\newcommand{\printmode}{0} 
\newtheorem{thm}{Theorem}[subsection]
\newtheorem{prop}[thm]{Proposition}
\newtheorem{cor}[thm]{Corollary}
\newtheorem{lem}[thm]{Lemma}
\newtheorem{theointro}{Theorem}
\theoremstyle{definition}
\newtheorem{defi}[thm]{Definition}
\newtheorem{ex}[thm]{Example}
\newtheorem{rem}[thm]{Remark}
\newtheorem{art}[thm]{}
\numberwithin{paragraph}{section}
\numberwithin{equation}{section}
\setlist{label=\it{(\roman*)}}
\def\P{{\mathbb P}}
\def\N{{\mathbb N}}
\def\Z{{\mathbb Z}}
\def\R{{\mathbb R}}
\def\C{{\mathbb C}}
\def\A{{\mathbb A}}
\def\G{{\mathbb G}}
\def\K{{\mathbb K}}
\def\N{{\mathbb N}}
\def\T{{\mathbb T}}
\def\SS{{\mathbb S}}
\newcommand{\metr}{{\|\hspace{1ex}\|}}
\newcommand{\curr}{{[\hspace{1ex}]}}
\newcommand{\Hom}{{\rm Hom}}
\def\an{{\rm an}}
\DeclareMathOperator{\trop}{trop}
\newcommand{\Tan}{{T^{\rm an}}}
\newcommand{\Spec}{{\rm Spec}}
\newcommand{\cl}{{\rm cl}}
\newcommand{\av}{{\rm av}}
\newcommand{\supp}{{\rm supp}}
\newcommand{\relint}{{\rm relint}}
\newcommand{\sym}{{\rm sym}}
\newcommand{\AS}{{A}}
\newcommand{\Rsup}{\R_\infty}
\newcommand{\torus}{{\T}}
\newcommand{\Xsigmaan}{{X_\Sigma^{\rm an}}}
\newcommand{\torusan}{{\torus^{\rm an}}}
\newcommand{\dell}{{\partial}}
\newcommand{\dellbar}{{\bar \partial}}
\DeclareMathOperator{\Mon}{{\bf Mon}}
\DeclareMathOperator{\Gr}{Gr}
\title{{A} comparison of positivity  
in complex and tropical toric geometry}
\author[J.I.Burgos Gil]{Jos\'e Ignacio Burgos Gil}
\address{
J.I.~Burgos Gil,
Instituto de Ciencias Matem\'aticas (CSIC-UAM-UCM-UCM3), 
Calle Nicol\'as Cabrera 15, Campus de la Universidad 
Aut\'onoma de Madrid, Cantoblanco, 28049 Madrid, Spain}
\email{burgos@icmat.es}
\author[W.~Gubler]{Walter Gubler}
\address{W. Gubler, Mathematik, Universit{\"a}t 
Regensburg, 93040 Regensburg, Germany, ORCID: 0000-0003-2782-5611}
\email{walter.gubler@mathematik.uni-regensburg.de}
 \author[P.~Jell]{Philipp Jell}
 \address{P. Jell,  Mathematik, Universit{\"a}t 
Regensburg, 93040 Regensburg, Germany, ORCID: 0000-0003-2757-1770}
\email{philipp.jell@mathematik.uni-regensburg.de}
\author[K.~K\"unnemann]{Klaus K{\"u}nnemann}
\address{K. K{\"u}nnemann, Mathematik, Universit{\"a}t 
Regensburg, 93040 Regensburg, Germany, ORCID: 0000-0003-1109-9096}
\email{klaus.kuennemann@mathematik.uni-regensburg.de}
 \thanks{J.~I.~Burgos was partially supported by MINECO research projects MTM2016-79400-P and 
by ICMAT Severo Ochoa project SEV-2015-0554.
W.~Gubler, P.~Jell and K.~K{\"u}nnemann 
were supported by the collaborative research 
center SFB 1085 \emph{Higher Invariants - Interactions between Arithmetic Geometry and Global Analysis} funded by the Deutsche Forschungsgemeinschaft.}
\begin{document}

\begin{abstract}
Given a smooth complex toric variety 
we will compare real Lagerberg forms and currents on its
tropicalization with invariant complex forms 
and currents on the toric variety. 
Our main result is a correspondence theorem which identifies
the cone of invariant closed positive currents on the complex toric
variety with closed positive currents on the tropicalization. 
In a subsequent paper, this correspondence will be used
to develop a Bedford--Taylor theory of 
plurisubharmonic functions on the tropicalization. 
\end{abstract}

\keywords{{Toric varieties, tropicalization, positive currents, Lagerberg forms}} 
\subjclass{{Primary 14L32; Secondary 14T05, 32U05, 32U40}}

\maketitle

\tableofcontents

\section{Introduction}\label{introduction}

A smooth complex toric variety $X_{\Sigma}$ is a smooth algebraic variety over $\C$ with an open 
immersion of an algebraic torus $\T$ and an algebraic extension of the
action of $\T$ on itself to $X_{\Sigma}$. 
Such a variety is encoded by the combinatorial structure of a fan $\Sigma$ 
in the vector space $N_\R := N \otimes_\Z \R$ where $N$ is the cocharacter lattice of $\T$.
We denote by $\T^{\an} := \T(\C)$ and $\Xsigmaan \coloneqq X_{\Sigma}(\C)$ 
the respective complex analytic manifolds 
of complex points. 
Inside $\T^{\an}$, there is a maximal compact torus $\mathbb{S}$. 
The quotient $\Xsigmaan / \mathbb{S}$ is denoted by $N_{\Sigma}$. 
The topological space $N_{\Sigma}$ has a canonical stratification
\begin{displaymath}
  N_{\Sigma }=\coprod_{\sigma \in \Sigma }N(\sigma )
\end{displaymath}
indexed by the cones of $\Sigma $,
analogous to the stratification of $X_{\Sigma}$ into orbits. 
The incidence
relations between cones of $\Sigma $ translate to incidence relations
between strata with the inclusions reversed. 
If $\tau$ is a face of the cone $\sigma \in \Sigma$,
denoted as $\tau\prec \sigma$,  then the corresponding strata
satisfy $N(\sigma  )\subset \overline {N(\tau )}$. The stratum
corresponding to the cone $\{0\}$ is equal to $N_\R$. 
It is called the \emph{dense stratum}. 
Each stratum $N(\sigma)$ of $N_\Sigma$ has a canonical
structure of a finite dimensional real vector space
equipped with a $\Z$-structure.
For $\tau \prec \sigma \in \Sigma $, there is a linear projection
\begin{displaymath}
  \pi_{\sigma ,\tau }\colon N(\tau )\longrightarrow N(\sigma ). 
\end{displaymath}
The space $N_\Sigma$ is a classical object in the theory of toric varieties
where it appears also under the name \emph{manifold with corners}
(see e.g.~\cite{AMRT, oda}]). In tropical geometry, it is called 
the \emph{Kajiwara-Payne tropicalization} of $X_{\Sigma}$.  
By construction, it comes with a natural map $\trop \colon \Xsigmaan
\to N_\Sigma$ 
which is a proper map of topological spaces. 

On the spaces $\Xsigmaan$ and $N_\Sigma$, there are sheaves of
bigraded algebras of smooth differential forms. 
Both are denoted $A^{\cdot, \cdot}$ or, when we want to stress the {underlying} space, 
by $A_{\Xsigmaan}^{\cdot, \cdot}$ and $A_{N_{\Sigma }}^{\cdot, \cdot}$ respectively.  
The well-known sheaf of complex smooth
differential forms $A_{\Xsigmaan}^{\cdot, \cdot}$ is a sheaf of
$\C$-algebras and plays a central role  
in complex analysis and complex geometry. The
sheaf $A_{N_{\Sigma }}^{\cdot, \cdot}$ is a sheaf of $\R$-algebras and was 
introduced by Smacka, Shaw and the third author
\cite{jell-shaw-smacka2015},
based on work of Lagerberg \cite{lagerberg-2012}. 
The smooth differential forms on $\Xsigmaan$ are called \emph{complex forms} while the
forms on $N_\Sigma$ are called \emph{Lagerberg forms}.
In both cases, the elements of $A^{0,0}$ are called smooth functions.

We explain briefly the definition of Lagerberg forms. More 
details are given in Section \ref{section smooth forms tropical}.
If  $U$ is an open subset of the finite dimensional real
vector space $N(\sigma)$, Lagerberg \cite{lagerberg-2012} has introduced the bigraded $\R$-algebra 
\begin{align*}
A^{\cdot,\cdot}(U)\coloneqq 	
\bigoplus_{p,q \in \N} A^p(U)\otimes_{C^\infty(U)}A^q(U)
\end{align*}
where $A^\cdot(U)$ denotes the usual $\R$-algebra of real
smooth differential forms on $U$. 
For an open set   
$U\subset N_{\Sigma }$, denote by $U_{\sigma}\coloneqq N(\sigma )\cap
U$ its strata. Then a Lagerberg form on $U$ is defined as a
collection of forms 
$(\omega _{\sigma })_{\sigma \in \Sigma }$, with $\omega _{\sigma
}\in A^{\cdot,\cdot}(U_{\sigma })$, satisfying the following
compatibility conditions. For every pair of cones $\tau \prec \sigma $
and every point $p\in U_\sigma$, there is a neighborhood $V\subset U$ of
$p$ with $V_\tau = \pi _{\tau ,\sigma }^{-1}(V_\sigma)\cap V$ and
\begin{equation}
  \label{eq:3}
  \omega _{\tau }|_{V_{\tau }}=\pi _{\tau ,\sigma }^{\ast}(\omega
  _{\sigma }|_{V_{\sigma }})
\end{equation}
on $V_\tau$.  
The compatibility conditions \eqref{eq:3} are,  
roughly speaking, saying that close to the boundary, 
Lagerberg forms are constant in the direction {towards} 
the boundary. Although this condition does not seem
entirely natural from an archimedean
point of view, it is very natural from both a tropical
\cite{jell-shaw-smacka2015} and a non-archimedean point of view
\cite{jell-2019}. Moreover, it has very strong consequences. For
instance, if $\omega $ is a form of bidegree $(p,q)$,
then the support of $\omega $ is
disjoint to any stratum of dimension smaller than $\min(p,q)$.

There are natural differential operators $d',d''$ of bidegree
$(1,0)$ and $(0,1)$ turning $A_{N_{\Sigma }}^{\cdot,\cdot}$ 
into a double complex analogous to the usual differential operators 
 $\partial $ and $\bar \partial$ on $A_{\Xsigmaan}^{\cdot,\cdot}$. 
There is also a theory of integration for Lagerberg forms similarly to the complex case.

The sheaf $A^{\cdot,\cdot}_{\Xsigmaan}$ has an antilinear
involution, the \emph{complex conjugation}, that sends $A^{p,q}$ to
$A^{q,p}$. A complex form is \emph{real}, if it is invariant under
complex conjugation. The sheaf $A^{\cdot,\cdot}_{N_{\Sigma }}$ has also
a canonical involution $J$ called the \emph{Lagerberg involution}. A form $\omega $
of bidegree $(p,p)$ with $J(\omega )=(-1)^{p}\omega $ is called 
\emph{symmetric}.
In both settings, there is a notion of positivity for $(p,p)$-forms (see \ref{positivity notions for toric varieties}).
Positive forms on $\Xsigmaan$ are always real, 
while  positive forms on $N_{\Sigma }$ are always symmetric.

In \S \ref{comparison-vector-space-section} and in \S \ref{subsection invariant forms toric variety}, 
we will introduce a new antilinear involution $F$ on the sheaf of $\SS$-invariant forms on $\Xsigmaan$ 
that respects the bigrading, and anticommutes with complex conjugation on one-forms.  
We denote by $A_{\Xsigmaan}^{\SS,F}$ the subsheaf of $\SS$-invariant forms which are $F$-invariant.

\begin{theointro}\label{thm:A}
{Let $U\subset N_\Sigma$ be open and $V$ the $\SS$-invariant
open subset $V\coloneqq \trop^{-1}(U)$ of $X_\Sigma^\an$.}
There exists a unique bigraded algebra morphism
\begin{equation}\label{chija-3}
\mathrm{trop}^*\colon A^{\cdot,\cdot}(U) \longrightarrow A^{\cdot,\cdot}(V)^{\mathbb S,F}
\end{equation}
with 
\begin{math}
  \trop^{\ast}\varphi = \varphi\circ \trop
\end{math} for all $\varphi \in A^{0,0}(U)$ and which
 satisfies
\begin{equation}
  \label{eq:4}
  \trop^{\ast} \circ \,d' = \pi ^{-1/2}\partial\circ \trop^{\ast},\quad
    \trop^{\ast} \circ \,d'' = \pi ^{-1/2}i\bar\partial\circ \trop^{\ast}.
\end{equation}
Moreover, for $\omega \in \AS ^{p,q}(U)$, we have
\begin{equation}\label{eq:5}
  \trop^{\ast}(J(\omega ))=i^{p+q}\overline{\trop^{\ast}(\omega )}.
\end{equation}
Therefore $\trop^{\ast}$ sends symmetric forms to real
forms. Furthermore, this morphism respects 
positivity and integration of top dimensional forms.
\end{theointro}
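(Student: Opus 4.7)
The plan is to construct $\trop^*$ explicitly on the dense torus orbit and then use the Lagerberg compatibility condition \eqref{eq:3} to extend it across the compactification strata of $V\subset X_\Sigma^{\an}$. Uniqueness is the easy half: any bigraded algebra morphism that restricts to pullback on $A^{0,0}$ and satisfies \eqref{eq:4} is determined on generators, because $A^{\cdot,\cdot}$ is locally generated as an $A^{0,0}$-algebra by the $d'$- and $d''$-derivatives of smooth functions.

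For existence I would first work on the dense stratum $N_\R$ with preimage $\T^{\an}\cong N_\R\times\mathbb{S}$. Fix linear coordinates $u_i$ on $N_\R$, define $\trop^*(u_i):=u_i\circ\trop$, and extend by declaring \eqref{eq:4}, then propagate multiplicatively to $A^{\cdot,\cdot}$. Well-definedness reduces to checking that the alternation and Leibniz relations are preserved, which follows from $\partial^2=\bar\partial^2=\partial\bar\partial+\bar\partial\partial=0$ together with anticommutativity of the wedge product. The image lies automatically in the $\mathbb{S}$-invariant part since $\trop$ factors through $X_\Sigma^{\an}/\mathbb{S}$. To verify the $F$-invariance and \eqref{eq:5}, I would reduce to generators: on one-forms, $J(d'u_i)=d''u_i$ together with $\bar\partial(\trop^*u_i)=\overline{\partial(\trop^*u_i)}$ (valid since $\trop^*u_i$ is real-valued) give $\trop^*J(d'u_i)=i\,\overline{\trop^*(d'u_i)}$, and the factor $i^{p+q}$ propagates under wedge products. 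A parallel generator-level calculation establishes $F$-invariance once the explicit formula for $F$ is in hand.

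For positivity, a positive Lagerberg $(p,p)$-form is locally a nonnegative combination of wedges $\bigwedge_{j=1}^p d'f_j\wedge d''f_j$, and $\trop^*$ sends each such wedge to $\pi^{-p}\bigwedge_{j=1}^p i\,\partial(\trop^*f_j)\wedge\bar\partial(\trop^*f_j)$, manifestly a positive complex $(p,p)$-form. For integration of top forms, the polar decomposition yields, per coordinate, $i\,\partial(u_j\circ\trop)\wedge\bar\partial(u_j\circ\trop)=\tfrac{1}{2}du_j\wedge d\theta_j$, which integrates along the $\mathbb{S}^1$-fiber to $\pi\,du_j$; the constant $\pi^{-1/2}$ is tuned precisely so that the accumulated $\pi^{-n}$ from \eqref{eq:4} cancels this fiber contribution and the two integrals agree. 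Lower-dimensional strata contribute nothing because a Lagerberg form of top bidegree vanishes on strata of smaller dimension.

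The main obstacle I anticipate is showing that $\trop^*\omega$, a priori well-defined only on the dense orbit, extends to a smooth form on all of $V$. This is where \eqref{eq:3} pays off. Near a point of the orbit $O_\sigma$, pick toric coordinates $x_1,\dots,x_k$ (vanishing on $O_\sigma$) and $y_1,\dots,y_{n-k}$ (units on the affine chart $X_\sigma$); condition \eqref{eq:3} forces $\omega$ to be locally a pullback from the quotient $N(\sigma)$, so only differentials of the surviving coordinates $\log|y_j|$ occur in $\omega$. Correspondingly, $\trop^*\omega$ contains only the smooth one-forms $\partial\log|y_j|$, $\bar\partial\log|y_j|$ and no logarithmically singular contributions from $\log|x_i|$, so it extends smoothly across $O_\sigma$. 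Gluing these local extensions across the cones of $\Sigma$ yields the desired global morphism.
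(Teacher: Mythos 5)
Your overall strategy matches the paper's: the map is built via the coordinate formula $\trop^{\ast}(d'u_j)=-\pi^{-1/2}dz_j/(2z_j)$, $\trop^{\ast}(d''u_j)=-\pi^{-1/2}id\bar z_j/(2\bar z_j)$ on the dense torus (Proposition~\ref{prop:4}), extended as a sheaf morphism to $N_\Sigma$ using the compatibility condition \eqref{eq:3} (Proposition~\ref{prop:5}), with positivity and integration handled separately (Lemmas~\ref{lemm:4}, \ref{lemm:7}, \ref{comparison of integration}). Your uniqueness argument, the $J$-vs-conjugation computation, and the integration-by-polar-coordinates check are all sound, and your explanation of why $\trop^{\ast}\omega$ extends smoothly across the boundary strata captures the essential mechanism.

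There are, however, two concrete gaps. First, your positivity argument uses the claim that a positive Lagerberg $(p,p)$-form is locally a nonnegative combination of $\bigwedge_{j=1}^{p}d'f_j\wedge d''f_j$. That is the definition of a \emph{strongly} positive Lagerberg form, not a positive one; a positive $(p,p)$-form is a nonnegative combination of $(-1)^{p(p-1)/2}\alpha\wedge J(\alpha)$ with $\alpha$ an arbitrary, not necessarily decomposable, $(p,0)$-form. The distinction is not pedantic here: Example~\ref{too few strongly positive} in the paper shows that for $2\le p\le n-2$ the strongly positive cone is not even full-dimensional in the symmetric forms, so your stated decomposition is false. The conclusion you want is still true, but the argument needs to be run with general $\alpha\in\Lambda^{p,0}V'$; using $\trop^{\ast}(J(\alpha))=i^{p}\overline{\trop^{\ast}\alpha}$ and the identity $(-1)^{p(p-1)/2}i^{p}=i^{p^2}$, one gets $\trop^{\ast}\bigl((-1)^{p(p-1)/2}\alpha\wedge J(\alpha)\bigr)=i^{p^2}\trop^{\ast}\alpha\wedge\overline{\trop^{\ast}\alpha}$, which is positive by definition.

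Second, you treat the codomain $A^{\cdot,\cdot}(V)^{\SS,F}$ as unproblematic, but when $V$ meets the boundary it is not automatic that $F$ is even defined on $\SS$-invariant forms: $F$ sends $dz$ to $(\bar z/z)\,dz$, which is not smooth at the origin, so $F$ fails to preserve smoothness in general. One must first verify (as in Lemma~\ref{lemm:1}) that $F$ \emph{does} preserve smoothness when restricted to $\SS$-invariant forms, by showing that $\SS$-invariance forces the coefficient functions to factor through $|z_1|,\dots,|z_n|$. Without this check the target of your morphism is not well-defined across the boundary, which is precisely the regime where the extension argument is supposed to be doing work.
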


If $U$ is contained in the dense stratum $N_\R$, then \eqref{chija-3}
is an isomorphism. In general, this is no longer true.  
The reason for this is the compatibility conditions \eqref{eq:3}. 
These results will be shown in {Section \ref{Complex invariant forms}}. 
The normalization factors in equation \eqref{eq:4} are almost forced
by the compatibility with integration and the compatibility between
the Lagerberg involution and complex conjugation \eqref{eq:5}.
If we do not insist on compatibility with integration or with the
bigrading, other identifications between Lagerberg forms and invariant
complex forms are possible. For instance, the map \eqref{chija-3}
differs from the interpretation of
Lagerberg forms 
as $\SS$-invariant forms given in  \cite[Remarque
(1.2.12)]{chambert-loir-ducros}.

The main interest of this paper will be  currents. 
To define currents on an open subset $U\subset N_{\Sigma}$, 
we first introduce a topology on the space of Lagerberg forms with
compact support $A_{c}^{\cdot,\cdot}(U) $. 
The definition is similar to the complex case 
with additional input  caused by the compatibility
condition \eqref{eq:3} towards the boundary 
(see Subsection \ref{subsection smooth forms tropical} for details). 
A \emph{Lagerberg current of type $(p,q)$} on $U$ is a continuous linear map
$T\colon A^{n-p,n-q}_c(U)\to \R$. 
We denote the space of currents of type $(p,q)$ by $D^{p,q}(U)$.
By duality, the involution $J$ defines an involution on
$D^{\cdot,\cdot}$, hence a notion of symmetric currents. 
We call $T\in D^{p,p}(U)$ \emph{positive}, if it is symmetric and
$T(\alpha)\geq 0$ for all positive Lagerberg forms $\alpha\in A^{n-p,n-p}_c(U)$.

Let again $V = \trop^{-1}(U)$. Since the map $\trop$ is
proper, the dual of the map $\trop^*$ from
\eqref{chija-3}
induces a $\C$-linear map 
\begin{equation}\label{chija-22}
\trop_*\colon D^{p,q}(V)\longrightarrow D^{p,q}(U)\otimes_\R \C
\end{equation}
defined by $\trop_*(T)(\alpha)=T\bigl(\trop^*(\alpha)\bigr)$
for all $T\in D^{p,q}(V)$ and $\alpha\in A^{n-p,n-q}_c(V)$.
We will show that $\trop_*(T) \in D^{p,p}(U)$ for every 
$\SS$-invariant $T \in D^{p,p}(V)$ which is also $F$-invariant. 
Since the map $\trop^{\ast}\colon A_{c}^{p,q}(U)\to A_{c}^{p,q}(V)$,
albeit injective, is
not a closed immersion, the map $\trop_{\ast}$ is in general not
surjective.
Since $\trop^*$ preserves positivity of forms, $\trop_*$ preserves
positivity of currents. 
All this will be shown in Section \ref{Complex invariant currents}.

The main result of this paper is the following Correspondence Theorem.

\begin{theointro} \label{main thm intro}
The map $\trop_*$ induces a linear isomorphism between the following cones:
\begin{enumerate}
\item\label{main thm intro 1} The cone of $\SS$-invariant positive
  complex currents
  in $D^{p,p}(V)$
that are closed with respect to $\dell$ and $\dellbar$ and that are
invariant with respect to $F$. 
\item \label{main thm intro 2} The cone of positive Lagerberg currents in
  $D^{p,p}(U)$ that are closed with respect to $d'$ and $d''$.   	
\end{enumerate} 
\end{theointro}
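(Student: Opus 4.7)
The plan is to check that $\trop_{\ast}$ carries the cone in \ref{main thm intro 1} into the cone in \ref{main thm intro 2}, prove injectivity by an averaging argument combined with Lelong's order-zero property of positive currents, and then construct an explicit inverse via fiber integration.

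\textbf{Forward direction (well-definedness).} For $T$ in the cone \ref{main thm intro 1} and a Lagerberg test form $\alpha$, the identity \eqref{eq:5} paired with the $F$-invariance of $T$ shows $\trop_\ast T(\alpha)\in \R$, so $\trop_\ast T$ genuinely lies in $D^{p,p}(U)$; symmetry follows by the same computation. Using \eqref{eq:4} and closedness of $T$,
\[
(\trop_\ast T)(d'\alpha) = T(\trop^{\ast}d'\alpha) = \pi^{-1/2}\, T(\partial\trop^{\ast}\alpha) = \pm\pi^{-1/2}(\partial T)(\trop^{\ast}\alpha) = 0,
\]
and analogously for $d''$, so $\trop_\ast T$ is $d',d''$-closed. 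Positivity of $\trop_\ast T$ follows directly from Theorem A, since $\trop^{\ast}$ maps positive Lagerberg forms to positive complex forms.

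\textbf{Injectivity.} Suppose $\trop_\ast T = 0$ for $T$ in \ref{main thm intro 1}. Positive $(p,p)$-currents have order zero by Lelong's theorem, so $T$ extends continuously to compactly supported continuous forms. By $\mathbb{S}$- and $F$-averaging, $T$ is determined by its values on $\mathbb{S}$- and $F$-invariant continuous compactly supported forms. On the dense stratum $N_\R$ such forms are precisely continuous pullbacks under $\trop^{\ast}$ (the isomorphism statement noted after Theorem A), and $\trop_\ast T = 0$ forces $T$ to vanish on these. Near a boundary stratum $N(\sigma)$ one argues inductively on stratum dimension: the support constraint on Lagerberg forms (disjoint from strata of dimension $<\min(p,q)$) combined with closedness and positivity of $T$ allows one to lift invariant continuous compactly supported forms to continuous Lagerberg forms in a neighborhood of the stratum using the compatibility conditions \eqref{eq:3}, so that $T$ vanishes on them as well.

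\textbf{Surjectivity.} Given $S$ in \ref{main thm intro 2}, I would construct a fiber-integration operator $\trop_{!}$ adjoint to $\trop^{\ast}$ with respect to the integration pairing. Over the dense stratum $\trop$ is a principal $\mathbb{S}$-bundle up to the normalization encoded in the $\pi^{-1/2}$ factors, and Haar integration over the fibers sends a smooth compactly supported $\mathbb{S}$- and $F$-invariant complex form to a smooth compactly supported Lagerberg form. Using the stratification $N_\Sigma = \coprod_\sigma N(\sigma)$ and the projections $\pi_{\sigma,\tau}$, one extends the construction across the boundary so that the resulting forms satisfy \eqref{eq:3}. Then define
\[
T(\omega) := S\bigl(\trop_{!}(A_{\mathbb{S},F}\omega)\bigr),
\]
where $A_{\mathbb{S},F}$ is the averaging idempotent over $\mathbb{S}$ and $F$. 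The current $T$ is $\mathbb{S}$- and $F$-invariant by construction, positive by the adjoint form of Theorem A, and $\partial,\bar\partial$-closed by dualizing \eqref{eq:4} to the intertwining relations $\trop_{!}\circ\partial = \pi^{1/2}d'\circ\trop_{!}$ and the analogue for $\bar\partial$. Finally $\trop_\ast T = S$ follows from $\trop_{!}\circ\trop^{\ast} = \mathrm{id}$ on Lagerberg forms.

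\textbf{Main obstacle.} In my estimation the technical core is the construction and continuity of $\trop_{!}$ along the boundary strata, where $\trop$ ceases to be a submersion. One must match the ``constant in the boundary direction'' behavior built into Lagerberg forms via \eqref{eq:3} with the limiting behavior of smooth $\mathbb{S}$-invariant complex forms expressed in toric coordinates, and verify the adjoint intertwining with $\partial$ and $\bar\partial$ across every stratum. Equally delicate is establishing the continuity of $\trop_{!}$ in a topology strong enough to extend the construction from smooth to continuous compactly supported invariant forms, since positive currents are naturally defined only on the latter.
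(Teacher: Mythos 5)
Your forward direction is fine and matches the paper. The difficulties are in injectivity and surjectivity, where the proposal contains genuine gaps that the paper addresses with specific machinery you have not supplied.

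\textbf{Injectivity.} You say that ``closedness and positivity of $T$ allows one to lift invariant continuous compactly supported forms to continuous Lagerberg forms in a neighborhood of the stratum using the compatibility conditions \eqref{eq:3}.'' But a generic $\SS$- and $F$-invariant form on $V$ near the boundary is \emph{not} in the image of $\trop^{*}$, nor even in its closure: Remark \ref{rem:4} shows the map is not surjective, and Example \ref{exm:2'} shows the image is not even closed. So there is no lifting; indeed, Example \ref{tropicalization of currents is not injective} gives a positive $\SS$- and $F$-invariant current $T\neq 0$ with $\trop_{*}T=0$, so positivity plus the order-zero property plus the isomorphism on the dense stratum cannot alone give injectivity. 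Closedness must be used in a stronger, structural way. The paper's Proposition \ref{prop:10} first decomposes $T'-T''$ along the toric stratification (Theorem \ref{stratification of currents}), uses the \emph{complex} Skoda--El~Mir theorem to show each piece is closed, deduces from Proposition \ref{extension by 0 and trop} that each piece has vanishing tropicalization, and then kills the co-coefficients of each piece by a careful double induction (on $\dim\sigma$ and on $|M\cap(I\cup J)|$) that exercises $\partial$- and $\bar\partial$-closedness directly on explicit test forms. That combinatorial argument is the content missing from your sketch.

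\textbf{Surjectivity.} The proposed fiber integration $\trop_{!}$ fails precisely at the point you flag as a difficulty, and I do not see how it could be repaired along the lines you indicate. For $\trop_{!}(A_{\SS,F}\omega)$ to be a test form for $S$, it must be a compactly supported \emph{Lagerberg} form, i.e.\ satisfy \eqref{eq:3}. But \eqref{eq:3} forces Lagerberg forms to be pulled back from lower-dimensional strata near the boundary (constant in the boundary direction), and fiber integration of a general $\SS$-invariant $\omega$ does not produce that behavior; this is exactly why $\trop^{*}$ is not surjective onto $\AS(V)^{\SS,F}$ (Remark \ref{rem:4}). More pointedly, Examples \ref{exm:1} and \ref{exm:3} exhibit positive Lagerberg currents that are \emph{not} of the form $\trop_{*}(S)$ for a positive $S$; any construction of $T$ by pairing $S$ with a $\trop_{!}$ would have to distinguish these cases, which requires a quantitative hypothesis. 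The paper's route is different: it introduces \emph{$\C$-finite local mass} (Definition \ref{def complex local finite mass}), characterizes the image of $\trop_{*}$ on positive currents by this condition (Proposition \ref{prop:9}), and then shows by an integration-by-parts estimate (Lemma \ref{disturbation lemma}) and a lattice sum that closed positive currents automatically have $\C$-finite local mass (Proposition \ref{prop:8}). The quantitative content of that step is indispensable and has no analogue in your sketch.

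In short, the forward direction and the Lelong order-zero observation are sound, but both the injectivity and surjectivity halves require the decomposition and local-mass machinery that the proposal does not provide and that the counterexamples in the paper show cannot be avoided.
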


The proof of Theorem \ref{main thm intro} will be given in Theorem \ref{thm:5}. 
We will show in Examples \ref{exm:1} and \ref{exm:1'} that we cannot omit 
any of the conditions \emph{closed} or \emph{positive}
in Theorem \ref{main thm intro}. 

Let us give some details about the proof
of the Correspondence Theorem. 
In the case $p = n \coloneqq \dim X_{\Sigma}$, 
the cone in \ref{main thm intro 1} is the space of positive
$\SS$-invariant Radon
measures on $V$ and
the cone in \ref{main thm intro 2} is the space of positive Radon
measures on $U$,
hence Theorem \ref{main thm intro} follows readily from the fact that
$U$ is the quotient of $V$ 
by the $\SS$-action (see Corollary \ref{correspondence for top degree currents}). 
This suggests that for $p<n$, we consider coefficients of complex
and Lagerberg currents 
to follow a similar argument. 
In fact, it is more convenient to go for the dual notion of
co-coefficients  as follows:

For simplicity, we consider the case $X_\Sigma^\mathrm{an}= \C^n$
with coordinates $z = (z_1,\ldots,z_n)$, a situation that can always be achieved locally.   
Then $N_\Sigma=(\R\cup\{\infty\})^n$ has coordinates $(u_1,\ldots,u_n)$
and the tropicalization map  is given by 
$\trop(z) =( -\log \vert z_1\vert,\dots, -\log \vert z_n \vert)$.  
The cones of $\Sigma$ are the faces of $\R_{\geq 0}^n$. 
Any cone in $\Sigma$ has the form 
$\sigma_L \coloneqq \{u \in \R_{\geq 0}^n \mid u_i = 0 \ \forall { i\notin L}\}$ 
for some $L \subset \{1,\dots,n\}$ and the corresponding stratum 
 of $N_\Sigma$ is given by 
\begin{align*}
N(\sigma_L) \coloneqq \{ (u_1,\dots,u_n) \in N_\Sigma \mid u_i = \infty \text{ if and only if } i \in L\}.
\end{align*}
 For $M \subset \{1,\ldots,n\}$, we define the following union of strata of codimension $1$:
\begin{align*}
E^M:=\{(u_1,\ldots,u_n)\in N_\Sigma \mid \text{$u_i=\infty$ for some $i\in M$}\}.
\end{align*}
{Let $U$ be an open subset of $(\R\cup\{\infty\})^n$.} Given a Lagerberg current $T\in D^{p,p}(U)$ and 
 $I,J\subset \{1,\ldots,n\}$
with $|I|=|J|=n-p$, we call 
$T^{IJ}\in D^{n,n}(U\setminus E^{I\cup J})$ given by
\begin{align*}
T^{IJ}(f)=T((-1)^{q(q-1)/2}fd'u_I\wedge d''u_J)
\end{align*}
a \emph{co-coefficient} of $T$. 
If the Lagerberg current $T$ is positive, then the co-coefficients
$T^{IJ}$ are real Radon measures, 
$T^{IJ}=T^{JI}$, the $T^{II}$ are positive Radon measures and
$2|T^{IJ}|\leq T^{II}+T^{JJ}$, where $|T^{IJ}|$ denotes
the total variation measure of $T^{IJ}$ (see Subsection \ref{Co-coefficients subsection}).

A key ingredient in the proof of the Correspondence
Theorem is the 
following Decomposition Theorem along the above stratification.

\begin{theointro}  \label{stratification of currents intro}
Let $U$ be an open subset of $(\R\cup\{\infty\})^n$.
If $T$ is a positive current in  $D^{p,p}(U)$,
then there is a decomposition 
\begin{equation}\label{stratification-of-currents}
T = \sum_{\sigma \in \Sigma}T_\sigma
\end{equation}
with uniquely determined currents $T_\sigma$ 
such that $U \setminus (E^{I\cup J} \cup N(\sigma))$ 
is a null set with respect to the Radon measure ${T_\sigma^{IJ}}$ for any $\sigma = \sigma_L \in \Sigma$.
\end{theointro}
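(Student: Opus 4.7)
The plan is to construct each $T_\sigma$ by restricting the co-coefficients of $T$ to the Borel stratum $N(\sigma)$ and then reassembling the restricted measures into a Lagerberg current. Uniqueness is forced by the support requirement, so the real content is existence together with the identity $T=\sum_\sigma T_\sigma$.

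First I use positivity to reduce everything to measure theory. By the properties of co-coefficients recalled just before the theorem, each $T^{IJ}$ is a signed Radon measure on $U\setminus E^{I\cup J}$ with $2|T^{IJ}|\leq T^{II}+T^{JJ}$. Each stratum $N(\sigma_L)$ is a Borel subset of $N_\Sigma$, namely the intersection of the closed set $\bigcap_{i\in L}\{u_i=\infty\}$ with the open set $\bigcap_{i\notin L}\{u_i\neq\infty\}$, and the strata partition $U$. I define
\[
T_\sigma^{IJ}\coloneqq\begin{cases} T^{IJ}\big|_{N(\sigma)\cap(U\setminus E^{I\cup J})} & \text{if } L\cap(I\cup J)=\emptyset,\\ 0 & \text{otherwise.}\end{cases}
\]
Each $T_\sigma^{IJ}$ is then a signed Radon measure supported on $N(\sigma)\cap(U\setminus E^{I\cup J})$ and inherits the bound $2|T_\sigma^{IJ}|\leq T_\sigma^{II}+T_\sigma^{JJ}$.

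Next I build $T_\sigma$ from these measures. By the compatibility condition \eqref{eq:3}, every test form $\omega\in A_c^{n-p,n-p}(U)$ admits a unique local expansion
\[
\omega|_{N(\sigma_L)}=\sum_{I,J\subset\{1,\dots,n\}\setminus L} f_{IJ}^{(L)}\,d'u_I\wedge d''u_J
\]
on each stratum, and \eqref{eq:3} moreover identifies $f_{IJ}^{(L)}$ with the restriction of the dense-stratum coefficient $f_{IJ}$ whenever $L\cap(I\cup J)=\emptyset$. I set
\[
T_\sigma(\omega)\coloneqq (-1)^{(n-p)(n-p-1)/2}\sum_{\substack{|I|=|J|=n-p\\ L\cap(I\cup J)=\emptyset}}\int_{N(\sigma)} f_{IJ}^{(L)}\,dT_\sigma^{IJ}.
\]
Continuity of $T_\sigma$ on $A_c^{n-p,n-p}(U)$ follows from local finiteness of the diagonal measures combined with the bound on the off-diagonals. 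Positivity follows by Radon--Nikodym-factorising $T_\sigma^{IJ}=h_{IJ}\mu_\sigma$ with $(h_{IJ})$ pointwise positive semi-definite Hermitian: if $\alpha$ is a positive form, then its coefficient matrix $(f_{IJ}^{(L)})$ is likewise pointwise positive semi-definite, so the integrand in $T_\sigma(\alpha)$ is the trace of a product of two positive Hermitian matrices and hence pointwise nonnegative.

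The identity $T=\sum_\sigma T_\sigma$ is then immediate: since $U\setminus E^{I\cup J}$ is partitioned into the Borel subsets $N(\sigma)\cap(U\setminus E^{I\cup J})$, we obtain $T^{IJ}=\sum_\sigma T_\sigma^{IJ}$ as signed Radon measures, and integrating $f_{IJ}=f_{IJ}^{(L)}$ against these measures matches the formulas for $T(\omega)$ and $\sum_\sigma T_\sigma(\omega)$ term by term. Uniqueness follows because the support condition pins the co-coefficients of any admissible decomposition to the same restrictions, and a positive Lagerberg current is determined by its co-coefficients via the stratum-by-stratum integration formula. The principal obstacle is this reconstruction: the strata are not open, so there is no intrinsic notion of restricting a distribution to $N(\sigma)$. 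Positivity is precisely what reduces the question to restricting Radon measures to Borel subsets, and the rigid pullback condition \eqref{eq:3} is what lets the restricted measures be reassembled into a continuous functional on test forms whose supports cross boundary strata.
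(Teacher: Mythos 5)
Your existence construction — restricting the co-coefficient Radon measures to the Borel strata $N(\sigma)$ and reassembling via Remark \ref{co-coefficients determine currents} — is the same as the paper's, though you gloss over the Hahn--Jordan decomposition that the paper uses to show the restricted measures are still Radon (one writes $T^{IJ}=T^{IJ}_+-T^{IJ}_-$, restricts each positive part, and observes that the restriction is dominated by the original, hence locally finite).

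Where you genuinely diverge is in the positivity and uniqueness steps, and this is worth noting. The paper proves positivity of each $T_\sigma$ by a telescoping approximation: it first shows $T'_\sigma \coloneqq T-\sum_{\tau\prec\sigma,\,\tau\neq\sigma}T_\tau$ is positive using a sequence of bump functions from Lemma \ref{uniqueness sequence}, then a second approximation argument, applied to $T'_\sigma$ rather than to $T$, yields positivity of $T_\sigma$. Your Radon--Nikodym/positive-semidefiniteness argument short-circuits this, but it rests on a fact that the paper never states, namely that for a positive Lagerberg current $T$ the matrix of Radon--Nikodym densities $(h_{IJ})$ of the co-coefficients is pointwise positive semidefinite. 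That claim is true, but it is strictly stronger than what the paper actually proves (Proposition \ref{prop:6} only establishes $T^{IJ}=T^{JI}$, positivity of $T^{II}$, and the off-diagonal bound $2|T^{IJ}|\le T^{II}+T^{JJ}$); to get pointwise psd you must test $T$ against forms of the shape $f\cdot(-1)^{q(q-1)/2}\bigl(\sum_I\lambda_Id'u_I\bigr)\wedge\bigl(\sum_J\lambda_Jd''u_J\bigr)$ for all $\lambda$, keeping careful track of the stratified domain of $f$ (these forms only live on $U\setminus E^M$, where $M$ is the union of the participating index sets), and then run a countable-dense-set argument to upgrade ``for a.e.\ $x$ depending on $\lambda$'' to ``for a.e.\ $x$ for all $\lambda$.'' You should also replace ``Hermitian'' by ``symmetric'': the Lagerberg setting is real, and the relevant statement is Proposition \ref{prop:12}, not a complex sesquilinear one. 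Your uniqueness argument, by contrast, is cleaner than the paper's: rather than inducting on the face poset with approximating sequences, you simply observe that the strata partition $U\setminus E^{I\cup J}$, so the co-coefficients of any admissible decomposition are forced to be the restrictions; both arguments implicitly require that each $T_\sigma$ has measure co-coefficients, which the paper guarantees by requiring $T_\sigma$ positive. Overall the proposal is a valid alternative route, but the psd lemma needs to be stated and proved, not just invoked, and the continuity of the reconstructed functional deserves a sentence tying it to the estimates of Proposition \ref{prop:6}.
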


The decomposition \eqref{stratification-of-currents} does not depend on the choice of the coordinates 
$u_1, \dots, u_n$ hence gives a canonical decomposition for any positive current 
$T \in D^{p,p}(U)$ on any open subset $U$ of $N_\Sigma$ as we show in Theorem \ref{stratification of currents}.
A similar statement is well known on the complex toric manifold $\Xsigmaan$ and 
we show in Subsection \ref{subsection positive currents and the boundary} 
that both canonical decompositions are closely related via $\trop_*$.

For a positive Lagerberg current $T \in D^{p,p}(U)$, we will prove that 
$T=\trop_*(S)$ for a positive current $S$ on $V=\trop^{-1}(U)$ if and only if 
$T$ has \emph{$\C$-finite local mass}. 
The latter is a local condition on $U$ given in Definition \ref{def complex local finite mass}. 
We then show that a closed positive Lagerberg current has $\C$-finite
local mass,
completing the proof of surjectivity. 
For injectivity of $\trop_*$, an additional argument is required. 
All this is done in Subsection \ref{closed positive invariant currents}.

As an application, we will prove in Theorem \ref{tropical El Mir} 
a tropical analogue of the Skoda--El Mir Theorem for Lagerberg
currents in the toric setting: 
Let  $U$ be an open subset of $N_\Sigma$ and let $E$ be a union of
strata closures in $N_\Sigma$.
We consider a closed positive current $T \in D^{p,p}(U \setminus E)$ 
which has $\C$-finite local mass on $U$. 
Then we can extend $T$ by zero to a closed positive
Lagerberg current on $U$. 
For details, we refer to Subsection \ref{section:tropical Skoda El
  Mir}. A consequence of the Tropical Skoda--El Mir theorem is that in
the canonical decomposition \eqref{stratification-of-currents}, if $T$
is closed, then all the currents $T_\sigma$ are closed. 

The motivation for the present work is the following.
It is known that there is no way to continuously extend the wedge
product on $A^{\cdot, 
  \cdot}$ to $D^{\cdot, \cdot}$.
Bedford--Taylor theory provides a way to
define products
of certain closed positive currents   on $\Xsigmaan$. 
In a subsequent paper, using the Correspondence Theorem and
Bedford--Taylor theory for complex manifolds, we develop a 
Bedford--Taylor theory on $N_\Sigma$.

For instance, Bedford-Taylor theory on $N_\Sigma$ will have the
following application:  
Let $K$ be a field  endowed with a non-archimedean complete absolute
value $|\phantom{a}|_v$.
From the fan $\Sigma$, we may construct a toric variety $X_{\Sigma, K}$. 
We may also consider the analytification $X_{\Sigma,K}^{\rm an}$ 
of the toric variety $X_{\Sigma, K}$ as a Berkovich space and the
corresponding tropicalization map
\begin{align*}
\trop_K \colon X_{\Sigma,K}^{\rm an} \longrightarrow N_\Sigma
\end{align*}
which should be viewed as a non-archimedean analogue of the tropicalization map 
$\trop$ of the complex toric manifold ${X_{\Sigma}^\an}$ 
considered before. 
For any open subset $U$ of $N_\Sigma$,  
these tropicalization maps lead to a natural bijective correspondence 
between invariant continuous plurisubharmonic functions on $\trop^{-1}(U)$ and 
invariant continuous plurisubharmonic functions on $\trop_K^{-1}(U)$ 
in the sense of Chambert-Loir and Ducros \cite[\S 5.5]{chambert-loir-ducros}. 
Moreover, we show that the complex Bedford--Taylor theory 
corresponds to the Bedford--Taylor theory of Chambert-Loir and Ducros
\cite[\S 5.6]{chambert-loir-ducros}.

We explain in more detail the content of the paper. In Section
\ref{sec:posit-real-compl}, we introduce complex and Lagerberg multilinear forms. 
We interpretate the Lagerberg forms as the complex forms which are invariant under a natural involution $F$. 
Then we discuss several positivity notions of these 
 forms. In the complex case,  weakly
positive, positive and strongly positive multilinear forms are well-known. They form
strictly convex cones of maximal dimension. Moreover, the cone of
strongly positive forms is dual to the one of weakly positive forms, 
while the cone of positive forms is self dual. Similar notions are defined in \cite{lagerberg-2012} for 
Lagerberg multilinear forms on a real vector space. The
notions of weakly and strongly positive Lagerberg forms are 
however somewhat pathological. We show in 
Example \ref{too few strongly positive} that  
the cone of strongly positive
forms is not of full dimension and 
that the cone of weakly positive forms is
not strictly convex. 
This also implies that the notions of weakly and
strongly positive in the complex and Lagerberg case do not correspond
exactly. For this reason, we will mainly restrict
ourselves to positive forms. 

In Section \ref{section smooth forms tropical}, 
we will first introduce the partial compactification $N_\Sigma$ associated to a fan $\Sigma$ and 
we will describe its topology. 
Then we introduce Lagerberg forms on  $N_\Sigma$. 
Similarly as in complex analysis, 
we endow the space of compactly supported Lagerberg forms with a locally convex topology and 
we define the dual notion of Lagerberg currents. 
The upshot of this section is that Lagerberg forms and currents on $N_\Sigma$ 
satisfy similar properties as their complex analogues.

In Section \ref{Complex invariant forms}, we study positivity of
Lagerberg  forms and compare them to invariant complex
differential forms. In particular, we prove Theorem \ref{thm:A}. 
{We first deal with the dense torus $\mathbb T$ in Subsection \ref{subsection invariant forms torus} 
before we consider arbitrary smooth toric varieties in Subsection \ref{subsection invariant forms toric variety}.}

Section \ref{Complex invariant currents} is devoted to positivity of
Lagerberg currents and the relation with positivity of complex
currents. We define the map $\trop_{\ast}$ and discuss the 
compatibility of the different notions of positivity with respect to
this map. We also define the co-coefficients of a Lagerberg current
and show that, analogously to the complex case, the co-coefficients of
a positive Lagerberg current are Radon measures. We show in Example
\ref{measures and weakly positive Lagerberg currents2} that there are
weakly positive Lagerberg currents whose co-coefficients 
are not Radon measures. 
This is caused by the fact that the cone of
weakly positive multilinear Lagerberg forms is not strictly convex.

In Section \ref{decomposition-invariant-boundary}, 
we prove the decomposition theorem (see Theorem \ref{stratification of currents intro}). 
Moreover we introduce the concept of $\C$-finite local mass. 
Intuitively, a current has $\C$-finite local mass 
if it has local finite mass as an invariant complex current 
(see Definition \ref{def complex local finite mass} for details). 
We give in Example \ref{exm:3} a positive current $T$ that does not have
$\C$-finite local mass. Nevertheless, it is of the form
$T=\trop_{\ast}(S)$ for a complex current $S$, but this current cannot
be chosen to be positive.

Finally, Section \ref{correspondance-invariant-boundary} is devoted to
the proof of Theorem \ref{main thm intro} and of the tropical Skoda-El
Mir theorem.
In  Appendix \ref{facts from measure theory}, there is a reminder on Borel and Radon measures.

\addtocontents{toc}{\protect\setcounter{tocdepth}{0}}
\section*{{Acknowledgements}}
\addtocontents{toc}{\protect\setcounter{tocdepth}{1}}
We are deeply grateful to Florent Martin for his numerous inputs at the beginning of this project.
We are grateful to Roberto Gualdi for his comments on a previous 
version and for pointing out the reference \cite{babaee-huh2017}.
We thank the referee for helpful comments.

\addtocontents{toc}{\protect\setcounter{tocdepth}{0}}
\section*{Notation and conventions}
\addtocontents{toc}{\protect\setcounter{tocdepth}{1}}

The set $\N$ of natural numbers includes zero.
We write $\Rsup =\R \cup \{ \infty \}$ 
and $\R_{\geq t} \coloneqq \{u \in \R \mid u \geq t\}$ for any $t \in \R$.
In the notation $A \subset B$, we allow that $A=B$. 

In this paper,  $N$ usually denotes a free abelian group of rank $n$ with dual $M$. We 
denote by $N_\R$ and $M_\R$ their scalar extensions to
$\R$. By a  \emph{fan} $\Sigma$ in $N_\R$, 
we mean a fan consisting of strictly convex rational polyhedral cones in $N_\R$. 
We denote the associated (complex) toric variety by $X_\Sigma$ and 
the associated partial compactification of $N_\R$ by $N_\Sigma$ 
(see \S \ref{subsection partial compactification}).

A  topological space is called \emph{locally compact} if it has a basis consisting of relatively compact subsets; but it 
 is not necessarily  Hausdorff. 
A \emph{topological vector space} is assumed to be Hausdorff. 
Our conventions on \emph{Radon measures} are summarized in  Appendix \ref{facts from measure theory}.

\section{Positivity on real and complex vector spaces }
\label{sec:posit-real-compl}

Let $V$ be a real vector space of dimension $n$. 
Write $V_{\C}=V\otimes_\R \C$ for the associated complex vector space, 
$V^{\ast}=\Hom_{\R}(V,\C)=\Hom_{\C}(V_{\C},\C)$ for the complex dual 
and $V'=\Hom_{\R}(V,\R)$ for the real dual.  
Let $\overline{V_\C}$ denote the real vector space
underlying $V_\C$ with the complex structure determined 
by $\lambda (v\otimes \mu)=v\otimes (\bar\lambda \mu)$.
We denote the complex dual of $\overline{V_\C}$ by 
$\overline V^*$. Observe that
$\overline V^{\ast}$ agrees with the space of
antilinear maps from $V_\C$ to $\C$. 
Let $V''$ be a copy of $V'$. 
We consider the exterior algebras
\begin{displaymath}
\Lambda ^{\cdot,\cdot}V^{\ast}\coloneqq \Lambda _{\C}(V^{\ast}\oplus
\overline{V}^{\ast})=\bigoplus_{p,q \in \N} \Lambda ^{p,q}V^{\ast},
\end{displaymath}
and
\begin{displaymath}
\Lambda ^{\cdot,\cdot}V'\coloneqq \Lambda _{\R}(V'\oplus V'')=\bigoplus_{p,q \in \N} \Lambda ^{p,q}V'.
\end{displaymath}
The elements of $\Lambda ^{p,q}V^{\ast}$ are called \emph{complex
$(p,q)$-forms} while the elements of $\Lambda ^{p,q}V'$ are called
\emph{Lagerberg $(p,q)$-forms}.

\subsection{The complex situation} \label{subsection: The complex situation}
We recall some definitions from complex geometry.

The identity induces antilinear maps
$\sigma \colon V_\C\to \overline{V_\C}$ and $\sigma \colon \Lambda^pV_\C\to
\Lambda^p\overline{V_\C}$ which we denote
by $w\mapsto \bar w$. The inverse of $\sigma $ is also denoted by
$\sigma \colon \overline{V_\C}\to V_{\C}$.
The antilinear maps $\sigma $ induce antilinear maps 
$\sigma \colon  V^{\ast}\to \overline {V}^{\ast}$ and $\sigma \colon
\overline{V}^{\ast}\to V^{\ast}$, that 
extend uniquely to an antilinear involution of the $\R$-algebra $\Lambda ^{\cdot,\cdot}V^{\ast}$.
This involution sends $\Lambda ^{p,q}V^{\ast}$ to $\Lambda^{q,p}V^{\ast}$. 
We continue to write $\sigma(\omega)=\bar\omega$ for complex forms $\omega$.
A complex form $\omega$ is called \emph{real} if $\bar \omega =\omega $.

We choose a real basis $e_{1},\dots ,e_{n}$ of $V$. 
We obtain dual complex bases 
$du_{1},\dots,du_{n}$
of $V^{\ast}$ and $d\bar u_{1},\dots,d\bar u_{n}$ of 
$\overline{V}^{\ast}$ determined by $du_i(e_j\otimes 1)=\delta_{ij}=
d\bar u_i(e_j\otimes 1)$.
Then we have 
\begin{displaymath}
\sigma (\lambda du_i ) = \bar \lambda d\bar u_i. 
\end{displaymath}

\begin{defi}
The \emph{canonical orientation} of the vector space $V_\C$
is the orientation 
 determined by the 
real form
\begin{displaymath}
\omega _{n}=du_{1}\wedge i d\bar u_{1} \wedge \dots \wedge  du_{n}\wedge i d\bar u_{n}\in \Lambda ^{n,n}V^{\ast}.
\end{displaymath} 
The form $\omega _{n}$ depends on our choice of 
a basis, but the orientation  does not.

An $\omega \in \Lambda ^{p,p}V^{\ast}$ is called \emph{strongly positive} if it
is in the convex cone spanned by 
\begin{displaymath}
\{\alpha _{1}\land i \bar
\alpha _{1}\land \dots \land
\alpha _{p}\land i \bar
\alpha _{p} \mid \text{$\alpha _{j}\in \Lambda ^{1,0}V^{\ast}$ for $j=1,\dots,p$}\}.
\end{displaymath}

An $\omega \in \Lambda ^{p.p}V^{\ast}$ is called \emph{positive} if it
belongs to the convex cone spanned by 
\begin{displaymath}
\{i^{p^{2}}\alpha \land  \bar \alpha \mid \alpha \in \Lambda ^{p,0}V^{\ast}\}.
\end{displaymath}

A complex $(p,p)$-form $\omega \in \Lambda ^{p,p}V^{\ast}$ is called
\emph{weakly positive} if, for every strongly positive form $\eta$ of
type $(n-p,n-p)$, there is a real number $\gamma \ge 0$ with
\begin{displaymath}
\omega \land \eta=\gamma \omega _n.
\end{displaymath}

We denote by $\Lambda^{p,p}_{+,s} V^*$, $\Lambda^{p,p}_{+} V^*$ and $\Lambda^{p,p}_{+,w} V^*$ the cones of 
strongly positive, positive and weakly positive $(p,p)$-forms respectively. 
\end{defi}

Observe that our weakly positive forms
are called positive in \cite[\S III.1.A]{demailly_agbook_2012}.

\begin{defi}\label{def:2}
To $\eta\in \Lambda ^{p,p}V^{\ast}$ we associate a sesquilinear form 
$|\eta|$ 
  on $\Lambda ^{p}V_{\C}$ by the rule
  \begin{displaymath}
    |\eta|(x,y) =  (-1)^{\frac{p(p-1)}{2}}i^{-p}\eta(x,\bar y).
  \end{displaymath}
Moreover, for $q=n-p$, there is a duality pairing
\begin{equation}\label{eq:31}
  \langle\,\cdot\,,\,\cdot\,\rangle\colon \Lambda ^{p,p}V^{\ast}\otimes \Lambda ^{q,q}V^{\ast}
  \longrightarrow \C
\end{equation}
defined  by
\begin{displaymath}
  \eta\land \omega = \langle \eta,\omega \rangle \omega _{n}.
\end{displaymath}
We denote by $\Lambda ^{p,p}V^{\ast}_{\R}$ the subspace of real
elements. Then the pairing \eqref{eq:31} induces a pairing
\begin{equation}\label{eq:35}
  \langle\,\cdot\,,\,\cdot\,\rangle_{\R}\colon \Lambda ^{p,p}V^{\ast}_{\R}\otimes
  \Lambda ^{q,q}V^{\ast}_{\R}\to \R. 
\end{equation}
\end{defi}

Note that the assignment $\eta \mapsto |\eta|$ is canonical and gives
an isomorphism between $\Lambda ^{p,p}V^{\ast}$ and the space of
sesquilinear forms on $\Lambda ^{p}V_{\C}$. On the other hand, the
duality pairing $\langle\cdot ,\cdot \rangle$ depends on the choice of basis but only up
to a non-zero positive number.

\begin{prop}\label{prop:11} A form $\eta\in \Lambda ^{p,p}V^{\ast}$ is
  real if and only if the sesquilinear form $|\eta|$ is Hermitian. A
  form $\eta$ is positive if and only if $|\eta|$ is a positive semidefinite Hermitian form. 
\end{prop}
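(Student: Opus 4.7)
The plan is to use the bijective correspondence $\eta \mapsto |\eta|$ between $\Lambda^{p,p}V^*$ and sesquilinear forms on $\Lambda^p V_\C$ (observed just after Definition \ref{def:2}) and to verify both equivalences by a short coordinate calculation in the basis $\{du_I \wedge d\bar u_J : |I|=|J|=p\}$.

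For the real/Hermitian equivalence, I would write $\eta = \sum_{I,J} \eta_{IJ}\, du_I \wedge d\bar u_J$. A direct application of $\sigma$, using $\sigma(du_I \wedge d\bar u_J) = d\bar u_I \wedge du_J = (-1)^{p^2} du_J \wedge d\bar u_I$, shows that $\eta$ is real iff $\eta_{IJ} = (-1)^{p}\, \bar\eta_{JI}$. On the other side, evaluating on basis multivectors gives $|\eta|(e_I, e_J) = (-1)^{p(p-1)/2} i^{-p}\, \eta_{IJ}$, and the Hermitian relation $|\eta|(x,y) = \overline{|\eta|(y,x)}$ translates, via $i^{2p} = (-1)^p$, into precisely the same equation on coefficients.

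The positivity equivalence rests on a sign computation for the generators of the positive cone. For $\alpha \in \Lambda^{p,0}V^* = \Lambda^p V^*$, I would evaluate directly to get
\begin{equation*}
\big|i^{p^2}\alpha \wedge \bar\alpha\big|(x, y) = (-1)^{p(p-1)/2}\, i^{-p} \cdot i^{p^2}\, \alpha(x)\, \overline{\alpha(y)} = \alpha(x)\, \overline{\alpha(y)},
\end{equation*}
the final equality being the point of the normalization in Definition \ref{def:2} (since $i^{p^2}\cdot i^{-p} = i^{p(p-1)} = (-1)^{p(p-1)/2}$). Such a form is manifestly positive semidefinite Hermitian on $\Lambda^p V_\C$, so $\R_{\geq 0}$-linearity of $\eta \mapsto |\eta|$ immediately gives that if $\eta$ is positive, then $|\eta|$ is positive semidefinite Hermitian.

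For the converse, starting from a positive semidefinite Hermitian $|\eta|$ on the finite-dimensional complex vector space $\Lambda^p V_\C$, the spectral theorem (or a Cholesky-type factorization of the Gram matrix in any chosen basis) provides a decomposition $|\eta|(x, y) = \sum_\ell L_\ell(x)\, \overline{L_\ell(y)}$ with finitely many complex linear functionals $L_\ell \in (\Lambda^p V_\C)^* \cong \Lambda^{p,0}V^*$. Combining this with the preceding calculation and the injectivity of $\eta \mapsto |\eta|$ forces $\eta = \sum_\ell i^{p^2}\, L_\ell \wedge \overline{L_\ell}$, which exhibits $\eta$ as positive. No step is an essential obstacle; the only care needed is in tracking the factors $i^{p^2}$, $i^{-p}$, and $(-1)^{p(p-1)/2}$, which is precisely what the normalization in Definition \ref{def:2} is designed to handle.
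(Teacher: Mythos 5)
Your proposal is correct and follows essentially the same route as the paper: both parts hinge on the isomorphism $\eta\mapsto|\eta|$ together with the normalization identity $i^{p^2}\cdot(-1)^{p(p-1)/2}\cdot i^{-p}=1$, and the positivity equivalence is settled in both by the spectral decomposition of positive semidefinite Hermitian forms (your $|\eta|=\sum_\ell L_\ell\otimes\bar L_\ell$ is the paper's $\sum_j\gamma_j\,\alpha_j\otimes\bar\alpha_j$ after absorbing $\sqrt{\gamma_j}$ into $\alpha_j$). The only cosmetic difference is in the real/Hermitian direction, where you compute in the coordinate basis $du_I\wedge d\bar u_J$ while the paper works coordinate-free from the identity $\sigma(\eta)(x,\xi)=(-1)^p\overline{\eta(\bar\xi,\bar x)}$; the coefficient relation $\eta_{IJ}=(-1)^p\bar\eta_{JI}$ you derive is the component version of the same fact.
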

\begin{proof}
  The antilinear involution $\sigma $ on $\Lambda ^{p,p}V^{\ast}$
  is given by
  \begin{displaymath}
     \sigma (\eta)(x,\xi)=\bar \eta(x,\xi)=(-1)^{p}\overline {\eta(\bar
       \xi,\bar x)}
   \end{displaymath}
  for $x\in \Lambda ^{p}V_\C$ and $\xi\in \Lambda ^{p}
  \overline{V_\C}$. Assume that $\eta$ is real. Then 
  for $x,y\in \Lambda ^{p}V_\C$ we have  
  \begin{multline*}
    \overline{|\eta|(x,y)}=
    \overline{(-1)^{\frac{p(p-1)}{2}}i^{-p}\eta(x,\bar y)}=
    (-1)^{\frac{p(p-1)}{2}}(-i)^{-p}\overline{\eta(x,\bar y)}\\=
    (-1)^{\frac{p(p-1)}{2}}(-i)^{-p}(-1)^{p}\bar \eta(y,\bar x) =
     (-1)^{\frac{p(p-1)}{2}}i^{-p}\eta(y,\bar x) =
     |\eta|(y,x).
  \end{multline*}
Thus, the sesquilinear form $|\eta|$ is hermitian. The converse is
proved analogously.

Assume now that $\eta\in \Lambda ^{p,p}V^{\ast}$ is a positive
form and $x\in \Lambda ^{p} V_\C$. Then we can write
\begin{displaymath}
  \eta= \sum_{j}i^{p^{2}}\gamma _{j}\alpha _{j}\land \bar \alpha
  _{j}, 
\end{displaymath}
with  $\gamma _{j}\in \R_{\ge 0}$ and  $\alpha _{j}\in \Lambda
^{p,0}V^{\ast}$. Then, since
$i^{p^{2}}(-1)^{\frac{p(p-1)}{2}}i^{-p}=1$, we have    
\begin{displaymath}
  |\eta|(x,x)=(-1)^{\frac{p(p-1)}{2}}i^{-p}\eta(x,\bar x)
  =\sum_{j}\gamma _{j}(\alpha _{j}\land \bar
  \alpha_{j})(x,\bar x)
  =\sum_{j}\gamma _{j}\alpha _{j}(x)\overline{\alpha _{j}(x)}
  \ge 0,
\end{displaymath}
proving that $|\eta|$ is positive semidefinite. 

Conversely, assume that
$|\eta|$ is positive semidefinite. Then by the spectral theory of
Hermitian forms, there are  $\gamma _{j} \in \R_{\geq 0}$ and
$\alpha _{j}\in \Lambda ^{p,0}V^{\ast}=(\Lambda ^{p}V_{\C})^{\ast}$
such that
\begin{displaymath}
  |\eta|=\sum \gamma _{j}\alpha _{j}\otimes \bar \alpha _{j}.  
\end{displaymath}
This implies that
\begin{displaymath}
  \eta=\sum i^{p^{2}}\gamma _{j}\alpha _{j}\land \bar \alpha _{j}
\end{displaymath}
showing that $\eta$ is a positive form.
\end{proof}

\begin{lem} \label{lemm:13}
For every $p\ge 0$, the complex vector space $\Lambda ^{p,p}V^{\ast}$
admits a $\C$-basis of strongly positive forms.
\end{lem}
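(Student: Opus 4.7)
The plan is to induct on $p$, combining the following two observations. First, if $\omega_1$ is a strongly positive $(p-1,p-1)$-form and $\omega_2$ is a strongly positive $(1,1)$-form, then $\omega_1 \wedge \omega_2$ is again strongly positive: writing each as a nonnegative combination of forms of the prescribed type and expanding produces a nonnegative combination of forms $\alpha_1 \wedge i\bar\alpha_1 \wedge \cdots \wedge \alpha_p \wedge i\bar\alpha_p$. Second, the $\C$-bilinear wedge map $\Lambda^{p-1,p-1}V^\ast \otimes_\C \Lambda^{1,1}V^\ast \to \Lambda^{p,p}V^\ast$ is surjective, since any basis element $du_I \wedge d\bar u_J$ with $|I|=|J|=p$ factors, up to a sign, as $(du_{I'} \wedge d\bar u_{J'}) \wedge (du_i \wedge d\bar u_j)$ for $I = I' \sqcup \{i\}$, $J = J' \sqcup \{j\}$.

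For the base case $p=1$, I would invoke the standard four-term complex polarization identity
\[
  4\,\alpha \wedge \bar\beta \;=\; \sum_{k=0}^{3} i^k\,(\alpha + i^k\beta) \wedge \overline{(\alpha + i^k\beta)}
\]
for $\alpha,\beta \in V^\ast$, which follows by expanding each right-hand summand and using $\sum_{k=0}^{3} i^k = 0$ and $\sum_{k=0}^{3}(-1)^k = 0$ to kill the unwanted cross terms. Multiplying through by $i$ exhibits $i\,\alpha \wedge \bar\beta$ as a $\C$-linear combination of the strongly positive forms $i\,\gamma_k \wedge \bar\gamma_k$ with $\gamma_k = \alpha + i^k\beta$. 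Specializing $\alpha = du_r$, $\beta = du_s$ and rescaling by $-i$ writes every basis element $du_r \wedge d\bar u_s$ of $\Lambda^{1,1}V^\ast$ in the $\C$-span of strongly positive $(1,1)$-forms; extracting a basis finishes the case.

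For the inductive step $p-1 \Rightarrow p$ with $p \ge 2$, I would proceed as follows. By the inductive hypothesis and the base case, the $\C$-spans of strongly positive forms fill $\Lambda^{p-1,p-1}V^\ast$ and $\Lambda^{1,1}V^\ast$ respectively. Applying the wedge map to pairs of strongly positive forms yields, by the first observation, elements in the $\C$-span of strongly positive $(p,p)$-forms; by surjectivity in the second observation, these exhaust $\Lambda^{p,p}V^\ast$. A $\C$-basis of strongly positive forms can then be extracted from this spanning set.

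The only genuine point of care is in the base case: the polarization identity must use complex coefficients $i^k$ rather than merely $\pm 1$, because real polarization would not recover cross terms like $du_r \wedge d\bar u_s$ with $r\neq s$, and the spanning claim is over $\C$. Once the base case is established, the inductive step is essentially formal, leveraging that strong positivity (unlike weak positivity) behaves well under wedge products with strongly positive factors.
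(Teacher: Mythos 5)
Your proof is correct. Since the paper disposes of this lemma by citing \cite[Lemma III.1.4]{demailly_agbook_2012} without reproducing an argument, what you have supplied is a self-contained proof of the cited fact, and it follows the same underlying strategy as Demailly's: first show that the strongly positive $(1,1)$-forms $\gamma\wedge i\bar\gamma$ span $\Lambda^{1,1}V^{\ast}$ over $\C$, then pass to higher $p$ by observing that each basis monomial $du_I\wedge d\bar u_J$ factors into $(1,1)$-pieces and that products of strongly positive forms are strongly positive. The only cosmetic difference is in the degree-$(1,1)$ polarization: you use the symmetric identity $4\alpha\wedge\bar\beta=\sum_{k=0}^3 i^k\,\gamma_k\wedge\bar\gamma_k$ with $\gamma_k=\alpha+i^k\beta$, whereas Demailly solves the same linear system using the four covectors $du_i$, $du_j$, $du_i+du_j$, $du_i+i\,du_j$; these are equivalent reformulations of the same four-point averaging. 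Your remark that the coefficients $i^k$ (rather than $\pm1$) are essential is well taken --- with only real signs one would recover only the Hermitian part of $du_r\wedge d\bar u_s$, not the full complex span.
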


\proof
\cite[Lemma III.1.4]{demailly_agbook_2012}.
\qed

\begin{lem} \label{lemm:12}
Any weakly positive form in  $\Lambda ^{p,p}V^{\ast}$ is real. 
\end{lem}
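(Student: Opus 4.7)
The plan is to show that $\omega$ and $\bar\omega$ pair identically with every strongly positive $(n-p,n-p)$-form, and then extend this by $\C$-linearity to deduce $\omega = \bar\omega$ using non-degeneracy of the top-degree wedge pairing. The key preliminary observation is that \emph{every strongly positive form is itself real}. It suffices to check this on a generator $\alpha_1 \wedge i\bar\alpha_1 \wedge \cdots \wedge \alpha_p \wedge i\bar\alpha_p$ with $\alpha_j \in \Lambda^{1,0}V^*$. Each factor $\beta_j \coloneqq \alpha_j \wedge i\bar\alpha_j$ is a $(1,1)$-form, and a short sign count gives $\overline{\beta_j} = \bar\alpha_j \wedge (-i)\alpha_j = \alpha_j \wedge i\bar\alpha_j = \beta_j$; since any two $(1,1)$-forms commute under $\wedge$, the product $\beta_1 \wedge \cdots \wedge \beta_p$ is also $\sigma$-invariant.

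Now let $\omega \in \Lambda^{p,p}V^*$ be weakly positive and let $\eta \in \Lambda^{n-p,n-p}V^*$ be strongly positive. By definition, there exists $\gamma \geq 0$ with $\omega \wedge \eta = \gamma\, \omega_n$. Since $\omega_n$ is real by construction and $\gamma$ is real, the product $\omega \wedge \eta$ is real. Applying the antilinear involution $\sigma$ and using that $\bar\eta = \eta$ by the previous step yields
\[
  \bar\omega \wedge \eta \;=\; \overline{\omega \wedge \eta} \;=\; \omega \wedge \eta,
\]
so $(\bar\omega - \omega) \wedge \eta = 0$ for every strongly positive $\eta \in \Lambda^{n-p,n-p}V^*$.

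Finally, Lemma~\ref{lemm:13} supplies a $\C$-basis of $\Lambda^{n-p,n-p}V^*$ consisting of strongly positive forms, so the identity $(\bar\omega - \omega) \wedge \eta = 0$ extends by $\C$-linearity to all $\eta \in \Lambda^{n-p,n-p}V^*$. The wedge pairing $\Lambda^{p,p}V^* \otimes \Lambda^{n-p,n-p}V^* \to \Lambda^{n,n}V^* \cong \C$ is non-degenerate (being induced by the perfect pairing on $\Lambda^\bullet V_\C$), hence $\bar\omega = \omega$, i.e.\ $\omega$ is real. There is no serious obstacle in this argument; the only point requiring care is the sign computation showing that strongly positive generators are $\sigma$-invariant, which is routine.
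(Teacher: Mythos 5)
Your proof is correct and follows essentially the same route as the paper's: establish that strongly positive forms are real (you verify this on generators, the paper asserts it follows directly from the definition), deduce $(\omega-\bar\omega)\wedge\eta=0$ for every strongly positive $\eta$, then invoke Lemma~\ref{lemm:13} and non-degeneracy of the wedge pairing to conclude. No substantive difference.
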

\begin{proof}
  The fact that $\omega _{n}$ and any positive or strongly positive
  form are real follows directly from the definition. Let now
  $\omega $ be a weakly positive form in $\Lambda
  ^{p,p}V^{\ast}$. This means that for each strongly positive form
  $\eta$ in $\Lambda^{q,q} V^*$, there is a non-negative real number
  $\gamma $ such that
\begin{displaymath}
\omega \wedge \eta = \gamma \omega _{n}.
\end{displaymath}
Since $\eta$, $\gamma $ and $\omega _{n}$ are real, this implies that
\begin{displaymath}
\bar \omega \wedge \eta = \gamma \omega _{n}.
\end{displaymath}
Hence $(\omega -\bar \omega)\wedge \eta =0 $ for any strongly positive
form $\eta$ in $\Lambda^{q,q} V^*$. By Lemma \ref{lemm:13},
$\Lambda ^{q,q}V^{\ast}$ admits a basis of strongly positive
elements. Therefore $(\omega -\bar \omega)\wedge \eta =0 $ holds for
any $\eta \in \Lambda ^{q,q}V^{\ast}$. By duality, we get
$\omega -\bar \omega=0$ and hence $\omega $ is real.
\end{proof}

\begin{cor}\label{cor:1} 
	For $p \in \N$, there are inclusions of closed convex cones
  \begin{displaymath}
  \Lambda^{p,p}_{+,s} V^* \subset \Lambda^{p,p}_{+} V^* \subset
  \Lambda^{p,p}_{+,w} V^* 
  \end{displaymath}
in  $\Lambda ^{p,p}V^{\ast}_{\R}$. 
For $q \coloneqq n-p$,  the cones  $\Lambda^{p,p}_{+,s} V^*$ and $\Lambda^{q,q}_{+,w}
V^*$ are dual to each other and
the cone $\Lambda^{p,p}_{+} V^*$ is the dual of $\Lambda^{q,q}_{+}
V^*$ with respect to the real duality pairing \eqref{eq:35}.
\end{cor}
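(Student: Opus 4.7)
The plan is to prove the two inclusions and closedness first, and then deduce the duality statements from the bipolar theorem together with Proposition \ref{prop:11}.

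For the first inclusion $\Lambda^{p,p}_{+,s} V^* \subset \Lambda^{p,p}_{+} V^*$, I would compute that any generator of the strongly positive cone can be rewritten as a generator of the positive cone. Setting $\alpha \coloneqq \alpha_1 \wedge \dots \wedge \alpha_p \in \Lambda^{p,0}V^*$ and reordering the wedge product of $\alpha_1 \wedge i\bar\alpha_1 \wedge \dots \wedge \alpha_p \wedge i\bar\alpha_p$ so that all holomorphic factors come before the antiholomorphic ones (a permutation contributing sign $(-1)^{p(p-1)/2}$) together with collecting the $p$ factors of $i$, one arrives at $i^p(-1)^{p(p-1)/2}\alpha\wedge\bar\alpha = i^{p^2}\alpha\wedge\bar\alpha$, which is a generator of $\Lambda^{p,p}_{+} V^*$. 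For $\Lambda^{p,p}_{+} V^* \subset \Lambda^{p,p}_{+,w} V^*$, it suffices to check on generators. Given $\omega = i^{p^2}\alpha\wedge\bar\alpha$ and a strongly positive $\eta \in \Lambda^{q,q}V^*$ rewritten by the previous step as a non-negative combination of forms $i^{q^2}\beta\wedge\bar\beta$ with $\beta \in \Lambda^{q,0}V^*$, the same sign bookkeeping yields $\omega\wedge\eta = i^{(p+q)^2}|c|^2\, du_1\wedge d\bar u_1\wedge\dots\wedge du_n\wedge d\bar u_n$ where $\alpha\wedge\beta = c\,du_1\wedge\dots\wedge du_n$, and matching $i^{n^2} = i^n(-1)^{n(n-1)/2}$ identifies this with a non-negative scalar multiple of $\omega_n$.

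Closedness of the three cones follows from a standard compactness argument: normalising the defining parameters $\alpha_j$ (respectively $\alpha$) shows that the strongly positive cone (respectively positive cone) is the convex cone generated by a compact set, hence closed in $\Lambda^{p,p}V^*_\R$. The weakly positive cone is closed as it is cut out by linear inequalities parametrised by strongly positive forms. That all three cones lie in $\Lambda^{p,p}V^*_\R$ follows from Lemma \ref{lemm:12} together with the trivial fact that strongly positive and positive forms are real.

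For the duality statements, I observe that the definition of the weakly positive cone can be restated as $\Lambda^{p,p}_{+,w}V^* = (\Lambda^{q,q}_{+,s}V^*)^\vee$ with respect to the real pairing \eqref{eq:35}, where $C^\vee \coloneqq \{\omega \mid \langle\omega,\eta\rangle_\R\geq 0 \text{ for all } \eta\in C\}$. Applying the bipolar theorem to the closed convex cone $\Lambda^{q,q}_{+,s}V^*$ in the finite dimensional real vector space $\Lambda^{q,q}V^*_\R$, we obtain the reciprocal identity $(\Lambda^{p,p}_{+,w}V^*)^\vee = \Lambda^{q,q}_{+,s}V^*$, which establishes the first duality statement.

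The main obstacle is the self-duality of the positive cone. One inclusion $\Lambda^{p,p}_+V^* \subset (\Lambda^{q,q}_+V^*)^\vee$ follows from the computation already used to prove the second inclusion. For the reverse, I would invoke Proposition \ref{prop:11}: a real form $\omega\in\Lambda^{p,p}V^*_\R$ is positive precisely when the Hermitian form $|\omega|$ on $\Lambda^p V_\C$ is positive semidefinite. A direct calculation on generators shows that for $\omega = i^{p^2}\alpha\wedge\bar\alpha$ and $\eta = i^{q^2}\beta\wedge\bar\beta$ the real pairing $\langle\omega,\eta\rangle_\R$ equals, up to a fixed positive factor, the Hermitian inner product $|\alpha(x_\beta)|^2$ for a suitable $x_\beta \in \Lambda^p V_\C$ determined by $\beta$ via the Poincaré duality isomorphism $\Lambda^q V_\C^* \cong \Lambda^p V_\C \otimes (\Lambda^n V_\C^*)$. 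As $\beta$ ranges over $\Lambda^{q,0}V^*$, the corresponding $x_\beta$ ranges over all of $\Lambda^p V_\C$; hence the assumption $\langle\omega,\eta\rangle_\R \geq 0$ for all positive $\eta$ forces $|\omega|(x,x)\geq 0$ for every $x\in\Lambda^p V_\C$, so $\omega$ is positive. Combined with the previous inclusion this gives $\Lambda^{p,p}_+V^* = (\Lambda^{q,q}_+V^*)^\vee$, completing the proof.
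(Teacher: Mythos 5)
Your proposal is correct and follows essentially the same route as the paper's proof: closedness of the strongly positive cone via a compactness argument over the Grassmannian, the bipolar theorem for the duality between weakly and strongly positive cones, and the characterisation of positive forms via positive semidefinite Hermitian forms (Proposition \ref{prop:11}), accessed through a Poincar\'e-type isomorphism $\Lambda^{q}V^{\ast}\to\Lambda^{p}V$, for the self-duality of the positive cone. The only cosmetic difference is that you prove closedness of $\Lambda^{p,p}_{+}V^{\ast}$ directly by the same compactness argument, whereas the paper deduces it afterwards from the fact that a dual cone is automatically closed.
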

\begin{proof}
  By definition, the spaces of strongly positive forms and of  positive
  forms are convex cones contained in $\Lambda ^{p,p}V^{\ast}_{\R}$.
   Lemma \ref{lemm:12} implies that
  $\Lambda^{p,p}_{+,w}V^*$ is contained in $\Lambda
  ^{p,p}V^{\ast}_{\R}$. Then $\Lambda^{p,p}_{+,w}V^*$ is a closed convex cone as the dual of the  convex cone $\Lambda^{p,p}_{+,w}V^*$.

  We next show that the convex cone of strongly positive forms is
  closed. Choose any  hermitian metric in the complex vector space
  $\Lambda ^{p}V^{\ast}$ and let $S\subset \Lambda ^{p}V^{\ast}$ be
  the unit sphere. The set 
  $K\subset S$ of totally decomposable elements is closed as it is the
  preimage of the Grassmanian {$\Gr(p,V^\ast)$} under the projection $S\to \P(\Lambda
  ^{p}V^{\ast})$. Since $S$ is compact, so is $K$. Let $K'\subset
  \Lambda ^{p,p}V^{\ast}$ be the image of $K$ under the continuous map
  \begin{displaymath}
    \Lambda ^{p}V^{\ast}\to \Lambda ^{p,p}V^{\ast},\quad \alpha
    \mapsto i^{p^{2}}\alpha \land \bar \alpha.
  \end{displaymath}
  Then $K'$ is a compact set that does not contain $0$. Thus the convex cone
  over $K'$ is closed
  \cite[Cor.~9.6.1]{rockafellar1970}. 
  Since the {convex} cone over $K'$ is
  $\Lambda^{p,p}_{+,s} V^*$, we deduce that the space of strongly
  positive forms is a closed convex cone. Being  $\Lambda^{p,p}_{+,s}
  V^*$ closed and convex, it agrees with its double dual. Therefore
  $\Lambda^{p,p}_{+,s} V^*$ is the dual of $\Lambda^{p,p}_{+,w} V^*$.  
  
  We next prove that $\Lambda^{p,p}_{+} V^*$ is self dual. Let $\gamma
  \in \Lambda ^{n}V^{\ast}$. Then $\gamma =zdu_{1}\wedge\dots\wedge
  du_{n}$ for some  $z \in \C$ and hence
  $i^{n^{2}}\gamma \wedge \bar \gamma=z\bar z \omega _{n}$. 
  For $\alpha \in \Lambda
  ^{p}V^{\ast}$ and $\beta \in \Lambda ^{q}V^{\ast}$, we conclude that
  \begin{displaymath}
    i^{p^{2}}i^{q^{2}}\alpha \wedge \bar \alpha \wedge
    \beta\wedge 
    \bar \beta = i^{n^{2}} \alpha \wedge \beta \wedge
    \overline{\alpha \wedge \beta} 
  \end{displaymath}
  is a positive multiple of $\omega _{n}$. Here we have used the identity
   $ i^{p^{2}}i^{q^{2}}(-1)^{pq}=i^{n^{2}}$.
  This proves that the cone $\Lambda^{p,p}_{+} V^*$ is contained the
  dual of $\Lambda^{q,q}_{+} V^*$. To prove the converse, 
  we introduce the {isomorphism} $\varphi\colon \Lambda ^{q}V^{\ast}\to \Lambda
  ^{p}V$ defined, for $\alpha \in \Lambda ^{q}V^{\ast}$, by
  \begin{displaymath}
    \beta \wedge \alpha  = \beta (\varphi(\alpha ))\,
    du_{1}\wedge\dots\wedge du_{n} \quad  \quad {(\beta \in \Lambda ^{p}V^{\ast})}. 
  \end{displaymath}
  Then, for any pair of forms $\eta\in \Lambda ^{p,p}V^{\ast}$ and $\alpha \in
  \Lambda ^{q}V^{\ast}$, the equality
  \begin{displaymath}
    \eta\wedge i^{q^{2}}\alpha \wedge \bar \alpha =
    |\eta|(\varphi(\alpha ),\varphi(\alpha )) \omega _{n}
  \end{displaymath}
  is satisfied. Therefore, if $\eta$ belongs to the dual cone of
  $\Lambda^{q,q}_{+} V^*$, then the sesquilinear form $|\eta|$ is
  positive semidefinite. By Proposition \ref{prop:11}, the form $\eta$
  is positive proving the reverse inclusion. We deduce that the cone 
  $\Lambda^{p,p}_{+} V^*$ is closed because it is a dual cone.
\end{proof}

\begin{rem} \label{rem:6} 
For $p=0,1,n-1,n$, the notions of strong positivity,
positivity and weakly positivity agree (see \cite[Corollary III.1.9]{demailly_agbook_2012}). 
{In \cite[Remark III.1.10]{demailly_agbook_2012},  there are 
examples of positive forms that are not strongly positive for any
$2\le p \le n-2$.}
\end{rem}

\subsection{The real situation}

We now shift to positivity of  Lagerberg forms following
\cite{lagerberg-2012}.  
We consider again a real basis $e_{1},\dots ,e_{n}$ of $V$ which
induces dual bases
 $ d'u_{1},\dots,d'u_{n}$ of $V'$ and $d''u_{1},\dots,d''u_{n}$
 of $V''$. 

We denote by $JV$ another copy of our $n$-dimensional $\R$-vector space $V$ and
 let us denote by $J \colon V \mapsto JV$ the identity map. 
There is a unique involution on $V \oplus JV$ that extends $J$ and which we also denote by $J$. 
From now on, we make the identification $V''\coloneqq \Hom(JV,\R)$ and 
then duality yields an involution on $V' \oplus V''$ which we also call $J$. 
There is a unique algebra homomorphism on $\Lambda^{\cdot,\cdot} V'$ that extends $J$. 
It is again an involution  mapping $\Lambda ^{p,q}V'$ onto $\Lambda ^{q,p}V'$. 
This map, also denoted by $J$, is called the \emph{Lagerberg involution}.

\begin{defi} \label{Lagerberg involution and orientation}
The \emph{Lagerberg
    orientation} is the orientation on the vector space
    $V'\oplus V''$ defined by 
  \begin{displaymath}
    \tau _{n}=d'u_{1}\wedge d'' u_{1}\wedge \dots \wedge
    d'u_{n}\wedge d'' u_{n} \in \Lambda^{n,n}(V).
  \end{displaymath}
Consider a form $\omega \in \Lambda ^{p,p}V'$. 
We call $\omega$ \emph{symmetric} if
  \begin{equation}
    \label{eq:24}
    J(\omega)=(-1)^{p}\omega. 
  \end{equation}
We call $\omega$ \emph{strongly positive} if it belongs to
  the convex cone spanned by
  \begin{displaymath}
    \{\alpha _{1}\land J(
    \alpha _{1})\land \dots \land
    \alpha _{p}\land J(
    \alpha _{p})\mid \text{$\alpha _{j}\in \Lambda ^{1,0}V'$ for $j=1, \dots, p$}\}.
  \end{displaymath}
We call $\omega$ \emph{positive} if it belongs to
  the convex cone spanned by 
  \begin{displaymath}
    \{(-1)^{\frac{p(p-1)}{2}}\alpha \land J(
    \alpha ) \mid \alpha \in \Lambda ^{p,0}V'\}.
  \end{displaymath}

  A symmetric Lagerberg $(p,p)$-form $\omega \in \Lambda ^{p,p}V'$ is called
  \emph{weakly positive} if for every strongly positive form $\eta$ of
  type $(n-p,n-p)$, there is a
  real number $\gamma \ge 0$ with
  \begin{displaymath}
    \omega \land \eta =\gamma \tau_n.
  \end{displaymath}
  We will also denote as $\Lambda^{p,p}_{+,s} V'$, $\Lambda^{p,p}_{+} V'$ and
$\Lambda^{p,p}_{+,w} V'$ the spaces of Lagerberg $(p,p)$-forms that are strongly
positive forms, positive and weakly positive respectively.
\end{defi}

\begin{rem} \label{rem:5}
Strongly
positive, positive or weakly positive Lagerberg forms and $\tau_n$ are symmetric. 
In the definition of weakly positive Lagerberg forms, it is
necessary to impose the symmetry because the real analogue of 
Lemma \ref{lemm:13} is not true,  see Example \ref{too few strongly positive}.

{It follows from Corollary
\ref{cor:2} below} that our definition of positive Lagerberg forms 
agrees with the definition given by Lagerberg in \cite[Definition
2.1]{lagerberg-2012}.

From 
our definition, we deduce that the product of positive Lagerberg forms 
is  positive.
\end{rem}

\begin{defi}\label{def:4}
  To $\eta\in \Lambda ^{p,p}V'$, we associate a bilinear form $|\eta|$
  on $\Lambda ^{p}V$ by the rule
  \begin{displaymath}
    |\eta|(x,y) =  (-1)^{\frac{p(p-1)}{2}}\eta(x,J(y)).
  \end{displaymath}

Moreover, for $q=n-p$, there is a duality pairing
\begin{displaymath}
  \langle\,\cdot\,,\,\cdot\, \rangle\colon \Lambda ^{p,p}V'\otimes \Lambda ^{q,q}V'\longrightarrow \R, \quad 
\end{displaymath}
defined  by
$ \eta\land \omega = \langle \eta,\omega \rangle \tau _{n}$ {for $\eta \in \Lambda ^{p,p}V'$ and $\omega \in \Lambda ^{q,q}V'$.}

\end{defi}

Note that, again, the assignment $\eta \mapsto |\eta|$ is canonical and gives
an isomorphism between $\Lambda ^{p,p}V'$ and the space of
bilinear forms on $\Lambda ^{p}V$. On the other hand, the
duality pairing
$\langle\cdot ,\cdot \rangle$ depends on the choice of a basis, but only up
to a positive number.

\begin{prop}\label{prop:12} A form $\eta\in \Lambda ^{p,p}V'$ is
  symmetric if and only if the bilinear form $|\eta|$ is symmetric. A
  form $\eta$ is positive if and only $|\eta|$ is a 
  positive semidefinite symmetric form. 
\end{prop}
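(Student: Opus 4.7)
The plan is to mimic the proof of Proposition \ref{prop:11} in the Lagerberg setting, replacing the role of complex conjugation and Hermitian forms by the Lagerberg involution $J$ and symmetric bilinear forms, and replacing the spectral theory of Hermitian forms by the spectral theory of real symmetric bilinear forms.

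First I would compute the action of $J$ on $\Lambda^{p,p}V'$ in the pairing picture. Viewing an element $\eta \in \Lambda^{p,p}V'$ as a bilinear pairing on $\Lambda^p V \otimes \Lambda^p JV$, one checks directly from the definition of $J$ on $V' \oplus V''$ that
\begin{displaymath}
  J(\eta)(x,\xi) = (-1)^{p}\,\eta\bigl(J(\xi), J(x)\bigr)
\end{displaymath}
for $x \in \Lambda^p V$ and $\xi \in \Lambda^p JV$ (the sign $(-1)^p$ comes from reordering the $p$ factors of type $(1,0)$ past the $p$ factors of type $(0,1)$). Then, assuming $J(\eta)=(-1)^p\eta$, substituting and using the defining formula of $|\eta|$ yields
\begin{displaymath}
  |\eta|(x,y)=(-1)^{\frac{p(p-1)}{2}}\eta(x,J(y))=(-1)^{\frac{p(p-1)}{2}}\eta(y,J(x))=|\eta|(y,x),
\end{displaymath}
so $|\eta|$ is symmetric. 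Conversely, the formula $\eta \mapsto |\eta|$ is an isomorphism onto the space of all bilinear forms on $\Lambda^p V$; symmetry of $|\eta|$ thus forces $J(\eta)=(-1)^p\eta$ by running the same identities backwards.

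Next I would handle positivity. If $\eta$ is positive, write it as a non-negative real combination $\eta = \sum_j \gamma_j (-1)^{p(p-1)/2}\alpha_j \wedge J(\alpha_j)$ with $\alpha_j \in \Lambda^{p,0}V'$. Then, for any $x \in \Lambda^p V$, the sign factors satisfy $(-1)^{\frac{p(p-1)}{2}}(-1)^{\frac{p(p-1)}{2}}=1$, and a direct computation (pairing the $p$-form $\alpha_j \wedge J(\alpha_j)$ against $x \otimes J(x)$) gives
\begin{displaymath}
  |\eta|(x,x)=\sum_j \gamma_j\,\alpha_j(x)^2 \ge 0,
\end{displaymath}
which, combined with the already established symmetry of $|\eta|$, shows $|\eta|$ is positive semidefinite.

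For the converse, assume $|\eta|$ is a positive semidefinite symmetric bilinear form on $\Lambda^p V$. By the spectral theorem for real symmetric forms, there exist $\gamma_j \ge 0$ and $\alpha_j \in (\Lambda^p V)^* = \Lambda^{p,0}V'$ with $|\eta| = \sum_j \gamma_j\,\alpha_j \otimes \alpha_j$. Unwinding the definition of $|\eta|$ gives $\eta = \sum_j \gamma_j (-1)^{p(p-1)/2}\alpha_j \wedge J(\alpha_j)$, so $\eta$ is positive by Definition \ref{Lagerberg involution and orientation}. The only minor obstacle is bookkeeping the $(-1)^{p(p-1)/2}$ signs when reordering wedges and verifying the identity $J(\eta)(x,\xi)=(-1)^p\eta(J\xi,Jx)$; no deeper input than in the complex case is needed, since Hermitian positivity is replaced by ordinary positivity on $\R$.
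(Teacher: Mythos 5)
Your proof is correct and follows precisely the route the paper intends: the paper's proof of Proposition \ref{prop:12} simply says "similar to the complex case" and cites Lagerberg, and your argument is exactly the adaptation of the proof of Proposition \ref{prop:11} with $\sigma$ replaced by $J$, Hermitian forms by real symmetric bilinear forms, and the spectral theorem for Hermitian forms by the one for real symmetric forms. The sign bookkeeping you describe ($J(\eta)(x,\xi)=(-1)^p\eta(J\xi,Jx)$ and $(-1)^{p(p-1)/2}\cdot(-1)^{p(p-1)/2}=1$) is the right replacement for the complex-side identity $i^{p^2}(-1)^{p(p-1)/2}i^{-p}=1$, and everything goes through; the only slip is calling $\alpha_j\wedge J(\alpha_j)$ a "$p$-form" when it is a $(p,p)$-form.
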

\begin{proof}
  The proof is similar to the complex case and is given in
  \cite[Proposition 2.1]{lagerberg-2012}
\end{proof}

Denote now by $\Lambda_{\sym}^{p,p}V'$ the subspace of
symmetric elements and let  $q \coloneqq n-p$. The duality pairing of
Definition \ref{def:4} induces a real
duality pairing, denoted by the same symbol,
\begin{displaymath}
  \langle\cdot,\cdot \rangle  \colon  \Lambda_{\sym}^{p,p}V'\otimes \Lambda^{q,q}_{\sym}V' \to \R.
\end{displaymath}

{We  give  the  analogue of Corollary \ref{cor:1} which was stated before \cite[Lemma 2.2]{lagerberg-2012}.}

\begin{cor}\label{cor:2} For $p \in \N$, there are inclusions of closed convex cones
  \begin{displaymath}
    \Lambda^{p,p}_{+,s} V' \subset \Lambda^{p,p}_{+} V' \subset
    \Lambda^{p,p}_{+,w} V'
  \end{displaymath}
in $ \Lambda_{\sym}^{p,p}V'$. For $q \coloneqq n-p$, the cones  $\Lambda^{p,p}_{+,s}
V'$ and $\Lambda^{q,q}_{+,w} V'$  
are dual to each other and the cone $\Lambda^{p,p}_{+} V'$ is the
dual of $\Lambda^{q,q}_{+} V'$ with respect to the above real duality pairing.
\end{cor}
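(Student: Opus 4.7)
The plan is to follow the proof of Corollary \ref{cor:1} step by step, replacing complex conjugation by the Lagerberg involution $J$ and dropping factors of $i^{p^{2}}$; the whole argument then reduces to tracking signs, as the structural identities carry over verbatim.

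First I would verify the two inclusions. For $\Lambda^{p,p}_{+,s}V' \subset \Lambda^{p,p}_{+}V'$, rearrange a strongly positive generator $\alpha_{1}\wedge J(\alpha_{1})\wedge\cdots\wedge\alpha_{p}\wedge J(\alpha_{p})$ into $(-1)^{p(p-1)/2}\alpha\wedge J(\alpha)$ with $\alpha=\alpha_{1}\wedge\cdots\wedge\alpha_{p}\in \Lambda^{p,0}V'$, using $p(p-1)/2$ transpositions of degree-$1$ forms; this is a generator of the positive cone. For $\Lambda^{p,p}_{+}V' \subset \Lambda^{p,p}_{+,w}V'$ I would use that wedge products of positive Lagerberg forms are positive (Remark \ref{rem:5}) together with the direct computation that a positive form of bidegree $(n,n)$ is a non-negative multiple of $\tau_{n}$ (each generator $(-1)^{n(n-1)/2}\alpha\wedge J(\alpha)$ with $\alpha=c\,d'u_{1}\wedge\cdots\wedge d'u_{n}$ equals $c^{2}\tau_{n}$ after reordering). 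Containment of all three cones in $\Lambda^{p,p}_{\sym}V'$ is then immediate: weakly positive forms are symmetric by definition, while symmetry of the standard positive and strongly positive generators follows from a short check using $J^{2}=\id$.

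Next I would prove self-duality of the positive cone. In parallel with the complex argument, define the isomorphism $\varphi\colon \Lambda^{q}V' \to \Lambda^{p}V$ by $\beta\wedge\alpha=\beta(\varphi(\alpha))\,d'u_{1}\wedge\cdots\wedge d'u_{n}$ and establish, for every $\eta\in \Lambda^{p,p}V'$ and $\alpha\in \Lambda^{q}V'$, the identity
\begin{displaymath}
\eta\wedge(-1)^{q(q-1)/2}\alpha\wedge J(\alpha)=|\eta|\bigl(\varphi(\alpha),\varphi(\alpha)\bigr)\,\tau_{n}.
\end{displaymath}
The sign bookkeeping is governed by $p(p-1)/2+q(q-1)/2+pq=n(n-1)/2$, the real counterpart of $i^{p^{2}}i^{q^{2}}(-1)^{pq}=i^{n^{2}}$ from the complex case. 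Combined with Proposition \ref{prop:12}, this identifies the dual of $\Lambda^{q,q}_{+}V'$ inside $\Lambda^{p,p}_{\sym}V'$ with the symmetric forms whose bilinear form $|\eta|$ is positive semidefinite, that is, with $\Lambda^{p,p}_{+}V'$ itself; this self-duality in particular forces the positive cone to be closed.

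For the remaining closedness and the duality between the strongly and weakly positive cones, the strongly positive cone is the convex cone over the compact image of the totally decomposable elements of the unit sphere of $\Lambda^{p}V'$ under the continuous map $\alpha\mapsto(-1)^{p(p-1)/2}\alpha\wedge J(\alpha)$; since this image avoids $0$, \cite[Cor.~9.6.1]{rockafellar1970} gives closedness. The cone of weakly positive forms is by construction the dual of the strongly positive cone inside $\Lambda^{p,p}_{\sym}V'$, hence is closed, and biduality of closed convex cones then yields the asserted duality. The subtlety that I expect to require most care is the explicit imposition of symmetry in the definition of weakly positive Lagerberg forms: the real analogue of Lemma \ref{lemm:13} fails (Remark \ref{rem:5}, Example \ref{too few strongly positive}), so symmetry does not come for free from a strongly-positive basis argument and must be built into the set-up for the duality above to take the clean form stated.
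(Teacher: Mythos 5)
Your proposal is correct and takes exactly the approach the paper uses: the paper's own proof of this corollary is a one-sentence reduction to Corollary \ref{cor:1}, instructing the reader to replace complex numbers by real numbers and sesquilinear forms by symmetric bilinear forms, and your write-up simply carries out that translation. The sign identity $p(p-1)/2+q(q-1)/2+pq=n(n-1)/2$ you invoke as the real counterpart of $i^{p^2}i^{q^2}(-1)^{pq}=i^{n^2}$ is an exact integer identity (not merely mod $2$), so the bookkeeping is even cleaner than you suggest, and your remark about symmetry being imposed by fiat in the definition of weak positivity correctly flags the one genuine asymmetry with the complex situation.
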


\begin{proof}
{The arguments are as in Corollary \ref{cor:1} replacing complex by real numbers 
  	and sesquilinearforms by symmetric bilinear forms.}
\end{proof}

\begin{rem} \label{rem:7} As in the complex case, strong positivity agrees with positivity and weak positivity for $p=0,1,n-1,n$.
	Similarly as   in \cite[Example III 
  1.10]{demailly_agbook_2012}, there are 
  positive Lagerberg forms that are not strongly positive for any
  $2\le p \le n-2$.
\end{rem}

\subsection{Comparison between the real and complex situations}
\label{comparison-vector-space-section}

We aim for an identification 
of the space of Lagerberg forms $\Lambda ^{\cdot,\cdot}V'$
with a subspace of the space of complex forms 
$\Lambda ^{\cdot,\cdot}V^{\ast}$ that preserves positivity 
as much as possible.

\begin{defi}\label{def:5}
The vector space $V^{\ast}=\Hom_{\R}(V,\C)$ has an antilinear
involution $F$ coming from complex conjugation in $\C$. We extend
$F$ to $V^{\ast}\oplus \overline {V}^{\ast}$ in such a way that $F$
and $\sigma $ (the complex conjugation from \ref{subsection: The complex situation}) anticommute:
\begin{equation}\label{eq:12}
  F(\bar \alpha )=-\overline {F(\alpha )}
  \quad (\text{for }\alpha\in {V^*} ).
\end{equation}
{There is a unique antilinear involution of the $\R$-algebra  $\Lambda ^{\cdot,\cdot}V^{\ast}$ which extends $F$. We denote this extension also by $F$.}   
\end{defi}

{Note that $F$ induces an antilinear involution of any  $\Lambda ^{p,q}V^{\ast}$.} In coordinates, we have 
\begin{equation}\label{def:555}
F(\lambda)=\bar\lambda,\,\,\,\,\,
F (du_i ) = du_i \quad\text{ and } \quad
F (d\bar u_i ) = - d \bar u_i 
\end{equation}
where $\lambda\in \Lambda^{0,0}V^\ast=\C$.

Note that, if $\eta\in \Lambda ^{p,q}V^{\ast}$, then
\begin{equation}\label{eq:13}
  \overline {F(\eta )}=(-1)^{p+q}F(\bar \eta ).
\end{equation}

We see $V'\subset V^{\ast}$  and $V''\subset \overline{V}^{\ast}$  as
the subspaces of $F$-invariant elements. Note that $V'$ looks like the space of 
real elements of $V^{\ast}$, while, due to the twisted definition
\eqref{eq:12} of $F$ in $\overline{V}^{\ast}$, the elements of $V''$ look like the
\emph{imaginary} elements of $\overline{V}^{\ast}$.

We extend the inclusion $V'\oplus V'' \to
V^{\ast}\oplus \overline{V}^{\ast}$ to an {$\R$-algebra homomorphism}
\begin{equation}\label{eq:15}
   \Lambda ^{\cdot,\cdot}V'\longrightarrow  \Lambda ^{\cdot,\cdot}V^{\ast}.
 \end{equation}
 This inclusion sends $d'u_{j}$ to $du_{j}$ and $d''u_{j}$ to
 $id\bar u_{j}$. 

\begin{prop}\label{prop:13}
  We have the following compatibilities for the inclusion \eqref{eq:15}. 
  \begin{enumerate}
    \item The space of complex forms invariant
    under $F$ is the image of $\Lambda ^{\cdot,\cdot}V'$. 
    \item     
    The Lagerberg involution $J$ agrees on {$V' \oplus V''$} 
        with the map $\alpha \mapsto i\bar \alpha $.  
    \item \label{item:18} A Lagerberg $(p,p)$-form is symmetric if and
      only if it is real as a complex form.
    \item The image of $\tau _{n}$ 
      is $\omega _{n}$. 
\end{enumerate}
\begin{proof}
  Since $F$ is an antilinear involution, the space of $F$-invariant
  elements of $\Lambda ^{\cdot,\cdot}V^{\ast}$ has real dimension
  $2^{2n}$ which agrees with the dimension of $\Lambda
  ^{\cdot,\cdot}V'$. Since, by construction, the Lagerberg forms are
  invariant under $F$, both spaces agree. The remaining statements are
  direct computations. 
\end{proof}
\end{prop}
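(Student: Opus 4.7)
The plan is to verify the four statements essentially by direct computations on generators, the only slightly less trivial point being a dimension count for (i).

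For (iv), I would simply evaluate the $\R$-algebra homomorphism \eqref{eq:15} on $\tau_n$. Since the homomorphism sends $d'u_k \mapsto du_k$ and $d''u_k \mapsto i\,d\bar u_k$, expanding $\tau _n = d'u_1 \wedge d''u_1 \wedge \dots \wedge d'u_n \wedge d''u_n$ term-by-term yields precisely $du_1 \wedge i\,d\bar u_1 \wedge \dots \wedge du_n \wedge i\,d\bar u_n = \omega_n$.

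For (ii), a direct check on the basis of $V' \oplus V''$ suffices: $J(d'u_j) = d''u_j$ maps to $i\,d\bar u_j$, and on the other hand the image of $d'u_j$ is $du_j$, so $i\,\overline{du_j} = i\,d\bar u_j$ matches. Similarly $J(d''u_j) = d'u_j$ maps to $du_j$, while the image of $d''u_j$ is $i\,d\bar u_j$ and $i\,\overline{i\,d\bar u_j} = i\cdot(-i)\,du_j = du_j$. So $J = i\sigma$ on the image of $V'\oplus V''$.

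For (i), I would first check that each generator $du_j$ (the image of $d'u_j$) is $F$-fixed by \eqref{def:555}, and that $i\,d\bar u_j$ (the image of $d''u_j$) satisfies $F(i\,d\bar u_j) = -i\,F(d\bar u_j) = i\,d\bar u_j$, again using \eqref{def:555}. Since $F$ is an algebra involution of $\Lambda^{\cdot,\cdot}V^*$, the image of \eqref{eq:15} is contained in the $F$-invariant subalgebra. To conclude equality, I would do a dimension count: the elements $du_I \wedge i^{|J|}d\bar u_J$ (images of the standard basis of $\Lambda^{\cdot,\cdot}V'$) are $\C$-linearly independent in $\Lambda^{\cdot,\cdot}V^*$, so \eqref{eq:15} is injective, and the domain has real dimension $2^{2n}$. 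Since $F$ is an antilinear involution of a complex vector space of complex dimension $2^{2n}$, its $+1$-eigenspace has real dimension $2^{2n}$ as well, forcing equality.

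Finally, (iii) follows by extending (ii) multiplicatively. Both $J$ (on $\Lambda^{\cdot,\cdot}V'$) and $\sigma$ (on $\Lambda^{\cdot,\cdot}V^*$) are $\R$-algebra endomorphisms, so for $\omega \in \Lambda^{p,q}V'$ with image $\tilde\omega$ one obtains $\widetilde{J(\omega)} = i^{p+q}\,\sigma(\tilde\omega)$ (the factor $i^{p+q}$ coming from $p+q$ applications of (ii) while $\sigma$ itself is multiplicative). The small technical point to watch is that although $\sigma$ is antilinear, the relation is $\R$-linear in $\omega$ and so extends from pure wedges to all of $\Lambda^{p,q}V'$. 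Specializing to $q=p$, $\omega$ is symmetric iff $J(\omega) = (-1)^p\omega$, which via the above relation becomes $(-1)^p\tilde\omega = i^{2p}\overline{\tilde\omega} = (-1)^p\overline{\tilde\omega}$, i.e., $\tilde\omega$ is real. The main (mild) obstacle is keeping the antilinear bookkeeping straight in this last step, but once one separates the algebra-morphism content of $J$ and $\sigma$ from the twisting factor $i^{p+q}$ there is nothing further to do.
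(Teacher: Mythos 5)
Your proposal is correct and takes the same approach as the paper: part (i) is settled by checking that the generators land in the $F$-invariant subalgebra and then counting dimensions (both real dimension $2^{2n}$), and the paper dismisses (ii)--(iv) as ``direct computations'' which you carry out explicitly. Your identity $\widetilde{J(\omega)} = i^{p+q}\sigma(\widetilde\omega)$ and the $\R$-linearity remark justifying the extension from pure wedges are both sound and make the terse ``direct computation'' for (iii) precise.
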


\begin{lem} \label{lemm:15}
The involution $F$ maps strongly positive (resp.~weakly
positive, resp.~positive) complex forms to strongly positive
(resp.~weakly positive, resp.~positive) complex forms. 
\end{lem}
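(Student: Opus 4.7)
The plan is to verify the three cases by applying $F$ to the defining generators of the respective convex cones, using that $F$ is an $\R$-algebra morphism (antilinear over $\C$, but $\R$-linear) that preserves bidegrees.

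For the strongly positive case, first observe that $F$ sends $\Lambda^{1,0}V^*$ into itself. Given a generator $\alpha_1 \wedge i\bar\alpha_1 \wedge \cdots \wedge \alpha_p \wedge i\bar\alpha_p$ with $\alpha_j \in \Lambda^{1,0}V^*$, set $\beta_j \coloneqq F(\alpha_j) \in \Lambda^{1,0}V^*$. Using antilinearity together with the relation $F(\bar\alpha_j) = -\overline{F(\alpha_j)}$ from \eqref{eq:12}, one computes
\begin{displaymath}
  F(i\bar\alpha_j) = -i\,F(\bar\alpha_j) = i\,\overline{F(\alpha_j)} = i\bar\beta_j,
\end{displaymath}
so the generator maps to $\beta_1 \wedge i\bar\beta_1 \wedge \cdots \wedge \beta_p \wedge i\bar\beta_p$, which is again strongly positive. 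Since $F$ is $\R$-linear, the whole convex cone is preserved.

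For the positive case, take a generator $i^{p^2}\alpha \wedge \bar\alpha$ with $\alpha \in \Lambda^{p,0}V^*$. Let $\beta \coloneqq F(\alpha) \in \Lambda^{p,0}V^*$. Applying \eqref{eq:13} to $\alpha$ (bidegree $(p,0)$) gives $F(\bar\alpha) = (-1)^{p}\overline{F(\alpha)} = (-1)^p \bar\beta$. Using antilinearity of $F$, $F(i^{p^2}) = (-i)^{p^2} = (-1)^{p^2} i^{p^2}$, hence
\begin{displaymath}
  F\bigl(i^{p^2}\alpha \wedge \bar\alpha\bigr) = (-1)^{p^2}(-1)^{p}\,i^{p^2}\beta \wedge \bar\beta = i^{p^2}\beta \wedge \bar\beta,
\end{displaymath}
since $p^2 + p = p(p+1)$ is even. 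Hence generators go to generators, and $F$ preserves the cone of positive forms.

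For the weakly positive case, I use the characterization of weak positivity via the wedge pairing with strongly positive forms and the fact that $F(\omega_n)=\omega_n$. The latter follows because each factor $du_j \wedge i d\bar u_j$ is $F$-invariant by \eqref{def:555} and the computation of $F(i d\bar u_j)$ as above. Now let $\omega \in \Lambda^{p,p}_{+,w}V^*$ and let $\eta'$ be an arbitrary strongly positive $(n-p,n-p)$-form. Since $F$ preserves strong positivity, $\eta \coloneqq F(\eta')$ is strongly positive, so by hypothesis $\omega \wedge \eta = \gamma \omega_n$ for some $\gamma \geq 0$. Applying the $\R$-algebra morphism $F$ to this identity, using $F^2 = \mathrm{id}$ and $F(\omega_n)=\omega_n$, yields
\begin{displaymath}
  F(\omega) \wedge \eta' = \gamma\,\omega_n,
\end{displaymath}
which shows that $F(\omega)$ is weakly positive. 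No step here presents a real obstacle; the only point requiring care is the sign bookkeeping in the positive case, where the parity identity $p^2+p \equiv 0 \pmod 2$ is crucial.
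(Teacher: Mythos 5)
Your proof is correct and follows essentially the same route as the paper: verify that $F$ sends the defining generators of the positive and strongly positive cones to generators (using antilinearity and the relation $F(\bar\alpha)=-\overline{F(\alpha)}$), then handle weak positivity by duality using $F(\omega_n)=\omega_n$ and the fact that $F$ is a multiplicative involution. The only cosmetic difference is that the paper packages the positive and strongly positive cases through a single computation $F(\alpha\wedge i^p\bar\alpha)=F\alpha\wedge i^p\overline{F\alpha}$, whereas you treat the two cases separately with explicit sign bookkeeping on $i^{p^2}$; the content is the same.
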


\begin{proof} 
For any complex $(p,0)$-form $\alpha$, antilinearity of $F$ and \eqref{eq:13} give
\begin{align} \label{formula with F}
F(\alpha \wedge i^p \bar \alpha) = F\alpha \wedge (-i)^p F(\bar
\alpha)=F\alpha \wedge i^p \overline{ F(\alpha)}.
\end{align}
We conclude that $F$ preserves  positivity of
complex forms.
	Multiplicativity of $F$ and \eqref{formula with F}  show also that $F$
preserves strong positivity of forms.  
	
Since  $\omega_n$ is the image of  $\tau_n$ under the above identification, it is
fixed under $F$. Using that $F$ is an involution, we deduce from
duality that $F$ preserves weak positivity as well.
\end{proof}

\begin{prop} \label{prop:15} The following compatibility conditions hold. 
\begin{enumerate}
\item \label{simpl} A strongly positive Lagerberg form is also a strongly positive
  complex form.
\item \label{wimpl} An $F$-invariant weakly positive complex form is a
  weakly positive Lagerberg form.
\end{enumerate}  
\end{prop}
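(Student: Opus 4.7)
The plan is to leverage Proposition~\ref{prop:13}, which realises $\Lambda^{\cdot,\cdot}V'$ as the $F$-invariant subalgebra of $\Lambda^{\cdot,\cdot}V^{\ast}$, identifies $\tau_n$ with $\omega_n$, characterises symmetric Lagerberg forms as real complex forms, and, crucially, shows that on one-forms the Lagerberg involution $J$ acts as $\alpha \mapsto i\bar\alpha$. Once this algebraic dictionary is in place, both parts follow by unpacking the definitions; there is no serious obstacle.

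For \ref{simpl}, I take a generator of the strongly positive Lagerberg cone, $\omega = \alpha_1 \wedge J(\alpha_1) \wedge \dots \wedge \alpha_p \wedge J(\alpha_p)$ with each $\alpha_j \in \Lambda^{1,0}V'$. Via the inclusion \eqref{eq:15} each $\alpha_j$ is also a complex $(1,0)$-form, and by Proposition~\ref{prop:13}(ii) we have $J(\alpha_j) = i\bar\alpha_j$. Since \eqref{eq:15} is a homomorphism of $\R$-algebras, wedge products are preserved, so
\[
\omega = \alpha_1 \wedge i\bar\alpha_1 \wedge \dots \wedge \alpha_p \wedge i\bar\alpha_p
\]
is a generator of the strongly positive complex cone. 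Passing to non-negative real combinations gives the inclusion $\Lambda^{p,p}_{+,s}V' \subset \Lambda^{p,p}_{+,s}V^{\ast}$.

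For \ref{wimpl}, suppose $\omega \in \Lambda^{p,p}V^{\ast}$ is $F$-invariant and weakly positive as a complex form. By Lemma~\ref{lemm:12} it is real, and Proposition~\ref{prop:13}(iii) then says that, viewed as a Lagerberg form in $\Lambda^{p,p}V'$, it is symmetric. To verify Lagerberg weak positivity, I take an arbitrary $\eta \in \Lambda^{q,q}_{+,s}V'$ with $q = n-p$; by part~\ref{simpl} this $\eta$ also lies in $\Lambda^{q,q}_{+,s}V^{\ast}$, so complex weak positivity of $\omega$ yields some $\gamma \geq 0$ with $\omega \wedge \eta = \gamma\,\omega_n$. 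Invoking Proposition~\ref{prop:13}(iv) together with the fact that \eqref{eq:15} is an algebra homomorphism, this identity transfers to $\omega \wedge \eta = \gamma\,\tau_n$ in $\Lambda^{n,n}V'$, which is exactly the defining condition for weak positivity as a Lagerberg form. The only subtle point is that we do need $\omega$ to be symmetric (already established) in order to qualify as a weakly positive Lagerberg form at all; after that, testing against the Lagerberg strongly positive cone reduces by part~\ref{simpl} to testing against a subfamily of the complex strongly positive cone, which is exactly what weak positivity in the complex sense provides.
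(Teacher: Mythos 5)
Your proof is correct and takes essentially the route the paper intends: the paper's proof is the one-line remark that the statement ``follows easily from the definitions,'' and what you have written is exactly the careful unpacking via Proposition~\ref{prop:13} (the dictionary $J(\alpha)=i\bar\alpha$ on one-forms, the identification of $\tau_n$ with $\omega_n$, and the characterisation of symmetric Lagerberg forms as real complex forms) together with Lemma~\ref{lemm:12} to secure symmetry in part~\ref{wimpl}.
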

This follows easily from the definitions. 
For positive forms, we have a stronger result.

\begin{prop}\label{prop:14}
    A Lagerberg form is positive if and only if it is positive as a complex form.
\end{prop}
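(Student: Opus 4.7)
My plan is to reduce both positivity notions to the positive semidefiniteness of a single real symmetric matrix, using Propositions \ref{prop:11} and \ref{prop:12} to translate positivity of $\omega\in\Lambda^{p,p}$ into a statement about its associated pairing $|\omega|$.

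For the easy direction, I would first verify in coordinates that, under the inclusion \eqref{eq:15}, any generator $(-1)^{\frac{p(p-1)}{2}}\alpha\wedge J(\alpha)$ of the Lagerberg positive cone becomes a generator of the complex positive cone. Concretely, for $\alpha=\sum c_I\,d'u_I\in \Lambda^{p,0}V'$ with $c_I\in\R$, the identifications $d'u_j\mapsto du_j$ and $d''u_j\mapsto i\,d\bar u_j$ give $J(\alpha)\mapsto i^p\bar\alpha$, and the identity $i^{p^2}=(-1)^{p(p-1)/2}i^p$ yields
\[
(-1)^{\frac{p(p-1)}{2}}\,\alpha\wedge J(\alpha) \,=\, i^{p^2}\,\alpha\wedge\bar\alpha.
\]
Hence any positive Lagerberg form is positive as a complex form.

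For the converse, I would expand $\omega=\sum a_{IJ}\,d'u_I\wedge d''u_J$ with $a_{IJ}\in\R$ and evaluate both pairings in the real basis $\{e_I\}$ of $\Lambda^pV\subset\Lambda^pV_\C$. The factor $i^p$ introduced by $d''u_j\mapsto i\,d\bar u_j$ cancels exactly the factor $i^{-p}$ appearing in Definition \ref{def:2}, and a short calculation yields
\[
|\omega|_\C(e_I,e_J) \,=\, (-1)^{\frac{p(p-1)}{2}}a_{IJ} \,=\, |\omega|_L(e_I,e_J).
\]
Now assume $\omega$ is positive as a complex form. By Proposition \ref{prop:11}, $|\omega|_\C$ is a positive semidefinite Hermitian form on $\Lambda^pV_\C$, and by the computation above its matrix in the real basis $\{e_I\}$ is a real symmetric matrix $A$. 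The elementary linear-algebra fact that such a Hermitian form is positive semidefinite if and only if its restriction to the underlying real subspace is a positive semidefinite symmetric bilinear form then gives that $A$ is positive semidefinite, so $|\omega|_L$ is positive semidefinite as a symmetric bilinear form on $\Lambda^pV$. Proposition \ref{prop:12} then concludes that $\omega$ is positive as a Lagerberg form. The only point beyond the bookkeeping of the $i^p$-phases is this last linear-algebra observation, which will be the genuine input of the argument.
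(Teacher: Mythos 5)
Your proof is correct, but the route is genuinely different from the paper's. The paper also starts from the identity $(-1)^{p(p-1)/2}\alpha\wedge J\alpha = i^{p^2}\alpha\wedge\bar\alpha$ for the easy direction, but for the converse it proceeds by an explicit decomposition: write $\omega=\sum_s\gamma_s(-1)^{p(p-1)/2}\alpha_s\wedge i^p\bar\alpha_s$ with $\gamma_s\geq 0$, symmetrize each summand via $\eta\mapsto\tfrac12(\eta+F\eta)$ using the $F$-invariance of $\omega$, and then show by direct computation (writing $\alpha_s=a+ib$ with $a,b\in\Lambda^{p,0}V'$) that each symmetrized term equals $(-1)^{p(p-1)/2}(a\wedge Ja+b\wedge Jb)$, manifestly a positive Lagerberg form. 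You instead stay entirely at the level of the associated forms $|\omega|$ from Definitions \ref{def:2} and \ref{def:4}: you check that under the inclusion \eqref{eq:15} the two matrices agree on the real basis $\{e_I\}$, note that positivity of $\omega$ as a complex form forces that common matrix $A$ to be real symmetric (via reality of $\omega$ and Proposition \ref{prop:11}), and then invoke the elementary observation that for a Hermitian form $H$ with real matrix in a real basis one has $H(x+iy,x+iy)=B(x,x)+B(y,y)$, so $H$ is positive semidefinite iff the real symmetric form $B$ is. This is a clean, more structural argument that leans directly on the self-duality characterizations (Propositions \ref{prop:11} and \ref{prop:12}) the paper has already established, and it avoids the explicit $a+ib$ bookkeeping. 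The paper's version has the minor advantage of exhibiting an explicit Lagerberg-positive decomposition; your version is shorter and makes the common source of both positivity notions (positive semidefiniteness of the same matrix $A$) transparent. Both are valid. If you were to flesh yours out, the one step to state explicitly is the linear-algebra identity $H(x+iy,x+iy)=B(x,x)+B(y,y)$, which is precisely where the reality of the matrix is used.
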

\begin{proof}
It is easily seen that if $\omega\in \Lambda ^{p,p}V'$ 
is a positive Lagerberg form, then it is also  a positive complex form 
by using the embedding \eqref{eq:15}
 and that $J\omega = i^p \bar \omega$.

Conversely, assume that $\omega$ is a Lagerberg $(p,p)$-form that is positive as a complex form.  
We claim that is enough to show that if
$\eta = (-1)^{\frac{p(p-1)}{2}}\alpha \wedge i^p \bar \alpha$ for some
complex $(p,0)$-form $\alpha$,
then $\eta +F(\eta )$ is a positive Lagerberg form. 
Assuming this claim, we can write 
  \begin{displaymath}
    \omega=\sum_{s}\gamma _{s} (-1)^{\frac{p(p-1)}{2}}\alpha_{s}
    \wedge i^p \bar  \alpha_{s} 
\end{displaymath}
with {$\gamma_s \in \R_{\geq 0}$ and $\alpha_s \in \Lambda^{p,0}V^*$.} Using the claim and the $F$-invariance of $\omega$, we have that 
\begin{displaymath}
  \omega =\frac{1}{2}(\omega  + F(\omega ))=
  \sum_{s} \frac{\gamma _{s}}{2} (-1)^{\frac{p(p-1)}{2}} (\alpha_{s}
  \wedge i^p \bar
  \alpha_{s}+ F(\alpha_{s} \wedge i^p \bar \alpha_{s}))
\end{displaymath}
is a positive Lagerberg form.

To prove our claim, we write $\alpha = a + ib$ with $a,b\in \Lambda^{p,0}(V')$. Then we have  
\begin{displaymath}
  \alpha \wedge i^p \bar \alpha = (a+ib) \wedge i^p(\bar a -
  i \bar b)= i^p(a \wedge \bar a - i a \wedge \bar b +
  i b \wedge \bar a + b \wedge  \bar b).
\end{displaymath}
Using $F(\alpha)=a-ib$, we get
\begin{displaymath}
  F\alpha \wedge F(i^p \bar\alpha)) = (a-ib) \wedge F(i^p(\bar a - i
  \bar b)) = i^p(a-ib) \wedge (\bar a +i \bar b),
\end{displaymath}
where we have used for the last equality that 
\begin{displaymath}
  F(i^p\bar a)=(-1)^{p}i^{p}F(\bar a)
  =(-1)^{p}i^{p}(-1)^{p}\overline{F(a)}= i^p\bar a
\end{displaymath}
by equation \eqref{eq:13} and  analogously $F(i^{p+1}\bar b)
= -i^{p+1}\bar b$. Since
\begin{displaymath}
  \frac{1}{2}(\eta + F(\eta))=
   (-1)^{\frac{p(p-1)}{2}}\frac{1}{2} \left( \alpha
    \wedge  i^p\bar \alpha +  F\alpha \wedge F(i^p
    \bar \alpha)) \right),
\end{displaymath}
we deduce that
\begin{displaymath}
\eta +F(\eta )  = 2 \cdot (-1)^{\frac{p(p-1)}{2}}i^p(a \wedge \bar a +
b \wedge\bar b) = 
2 \cdot (-1)^{\frac{p(p-1)}{2}} (a \wedge J a + b \wedge J b) 
\end{displaymath}
which is a positive Lagerberg form.
\end{proof}

The next example shows that the analogues of 
Proposition \ref{prop:14} for strongly and weakly  
positive forms do not hold. 
Hence the converse of Proposition \ref{prop:15} \ref{simpl} and \ref{wimpl} is wrong.

\begin{ex} \label{too few strongly positive}
We show that the space of strongly 
positive Lagerberg forms does not span the space of 
symmetric Lagerberg forms. 
In particular, the cone of strongly positive Lagerberg 
forms has empty interior and 
not every symmetric Lagerberg form can be written as a 
difference of strongly positive ones. 

Indeed, let $V$ be a real vector space of dimension four. 
Then we have
\begin{displaymath}
  \Lambda^{2,2}_{\sym} V'=\Bigl\{\sum_{i<k, j<l}
  w_{ijkl}d'u_i \wedge d''u_j \wedge d'u_k \wedge d''u_l
  \,\Big|\,  w_{ijkl}=w_{jilk}\Bigr\}.
\end{displaymath}
Moreover, 
we have the following identity of cones:
\begin{align*}
\Lambda^{2,2}_{+,s} V' = \langle  a \wedge J(a) \wedge b \wedge J(b)
  \mid a,b \in \Lambda^{1,0} V'  \rangle_{\R_{\geq 0}}.
\end{align*}
Given $a,b \in \Lambda^{1,0} V'$, let $w_{ijkl}(a,b) \in \R$ be such that
\begin{align*}
a \wedge J(a) \wedge b \wedge J(b) = \sum_{i<k, j<l}
  w_{ijkl}(a,b) d'u_i \wedge d''u_j \wedge d'u_k \wedge d''u_l.
\end{align*}
A direct computation shows  
\begin{align*}
  w_{1,3,2,4}(a,b) - w_{1,2,3,4}(a,b) + w_{1,2,4,3}(a,b) = 0,
\end{align*}
and, by symmetry, we have
  \begin{displaymath}
    w_{3,1,4,2}(a,b) - w_{2,1,4,3}(a,b) + w_{2,1,3,4}(a,b) = 0,
  \end{displaymath}
Hence 
$\Lambda^{2,2}_{+,s}V'$ is in a proper linear subspace of 
$\Lambda^{2,2}_{\sym} V'$ and so it has empty interior.

As a consequence, the cone {$\Lambda^{2,2}_{+,w}V'$}  is not {strictly} convex. Namely, the form
\begin{align*}
  \omega  =
    &d'u_3 \wedge d''u_1\wedge d'u_4 \wedge d''u_2
   - d'u_2 \wedge d''u_1\wedge d'u_4 \wedge d''u_3
   +d'u_2 \wedge d''u_1\wedge d'u_3 \wedge d''u_4\\
   +&d'u_1 \wedge d''u_3\wedge d'u_2 \wedge d''u_4
   - d'u_1 \wedge d''u_2\wedge d'u_3 \wedge d''u_4
    +d'u_1 \wedge d''u_2\wedge d'u_4 \wedge d''u_3
\end{align*}
{satisfies $\eta\wedge \omega =0$ for every  $\eta \in\Lambda^{2,2}_{+,s}V'$, and hence 
 $C\omega \in \Lambda^{2,2}_{+,w}V'$ for every $C \in \R$.}

\end{ex}

\begin{rem} \label{strongly positive not compatible}
We will deduce from Example \ref{too few strongly positive}  the existence of Lagerberg forms that are strongly positive 
as complex forms but not strongly positive as Lagerberg forms. 

By Lemma \ref{lemm:13},  every complex form $\omega$ can be written as a complex
linear combination
\begin{align} \label{strongly positive not compatible eq}
\omega = \sum_i \lambda_i \omega_i 
\end{align}
 of strongly positive complex forms $\omega_i$ . 
If $\omega$ is real, then applying $\sigma$ to (\ref{strongly positive not compatible eq}) and using that the $\omega_i$ are real, 
we see that we may take the $\lambda_i$ to be real. 
If $\omega$ is further invariant under $F$, then applying $F$ to (\ref{strongly positive not compatible eq}) 
we see that replacing $\omega_i$ by $1/2 (\omega_i + F(\omega_i))$ we may assume 
the $\omega_i$ to be $F$-invariant. 
Note that $F(\omega_i)$ is strongly positive by Lemma \ref{lemm:15}.

We have just shown $\Lambda^{p,p}_{\sym} V' = \langle (\Lambda^{p,p}_{+,s} V^*)^{F} \rangle_\R$. 
Since we showed in Example \ref{too few strongly positive} that 
$\langle \Lambda^{p,p}_{+,s} V' \rangle_{\R} \subsetneq \Lambda^{p,p}_{\sym} V'$,  
we find that $(\Lambda^{p,p}_{+,s} V^*)^{F} \not\subset \Lambda^{p,p}_{+,s} V'$. 
By Proposition \ref{prop:13}, this means exactly that not every Lagerberg form
that is strongly positive as a complex form is strongly positive as a Lagerberg form. 

The next example illustrates this phenomenon.  
\end{rem}

\begin{ex} \label{strongly positive not compatible explicit}
Let $V$ still be a real vector space of dimension four. 
Choose a basis $e_{1},\dots,e_{4}$ and corresponding bases of $V^{\ast}$,
$\overline {V}^{\ast}$, $V'$ and $V''$ as before. The complex form
\begin{displaymath}
\eta = (du_{1}+idu_{2})\land i(d\bar u_{1}-id\bar u_{2})\land
(du_{3}+idu_{4})\land i(d\bar u_{3}-id\bar u_{4})
\end{displaymath}
is strongly positive by definition. 
By Lemma \ref{lemm:15}, the form $\omega = \frac{1}{2}(\eta + F(\eta))$ is strongly positive. 
Moreover, it is $F$-invariant and hence $\omega$ may be seen as  a Lagerberg form by Proposition \ref{prop:13}. 
We claim that $\omega$ is not strongly positive as a Lagerberg form.

A direct computation shows that
\begin{align*}
\omega = & 
d'u_{1}\land d''u_{1}\land d'u_{3}\land d''u_{3}
+  d'u_{1}\land d''u_{1}\land d'u_{4}\land d''u_{4}\\
& + d'u_{2}\land d''u_{2}\land d'u_{3}\land d''u_{3}
+  d'u_{2}\land d''u_{2}\land d'u_{4}\land d''u_{4}\\
&-d'u_{1}\land d''u_{2}\land d'u_{3}\land d''u_{4}
+ d'u_{2}\land d''u_{1}\land d'u_{3}\land d''u_{4}\\
&+ d'u_{1}\land d''u_{2}\land d'u_{4}\land d''u_{3}
-d'u_{2}\land d''u_{1}\land d'u_{4}\land d''u_{3}.\\
=&(-1)(d'u_{1}\land d'u_{3}-d'u_{2}\land d'u_{4})\land
J(d'u_{1}\land d'u_{3}-d'u_{2}\land d'u_{4})\\
&+(-1)(d'u_{1}\land d'u_{4}+d'u_{2}\land d'u_{3})\land
J(d'u_{1}\land d'u_{4}+d'u_{2}\land d'u_{3}).
\end{align*}
This shows that $\omega $ is a  positive Lagerberg form, that the
associated symmetric 
bilinear form $|\omega |$ has rank 2 and that $(\ker |\omega
|)^{\perp}$ is the 2-dimensional subspace 
\begin{displaymath}
(\ker |\omega |)^{\perp}=\R(d'u_{1}\land d'u_{3}-d'u_{2}\land
d'u_{4}) + \R(d'u_{1}\land d'u_{4}+d'u_{2}\land d'u_{3}).
\end{displaymath}
We pick any decomposition
\begin{equation}\label{eq:16}
\omega =\sum_j \alpha _{j}\land J(\alpha _{j})
\end{equation}
with $\alpha \in \Lambda^{2,0} V'$. 
Then  $\alpha _{j}\in (\ker |\omega |)^{\perp}$, since for any 
$v\in \ker |\omega | \subset \Lambda ^{2}V$, we have 
\begin{displaymath}
0=|\omega |(v,v)=\sum_{j}\langle \alpha _{j},v\rangle ^{2}.
\end{displaymath}

If $\omega $ were strongly positive, we would have a decomposition
like \eqref{eq:16} where $\alpha _{j}\in  \Lambda ^{2,0}V'$ is 
{a product of $(1,0)$-forms}.
However, this is not possible because $(\ker |\omega |)^{\perp}$ 
does not contain any non-zero real decomposable element as the
following argument shows. 
Assume that
\begin{align*}
\rho \coloneqq (\alpha\,d'u_{1}+\beta \,d'u_{2}+\gamma\,d'u_{3}+\delta\,d'u_{4})\land (\alpha'\,d'u_{1}
+ \beta'\,d'u_{2}+\gamma'\,d'u_{3}+\delta'\,d'u_{4}) \in (\ker |\omega |)^{\perp}.
\end{align*}
This implies the equations
\begin{align*}
\alpha\gamma'-\alpha'\gamma & = \beta'\delta-\beta\delta',\\
\alpha\delta'-\alpha'\delta &=\beta\gamma'-\beta'\gamma,\\
\alpha\beta'-\alpha'\beta&=0,\\
\gamma\delta'-\gamma'\delta&=0.
\end{align*}
The point $\rho$ determines a point $\rho'$ in the Grassmannian $\Gr(2,4)$
with Pl\"ucker coordinates
\begin{alignat*}{3}
x&=
    \begin{vmatrix}
      \alpha & \beta\\
      \alpha' & \beta'
    \end{vmatrix},
    &\quad
    y&=
    \begin{vmatrix}
      \alpha & \gamma\\
      \alpha' & \gamma'
    \end{vmatrix},
    &\quad
    z&=
    \begin{vmatrix}
      \alpha & \delta\\
      \alpha' & \delta'
    \end{vmatrix},
    \\
    u&=
    \begin{vmatrix}
      \beta & \gamma\\
      \beta' & \gamma'
    \end{vmatrix},
    &
    v&=
    \begin{vmatrix}
      \beta & \delta\\
      \beta' & \delta'
    \end{vmatrix},
    &
    w&=
    \begin{vmatrix}
      \gamma & \delta\\
      \gamma' & \delta'
    \end{vmatrix}.
  \end{alignat*}
  The previous equations imply that the Pl\"ucker coordinates 
  of $\rho'$ satisfy the equations
  \begin{displaymath}
    y=-v,\quad z=u,\quad x=w=0.
  \end{displaymath}
  Moreover, the Pl\"ucker equations for $\Gr(2,4)$ are reduced to the
  single equation
  \begin{displaymath}
    xw-yv+zu=0.
  \end{displaymath}
  We conclude that the Pl\"ucker coordinates of $\rho'$ satisfy the
  equation
  \begin{equation}\label{eq:17}
    y^{2}+z^{2}=0
  \end{equation}
  that has no real solutions except the trivial one. The fact that
  $\omega $ is strongly positive as a complex form is reflected  
  by the
  fact that  \eqref{eq:17} has non-trivial complex
  solutions.   
\end{ex}

\section{Lagerberg forms and Lagerberg currents on 
partial compactifications}
\label{section smooth forms tropical}

For convex geometry we will use the notation and conventions
set up in \cite[Appendix]{gubler-kuenne2017}.
Let $N$ be a free abelian group of rank {$n$}, 
$M=\Hom_\Z(N,\Z)$ its dual and 
denote by $N_\R$ resp.~$M_\R$ the respective scalar extensions to $\R$.

\subsection{Partial compactifications}
\label{subsection partial compactification}

A strictly convex rational 
polyhedral cone $\sigma \in N_\R$ is a polyhedron defined by
finitely many equations of the form $\varphi(\,.\,) \geq 0$ with $\varphi \in M$,  
that does not contain a positive dimensional linear subspace. 
A rational polyhedral fan $\Sigma$ in $N_\R$ is a 
polyhedral complex all of whose polyhedra are  
strictly convex rational cones. 
In this paper we make the convention that 
a \emph{fan} is always a rational polyhedral fan. 

A cone is called \emph{smooth} if it is generated by a subset of
a $\Z$-basis of $N$. A fan $\Sigma$ is called smooth 
if each cone of $\Sigma$ is smooth. 
For $\sigma \in \Sigma$ we define the monoid 
${S_\sigma}\coloneqq \{ \varphi \in M | \; \varphi(v) \geq 0
\text{ for all } v \in \sigma \}$. 
For $\sigma \in \Sigma$, write 
$N(\sigma)\coloneqq N_\R / \langle\sigma\rangle_\R$
where $\langle\sigma\rangle_\R$ denotes the real vector space generated by $\sigma$. 
Given $\sigma,\tau\in \Sigma$, we write $\tau\prec\sigma$
if $\tau$ is a face of $\sigma$.
We have projection maps $\pi_{\sigma} \colon N_{\R}
\to N(\sigma)$ and $\pi_{\sigma, \tau} \colon N(\tau)
\to N(\sigma)$ for $\tau \prec \sigma$.

\begin{defi}\label{definition partial compactification}
Let $\Sigma \subset N_\R$ be a rational 
polyhedral fan. 
We consider the disjoint union
\begin{align*}
N_\Sigma \coloneqq  \coprod \limits_{\sigma \in \Sigma } N(\sigma)
\end{align*}
and call $N_\Sigma$ equipped with the topology introduced
in Remark \ref{def-topology-partial-compactification} the 
\emph{partial compactification of $N_\R$
associated to $\Sigma$}.
\end{defi}

\begin{rem}\label{def-topology-partial-compactification}
The partial compactification $N_\Sigma$ carries the following
topology which is Hausdorff.
It is also locally compact and has a countable basis and hence it is metrizable. 
Let us briefly recall its definition.
		
First, we define the partial compactification of $N(\sigma)$ 
for a single cone $\sigma \in \Sigma$ by setting
\[
{N_{\sigma}} 
\coloneqq \coprod \limits_{{\tau \prec \sigma}} N({\tau}).
\] 
The set ${N_\sigma}$ is naturally identified with the monoid homomorphisms
$\Hom_{\Mon}(S_\sigma, \Rsup)$. We equip it with the subspace topology of $\Rsup^{S_\sigma}$. 
Using a finite set of generators {$\varphi_1, \dots, \varphi_k$} for the monoid $S_\sigma$, we can 
realize $\Hom_{\Mon}(S_\sigma, \Rsup)$ as a closed subspace of $\Rsup^k$ with the induced topology 
(see \cite[Remark 3.1]{payne-2009a} and use \cite[Lemma 2.1]{payne-2009a}).

For a face $\rho$ of $\sigma$, we note that  ${N_\rho}$ is an open subset 
of ${N_\sigma}$. 
This is used to define a topology on the partial compactification 
$N_\Sigma$ by gluing the partial compactifications 
${N_\sigma}$, $\sigma \in \Sigma$, along the open subsets 
induced by common faces.

We give a second description of the topology of $N_{\Sigma }$.
To this end, we fix an Euclidean metric in $N_{\R}$. 
For a cone $\nu$ of $\Sigma$, the Euclidean metric allows us
to identify
$\nu^{\perp}$ with a subspace of $N_{\R}$ and, through the
projection $\pi _{\nu}$, with the space $N(\nu)$.
Again, we consider first the case of a single cone $\sigma
\in \Sigma$.  
For  a point $u\in {N_\sigma}$, 
there is a unique face $\nu$ of $\sigma$ with $u \in N(\nu)$.    
Let $u_{0}\in \nu^{\perp}\subset N_\R$ be the corresponding point and 
let $U$ be a neighborhood of $u_0$ in $\nu^{\perp}$. 
For each face $\tau \prec \nu$, the cone $\nu$
induces a cone $\pi _{\tau }(\nu)$ contained in $N(\tau)$. 
For each $p\in \nu$, we write  
\begin{equation}\label{eq:20}
	W(\nu ,U,p)=\coprod _{\tau  \prec \nu  }\pi _{\tau }(U+p+\nu).
\end{equation}
The topology of ${N_\sigma}$ is defined by the fact that 
$\{W(\nu,U,p)\}_{U,p}$ is a basis of neighbourhoods of 
$u$ in ${N_\sigma}$ for any $u \in {N_\sigma}$.
As before the topology of $N_\Sigma$ is defined by gluing
along the open subsets ${N_\tau}$  of ${N_\sigma}$
whenever $\tau\prec\sigma$. 

The first definition of the topology is 
given by Kajiwara \cite{kajiwara2008} and by Payne \cite{payne-2009a},  
the second definition is from \cite[I.1]{AMRT}. 
The topologies coincide as the above basis of neighbourhoods 
works also for the first definition by \cite[Remark 3.4]{payne-2009a}. 

To prove that $N_{\Sigma }$ is Hausdorff, we use that the quotient of a 
topological space (in this case
the disjoint union of the $N_{\sigma }$) by an equivalence relation is
Hausdorff if 
the canonical map to the quotient is open and 
the graph of the equivalence relation is closed
\cite[Ch.~I \S 8.3 Prop.~8]{bourbaki-topologie-generale-1-4}.  
As the map to the quotient is open
by construction, it is enough to show that, for cones $\sigma_{1}$ and  
$\sigma _{2}$ with $\tau =\sigma_{1}\cap \sigma_{2}$,
the map
\begin{displaymath}
N_{\tau}\longrightarrow N_{\sigma_{1}}
\times  N_{\sigma_2}
\end{displaymath}
is a closed immersion. This follows easily from the first description
of the topologies of $N_{\sigma_{1}}$ and $N_{\sigma_2}$ by choosing a
finite set of generators of $S_{\sigma _{1}}$ and $S_{\sigma_2}$ and
observing that the union of both sets is a set of generators of
$S_{\tau }$. 

Note that $N_{\Sigma }$
is locally compact because the $N_{\sigma }$
provide an open covering of $N_{\Sigma }$ and each of them is locally
compact. 
Finally every $N_\sigma$ has a countable basis and hence also $N_\Sigma$.
\end{rem}

\begin{rem}\label{ass-toric-variety}
Let $\torus=\Spec\, \C[M]$ be the split complex torus with cocharacter lattice $N$.
Let $X_\Sigma$ denote the toric variety over $\C$ 
with dense torus $\torus$ determined by 
the  fan $\Sigma$ in $N_\R$. 
Let $X_\Sigma^\an$ denote the analytification of $X_\Sigma$,
i.e.~the set of complex points $X_\Sigma(\C)$ with its structure of an
analytic space.

There is a well-known continuous map (see for example \cite[I.1, p.2]{AMRT}, 
\cite[Definition 1.2]{kajiwara2008} or \cite[Section 4.1]{bps-asterisque}) 
\begin{align}\label{ass-toric-variety-eq1}
\trop \colon X_\Sigma^{\an} \longrightarrow N_{\Sigma},
\end{align}
which is nowadays called \emph{tropicalization map}
as it is given by glueing on the affine open subsets 
${U_\sigma=\Spec\, \C[S_\sigma]}$ for $\sigma\in \Sigma$ of $X_\Sigma$
the tropicalization maps
\begin{align*}
\trop \colon U_\sigma^{\an} \longrightarrow 
{N_\sigma} = \Hom_{\Mon}(S_\sigma, \Rsup),\,\,
y \longmapsto (m \longmapsto - \log \vert \chi^m (y)\vert)
\end{align*}
where $\chi^m\colon \T\to \G_m$ is the character associated with $m$.
It follows from {\cite[Sections 4.1,  4.2]{bps-asterisque}} that the tropicalization 
map \eqref{ass-toric-variety-eq1}
is a proper continuous map that identifies 
$N_{\Sigma }$ with $\Xsigmaan/\SS$
where $\SS$ denotes the real compact torus
\[
\SS=\bigl\{p\in \T^\an\,\big|\,|\chi^m(p)|=1\,\mbox{for all}\,m\in
M\bigr\}\subset \T^\an.
\]
There is a continuous proper section
$\rho_\Sigma\colon N_\Sigma\rightarrow X_\Sigma^\an$
of the tropicalization map \eqref{ass-toric-variety-eq1}
given as the unique continuous extension
of the section
\[
\rho\colon N_\R\longrightarrow \T^\an=\Hom(M,\C^\times),\,\,\,
n\longmapsto\bigl[m\longmapsto \exp(-\langle m,n\rangle)\bigr]
\]
of \eqref{ass-toric-variety-eq1} (see \cite[Remark 4.1.3]{bps-asterisque}). 
\end{rem}

{To illustrate the topology on the partial compactification $N_\Sigma$, we give 
the following lemma where the notation coincides with the one 
in \cite[I.1, p.5]{AMRT}.}

\begin{lem}\label{lemma definition infinity}
Let $\Sigma \subset N_\R$ be a  fan, and let 
$N_\Sigma$ be its associated partial compactification.
Given $p \in N_\R$ and $v \in |\Sigma|$, the limit 
$p + \infty v \coloneqq \lim_{\mu \to + \infty} p + \mu v $ exists in $N_\Sigma$.
Moreover, $p + \infty v \in N(\sigma)$ for the unique cone  $\sigma \in \Sigma$ such that 
$v \in \relint(\sigma)$.
\end{lem}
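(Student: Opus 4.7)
The plan is to verify convergence directly using the monoid-homomorphism description of $N_\sigma$ recalled in Remark \ref{def-topology-partial-compactification}. The uniqueness of $\sigma$ is automatic because the relative interiors of the cones of a fan partition the support $|\Sigma|$. The natural candidate for the limit is $u_\infty \coloneqq \pi_\sigma(p) \in N(\sigma) \subset N_\sigma \subset N_\Sigma$. Since $N_\sigma$ is open in $N_\Sigma$ and contains both $u_\infty$ and the entire half-line $\{p + \mu v \mid \mu \in \R_{\geq 0}\} \subset N_\R$, it suffices to prove convergence inside $N_\sigma$.

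I would then use the identification $N_\sigma = \Hom_{\Mon}(S_\sigma, \Rsup)$ with the subspace topology from $\Rsup^{S_\sigma}$. Via the realization as a closed subspace of $\Rsup^k$ for a finite generating set $\varphi_1, \dots, \varphi_k$ of $S_\sigma$, convergence reduces to the pointwise convergence of the evaluation functions $m \mapsto \langle m, p + \mu v\rangle$ on this generating set. The fundamental calculation is
\begin{displaymath}
\langle m, p + \mu v\rangle = \langle m, p\rangle + \mu \langle m, v\rangle,
\end{displaymath}
where $\langle m, v\rangle \geq 0$ since $m \in S_\sigma$ is nonnegative on $\sigma$ and $v \in \sigma$. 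For each $m$, the expression is either constant equal to $\langle m, p\rangle$ (if $\langle m, v\rangle = 0$) or tends to $+\infty$ in $\Rsup$ (if $\langle m, v\rangle > 0$), so the pointwise limit exists in $\Hom_{\Mon}(S_\sigma, \Rsup)$.

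The essential step, and arguably the only nonformal point in the proof, is to identify this pointwise limit with the monoid homomorphism attached to $\pi_\sigma(p) \in N(\sigma)$. This rests on the observation that for $m \in S_\sigma$ one has $\langle m, v\rangle = 0$ if and only if $m \in \sigma^\perp$: the set $\{w \in \sigma \mid \langle m, w\rangle = 0\}$ is a face of $\sigma$ which, by containing the relatively interior point $v$, must equal $\sigma$ itself. Combined with the standard description of points of $N(\sigma) \subset N_\sigma$ as those monoid homomorphisms $S_\sigma \to \Rsup$ which take finite values exactly on $\sigma^\perp \cap M$ (and there coincide with $m \mapsto \langle m, p\rangle$), this identifies the pointwise limit with $\pi_\sigma(p)$ and completes the proof. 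An alternative approach via the AMRT neighborhood basis $W(\sigma, U, q)$ from \eqref{eq:20} also works, but requires fixing a Euclidean metric and verifying $w - q + \mu v \in \sigma$ for large $\mu$ (where $p = u_0 + w$ with $u_0 \in \sigma^\perp$ and $w \in \langle \sigma\rangle_\R$); the monoid description avoids this overhead.
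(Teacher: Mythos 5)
Your proof is correct, but it takes a genuinely different route from the paper's. The paper proves the lemma using the second (AMRT) description of the topology on $N_\sigma$: it fixes a Euclidean metric, writes $p_0 \in \sigma^\perp$ for the representative of $\pi_\sigma(p)$, and shows directly that for every neighborhood $W(\sigma, U, q)$ of $p + \infty v$ one has $p + \mu v \in W(\sigma, U, q) = q + U + \sigma$ for all large $\mu$; the key step is that $p - p_0 - q \in \langle\sigma\rangle_\R$ and $v \in \relint(\sigma)$ together force $p - p_0 - q + \mu v \in \sigma$ eventually. You instead use the first (Kajiwara--Payne) description, $N_\sigma = \Hom_{\Mon}(S_\sigma, \Rsup)$ as a closed subspace of $\Rsup^k$, and reduce to the elementary pointwise calculation $\langle m, p + \mu v\rangle = \langle m, p\rangle + \mu\langle m, v\rangle$. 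Both arguments use $v \in \relint(\sigma)$ in an essential but structurally different way: the paper uses it to force membership of a translate in the cone $\sigma$, while you use it to establish the dichotomy $\langle m, v\rangle = 0 \Leftrightarrow m \in \sigma^\perp$ for $m \in S_\sigma$, which is what pins down the stratum of the limit. Your approach avoids the auxiliary choice of a Euclidean metric and is arguably more intrinsic, at the modest cost of spelling out the identification of $N(\sigma) \subset \Hom_{\Mon}(S_\sigma,\Rsup)$ with monoid homomorphisms finite exactly on $\sigma^\perp \cap M$; the paper's approach gives a slightly more geometric picture of what a neighborhood of the limit point looks like.
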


\begin{proof}
We use the description of the topology of $N_{\Sigma }$ given by the
basis of neighborhoods $W(\sigma ,U,q)$, so we fix an
Euclidean metric in $N_{\R}$. Let $p_{0}\in \sigma ^{\perp}$ be the
point corresponding to $\pi _{\sigma }(p)$, $U$ a neighborhood of
$p_{0}$ in $\sigma ^{\perp}$ and $q\in \sigma $. It is enough to show
that there is a $\mu _{0}>0$ and for all $\mu\ge \mu_{0}$, the
condition $p+\mu v\in W(\sigma ,U,q)$ holds. Since $\pi _{\sigma
}(p)=\pi _{\sigma }(p_{0})$ and $q\in \sigma $, we deduce that
$p-p_{0}-q\in \left< \sigma \right >_{\R }$. Since $v\in
\relint(\sigma ) $, there is a $\mu _{0}$ such that for all $\mu \ge \mu _{0}$,
we have $p-p_{0}-q+\mu v\in \sigma$ and hence
 $p+\mu v \in q+U+\sigma =W(\sigma ,U,q)$. 
\end{proof}

\begin{ex}
{As an example, we consider the toric variety $\mathbb P^2$ and its tropicalization shown in Figure \ref{Figure tropicalization}. Note that for $v=(0,1)$, the point $p+\infty v$ is the point in $N({\sigma_3})$ lying vertically above $p$.}
\end{ex}

\begin{figure}
	\includegraphics[width=.8\linewidth]{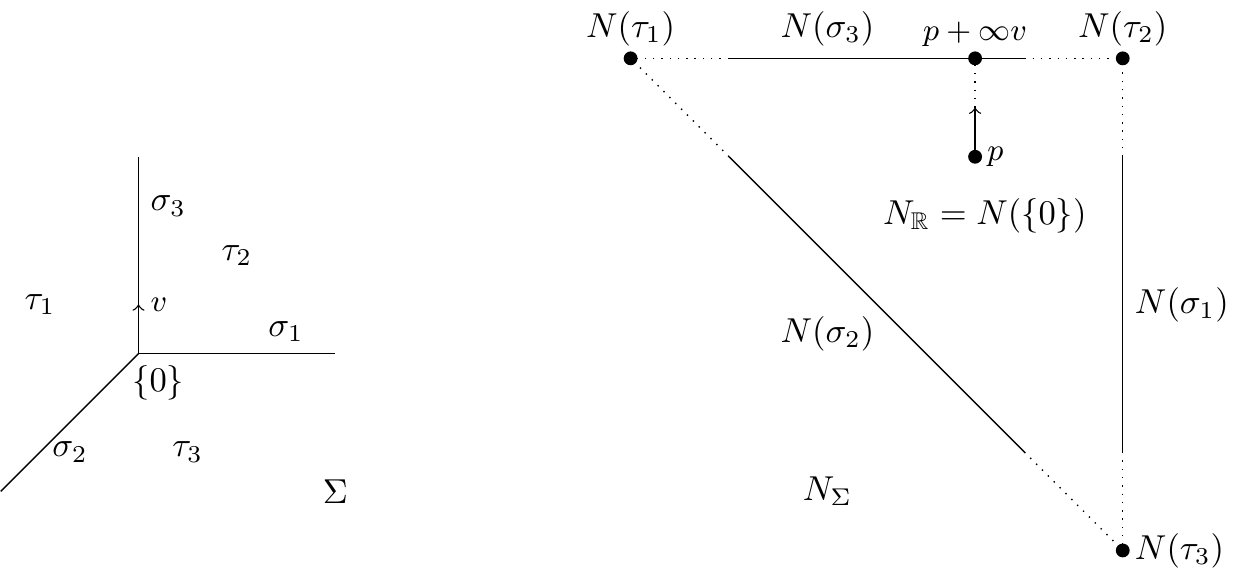}
	\caption{The fan and partial compactification of the toric variety
		$\mathbb{P}^2$.}
	\label{Figure tropicalization}
\end{figure}

\subsection{Lagerberg forms and Lagerberg currents}
\label{subsection smooth forms tropical}

Recall from \cite{lagerberg-2012} 
that for every open subset $U$ of $N_\R$, 
there is a bigraded $\R$-algebra 
of \emph{Lagerberg forms} $A^{\cdot,\cdot}(U)$
with differentials $d'$ and $d''$ of bidegree $(1,0)$ and $(0,1)$.
Lagerberg forms were introduced by Lagerberg  in \emph{loc.~cit.}~under 
the name superforms. They are defined
as 
\begin{align*}
A^{p,q}(U)=A^p(U)\otimes_{C^\infty(U)}A^q(U) 
\end{align*}
where $A^\cdot(U)$ denotes the $\R$-algebra of real
valued smooth differential forms on $U$.

{We choose a basis of $N$  which defines coordinates $u_1,\ldots,u_n$ on $N_\R$.} 
Then we may write a Lagerberg form $\alpha$ as 
\begin{align*}
\alpha = \sum_{I, J} f_{IJ} d'u_I \wedge d''u_J 
\end{align*}
where {$I=\{i_1< \dots < i_p\}$ and $J= \{j_1< \dots < j_q\}$} range over all subsets of $\{1,\dots,n\}$,
{where $f_{IJ}$ are smooth real functions on $U$ and we use the multi-index notation}
\begin{align*}
d'u_I \wedge d''u_J := du_{i_1} \wedge \dots \wedge du_{i_p} \otimes du_{j_1} \wedge \dots \wedge du_{j_q}.
\end{align*} 
There are natural differentials
$d'\colon A^{p,q}(U)\to A^{p+1,q}(U)$ and $d''\colon A^{p,q}(U)\to A^{p,q+1}(U)$, 
which are in coordinates given by 
\begin{align*}
d' ( f d'u_I \wedge d''u_J ) = 
\sum_{i_0 = 1}^n \frac{\partial f}{\partial u_{i_0}} d'u_{i_0} \wedge d'u_I \wedge d''u_J
\end{align*}
and
\begin{align*}
d'' ( f d'u_I \wedge d''u_J ) = 
(-1)^p \sum_{j_0 = 1}^n \frac{\partial f}{\partial u_{j_0}} d'u_I \wedge d''u_{j_0}  \wedge d''u_J.
\end{align*}

{The product of the  bigraded $\R$-algebra $A^{\cdot,\cdot}(U)$ is alternating and we denote it by $\wedge$.} 
The algebras $A^{\cdot,\cdot}(U)$ 
form a sheaf on $N_{\R}$ that is denoted  by $A^{\cdot,\cdot}$
or  by $A^{\cdot,\cdot}_{N_{\R}}$. 

The algebra $A^{\cdot,\cdot}(U)$  carries a natural
involution $J$ that permutes bidegrees and is determined
by $J(\alpha\otimes \beta)=\beta\otimes \alpha$
for all $\alpha,\beta\in A^\cdot(U)$.
A Lagerberg form $\alpha\in A^{p,p}(U)$ of type $(p,p)$
is called \emph{symmetric} 
if it satisfies $J(\alpha)=(-1)^p\alpha$.

Let us fix a  fan $\Sigma \subset N_\R$. {Following
\cite[Definition 2.4]{jell-shaw-smacka2015}, smooth forms on open subsets of 
$N_{\Sigma}$ are defined as follows.}

\begin{defi}\label{def-lagerberg-partial-compactification}
Let $U \subset N_\Sigma$ be an open subset. 
{For every $\sigma \in \Sigma $,} 
we write $U_\sigma \coloneqq U \cap {N(\sigma)}$. 
A \emph{Lagerberg form of type $(p,q)$ on $U$} is given by a family
$\omega=(\omega_\sigma)_{\sigma \in \Sigma }$ with $\omega_\sigma \in  A^{p,q}(U_\sigma)$ satisfying the following local condition. 
For each $p \in U_\sigma$, there exists a neighborhood $V$ of $p$ in $U$ such that for 
all $\tau \prec \sigma$ we have
\begin{displaymath}
 \omega_\tau |_{V_\tau\cap \pi_{\sigma,\tau}^{-1}(V_\sigma)} 
= \pi_{\sigma, \tau}^* 
(\omega_\sigma|_{V_\sigma})|_{V_\tau\cap \pi_{\sigma,\tau}^{-1}(V_\sigma)}. 
\end{displaymath}
We denote by $A^{p,q}(U)$ the real vector space of 
Lagerberg forms of type $(p,q)$ on $U$.
There are unique differentials
$d'\colon A^{p,q}(U)\to \AS^{p+1,q}(U)$
and $d''\colon A^{p,q}(U)\to A^{p,q+1}(U)$ such that
$(d'\omega)_\sigma=d'(\omega_\sigma)$ and 
$(d''\omega)_\sigma=d''(\omega_\sigma)$ for each 
$\omega\in \AS^{p,q}(U)$ and each $\sigma\in \Sigma$.
A \emph{smooth function} $f\colon U\to \R$ is
a Lagerberg form of type $(0,0)$ in $U$.
The assignment $U\mapsto A^{p,q}(U)$ defines a sheaf $A^{p,q}$ of
real vector spaces on the topological space $N_\Sigma$. 
{If we want to stress the fact that $A^{p,q}$ is a sheaf on $N_{\Sigma }$,  
we will denote it by $A_{N_{\Sigma }}^{p,q}$},
The \emph{support} $\mathrm{supp}(\omega )$ 
of a Lagerberg form $\omega \in A^{p,q}(U)$ is 
the closed subset of points of $U$ where $\omega$
has a non-zero germ in the stalk. 
The space of Lagerberg forms of type $(p,q)$ on 
an open subset $U$ of $N_\Sigma$ with 
\emph{compact support} is denoted $A^{p,q}_{c}(U)$.
\end{defi}

\begin{rem}\label{rem-support-forms}
\begin{enumerate} 
\item \label{item:3} The stalk $A^{p,q}_{N_{\Sigma },x}$ of the sheaf
  $A_{N_{\Sigma }}^{p,q}$ in a point $x \in 
  N(\sigma)\subset N_\Sigma$
can be identified with the stalk $A^{p,q}_{N(\sigma ),x}$ in $x$ of the sheaf of Lagerberg forms on 
the real vector space $N(\sigma)$. 
{This follows from Definition \ref{def-lagerberg-partial-compactification}.}
\item
Let $U\subset N_{\Sigma }$ be an open subset and $\omega  \in
A^{p,q}(U)$ a {Lagerberg form}. It follows from statement \ref{item:3} that  
{$\supp(\omega) = \cup_{\sigma \in \Sigma} \supp(\omega_\sigma)$, see \cite[Lemma 2.17]{jell-shaw-smacka2015}.}
\item 
Let $U\subset N_{\Sigma }$ be an open subset.
There is an involution $J$ on
$A^{\cdot,\cdot}(U)=\oplus_{p,q\geq 0}A^{p,q}(U)$ 
such that given a Lagerberg form
$\omega=(\omega_\sigma)_{\omega\in \Sigma}$ 
as in Definition \ref{def-lagerberg-partial-compactification},
$J(\omega)$
is determined by $(J\omega)_\sigma =J(\omega_\sigma)$.
A Lagerberg form $\omega\in A^{p,p}(U)$ of type $(p,p)$
is called \emph{symmetric} if it satisfies $J(\omega)=(-1)^p\omega$.
\item
For  $U\subset N_\Sigma$ open, the restriction map $A^{p,q}(U)\rightarrow A^{p,q}(U \cap N_\R)$ is injective.
\end{enumerate}
\end{rem}

\begin{rem}\label{rem-not-free}
Let $U$ be an open subset of $N_\Sigma$.
Observe that $A^{p,q}_{c}(U)$
is in general not a finitely generated $A^{0,0}_{c}(U)$-module.
This is caused by the fact that forms of large degree
vanish automatically at the boundary. 
\end{rem}

We next discuss a topology on the space $A^{p,q}_{c}(U)$. This
topology is modeled on the topology of the space of test forms used
in analysis, see for instance
\cite[\S 6]{rudin73:_funct_analy}. 
In fact, for an open subset $U$ of $N_\Sigma$, we will
define topologies on certain subspaces of $A^{p,q}_{c}(U)$ and use a
limit process to define a topology
on $A^{p,q}_{c}(U)$.   
Moreover, we shall describe the convergent sequences in $A^{p,q}_{c}(U)$.
In the following, we fix  a basis $u_1,\ldots,u_n$ of $M$ which defines coordinates $(u_1,\ldots,u_n)\colon N_\R\stackrel{\sim}{\rightarrow}\R^n$
and allows to write Lagerberg forms on $U$ in terms of standard
forms $d'u_I\wedge d''u_J$ for subsets $I,J\subset \{1,\ldots,n\}$.

\begin{defi}\label{new-definition-topology}
Let $U$ be an open subset of $N_\Sigma$.
For each compact subset $K\subset U$ and each
finite open covering $(V_i)_{i}$ of $K$, 
we denote by $A_{K}^{p,q}(U,(V_i)_{i})$ the subset of all
Lagerberg forms $\omega=(\omega_\sigma)_{\sigma\in \Sigma}$ in $A^{p,q}_c(U)$ 
such that $\supp(\omega)\subset K$ and
\begin{equation}
\omega_{\tau }|_{V_{i,\tau}\cap 
\pi _{\sigma ,\tau }^{-1}(V_{i,\sigma})}=
\pi _{\sigma ,\tau }^{\ast}\bigl(\omega_{\sigma }|_{V_{i,\sigma}}\bigr)
\big|_{V_{i,\tau}\cap \pi_{\sigma,\tau}^{-1}(V_{i,\sigma})}
\end{equation}
holds for all $i$ and all cones $\sigma,\tau\in \Sigma$ with $\tau
\prec \sigma $
and $V_{i,\sigma}=V_{i}\cap {N(\sigma)}\neq \emptyset$.
Given $\omega\in A_{K}^{p,q}(U,(V_{i})_{i})$,
we write $\omega =\sum_{I,J\subset\{1,\ldots,n\}} f_{I,J}d'u_{I}\land
d''u_{J}$ and define
\begin{equation}\label{new-distribution-norm}
\|\omega\|_{m}\coloneqq
\sum_{\substack{\alpha\in\N^n\\|\alpha |\le m}}\,\sum_{I,J\subset\{1,\ldots,n\}} 
\left\|\frac{\partial^{|\alpha |}f_{I,J}}{\partial u^{\alpha}}\right \|_{K}
\end{equation}
for each $m\in \N$ using the supremum norm $\metr_K$ of 
continuous real functions on the
compact set $K$. The family of norms \eqref{new-distribution-norm},
where $m\in \N$ varies, 
defines on $A^{p,q}_{K}(U,(V_{i})_{i})$ the
structure of a locally convex topological vector
space {which is complete with respect to a translation invariant metric and hence it is a \emph{Fr\'echet space}.} The induced topology is denoted by $\tau _{K,(V_{i})_{i}}$.

We put on $A^{p,q}_{c}(U)$ the topology
$\tau $,  defined as the limit topology 
\begin{displaymath}
(A^{p,q}_{c}(U),\tau )=\lim_{\substack{\longrightarrow\\K,(V_{i})_{i}}}
\left(A^{p,q}_{K}(U,(V_{i})_{i}),\tau_{K,(V_{i})_{i}}\right)
\end{displaymath}
in the category of locally convex topological vector spaces. 
Note that this may be different from the direct limit in the category of topological {vector} spaces. 
\end{defi}

As mentioned previously, the topology of $A^{p,q}_{c}(U)$ 
is modeled on the classical topology
on the space of test {functions} in 
\cite[\S 6]{rudin73:_funct_analy} and
its formal properties are very similar.  
For instance, if $U$ is not compact, then
$A^{p,q}_{c}(U)$ is not metrizable. 
Nevertheless the topology on $A^{p,q}_{c}(U)$ has many nice properties and the 
fact that is not metrizable is only a minor issue.

\begin{rem} \label{LF-spaces}
The spaces $A^{p,q}_{c}(U)$ have the same properties 
as the test function spaces in \cite[Chapter 6]{rudin73:_funct_analy}. 
This is a  consequence from the fact that $E \coloneqq A^{p,q}_{c}(U)$ 
is an \emph{$LF$-space} as introduced by Dieudonn\'e and Schwartz \cite{DS}. 
This means that the vector space $E$ is a countable union of strictly increasing Fr\'echet spaces $E_k$ 
such that the topology on $E_k$ agrees with the induced topology from $E_{k+1}$. 
Indeed, using that $U$ has a countable basis, 
it is clear that the direct limit can be by described by using countable many $(K_k,(V_{k,i})_{i})$ 
such that the compact subset $K_k$ lies in the interior of $K_{k+1}$ and 
such that a subfamily of  
$(V_{k+1,i})_{i}$ is a refinement of the open covering $(V_{k,i})_{i}$ of $K_k$. 
Setting 
\[
E_k \coloneqq \bigl(A^{p,q}_{K}(U,(V_{i})_{i}),\tau_{K,(V_{i})_{i}}\bigr),
\]
we see that $E$ is an $LF$-space. 
It is shown in \cite{DS} that $E$ has a canonical structure as a locally convex space 
which is the finest structure such that the topology on $E_n$ agrees with the induced topology and 
it follows that $E$ is the direct limit of the $E_n$ in the category of locally convex spaces. 

All properties of test function spaces from \cite[Chapter 6]{rudin73:_funct_analy} 
were shown in \cite{DS} more generally for  $LF$-spaces and 
so they apply to $A_c^{p,q}(U)$. 
In fact, an $LF$-space is not only sequentially complete, 
but a complete Hausdorf space \cite[Corollary  of Theorem 6]{DS}. 
For our paper, we need mainly the following results about sequences.
\end{rem}

\begin{prop}\label{convergent sequences}
Let $U$ be an open subset of $N_\Sigma$. 
A sequence $(\omega _{k})$ in $A^{p,q}_{c}(U)$
converges to $\omega \in A^{p,q}_{c}(U)$ if and
only if there is a compact subset $K\subset U$ and a finite open covering
$(V_{i})_{i}$ of $K$ such that all $\omega_k$ and $\omega$ are contained in $ A^{p,q}_{K}(U,(V_{i})_{i})$ 
and for every $m \in \N$, we have
$\lim_{k \to \infty}\|\omega_{k}- \omega\|_{m}=0$.
\end{prop}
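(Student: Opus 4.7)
The plan is to reduce the statement to the general theory of strict inductive limits of Fréchet spaces ($LF$-spaces), which is already invoked in Remark \ref{LF-spaces} via the Dieudonné--Schwartz work \cite{DS}.

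For the easy direction, suppose the $\omega_k$ and $\omega$ all lie in $E \coloneqq A^{p,q}_K(U,(V_i)_i)$ and $\|\omega_k - \omega\|_m \to 0$ for every $m \in \N$. By the definition of $\tau_{K,(V_i)_i}$ as the locally convex topology generated by the seminorms $\|\cdot\|_m$, this is precisely convergence in the Fréchet space $E$. Since $A^{p,q}_c(U)$ carries the direct limit topology, the inclusion $E \hookrightarrow A^{p,q}_c(U)$ is continuous, so $\omega_k \to \omega$ in $A^{p,q}_c(U)$.

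For the converse, the plan is to write $A^{p,q}_c(U) = \bigcup_k E_k$ as the countable strict inductive limit of Fréchet spaces constructed in Remark \ref{LF-spaces}, with $K_k \subset \mathrm{int}(K_{k+1})$ and $(V_{k+1,i})_i$ refining $(V_{k,i})_i$. Two standard properties of strict $LF$-spaces then finish the argument. First, every bounded subset of the $LF$-space is contained in some $E_k$ and is bounded there; convergent sequences are bounded, so the set $\{\omega_k\}_k \cup \{\omega\}$ is contained in some $E_{k_0}$. Second, the topology on $E_{k_0}$ induced from $A^{p,q}_c(U)$ coincides with its Fréchet topology $\tau_{K_{k_0},(V_{k_0,i})_i}$, so the convergence $\omega_k \to \omega$ in $A^{p,q}_c(U)$ becomes convergence in $E_{k_0}$, hence $\|\omega_k - \omega\|_m \to 0$ for every $m$.

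The main obstacle, and the only point that requires input beyond citing \cite{DS}, is to verify that the hypotheses of the Dieudonné--Schwartz theorem genuinely hold in our setup, that is, that the inclusions $E_k \hookrightarrow E_{k+1}$ are topological embeddings of Fréchet spaces and not just continuous injections. This amounts to checking that for $\omega \in E_k$, the seminorm $\|\omega\|_m$ computed with respect to the covering $(V_{k,i})_i$ is equivalent to the one computed with respect to the finer covering $(V_{k+1,i})_i$ on a compact set containing $\supp(\omega) \subset K_k \subset \mathrm{int}(K_{k+1})$; this is immediate because the seminorm \eqref{new-distribution-norm} depends only on the supremum over $K$ of the partial derivatives of the coordinate functions $f_{I,J}$, and these are unchanged when the covering is refined, provided the compatibility condition defining $A^{p,q}_K(U,(V_i)_i)$ continues to hold (which it does automatically for a refinement). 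Once this is verified, invoking \cite{DS} delivers both the bounded-set statement and the subspace-topology statement, and the proof is complete.
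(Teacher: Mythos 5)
Your proof is correct and follows essentially the same route as the paper: both reduce to the Dieudonn\'e--Schwartz theory of $LF$-spaces and cite the fact that a bounded (hence convergent) sequence in a strict countable inductive limit of Fr\'echet spaces lies inside, and converges inside, one of the steps $E_{k_0}$. The paper merely asserts the $LF$-structure in Remark \ref{LF-spaces} and cites \cite[Proposition 4]{DS}; you add an explicit verification that the inclusions $E_k\hookrightarrow E_{k+1}$ are topological embeddings, which the paper leaves implicit. One small slip in that verification: the seminorm \eqref{new-distribution-norm} does not depend on the covering $(V_i)_i$ at all, only on $K$; what you really need is that for $\omega\in E_k$ with $\supp\omega\subset K_k$ the supremum over $K_{k+1}$ of each coordinate derivative equals the supremum over $K_k$, so the two norms literally coincide on $E_k$, and that the compatibility condition over the refined covering continues to hold --- both of which are immediate, as you note.
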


\begin{proof}
A convergent sequence is bounded and hence the result follows from the fact 
that every bounded subset of an $LF$-space is contained in some $E_n$ \cite[Proposition 4]{DS}.
\end{proof}

Even if the space $A^{p,q}_{c}(U)$ is not metrizable, for
many purposes, sequences are enough.

\begin{prop}\label{test-proposition}
Let $T\colon A^{n-p,n-q}_{c}(U) \to \R$ be a linear functional. Then
the following conditions are equivalent.
\begin{enumerate}
\item\label{item:4}
The map $T$ is continuous.
\item \label{item:5}
If a sequence $(\omega _{k})_{k\in \N}$ converges to zero, 
then $(T(\omega _{k}))_{k\in \N}$ converges to zero.
\item \label{item:6}
The restriction of $T$ to each subspace
$A^{n-p,n-q}_{K}(U,(V_{i})_{i})$ is continuous.
\end{enumerate}
\end{prop}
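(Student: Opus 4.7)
The plan is to prove the implications \ref{item:4} $\Rightarrow$ \ref{item:5} $\Rightarrow$ \ref{item:6} $\Rightarrow$ \ref{item:4}, exploiting the fact recalled in Remark \ref{LF-spaces} that $A^{n-p,n-q}_c(U)$ is the direct limit in the category of locally convex spaces of the Fréchet spaces $E_K \coloneqq A^{n-p,n-q}_K(U,(V_i)_i)$, indexed by pairs $(K,(V_i)_i)$.

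The implication \ref{item:4} $\Rightarrow$ \ref{item:5} is immediate from the general fact that continuous maps between topological spaces are sequentially continuous. For \ref{item:5} $\Rightarrow$ \ref{item:6}, I would first observe that by construction each inclusion $E_K \hookrightarrow A^{n-p,n-q}_c(U)$ is continuous, since the topology on the right-hand side is the finest locally convex topology for which all such inclusions are continuous. Therefore any sequence $(\omega_k)$ that converges to zero in $E_K$ also converges to zero in $A^{n-p,n-q}_c(U)$, and hence $T(\omega_k) \to 0$ by \ref{item:5}. This shows that the restriction $T|_{E_K}$ is sequentially continuous at the origin, hence sequentially continuous (by linearity). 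Since $E_K$ is a Fréchet space, in particular metrizable, sequential continuity and continuity coincide for maps out of $E_K$, so $T|_{E_K}$ is continuous.

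Finally, for \ref{item:6} $\Rightarrow$ \ref{item:4}, I would invoke the universal property of the direct limit in the category of locally convex topological vector spaces: a linear map from $\lim_{\longrightarrow} E_K$ to any locally convex space is continuous if and only if its composition with each structure map $E_K \to \lim_{\longrightarrow} E_K$ is continuous. Here the target is $\R$, and the compositions are precisely the restrictions $T|_{E_K}$, so \ref{item:6} gives \ref{item:4}.

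The only non-formal step is \ref{item:5} $\Rightarrow$ \ref{item:6}, and the mild subtlety there is to justify that convergence in $E_K$ implies convergence in the inductive limit topology; but this is built into the definition of the limit topology in Definition \ref{new-definition-topology}. No further issue arises because each $E_K$ is metrizable, so no transfinite or net-theoretic argument is needed to upgrade sequential continuity to continuity on the individual pieces.
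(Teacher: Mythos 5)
Your proof is correct and is a genuine reorganization of the argument rather than a restatement of it. The paper deduces \ref{item:4}~$\Leftrightarrow$~\ref{item:6} from a result of Dieudonn\'e and Schwartz \cite[Proposition~5]{DS} on LF-spaces, and \ref{item:5}~$\Leftrightarrow$~\ref{item:6} from Proposition~\ref{convergent sequences}, the characterization of convergent sequences in $A_c^{n-p,n-q}(U)$ as those that are supported in, and converge within, a single piece $A^{n-p,n-q}_K(U,(V_i)_i)$. You instead run the cycle \ref{item:4}~$\Rightarrow$~\ref{item:5}~$\Rightarrow$~\ref{item:6}~$\Rightarrow$~\ref{item:4}. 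Your step \ref{item:6}~$\Rightarrow$~\ref{item:4} is the universal property of the locally convex inductive limit, which is the content of the Dieudonn\'e--Schwartz result the paper cites, so no real difference there. The structural difference is in \ref{item:5}~$\Rightarrow$~\ref{item:6}: you need only the easy observation that convergence in $E_K$ implies convergence in the inductive limit (i.e.\ continuity of the structure maps) together with metrizability of the Fr\'echet piece, whereas the paper's appeal to Proposition~\ref{convergent sequences} for the direction \ref{item:6}~$\Rightarrow$~\ref{item:5} invokes the harder half of that proposition --- that every convergent sequence in the LF-space lands in a single $E_K$, which rests on \cite[Proposition~4]{DS} about bounded sets. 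By arranging the implications as a cycle you sidestep that boundedness argument entirely, making the proof marginally more self-contained; the paper's version is shorter to state because it simply cites the two results. Both are sound.
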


\begin{proof}
The equivalence of \ref{item:4} and \ref{item:6} follows from
\cite[Proposition 5]{DS}, while \ref{item:5} and \ref{item:6} 
are equivalent by Proposition \ref{convergent sequences}.
\end{proof}

As in the classical case of distributions, we now define
currents as the topological dual of the space of smooth forms with
compact support.

\begin{defi}\label{new-definition-currents}
	A \emph{Lagerberg current of type $(p,q)$ on $U$} 
	is a  continuous linear  functional $T\colon A^{n-p,n-q}_{c}(U)\to \R$.
	The space of such Lagerberg currents is denoted by $D^{p,q}(U)$.
	A \emph{Lagerberg distribution} is a Lagerberg current of type $(n,n)$.
\end{defi}

\begin{rem}\label{product-partial-comp}
Let $U$ denote an open subset of $N_\Sigma$.
\begin{enumerate}
\item\label{product-partial-comp1}  If $U$ is contained in the open generic stratum $N_\R$, then
the definition of $D^{p,q}(U)$ above coincides 
with the definition of $D^{p,q}(U)$ given by
Lagerberg \cite[2.1]{lagerberg-2012}.
\item\label{product-partial-comp2} By the usual methods, 
the spaces of Lagerberg currents inherit operators 
\[
d'\colon D^{p,q}(U)\longrightarrow D^{p+1,q}(U),\,\,\,
d''\colon D^{p,q}(U)\longrightarrow D^{p,q+1}(U)
\] 
and a product 
\begin{displaymath}
A^{p',q'}(U)\otimes D^{p,q}(U)\longrightarrow D^{p+p',q+q'}(U)
\end{displaymath}
such that 
\begin{eqnarray*}
d'(T)(\omega')&=&(-1)^{p+q+1}T(d'\omega'),\\
d''(T)(\omega'')&=&(-1)^{p+q+1}T(d''\omega''),\\
(\beta\wedge T)(\omega)&=&(-1)^{(p'+q')(p+q)}T(\beta\wedge \omega)
\end{eqnarray*}
for all $T\in D^{p,q}(U)$, $\beta \in A^{p',q'}(U)$ and all
$\omega',\omega'',\omega \in 
A^{\cdot,\cdot}(U)$ of suitable bidegree.
\item \label{product-partial-comp3} 
There is an involution $J$ on
$D^{\cdot,\cdot}(U)=\oplus_{p,q\geq 0}D^{p,q}(U)$ 
such that given 
$T\in D^{p,q}(U)$ and $\omega\in A^{n-q,n-p}_c(U)$ the
Lagerberg current $J(T)\in D^{q,p}(U)$
is given by
\begin{displaymath}
  J(T)(\omega)=(-1)^{n}T(J(\omega)).
\end{displaymath}
A Lagerberg current $T\in D^{p,p}(U)$ 
is called \emph{symmetric} if it satisfies
$J(T)=(-1)^{p}T$.
\end{enumerate}
\end{rem}

Integration of Lagerberg forms gives examples of currents.  
We start by recalling the integration theory of Lagerberg forms. If $U\subset
  N_{\Sigma }$ is an open subset and $\eta\in
  \AS^{n,n}(U)$ is a Lagerberg form with compact support, using the
  chosen basis of $M$, we write 
  \begin{displaymath}
    \eta = f d'u_{1}\land d''u_{1}\land \dots \land d'u_{n}\land d''u_{n}.
  \end{displaymath}
  Denote by $d \lambda$ the Lebesgue measure on $N_\R$ induced by the
  lattice $N$.
  The integral of $\eta$ is defined as
  \begin{displaymath}
    \int_{U} \eta = \int_{N_\R} f d\lambda.
  \end{displaymath}
  Since the support of any compactly supported Lagerberg form of type
  $(n,n)$ is a compact subset of $N_\R$, the integral is finite.
  Since two isomorphisms $N\cong \Z^{n}$ differ by a matrix of
  determinant $\pm 1$, the integral does not depend on the choice of
  coordinates.

\begin{ex}\label{current of integration}
Let $U\subset N_{\Sigma }$ be an open subset. 
We will use the map
\begin{displaymath}
{\curr\colon A^{p,q}(U)\longrightarrow D^{p,q}(U), \quad 
\eta \longmapsto [\eta ]( \cdot )=\int_{U} \eta \wedge \,\cdot\,.}
\end{displaymath}
This map is a morphism of $A^{\cdot,\cdot}(U)$-modules compatible with
the actions of $d'$, $d''$ and $J$. 
\end{ex}     

\begin{ex} \label{measure gives current in top degree}
Let $U$ be an open subset of $N_{\Sigma }$. 
For every real Radon measure $\mu$ on $U$ 
(see Appendix \ref{facts from measure theory}), 
there exists a unique Lagerberg current
$T \in D^{n,n}(U)$ such that 
\begin{equation} \label{current equation}
T(f)=\int_U f \, d\mu \quad \forall f \in {A_c^{0,0}(U)}.
\end{equation}
Indeed, for a compact subset $K$ of $U$ and a finite covering 
$(V_i)_{i\in I}$ of $K$, the canonical maps
$A^{0,0}_K(U,(V_i)_{i\in I})\to C_K^0(U,\R)$ and
$C_K^0(U,\R)\to C_c^0(U),\R)$ are morphisms of locally
convex vector spaces. 
By the universal property
of the direct limit, the composition of these maps induces  a continuous map $A^{0,0}_c(U)\to C_c^0(U,\R)$.
This shows our claim.
\end{ex}

\begin{prop}\label{extended-partition-unity}
Let $(U_i)_{i\in I}$ be an open cover
of an open subset $U$ of $N_\Sigma$.
Then there exists a \emph{partion of unity subordinate
to the given cover $(U_i)_{i\in I}$}, i.e.~a
countable, locally finite open cover $(V_j)_{j\in J}$ of $U$
together with a map $s\colon J\to I$ such that $V_j\subset U_{s(j)}$
for all $j\in J$ and
a collection of non-negative functions $f_j\in A_c^{0,0}(V_j)$
such that $\sum_{j\in J}f_j \equiv 1$.
\end{prop}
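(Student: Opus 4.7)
The plan is to imitate the classical partition of unity argument on smooth manifolds; the only nontrivial new ingredient is the local construction of smooth Lagerberg bump functions that respect the compatibility condition of Definition \ref{def-lagerberg-partial-compactification}.

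First I would use the topological input from Remark \ref{def-topology-partial-compactification}: $N_\Sigma$ is metrizable, locally compact and second countable, hence paracompact and $\sigma$-compact. A standard refinement argument then produces a countable, locally finite refinement $(V_j)_{j\in J}$ of $(U_i)_{i\in I}$ by relatively compact open subsets of $U$ together with a map $s\colon J\to I$ such that $\overline{V_j}\subset U_{s(j)}$, and one may further shrink to open subsets $V_j'$ with $\overline{V_j'}\subset V_j$ which still cover $U$.

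The core step is the following local construction. Given $p\in N(\sigma)\cap U$ and any open neighborhood $W$ of $p$ in $U$, I would produce a non-negative $g_p\in A^{0,0}_c(W)$ with $g_p(p)>0$. After shrinking $W$, I would take it to be one of the basic neighborhoods $W(\sigma,U_0,q)$ of \eqref{eq:20}, so that $W\cap N(\tau)$ is non-empty only for faces $\tau\prec\sigma$. Pick a smooth compactly supported bump $h\geq 0$ on $N(\sigma)\cap W$ with $h(p)>0$, and a smooth cutoff $\chi\colon\R\to[0,1]$ equal to $1$ for large arguments and to $0$ for small arguments. For each face $\tau\prec\sigma$, define
\[
 g_p\big|_{W\cap N(\tau)} \;\coloneqq\; \pi_{\sigma,\tau}^{*}(h)\cdot \prod_i \chi(u_i),
\]
where the product runs over those coordinates $u_i$ on $N(\tau)$ which tend to $\infty$ as one approaches $N(\sigma)$ from $N(\tau)$. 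Verifying the compatibility condition is then a matter of unwinding definitions: for $p'\in W\cap N(\tau')$ and any $\tau\prec\tau'\prec\sigma$, the coordinates $u_i$ entering the cutoff product for $N(\tau)$ which belong to $\langle\tau'\rangle_\R/\langle\tau\rangle_\R$ equal $\infty$ at $p'$, hence are large on a neighborhood of $p'$, so the corresponding factors $\chi(u_i)$ are identically $1$ there; this makes $g_p|_{N(\tau)}$ agree locally with $\pi_{\tau',\tau}^{*}(g_p|_{N(\tau')})$. The compactness of $\supp(g_p)$ in $N_\Sigma$ follows from the explicit description of the basic neighborhoods.

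With these local bumps in hand the assembly is classical: for each $j\in J$ cover the compact set $\overline{V_j'}$ by finitely many local bumps with supports in $V_j$ and let $g_j\in A^{0,0}_c(V_j)$ be their sum. Local finiteness of $(V_j)_{j\in J}$ makes $G\coloneqq\sum_{j\in J}g_j$ a locally finite sum of smooth Lagerberg functions, strictly positive on $U$, and $f_j\coloneqq g_j/G\in A^{0,0}_c(V_j)$ furnishes the desired partition of unity. The main obstacle in carrying out this plan is the careful verification of the compatibility condition for $g_p$ when $\sigma$ is not smooth, where $W$ must be described via the monoid $S_\sigma$ rather than by Cartesian coordinates; the argument still goes through, but the bump construction becomes notationally heavier.
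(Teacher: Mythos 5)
Your proof is correct and follows essentially the same approach as the paper: reduce to constructing local bump Lagerberg functions that respect the boundary compatibility condition, then assemble via the standard paracompactness argument. The paper simply cites \cite[Lemma 2.7]{jell-shaw-smacka2015} for the local bump (after realizing $N_\sigma$ as a closed polyhedral subset of $\Rsup^k$ via monoid generators, which also dispatches the non-smooth case you flag as notationally heavy) and \cite[Theorem 1.11]{Warner} for the assembly, whereas you carry both steps out explicitly.
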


\begin{proof}
By general arguments, see \cite[Theorem 1.11]{Warner}, 
it is sufficient to show that, given a point $x$ and a neighborhood $V$ of $x$ 
in $U$, there exists a function $f \in A^{0,0}_c(V)$ 
that is constant equal to $1$ on a neighborhood of $x$. 
This statement is clearly local, so we may assume that 
our fan $\Sigma$ is generated by a single cone $\sigma$ and
$N_\Sigma=N_\sigma$. 
We have seen in Remark \ref{def-topology-partial-compactification} 
that a choice of $k$ generators of the cone $\sigma$ leads to a
realization of $N_\sigma$ as a closed (polyhedral) subset of $\Rsup^k$. Now the existence 
of a partition of unity for an open subset of $\Rsup^k$  from \cite[Lemma 2.7]{jell-shaw-smacka2015} 
readily shows the claim. 
\end{proof}

Let $U$ be an open subset of $N_\Sigma$.
Recall from Appendix \ref{facts from measure theory} that the space $C_c^0(U,\R)$
of real valued continuous functions on $U$ with compact support has a canonical
structure of a locally convex topological vector space.

\begin{cor}\label{apply-stone-weierstrass}
For $U \subset N_\Sigma$ open, the image of  
$A^{0,0}_c(U)\hookrightarrow C_c^0(U)$ is sequentially dense.
\end{cor}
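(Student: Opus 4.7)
The plan is to combine a smooth cutoff produced via Proposition \ref{extended-partition-unity} with the Stone--Weierstrass approximation theorem applied to a compact set containing the support.

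Fix $f\in C_c^0(U,\R)$ with compact support $K\subset U$. I would first choose a relatively compact open neighborhood $U'$ of $K$ in $U$ and apply Proposition \ref{extended-partition-unity} to the open cover $\{U',\,U\setminus K\}$ of $U$ in order to produce a cutoff $\chi\in A^{0,0}_c(U)$ with $\chi\equiv 1$ on a neighborhood of $K$ and with compact support $L\subset \overline{U'}$. This is a standard consequence of the partition of unity: $\chi$ is the (locally finite) sum of those bump functions of the partition whose indices map to $U'$.

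Next I would apply the Stone--Weierstrass theorem to the compact Hausdorff space $L$ with the subalgebra $\mathcal{A}\subset C^0(L,\R)$ given by the image of the restriction map $A^{0,0}(U)\to C^0(L,\R)$. The set $\mathcal{A}$ is an $\R$-subalgebra containing the constant functions, and it separates points of $L$: given distinct $p,q\in L$, choose a neighborhood $V$ of $p$ in $U$ with $q\notin V$ (possible since $N_\Sigma$ is Hausdorff); the local bump function assertion established inside the proof of Proposition \ref{extended-partition-unity} produces $\phi\in A^{0,0}_c(V)$ that is identically $1$ on a neighborhood of $p$, so $\phi(p)=1\ne 0=\phi(q)$. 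Stone--Weierstrass then yields a sequence $g_n\in A^{0,0}(U)$ whose restrictions $g_n|_L$ converge uniformly to $f|_L$.

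Finally I would set $f_n\coloneqq \chi g_n\in A^{0,0}_c(U)$. All $f_n$ have support inside the fixed compact set $L\subset U$, and since $\chi\equiv 1$ on a neighborhood of $\supp(f)=K$ we have $\chi f=f$, so
\[
\|f_n-f\|_\infty=\|\chi(g_n-f)\|_\infty\le \|\chi\|_\infty\,\|(g_n-f)|_L\|_\infty\longrightarrow 0.
\]
Uniform convergence together with the common compact support $L$ is exactly convergence in the inductive limit topology on $C_c^0(U,\R)$ recalled in Appendix \ref{facts from measure theory}, which proves the corollary.

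The only non-routine ingredient is the separation of points for $\mathcal{A}$, and that is provided for free by the bump function construction already embedded in the proof of Proposition \ref{extended-partition-unity}; so the main difficulty is essentially bookkeeping rather than a genuinely hard step.
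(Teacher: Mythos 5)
Your proof is correct and follows essentially the same route as the paper: produce a smooth cutoff via the partition of unity, apply Stone--Weierstrass on a compact set to approximate uniformly, and then multiply by the cutoff to obtain a sequence in $A^{0,0}_c(U)$ with uniformly bounded compact supports. The only cosmetic difference is that you invoke Stone--Weierstrass for the unital algebra of restrictions of all of $A^{0,0}(U)$, whereas the paper works with the non-unital algebra of restrictions of functions supported in an intermediate compact set and uses the bump functions for the ``non-vanishing'' hypothesis; both variants are equally valid.
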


\begin{proof}
It is enough to show that, for any continuous function $f\in
C_c^0(U)$, there is a sequence of functions $g_{k}\in
A^{0,0}_c(U)$, $k\ge 0$, that converges to $f$ in the topology of
$C_c^0(U)$. Let $K$ be the support of $f$.
Using that $N_\Sigma$ is locally compact,
we can easily find open
subsets $U_{1}, U_{2}$ and compact subsets $K_{1},K_{2}$ with
\begin{displaymath}
K \subset U_{1}\subset K_{1}\subset U_{2}\subset K_{2} \subset U.   
\end{displaymath}
By Proposition \ref{extended-partition-unity}, the Stone--Weierstrass
Theorem \cite[Theorem 7.32]{rudin1964} implies that 
the $\R$-algebra $\{h|_{K_1}\mid h \in A^{0,0}(U),\, \supp(h) \subset K_2\}$
is dense in $C^0(K_1)$. 
Hence there is a sequence of smooth functions $h_{k}\in A^{0,0}_c(U)$ with 
$\supp(h_k) \subset K_2$ such that the $h_k|_{K_1}$ converge uniformly to $f$ in $C^0(K_{1})$. 
Again by Lemma \ref{extended-partition-unity},
there is a smooth function $0\le \rho \le 1 $, 
whose support is contained in
$U_{1}$ and with $\rho|_K \equiv 1$. 
Then the sequence of smooth functions given by
$g_{k}=\rho h_{k}$ converges to $f$ in $C_c^0(U)$.  
 \end{proof}

\section{Positivity for complex invariant forms and Lagerberg forms} 
\label{Complex invariant forms} 

In this section we study positive forms on a 
smooth complex toric variety
that are invariant under the action of the compact 
torus and compare them to positive Lagerberg forms.
We keep the setting from Section \ref{section smooth forms tropical}.

\subsection{Invariant forms in the case of the torus} \label{subsection invariant forms torus}
We start with the case of the complex algebraic torus 
$\torus = \Spec\,\C[M]$ of dimension $n$ with character lattice $M$ 
and cocharacter lattice $N$. 
We fix a splitting $N\cong \Z^{n}$ that induces  holomorphic
coordinates $z_1, \dots, z_n$ on $T$ as well as linear coordinates
$u_{1},\dots,u_{n}$ on $N_{\R}$. 
As before we
denote the associated complex manifold 
$M\otimes_\Z\C^\times\cong(\C^\times)^n$ by $\torusan$. 
We will also consider the real compact torus 
\begin{align*}
\SS \coloneqq 
\{z \in {\torusan} \mid  |z_j|=1 \,\,\,\forall j\in \{1,\ldots,n\}\} = 
\{z \in {\torusan} \mid |\chi^u(z)|=1 \,\,\,\forall u\in M\}.
\end{align*}

We will denote by  $\AS$ either the sheaf of complex 
differential forms on ${\torusan}$ or the sheaf of 
(real) Lagerberg forms on $N_{\R}$.  
The context will always allow us to distinguish between
them. If not we will denote the former as $\AS_{{\torusan}}$ 
and the latter as $\AS_{N_{\R}}$.  
For an $\SS$-invariant subset $V$ of ${\torusan}$,  let 
$\AS(V)^\SS$ denote the subalgebra of $\AS(V)$  given by the
$\SS$-invariant forms.

\begin{rem} \label{rem:2}
Let $V$ be an $\SS$-invariant subset of ${\torusan}$.
The subalgebra  $\AS(V)^\SS$ is a direct factor of 
$\AS(V)$ because averaging with
  respect to the  {Haar probability measure $\mu_\SS$} of 
  $\SS$ defines a canonical projection 
\[
\AS(V)\longrightarrow \AS(V)^\SS, \quad \omega  \longmapsto \omega^{\rm av}
{\coloneqq \int_\SS a^*(\omega) \, d\mu_\SS(a)}
\]  
where $a\colon \torusan\to \torusan$ denotes translation by $a\in {\torusan}$. 
\end{rem}

\begin{defi}
Let $F$ be the antilinear involution of the sheaf of 
$\C$-algebras ${\AS=}\AS_{{\torusan}}$
determined by $F(f)=\bar f$ for $f\in \AS^{0,0}$ and by
  \begin{align*}
    F(dz_{j}/z_{j})= dz_{j}/z_{j},\quad 
    F(id\bar z_{j}/\bar z_{j})&= i d\bar z_{j}/\bar z_{j}
\end{align*}
for $j=1,\dots,n$. The $\SS$-invariant one-forms
$dz_j/z_j$ and $id\bar z_j/\bar z_j$ $(j=1,\ldots,n)$
generate a subsheaf of $\R$-algebras $B$ of $\AS$ such that
$\AS^{\cdot,\cdot}= \AS^{0,0} \otimes_\R B^{\cdot,\cdot}$.
Observe that this definition of $B$ does not depend on the
choice of the splitting $N\cong \Z^n$ which induces the coordinates
$z_1,\ldots,z_n$.
The antilinear involution $F$ above is the $\R$-linear endomorphism 
on $\AS_{{\torusan}}$ given as the tensor product of complex conjugation  
on $\AS^{0,0}$ with the identity on $B$.
We conclude that the involution $F$ is indeed well defined.
\end{defi}

\begin{lem}\label{lemm:11} 
The involution $F$ is ${\torusan}$-equivariant. 
In particular $F$ induces an antilinear
involution, also denoted by $F$, of $\AS(V)^{\SS}$ for any
$\SS$-invariant open subset $V$ of ${\torusan}$.
\end{lem}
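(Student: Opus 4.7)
The plan is to exploit the sheaf-level decomposition $\AS^{\cdot,\cdot} = \AS^{0,0} \otimes_\R B^{\cdot,\cdot}$ and to verify that each tensor factor is compatible with both $F$ and the torus action, so that equivariance reduces to two easy observations. Concretely, for every $a \in \torusan$ I would show that the pullback $a^\ast$ by translation commutes with $F$; specializing to $a \in \SS$ then immediately gives the induced antilinear involution on $\AS(V)^\SS$.

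First I would check that the generators $dz_j/z_j$ and $id\bar z_j/\bar z_j$ of $B$ are invariant under translation by any $a = (a_1,\ldots,a_n) \in \torusan$, which is a one-line computation using $d(a_j z_j) = a_j\, dz_j$. Since $B$ is the subsheaf of $\R$-algebras generated by these forms and $a^\ast$ is an algebra morphism, every local section of $B$ is $\torusan$-invariant. In particular, on $B$ both $F$ and $a^\ast$ act as the identity.

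Second, on smooth functions $f \in \AS^{0,0}$ the pullback commutes with complex conjugation because $a^\ast(\bar f) = \overline{a^\ast f}$ pointwise. Writing an arbitrary local form as $\omega = \sum_k f_k \otimes \beta_k$ with $f_k \in \AS^{0,0}$ and $\beta_k \in B$, I would then combine the two observations to conclude $F(a^\ast \omega) = a^\ast F(\omega)$ by $\R$-linearity, which is the desired equivariance. Restricting to $a \in \SS$ shows that $F$ preserves $\SS$-invariant sections over any $\SS$-stable open $V$, giving the asserted antilinear involution on $\AS(V)^\SS$.

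I do not expect any serious obstacle here; the only conceptual point is that the decomposition $\AS^{\cdot,\cdot} = \AS^{0,0} \otimes_\R B^{\cdot,\cdot}$ together with the translation-invariance of $B$ makes $F$ intrinsic (independent of the splitting $N \cong \Z^n$), and it is precisely this structural feature that forces $F$ to commute with the torus action.
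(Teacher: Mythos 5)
Your argument is correct and follows essentially the same route as the paper: both verify $a^\ast\bar f=\overline{a^\ast f}$ together with the translation-invariance of $dz_j/z_j$ and $id\bar z_j/\bar z_j$, and then deduce $a^\ast F = F a^\ast$ from the $\R$-algebra structure, with the decomposition $\AS = \AS^{0,0}\otimes_\R B$ making the reduction to generators transparent. The paper's version is simply a more condensed statement of the same computation.
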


\begin{proof}
For $f\in \AS^{0,0}(V)$ and $j=1,\dots,n$, we have
\begin{displaymath}
a^{\ast}\bar f=\overline{a^{\ast}f},\  
a^{\ast}(dz_{j}/z_{j})=dz_{j}/z_{j},\text{ and }
a^{\ast}(id\bar z_{j}/\bar z_{j})=i d\bar z_{j}/\bar z_{j}.
\end{displaymath}
We deduce that $a^{\ast}F(\omega )=F(a^{\ast}\omega )$ 
for any $\omega \in \AS(V)$ and the claim follows.
\end{proof}

\begin{lem}\label{lemm:14}
  Let $V\subset {\torusan}$ be an $\SS$-invariant open subset. If $\omega \in
  \AS(V)^{\SS}$, then
  \begin{displaymath}
    F(\partial\omega )=\partial F(\omega ),\quad
    F(i \bar \partial\omega )=i \bar \partial F(\omega ).
  \end{displaymath}
\end{lem}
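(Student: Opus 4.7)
My plan is to reduce the identity to the case of an $\SS$-invariant smooth function by exploiting the decomposition $\AS^{\cdot,\cdot}=\AS^{0,0}\otimes_\R B^{\cdot,\cdot}$, and then argue by a direct computation in the coordinates $z_1,\dots,z_n$. First I would observe that $\partial$ and $\bar\partial$ preserve $\SS$-invariance (since $\SS$ acts by biholomorphisms), so both sides of each identity lie in $\AS(V)^\SS$ and $F$ is defined there by Lemma \ref{lemm:11}. Writing
\[
\omega=\sum_{I,J} f_{IJ}\,\eta_{IJ},\qquad f_{IJ}\in \AS^{0,0}(V)^\SS,
\]
where the $\eta_{IJ}$ are wedge products of the generators $dz_k/z_k$ and $id\bar z_k/\bar z_k$ of $B$, the identities $d(dz_k/z_k)=0=d(id\bar z_k/\bar z_k)$ give $\partial\omega=\sum_{I,J}\partial f_{IJ}\wedge\eta_{IJ}$ and $i\bar\partial\omega=\sum_{I,J} i\bar\partial f_{IJ}\wedge\eta_{IJ}$. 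Since $F$ is a graded $\R$-algebra morphism that fixes every $\eta_{IJ}$, the two identities reduce to the case $\omega=f\in\AS^{0,0}(V)^\SS$.

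For such an $f$, $\SS$-invariance is the condition $\partial f/\partial\theta_k=0$ for every $k$, where $\theta_k=\arg z_k$. Combined with $i\,\partial/\partial\theta_k=z_k\,\partial/\partial z_k-\bar z_k\,\partial/\partial\bar z_k$, this yields the key identity
\[
g_k\;:=\;z_k\,\partial f/\partial z_k\;=\;\bar z_k\,\partial f/\partial\bar z_k,
\]
with $g_k$ itself $\SS$-invariant, and hence
\[
\partial f=\sum_{k=1}^n g_k\,(dz_k/z_k),\qquad i\bar\partial f=\sum_{k=1}^n g_k\,(id\bar z_k/\bar z_k).
\]
Applying the same identity to the (also $\SS$-invariant) function $\bar f$, and using $\partial\bar f/\partial z_k=\overline{\partial f/\partial\bar z_k}$, the corresponding coefficient is
\[
z_k\,\partial\bar f/\partial z_k\;=\;z_k\,\overline{\partial f/\partial\bar z_k}\;=\;\overline{\bar z_k\,\partial f/\partial\bar z_k}\;=\;\overline{g_k}.
\]
Consequently
\[
\partial F(f)\;=\;\partial\bar f\;=\;\sum_k \overline{g_k}\,(dz_k/z_k)\;=\;F\!\left(\sum_k g_k\,(dz_k/z_k)\right)\;=\;F(\partial f),
\]
where the third equality uses the antilinearity of $F$ together with $F(dz_k/z_k)=dz_k/z_k$. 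The argument for $i\bar\partial$ is entirely analogous, using $F(id\bar z_k/\bar z_k)=id\bar z_k/\bar z_k$ in place of the previous identity.

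No step presents a serious obstacle in this approach. The only care required is in verifying the $\SS$-invariance identity $z_k\,\partial f/\partial z_k=\bar z_k\,\partial f/\partial\bar z_k$—which is precisely where the hypothesis $\omega\in\AS(V)^\SS$ enters—and in tracking the antilinearity of $F$ on the coefficients while using that $F$ fixes the chosen $B$-basis pointwise; both amount to routine bookkeeping once the decomposition $\AS^{\cdot,\cdot}=\AS^{0,0}\otimes_\R B^{\cdot,\cdot}$ has been brought to bear.
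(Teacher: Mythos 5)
Your proof is correct and follows essentially the same route as the paper: both express $\omega$ in the $\SS$-invariant frame built from $dz_k/z_k$ and $id\bar z_k/\bar z_k$ (which $F$ fixes), reduce the computation to the coefficient functions, and use that complex conjugation commutes with differentiation of the $\SS$-invariant coefficients. Your reduction to the $(0,0)$ case via the closedness of the $B$-generators is a slightly cleaner bookkeeping device than the paper's direct computation, but the underlying argument is the same.
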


\begin{proof}
We note that the $\SS$-invariant forms $i^{|K|}dz_{I}\wedge d\bar z_{K}/(z_{I}\bar z_{K})$ give a frame in   $\AS(V)$. 
Using that $r \to \log(r^2)$ is a diffeomorphism from $\R_{>0}$ onto $\R$,  any $\omega \in \AS(V)^{\SS}$ can be written as
  \begin{equation} \label{added eqnr}
    \omega =\sum_{I,K}f _{I,K}(\log(z_{1}\bar
    z_{1}),\dots,\log(z_{n}\bar z_{n}) )\frac{i^{|K|}dz_{I}\wedge d\bar 
      z_{K}}{z_{I}\bar z_{K}}
  \end{equation}
  where the functions $f_{I,K}$ are smooth complex valued
  functions of $n$ real variables.
  We denote by $\partial_{j}f_{I,K}$ the partial derivative with respect
  to the $j$-th variable. Clearly
  \begin{equation*}
    \label{eq:7}
    \partial_{j}\bar f_{I,K}=\overline {\partial_{j}f_{I,K}}.
  \end{equation*} 
  Then $F(\partial \omega)$ is equal to 
  \begin{displaymath}
    F\left(
    \sum_{I,K}\sum_{j\not \in I}\partial_{j}
    f_{I,K}\frac{dz_{j}}{z_{j}} \wedge \frac{i^{|K|} dz_{I}\wedge d\bar  
      z_{K}}{z_{I}\bar z_{K}} \right) =
        \sum_{I,K}\sum_{j\not \in I}\overline{\partial_{j}
    f_{I,K}}\frac{dz_{j}}{z_{j}} \wedge \frac{i^{|K|}dz_{I}\wedge d\bar  
      z_{K}}{z_{I}\bar z_{K}},
  \end{displaymath}
  and 
  \begin{displaymath}
    \partial F(\omega) = \partial \left(
    \sum_{I,K}
    \bar f_{I,K}\frac{i^{|K|} dz_{I}\wedge d\bar  
      z_{K}}{z_{I}\bar z_{K}} \right) =
        \sum_{I,K}\sum_{j\not \in I}\partial_{j}
    \bar f_{I,K}\frac{dz_{j}}{z_{j}} \wedge \frac{i^{|K|}dz_{I}\wedge d\bar  
      z_{K}}{z_{I}\bar z_{K}}.
  \end{displaymath}
Thus the commutativity between $\partial$ and $F$ follows from $\partial_{j}\bar f_{I,K}=\overline {\partial_{j}f_{I,K}}$.

Similarly, $F(i\bar \partial \omega)$ is equal to 
  \begin{displaymath}
     F \left(
    \sum_{I,K}\sum_{j\not \in K}\partial_{j}
    f_{I,K}\frac{i d\bar z_{j}}{\bar z_{j}} \wedge \frac{i^{|K|} dz_{I}\wedge d\bar  
      z_{K}}{z_{I}\bar z_{K}} \right) =
        \sum_{I,K}\sum_{j\not \in K}\overline{\partial_{j}
    f_{I,K}}\frac{i d\bar z_{j}}{\bar z_{j}} \wedge \frac{i^{|K|}dz_{I}\wedge d\bar  
      z_{K}}{z_{I}\bar z_{K}},
  \end{displaymath}
  and
  \begin{displaymath}
    i\bar \partial F(\omega) = i\bar \partial \left(
    \sum_{I,K}
    \bar f_{I,K}\frac{i^{|K|} dz_{I}\wedge d\bar  
      z_{K}}{z_{I}\bar z_{K}} \right) =
        \sum_{I,K}\sum_{j\not \in K}\partial_{j}
    \bar f_{I,K}\frac{i d\bar z_{j}}{\bar z_{j}} \wedge \frac{i^{|K|}dz_{I}\wedge d\bar  
      z_{K}}{z_{I}\bar z_{K}}.
  \end{displaymath}
Therefore the commutativity of $F$ and $i\bar \partial$ also follows from $\partial_{j}\bar f_{I,K}=\overline {\partial_{j}f_{I,K}}$.
 
\end{proof}

The next example shows that $\SS$-invariance of $\omega$ is necessary  in Lemma \ref{lemm:14}.

\begin{ex} 
Consider the case $n=1$ and let $f(z)=z+\bar z$. 
This function is real but not $\SS$-invariant. 
Furthermore  
\[
\partial F(f) =\partial f =dz
\]
and
\[
F(\partial f)  =F(dz)=F(z dz/z)=\bar z dz/z.
\]
\end{ex}

\begin{defi}
  For any $\SS$-invariant open subset $V\subset {\torusan}$, we  
  denote by
  $\AS(V)^{\SS,F}$ the algebra of forms that are simultaneously 
  $F$-invariant and $\SS$-invariant. Since $F$ and the action of $\SS$
  both respect the bigrading of $\AS$, we deduce that $\AS(V)^{\SS,F}$
  is a bigraded algebra. By Lemma \ref{lemm:14}, 
  the operators $\partial$ and $i\bar \partial$  induce operators  on $\AS(V)^{\SS,F}$. 
\end{defi}

The next goal is to give an identification between the algebra of $\SS$-invariant
forms that are also $F$-invariant, and the algebra of real  Lagerberg forms on $N_\R$. 
This identification will respect the natural differential operators. 
Recall that $J$ denotes the Lagerberg involution introduced in 
Remark \ref{rem-support-forms}.

\begin{prop}\label{prop:4} 
Let $U\subset N_{\R}$ be an open subset
and $V=\trop^{-1}(U)$ the corresponding $\SS$-invariant subset of ${\torusan}$.
Then there is a unique homomorphism 
$\trop^{\ast} \colon \AS(U)\to \AS(V)$
of $\R$-algebras such that 
\begin{equation}\label{eq:21}
  \trop^{\ast} (\varphi) = \varphi \circ \trop
\end{equation}
for all $\varphi \in \AS^{0,0}({U})$ and such that
\begin{equation}\label{eq:19}
  \trop^{\ast}(d'\omega)= {\pi^{-1/2}}\partial\,\trop^{\ast} (\omega)  
  \quad \text{and}
\quad \trop^{\ast} (d''\omega)= {\pi^{-1/2}} i\overline \partial\,\trop^{\ast} (\omega)
\end{equation}
for all $\omega \in \AS(U)$. 
Moreover, this homomorphism is injective with image $\AS(V)^{\SS,F}$ 
inducing an isomorphism 
$\AS(U)\simeq\AS(V)^{\SS,F}$. 
For $\omega \in \AS ^{p,q}(U)$, we have
\begin{equation}
  \label{eq:18}
  \trop^{\ast}(J(\omega ))=i^{p+q}\overline{\trop^{\ast}(\omega )}.
\end{equation}
\end{prop}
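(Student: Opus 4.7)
The plan is to construct $\trop^*$ explicitly on generators and verify all properties by direct computation.

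First, I would establish \textbf{uniqueness}. Since any Lagerberg form on $U \subset N_\R$ is locally a sum $\sum_{I,K} f_{I,K} d'u_I \wedge d''u_K$ with $f_{I,K} \in \AS^{0,0}(U)$, the algebra $\AS(U)$ is generated over $\AS^{0,0}(U)$ by the symbols $d'u_i$ and $d''u_i$. Condition \eqref{eq:21} prescribes $\trop^*$ on $\AS^{0,0}(U)$ (in particular on each $u_i$), and then \eqref{eq:19} forces
\[
\trop^*(d'u_i) = \pi^{-1/2}\partial(u_i\circ\trop) = -\tfrac{1}{2\sqrt{\pi}}\,\tfrac{dz_i}{z_i}, \qquad \trop^*(d''u_i) = \pi^{-1/2}i\bar\partial(u_i\circ\trop) = -\tfrac{i}{2\sqrt{\pi}}\,\tfrac{d\bar z_i}{\bar z_i},
\]
using $u_i\circ\trop = -\log|z_i|$. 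Together with the algebra homomorphism property this pins $\trop^*$ down uniquely.

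For \textbf{existence}, I would \emph{define} $\trop^*$ by the formula
\[
\trop^*\Bigl(\sum_{I,K} f_{I,K}\, d'u_I\wedge d''u_K\Bigr) = \sum_{I,K} (f_{I,K}\circ\trop)\cdot c_{|I|,|K|}\cdot \tfrac{i^{|K|}\,dz_I\wedge d\bar z_K}{z_I\bar z_K},
\]
where $c_{p,q}=(-1)^{p+q}(2\sqrt{\pi})^{-(p+q)}$, and verify that it is an $\R$-algebra homomorphism. Then \eqref{eq:19} holds on functions because of the chain rule applied to $-\log|z_i|$, and extends to higher degree forms by the Leibniz rule once one observes that $\partial$ and $i\bar\partial$ annihilate each of the generators $dz_i/z_i$ and $d\bar z_i/\bar z_i$ (so the operators act only on the coefficient functions). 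Each image generator is manifestly $\SS$-invariant and, by the defining formulas \eqref{def:555}, $F$-invariant; hence the image lies in $\AS(V)^{\SS,F}$.

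For the \textbf{isomorphism onto $\AS(V)^{\SS,F}$} I would use the normal form \eqref{added eqnr} from the proof of Lemma \ref{lemm:14}: any $\omega\in\AS(V)^\SS$ can be written as
\[
\omega=\sum_{I,K} g_{I,K}(\log(z_1\bar z_1),\dots,\log(z_n\bar z_n))\,\tfrac{i^{|K|}dz_I\wedge d\bar z_K}{z_I\bar z_K},
\]
with $g_{I,K}$ smooth $\C$-valued. A direct application of $F$ using \eqref{def:555} shows that $\omega\in\AS(V)^{\SS,F}$ if and only if all $g_{I,K}$ are real valued. Setting $f_{I,K}(u) := c_{|I|,|K|}^{-1}g_{I,K}(-2u_1,\dots,-2u_n)$ then exhibits $\omega$ as $\trop^*$ of a Lagerberg form on $U$, proving surjectivity. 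Injectivity follows because the wedge products $dz_I\wedge d\bar z_K/(z_I\bar z_K)$ are linearly independent over $C^\infty$, together with surjectivity of $\trop\colon V\to U$ which forces every coefficient $f_{I,K}$ to vanish.

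Finally, \eqref{eq:18} is checked on the basis. Writing $J(d'u_I\wedge d''u_K) = (-1)^{pq} d'u_K\wedge d''u_I$ for $|I|=p,|K|=q$ and comparing with $i^{p+q}\overline{\trop^*(d'u_I\wedge d''u_K)}$ reduces everything to the identity $i^{p+q}(-i)^q = i^p$ and the sign $(-1)^{pq}$ coming from the Koszul reordering of $dz_I\wedge d\bar z_K$ into $dz_K\wedge d\bar z_I$. Linearity (and conjugate-linearity in the coefficient, which balances with the factor $i^{p+q}$ because real coefficient functions from $\AS^{0,0}(U)$ are fixed by conjugation) then extends the identity to arbitrary forms. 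The only mildly delicate point in the whole argument is this sign bookkeeping, and the verification that $\partial$, $i\bar\partial$ indeed pull through the scaling $\pi^{-1/2}$ coherently so that the algebra homomorphism property is preserved; everything else is routine once the formulas on generators are in place.
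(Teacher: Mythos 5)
Your proposal is correct and follows essentially the same route as the paper's proof: pin down $\trop^*$ on generators via \eqref{eq:21} and \eqref{eq:19}, derive the explicit formula \eqref{eq:22}, deduce uniqueness, take that formula as the definition for existence, and use the $\SS$-invariant frame $i^{|K|}dz_I\wedge d\bar z_K/(z_I\bar z_K)$ together with the normal form \eqref{added eqnr} to identify the image with $\AS(V)^{\SS,F}$. You spell out several verifications the paper declares ``immediate'' (that $\partial$ and $i\bar\partial$ kill the invariant frame, and the sign bookkeeping in \eqref{eq:18}), which is a helpful expansion rather than a different argument.
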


\begin{proof} We first recall some formulas from complex analysis in one
variable. If $z=re^{i\theta}$ and $u=-\log|z|=-\log(r)$, since $r^2=z
\bar{z}$, we have
\begin{displaymath}
  \partial(u)= -\frac{dz}{2z} \quad \text{and} \quad \bar{\partial}{u}
  = -	\frac{d\bar{z}}{2\bar{z}}.
\end{displaymath}
  For $j=1,\dots,n$ write $r_{j}=|z_{j}|=(z_{j}\bar
    z_{j})^{1/2}$. Then
  \begin{displaymath}
    \trop^{\ast}(u_{j})=-\log(r_{j}).
  \end{displaymath}
Therefore, equations \eqref{eq:21} and \eqref{eq:19} imply
\begin{equation}\label{eq:25}
  \trop^{\ast}(d'u_{j})=-\frac{dz_{j}}{2 {\sqrt{\pi}}  z_{j}},\quad
  \trop^{\ast}(d''u_{j})=-\frac{id\bar z_{j}}{2{\sqrt{\pi}} \bar z_{j}}.
\end{equation}
Since $\trop ^{\ast}$ is an algebra homomorphism,
we deduce that, if $\omega \in \AS(U)$ is written as
\begin{equation}\label{eq:23}
  \omega = \sum_{I,K} f_{I, K}(u_{1},\dots,u_{n}) \, d' u_I \wedge
  d'' u_K,
\end{equation}
 then the corresponding form on the torus is given by
\begin{equation}\label{eq:22}
  \trop^{\ast}(\omega)= \sum_{I,K}
  \left(\frac{-1}{2{\sqrt{\pi}}}\right)^{|I|+|K|}i^{|K|}
  f_{I, K}(-\log(r_{1}),\dots,-\log(r_{n}))
  \, \frac{dz_{I}\wedge d\bar z_{K}}{z_{I}\wedge \bar z_{K}}.
\end{equation}
This proves the uniqueness of the map $\trop^{\ast}$. 
Conversely, using
equation \eqref{eq:22} as the definition of $\trop^{\ast}$, the
uniqueness of the decomposition \eqref{eq:23} shows that $\trop ^{\ast}$
is well defined and it is immediate to verify \eqref{eq:21},
\eqref{eq:19} and \eqref{eq:18}.
{Clearly, the form in \eqref{eq:22} is $F$- and $\SS$-invariant 
and every element in $\AS(V)^{\SS,F}$ has this form. 
Obviously, the map $\trop^*$ is injective.
Hence it induces an isomorphism $\AS(U)\simeq\AS(V)^{\SS,F}$.}
\end{proof}

\begin{cor}\label{top-coro}
{Let $U\subset N_{\R}$ be an open subset and $V=\trop^{-1}(U)$.
Then $\trop^*$ induces an isomorphism 
of topological vector spaces $\AS_c(U)\simeq\AS_c(V)^{\SS,F}$. }
\end{cor}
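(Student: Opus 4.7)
The plan is to use the algebraic isomorphism $\trop^{\ast}\colon \AS(U)\xrightarrow{\sim}\AS(V)^{\SS,F}$ established in Proposition \ref{prop:4} and promote it to a topological isomorphism of the compactly supported subspaces. Two items need verification: first, that $\trop^{\ast}$ restricts to a bijection between compactly supported forms; second, that both directions are continuous for the LF-space topologies described in Remark \ref{LF-spaces}.

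For the support statement, I would argue directly from the explicit frame appearing in \eqref{eq:22}. The forms $dz_{I}\wedge d\bar z_{K}/(z_{I}\bar z_{K})$ are pointwise linearly independent on $V$, so a point $z\in V$ lies in $\supp(\trop^{\ast}\omega)$ iff some coefficient $f_{I,K}(-\log r_{1},\dots,-\log r_{n})$ is nonzero in every neighbourhood of $z$; since $\trop\colon V\to U$ is an open map (it is a quotient by $\SS$), this is equivalent to $\trop(z)\in \supp\omega$. Hence $\supp(\trop^{\ast}\omega)=\trop^{-1}(\supp\omega)$. Because $\trop|_{V}$ is proper and surjective (surjectivity comes from the section $\rho$ of Remark \ref{ass-toric-variety}), compactness of support is preserved in both directions, so $\trop^{\ast}$ restricts to a bijection $A^{p,q}_{c}(U)\to A^{p,q}_{c}(V)^{\SS,F}$.

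For continuity, by Remark \ref{LF-spaces} it is enough to check, for each compact $K\subset U$, that the restriction $\trop^{\ast}\colon A^{p,q}_{K}(U)\to A^{p,q}_{\trop^{-1}(K)}(V)^{\SS,F}$ is continuous between Fréchet spaces whose topologies are given by the $C^{m}$-seminorms of the coefficients in a chosen standard frame. Formula \eqref{eq:22} expresses the coefficients of $\trop^{\ast}\omega$ as constant multiples of the coefficients of $\omega$ composed with the smooth map $z\mapsto(-\log|z_{1}|,\dots,-\log|z_{n}|)$. Since $|z_{j}|$ is bounded above and below by positive constants on the compact set $\trop^{-1}(K)$, this map and its pointwise inverse $u\mapsto(e^{-u_{1}},\dots,e^{-u_{n}})$ have partial derivatives of all orders uniformly bounded on the relevant compact sets. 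The chain rule then yields constants $C_{K,m}>0$ such that the $C^{m}$-seminorm of $\trop^{\ast}\omega$ on $\trop^{-1}(K)$ is at most $C_{K,m}$ times the $C^{m}$-seminorm of $\omega$ on $K$, and symmetrically for the inverse. This gives continuity of $\trop^{\ast}$ and of $(\trop^{\ast})^{-1}$ on each Fréchet piece, hence on the LF-colimit. I do not foresee any substantive obstacle; the corollary is in essence bookkeeping around the fact that $r\mapsto -\log r$ is a proper diffeomorphism $(0,\infty)\to\R$ and therefore induces a tame change of coordinates on compact subsets.
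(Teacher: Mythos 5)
Your proposal is correct and follows essentially the same route as the paper, which compresses the argument into one sentence (``the topological statement can be checked locally in coordinates''); you have simply unpacked that local check via the explicit formula \eqref{eq:22} and the resulting $C^{m}$-seminorm estimates, using properness of $\trop$ and the $LF$-structure from Remark \ref{LF-spaces} exactly as the paper intends.
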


\proof
{The isomorphism of vector spaces follows from Proposition \ref{prop:4}
as the map trop is proper.
Using that trop is a submersion, the topological statement
can be checked locally in coordinates.}
\qed

\begin{rem}\label{sheaf-interpretation}
The $\SS$-invariant open subsets of $\torusan$ are precisely
the preimages of open subsets of $N_\R$.
Hence $\SS$ and $F$ act in a natural way on the
sheaf $\trop_*A$. We denote by
$(\trop_*A)^{\SS,F}$ the subsheaf of sections invariant
under $\SS$ and $F$.
Then Proposition \ref{prop:4} yields a monomorphism
of sheaves of $\R$-algebras
\begin{equation}\label{sheaf-interpretation-map}
\trop^*\colon A\longrightarrow (\trop_*A)^{\SS,F}
\end{equation}
that preserves differentials as in \eqref{eq:19}.
\end{rem}

{The next remark shows that Lagerberg forms and the involution $F$ are pointwise described by the linear algebra in Section \ref{sec:posit-real-compl}.}

\begin{rem} \label{pointwise comparision}
Choose a point $x \in {\torusan}$ and let
$y=-\log|x|=\trop(x)$ denote its image under the tropicalization map. 
Write $V=T_{y}N_{\R}$ for the tangent space to $N_{\R}$ at $y$. 
Let $V'$, $V''$, $V^{\ast}$ and $\overline {V^{\ast}}$ be defined as Section \ref{sec:posit-real-compl}. 
There is an isomorphism
\begin{align*} 
\trop^{\ast}_{x}\colon V^{\ast}\oplus \overline {V^{\ast}}\longrightarrow T^{1,0}_{x}\torusan \oplus
T^{0,1}_{x}\torusan
\end{align*}
given by 
\begin{displaymath}
  d u_{j} \longmapsto -\frac{dz_{j}} {2{\sqrt{\pi}} x_j}, \quad\text{and}\quad
  i d \bar u_{j} \longmapsto \frac{-i d\bar z_{j}} {2{\sqrt{\pi}}\bar x_j}.
\end{displaymath}
This isomorphism is compatible with the map $\trop^{\ast}$ in the
sense that
\begin{displaymath}
  \trop^{\ast}(\omega )(x)=\trop^{\ast}_{x}(\omega (y)).
\end{displaymath}

Let $\tau_n\in\Lambda^{n,n}V'$ be the Lagerberg orientation 
as in Definition \ref{Lagerberg involution and orientation}. 
Then
\begin{equation}\label{eq:11}
  \trop^{\ast}_x (\tau _{n})= 
\frac{i^{n}   dz_{1}\land d \bar z_{1}\land \dots \land dz_{n}\land d \bar z_{n}}
 {(4\pi)^{n}|x_1|^{2}\dots |x_n|^{2}}.
\end{equation}
Note that the denominator of this form is strictly positive in the
torus ${\torusan}$ therefore, any positivity notions on ${\torusan}$ defined using
the orientation $\trop^{\ast}_{x}(\tau _{n})$ or the orientation
\begin{displaymath}
  i^{n}  dz_{1}\land d \bar z_{1}\land \dots \land dz_{n}\land d \bar z_{n}
\end{displaymath}
agree. 

Let $\omega\in A^{p,q}(W)$ for some open subset $W$ of $\T^\an$ and $x \in W$. 
We get from \eqref{def:555} that
\begin{equation}\label{F-comparison}
F(\omega)(x)=F(\omega(x))
\end{equation}
where $F$ on the righthand side is the involution from Definition \ref{def:5}.
\end{rem}

\begin{defi} \label{def: positivity of Lagerberg superforms}
  Let $U$ be an open set of $N_{\R}$. A Lagerberg form
  $\omega \in \AS(U)$ is called \emph{strongly positive (respectively
  positive, weakly
  positive)} if
  for all $x\in U$, $\omega (x)$ is strongly positive (respectively
  positive, weakly positive).

  Let $V$ be an open set of ${\torusan}$. A complex differential form
  $\omega \in \AS(V)$ is called strongly positive (respectively
  positive, weakly
  positive) if
  for all $y\in V$, $\omega (y)$ is strongly positive (respectively
  positive, weakly positive).
\end{defi}

As we have seen in Example \ref{too few strongly positive},
it is reasonable to restrict our attention to positive Lagerberg forms. 
Nevertheless we add the other positivity notions for further reference.

\begin{lem}\label{lemm:4} 
Let $U$ be an open set of $N_{\R}$ and 
$\omega \in \AS^{p,p}(U)$ be a Lagerberg form on $U$.
  \begin{enumerate}
  \item The Lagerberg form $\omega $ is symmetric if and only if
    $\trop^{\ast}\omega $ is real.
  \item If $\omega $ is strongly positive, then $\trop^{\ast}\omega $
    is strongly positive.
  \item If $\trop^{\ast}\omega $ is weakly positive then $\omega $ is
    weakly positive. 
  \item The Lagerberg form $\omega $ is positive if and only if $\trop^{\ast}\omega $ is
    positive.
  \end{enumerate}
\end{lem}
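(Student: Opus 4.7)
The plan is to reduce everything to a pointwise statement and then invoke the linear-algebraic positivity results from Section \ref{sec:posit-real-compl}. By Remark \ref{pointwise comparision}, for every $y \in U$ and every $x \in \trop^{-1}(y)$ we have the equality $\trop^{\ast}(\omega)(x) = \trop^{\ast}_{x}(\omega(y))$, where $\trop^{\ast}_{x}$ is a $\C$-linear isomorphism from $\Lambda^{p,p} V^{\ast}$ to $\Lambda^{p,p} T_x^{\ast}\torusan$ for $V = T_y N_{\R}$, and where $\omega(y) \in \Lambda^{p,p}V'$ is viewed inside $\Lambda^{p,p}V^{\ast}$ via the inclusion \eqref{eq:15}. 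A direct inspection of the defining formulas in Remark \ref{pointwise comparision} shows that $\trop^{\ast}_{x}$ commutes with complex conjugation, i.e.~$\trop^{\ast}_{x}(\bar\alpha) = \overline{\trop^{\ast}_{x}(\alpha)}$, and by equation \eqref{eq:11} combined with Proposition \ref{prop:13}(iv), $\trop^{\ast}_{x}$ sends the orientation $\omega_n^V$ to a strictly positive multiple of the canonical orientation on $T_x^{\ast}\torusan$.

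From these two facts, the isomorphism $\trop^{\ast}_{x}$ preserves all three complex positivity notions in both directions: for strong positivity this is immediate from $\alpha_1 \wedge i\bar\alpha_1 \wedge \dots \wedge \alpha_p \wedge i\bar\alpha_p \mapsto \trop^{\ast}_{x}(\alpha_1) \wedge i\overline{\trop^{\ast}_{x}(\alpha_1)} \wedge \dots$, for positivity it follows analogously using $i^{p^2}\beta\wedge\bar\beta$, and for weak positivity it follows by duality since $\trop^{\ast}_{x}$ and its inverse both preserve strong positivity and scale $\omega_n^V$ by a positive constant. Moreover, $\trop^{\ast}_{x}$ preserves real forms, again because it commutes with conjugation.

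With this dictionary in place, each item follows by combining pointwise with the corresponding result from Section \ref{sec:posit-real-compl}: (i) uses Proposition \ref{prop:13}\ref{item:18} (symmetric Lagerberg $=$ real complex) together with the preservation of real forms by $\trop^{\ast}_{x}$; (ii) uses Proposition \ref{prop:15}\ref{simpl} (strongly positive Lagerberg $\Rightarrow$ strongly positive complex) followed by the fact that $\trop^{\ast}_{x}$ preserves strong positivity; (iv) uses Proposition \ref{prop:14} (Lagerberg positivity equals complex positivity for $F$-invariant forms) together with preservation of positivity by $\trop^{\ast}_{x}$; and (iii) runs the argument in reverse — weak positivity of $\trop^{\ast}\omega(x)$ transfers through $(\trop^{\ast}_{x})^{-1}$ to weak positivity of $\omega(y)$ as a complex form, and then Proposition \ref{prop:15}\ref{wimpl} applies because $\omega(y)$ lies in the $F$-invariant subspace $\Lambda^{p,p}V'$.

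There is no real obstacle here; the only point that needs care is to verify the two compatibilities of $\trop^{\ast}_{x}$ (with conjugation and with orientations) cleanly enough that strong/weak/positive all transport without scalar ambiguities spoiling the sign, which is precisely why the normalization factors $\pi^{-1/2}$ in Proposition \ref{prop:4} are needed.
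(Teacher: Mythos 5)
Your proof is correct and is essentially the paper's proof, just written out in full: the paper's argument is exactly the reduction to pointwise linear algebra via Remark \ref{pointwise comparision} followed by Propositions \ref{prop:15} and \ref{prop:14} (the paper handles item (i) slightly more directly via the global identity \eqref{eq:18}, but this amounts to the same pointwise computation). You have usefully spelled out the two auxiliary facts that the paper leaves implicit — that $\trop^{\ast}_x$ commutes with complex conjugation and scales the orientation form by a strictly positive constant, whence it transports all three complex positivity cones isomorphically in both directions.
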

\begin{proof}
  The Lagerberg form $\omega $ is symmetric if and only if $J\omega
  =(-1)^{p}\omega $. By equation \eqref{eq:18} this is equivalent to 
 $\trop^{\ast}(\omega )=\overline{\trop^{\ast}(\omega )}$. This proves
 the first statement.

 The remaining assertions follow from the fact that 
 positivity is checked pointwise, 
 {Remark \ref{pointwise comparision}} and 
 Propositions \ref{prop:15} and \ref{prop:14}.  
\end{proof}

\begin{rem} \label{about the choice of the corresponding differential operators1}
Remark \ref{pointwise comparision} shows that the correspondence 
\begin{equation}\label{correspondence-choice}
{\pi^{-1/2}}\partial \longleftrightarrow d'\,\mbox{ and }\,
{\pi^{-1/2}}i\overline \partial \longleftrightarrow d''
\end{equation}
from Proposition \ref{prop:4} was already used implicitly in 
Section \ref{sec:posit-real-compl}  in order to preserve positivity 
and the bigrading between the complex and the Lagerberg forms. 
\end{rem}

\begin{rem} \label{about the choice of the corresponding differential operators2}
Chambert-Loir and Ducros \cite[Remarque 1.2.12]{chambert-loir-ducros} 
mention the identifications 
$d=\partial + \overline \partial \leftrightarrow d'$ and 
$4\pi d^c= \frac{1}{i}(\partial - \overline \partial) \leftrightarrow d''$
{(observe that $d\,\mathrm{Arg}(z)=4\pi d^c\log |z|$)} 
which have the disadvantage that they do not respect the bigrading and 
do not allow the nice interpretation of symmetric Lagerberg forms as 
real complex forms as in Lemma \ref{lemm:4}. 
\end{rem}

\begin{rem} \label{about the choice of the corresponding differential operators3}
Let $U$ be an open subset of $N_\R$ and $V\coloneqq \trop^{-1}(U)$. 
We have introduced the map $\trop^*\colon A(U) \to A(V)$ in order
to establish later correspondence results between Lagerberg forms 
and currents on $U$ and invariant complex forms and currents on $V$.

Let us explain our choices which lead to 
the achieved correspondence \eqref{correspondence-choice}: 
Our first condition is that $\trop^*$ should be a differential 
homomorphism of $\R$-algebras with respect to the differential 
operators $d'$ and $d''$ of $A(U)$ and  
natural differential operators $\dell'$	and $\dell''$ on $A(V)$. 
Naturality means here that $\dell',\dell''$ should be in the 
two-dimensional $\C$-vector space spanned by $\dell$ and $\dellbar$. 
The second condition is that $\trop^*$ respects 
the bigrading which implies $\dell'=a \dell$ and $\dell''= bi \dellbar$ for some $a,b \in \C$. 
Third, the range of $\trop^*$ should be contained in 
the $F$-invariant forms in $V$ which yields $a,b \in \R$. 
Observe that these three conditions imply already 
that $\trop^*$ preserves positivity of forms 
(see Lemma \ref{lemm:4}).  
Our fourth condition is \eqref{eq:18} which forces $a=b$. 
The fifth condition is that $\trop^*$ is 
compatible with integration 
(see Lemma \ref{comparison of integration} below). 
This gives $ab=1/\pi$. Hence we have seen that
the five conditions given above fix our choices 
in Proposition \ref{prop:4} up to a sign.
\end{rem}

\subsection{Invariant forms in the case of a toric variety} \label{subsection invariant forms toric variety}

The next goal is to partially extend Proposition \ref{prop:4} to toric varieties.   
Let $\Sigma $ be a smooth 
fan in $N_\R$.
Let $X_{\Sigma }$ be the corresponding smooth complex toric variety
and let $N_{\Sigma }$ be the corresponding partial compactification of $N_\R$ 
as in Section \ref{section smooth forms tropical}. 
We denote by $\Xsigmaan$ the complex manifold associated to $X_\Sigma$. 
Recall from Remark \ref{ass-toric-variety}  
that the tropicalization map is a proper continuous map 
$\trop\colon \Xsigmaan \to N_{\Sigma }$ that identifies $N_{\Sigma }$
with $\Xsigmaan/\SS$.

\begin{rem}
  Let $V\subset \Xsigmaan$ be an $\SS$-invariant open subset. 
  Then, in general, the involution $F$
  does not induce an involution of $A^{\cdot,\cdot}(V)$. In fact, if
  $\omega \in A^{\cdot,\cdot}(V)$, then $F(\omega |_{V\cap \T^{\an}})$ may
  not extend to a smooth form on $V$, as the
  following example shows. 
  
  We set $X_{\Sigma }=\A^{1}_{\C}$ and 
  $V=\Xsigmaan =\C$. Then $dz\in
  A^{1,0}(V)$, but
  \begin{displaymath}
    F(dz)=F\Bigl(z\frac{dz}{z}\Bigr)=\frac{\bar z}{z}dz
  \end{displaymath}
  is not a smooth form in $0$. Nevertheless, the next result implies that $F$ can be
  extended to smooth $\SS$-invariant forms.
\end{rem}

  \begin{lem}\label{lemm:1} Let $V\subset \Xsigmaan$ be an
    $\SS$-invariant open subset. 
    Let $\omega \in A^{\cdot,\cdot}(V)^{\SS}$ be a smooth
    $\SS$-invariant form on $V$. Then 
    \begin{math}
      F(\omega|_{V\cap \T^{\an}} )
    \end{math}
    extends uniquely to a smooth form $\SS$-invariant form on $V$ and hence $F$ induces an antilinear involution on
  $A^{\cdot,\cdot}(V)^{\SS}$ also denoted by $F$.
  \end{lem}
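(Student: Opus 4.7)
The plan is to handle uniqueness first and then reduce existence to a local computation in $\SS$-invariant coordinates.

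For uniqueness, since $V \cap \T^{\an}$ is dense in the $\SS$-invariant open set $V$ (its complement lies in a union of toric strata of strictly smaller dimension), any smooth extension of $F(\omega|_{V\cap\T^{\an}})$ is uniquely determined if it exists, and $\SS$-invariance of the extension then follows by continuity from the $\SS$-equivariance of $F$ on the torus established in Lemma \ref{lemm:11}.

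For existence, which is a local statement, I would reduce to the case where $V$ sits inside an affine toric open $X_\sigma$. Since $\Sigma$ is smooth, a choice of generators of $\sigma$ as part of a $\Z$-basis of $N$ identifies $X_\sigma^{\an}$ with $\C^k \times (\C^\times)^{n-k}$ in holomorphic coordinates $z_1,\ldots,z_n$, with $\T^{\an}$ corresponding to $(\C^\times)^n$, and $\SS=(S^1)^n$ acting by coordinatewise rotation. The structural ingredient I would then invoke is the standard Fourier-type classification of smooth invariant forms: any $\omega \in A^{\cdot,\cdot}(V)^\SS$ is a finite sum of elementary terms of the shape
\[
\alpha_{I,K} \coloneqq f_{IK}(|z_1|^2,\ldots,|z_n|^2) \Bigl(\prod_{j \in I \setminus K} \bar z_j\Bigr) \Bigl(\prod_{j \in K \setminus I} z_j\Bigr)\, dz_I \wedge d\bar z_K
\]
for subsets $I,K \subset \{1,\ldots,n\}$ with smooth coefficients $f_{IK}$. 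Indeed, the $(I,K)$-Fourier component of a smooth $\SS$-invariant form must carry exactly the monomial factor $\prod_{I\setminus K}\bar z_j \prod_{K\setminus I}z_j$ needed to cancel the characters by which $dz_I\wedge d\bar z_K$ transforms and to remain smooth along the coordinate hyperplanes $\{z_j=0\}$.

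The main obstacle is then verifying that $F$ applied to each such $\alpha_{I,K}$ remains smooth on $V$. This is the step where the seemingly singular identities $F(dz_j)=(\bar z_j/z_j)\,dz_j$ and $F(d\bar z_j)=-(z_j/\bar z_j)\,d\bar z_j$ (valid on the torus) have to be made sense of on the full chart. Using antilinearity and multiplicativity of $F$, a direct computation shows that the denominators $z_j$ and $\bar z_j$ introduced by $F$ cancel exactly against the monomial factors forced by $\SS$-invariance of $\omega$, yielding
\[
F(\alpha_{I,K}) = (-1)^{|K|}\,\overline{f_{IK}}(|z_1|^2,\ldots,|z_n|^2) \Bigl(\prod_{j \in I \setminus K} \bar z_j\Bigr) \Bigl(\prod_{j \in K \setminus I} z_j\Bigr)\, dz_I \wedge d\bar z_K.
\]
This is again of the form $\alpha_{I,K}$, hence extends smoothly across the toric boundary. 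Summing over $(I,K)$ gives a smooth $\SS$-invariant form on $V$ extending $F(\omega|_{V\cap\T^{\an}})$, and $F$ then restricts to an antilinear involution of $A^{\cdot,\cdot}(V)^{\SS}$ because it is so on the dense open $V\cap\T^{\an}$.
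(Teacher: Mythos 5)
Your overall plan (uniqueness by density of $V\cap\T^{\an}$, existence by a local computation in toric coordinates, $\SS$-invariance of the extension from Lemma \ref{lemm:11} by continuity) matches the paper's, and your final sign and monomial bookkeeping are correct. But the route you take through the existence step differs from the paper's and, as written, has a real gap.

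Your argument rests entirely on the claim that every smooth $\SS$-invariant form can be written as a finite sum of terms $\alpha_{I,K}=f_{IK}(|z_1|^2,\dots,|z_n|^2)\bigl(\prod_{j\in I\setminus K}\bar z_j\bigr)\bigl(\prod_{j\in K\setminus I} z_j\bigr)\,dz_I\wedge d\bar z_K$ with $f_{IK}$ \emph{smooth}. This is not a ``standard Fourier-type'' fact: passing from the transformation law for the coefficient $h_{I,K}$ to a \emph{smooth} quotient $h_{I,K}/(\bar z_{I\setminus K}z_{K\setminus I})$ requires a division theorem across the hyperplanes $\{z_j=0\}$, and expressing the resulting $\SS$-invariant smooth function as a smooth function of the squared moduli requires a Whitney-type even-function result (or more generally Schwarz's theorem on smooth invariants). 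These are genuine theorems, not formal Fourier bookkeeping; the formal Taylor-series argument only controls jets, not the flat part. You assert the decomposition without proof or citation, and that is where the load-bearing work has been elided.

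The paper avoids this entirely by a cheaper observation. Since $V$ is $\SS$-invariant, it is in particular invariant under coordinatewise complex conjugation. One then rewrites each $\SS$-invariant summand as $f_{I,K}\,dz_I\wedge d\bar z_K = g_{I,K}\,\frac{i^{|K|}dz_I\wedge d\bar z_K}{z_I\bar z_K}$ with $g_{I,K}\coloneqq i^{-|K|}z_I\bar z_K f_{I,K}$ a smooth $\SS$-invariant function (no division needed, since here one multiplies), applies $F$ (which fixes the $\SS$-invariant frame $i^{|K|}dz_I\wedge d\bar z_K/(z_I\bar z_K)$ and conjugates $g_{I,K}$), and then uses the $\SS$-transformation law for $f_{I,K}$ at the group element $e^{i\theta_j}=\bar z_j/z_j$ to obtain
\begin{displaymath}
F\bigl(f_{I,K}\,dz_I\wedge d\bar z_K\bigr)=(-1)^{|K|}\,\overline{f_{I,K}(\bar z_1,\dots,\bar z_n)}\,dz_I\wedge d\bar z_K
\end{displaymath}
on $V\cap\T^{\an}$. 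The right-hand side is manifestly smooth on all of $V$ because $f_{I,K}$ is smooth and $V$ is conjugation-invariant. So the paper's proof is shorter and self-contained, while yours requires a nontrivial structural input that should be either proved (flat-function analysis via Borel plus a Whitney-type argument) or replaced by the conjugation trick above.
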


  \begin{proof}
   Since the statement is local we may assume that $V$ is
      contained in the affine toric variety $X_{\sigma }$ for some
      $\sigma \in \Sigma$. We choose a system of toric coordinates
      $(z_{1},\dots, z_{n})$ and write
      \begin{displaymath}
        \omega = \sum f_{I,K}(z_{1},\dots,z_{n})dz_{I}\wedge
        d\bar z_{K}.
      \end{displaymath}
      Moreover, the $\SS$-invariance of $V$ yields that $V$ is invariant
      under complex conjugation of the coordinates. Since $\omega $ is
      $\SS$-invariant, 
      each summand
    $f_{I,K}dz_{I}\wedge
    d\bar z_{K}$ is also $\SS$-invariant and hence we can write
    \begin{displaymath}
      f_{I,K}(z_{1},\dots,z_{n})dz_{I}\wedge d\bar z_{K}
      =g_{I,K}(|z_{1}|,\dots,|z_{n}|)\frac{i^{|J|}
        dz_{I}\wedge d\bar z_{K}}{z_{I}\bar z_{K}},
    \end{displaymath}
   for a unique smooth function $g_{I,K} \in C^\infty(V)$.  On $V\cap \T^{\an}$, this yields
    \begin{multline*}
      F(f_{I,K}(z_{1},\dots,z_{n})dz_{I}\wedge d\bar z_{K})
      =
      F\left(g_{I,K}(|z_{1}|,\dots,|z_{n}|)\frac{i^{|K|}
      dz_{I}\wedge d\bar z_{K}}{z_{I}\bar z_{K}}\right)
      =\\
      \overline{g_{I,K}(|z_{1}|,\dots,|z_{n}|)}\frac{i^{|K|}
        dz_{I}\wedge d\bar z_{K}}{z_{I}\bar z_{K}}
      =
      (-1)^{|K|}\overline{
    f_{I,K}(\bar z_{1},\dots,\bar z_{n})}dz_{I}\wedge
    d\bar z_{K}
  \end{multline*}
  which can be uniquely extended to a smooth form on $V$ because $\omega $ is
  smooth. By Lemma \ref{lemm:11},  the 
  restriction $F(\omega )|_{V\cap \T^{\an}}$ is $\SS$-invariant. By
  continuity, $F(\omega )$ is $\SS$-invariant.
  \end{proof}

For any open $\SS$-invariant subset $V\subset
\Xsigmaan$, we will denote by $\AS(V)^{\SS,F}$ the $\R$-subalgebra of forms that are at the 
same time invariant under $\SS$ and under $F$. 
As before we obtain a sheaf of bigraded $\R$-algebras
$(\trop_*\AS_{\Xsigmaan})^{\SS,F}$ on $N_\Sigma$.

\begin{prop}\label{prop:5}
There is a unique morphism of sheaves of bigraded $\R$-algebras
\begin{equation}\label{trop-invariant-lagerberg-forms}
\trop ^{\ast}\colon \AS_{N_{\Sigma }} \longrightarrow
(\trop_*\AS_{\Xsigmaan})^{\SS,F}
\end{equation}
that extends the morphism \eqref{sheaf-interpretation-map}. 
Moreover, $\trop^*$ satisfies
\begin{equation}
\label{trop-invariant-lagerberg-forms2}
\pi^{-1/2}\dell\circ \trop^*=\trop^*\circ d' \quad \text{and} \quad \
\pi^{-1/2}i\dellbar\circ \trop^*=\trop^*\circ d''.
\end{equation}
{For an open subset $U$ of $N_\Sigma$ and $\omega \in \AS ^{p,q}(U)$, we have}
\begin{equation}
\label{eq:18'}
{\trop^{\ast}(J(\omega ))=i^{p+q}\overline{\trop^{\ast}(\omega )}.}
\end{equation}
\end{prop}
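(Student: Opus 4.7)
The plan is to bootstrap from the torus case Proposition \ref{prop:4} by working locally and exploiting the compatibility condition \eqref{eq:3}. Since both sides of \eqref{trop-invariant-lagerberg-forms} are sheaves and all requirements (algebra morphism, bigrading, differential relations, $F$- and $\SS$-invariance) are local, I would reduce to a single smooth affine chart: after choosing a $\Z$-basis of $N$ adapted to a smooth cone $\sigma \in \Sigma$ of dimension $k$, we may assume that the chart is $U_\sigma \subset \R_\infty^k \times \R^{n-k}$ with $V_\sigma = \trop^{-1}(U_\sigma) \subset \C^k \times (\C^\times)^{n-k}$, the boundary strata corresponding to subsets of $\{z_1,\dots,z_k\}$ vanishing.

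For uniqueness, I would argue that the restriction map $\AS(V)^{\SS,F} \to \AS(V \cap \Tan)^{\SS,F}$ is injective. Indeed, any $\SS$-invariant smooth form on $V$ admits a local expression as in \eqref{added eqnr}, and a smooth form on $V$ vanishing on the open dense subset $V \cap \Tan$ vanishes identically. Since Proposition \ref{prop:4} pins down the restriction of $\trop^*\omega$ to $V \cap \Tan$ from the requirements \eqref{eq:21} and \eqref{trop-invariant-lagerberg-forms2} together with the algebra morphism property, uniqueness is immediate.

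For existence, the key observation is that the compatibility condition \eqref{eq:3} forces any Lagerberg form $\omega$ to look, in a suitably small neighborhood $W$ of a point $p \in N(\sigma) \cap U$, like the pullback $\pi_{\sigma,\{0\}}^*(\omega_\sigma|_{W_\sigma})$ on the dense stratum. Hence, in the adapted coordinates,
\[
\omega|_{W \cap N_\R} \;=\; \sum_{I,J \subset \{k+1,\dots,n\}} f_{IJ}(u_{k+1},\dots,u_n)\, d'u_I \wedge d''u_J
\]
with smooth $f_{IJ}$. Applying the formula \eqref{eq:22} of Proposition \ref{prop:4} yields
\[
\trop^*\omega \;=\; \sum_{I,J \subset \{k+1,\dots,n\}} \left(\tfrac{-1}{2\sqrt{\pi}}\right)^{|I|+|J|} i^{|J|} f_{IJ}(-\log|z_{k+1}|,\dots,-\log|z_n|)\,\frac{dz_I \wedge d\bar z_J}{z_I\,\bar z_J}\,,
\]
which, crucially, involves only $z_{k+1},\dots,z_n$ (bounded away from zero on $\trop^{-1}(W)$) and is therefore a smooth form on the entire chart $\trop^{-1}(W)$, not merely on the torus part. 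I would then define $\trop^*\omega$ globally by these local recipes; the uniqueness argument above guarantees that the local definitions agree on overlaps (since they agree on the torus stratum, which is open dense).

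Finally, the properties \eqref{trop-invariant-lagerberg-forms2} and \eqref{eq:18'} are inherited directly from Proposition \ref{prop:4} on the dense stratum and then propagate to all of $V$ by injectivity of restriction to $\Tan$; the same reasoning shows that the image lies in $(\trop_*\AS_{\Xsigmaan})^{\SS,F}$ and that $\trop^*$ preserves the bigrading and the algebra structure. The main obstacle is the smoothness of the extension across boundary strata, since the naive formula for $\trop^*(d'u_i)$ involves $dz_i/z_i$ which is singular where $z_i = 0$; but this obstacle is precisely resolved by \eqref{eq:3}, which eliminates the offending indices $i \in \{1,\dots,k\}$ from the coordinate expansion of $\omega$ near any boundary point.
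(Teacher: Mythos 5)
Your proof is correct and is essentially the argument the paper has in mind; the paper's proof simply says the result ``follows easily from the definitions and Proposition \ref{prop:4},'' and you have unpacked exactly why. Your key observation---that the compatibility condition \eqref{eq:3} forces the local expression of $\omega$ near a point of $N(\sigma)$ to depend only on the coordinates $u_{k+1},\dots,u_n$ transversal to $\sigma$, so that the formula \eqref{eq:22} from the torus case has no singular denominators $z_i$ with $i\le k$ and hence extends smoothly across the boundary---is the correct and decisive point, and the gluing-by-density and uniqueness-by-density arguments are both sound. One small thing worth making explicit (you use it implicitly) is that the $F$-invariance of the resulting form on all of $V$ relies on Lemma \ref{lemm:1}, which guarantees that $F$ extends from $\SS$-invariant forms on the dense torus to $\SS$-invariant forms on the whole chart; once that is in place, the $F$-invariance of $\trop^*\omega|_{V\cap\Tan}$ propagates to $V$ by the uniqueness of the smooth extension, exactly as you argue for \eqref{trop-invariant-lagerberg-forms2} and \eqref{eq:18'}.
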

\begin{proof}
This follows easily from the definitions {and Proposition \ref{prop:4}}.
\end{proof}

\begin{rem}\label{rem:4}
{If $U$ is an open subset of $N_\R$, then we have seen in 
Proposition \ref{prop:4} that the map \eqref{trop-invariant-lagerberg-forms} 
is an isomorphism. 
For an open subset $U$ of $N_\Sigma$, the map is obviously still 
injective, but in general no longer surjective.}
The latter can be seen already in the one dimensional case. Let
$N=\Z$ and $\Sigma $ the fan with a single maximal cone $\sigma
  =\R_{\ge 0}$. Then $X_{\Sigma}=\A_\C^{1}$ and $N_{\Sigma }=\R\cup
  \{\infty\}$.  Consider the smooth
  function $\varphi(z)=z\bar z$ on $\Xsigmaan=\C$ and the function
  $f(u)=e^{-2u}$ on $N_{\R}=\R$. Then, on $\C^{\times}\subset \C$ 
we have $\varphi = \trop^{\ast} f$, 
but $f$ can not be extended to a smooth function on $N_{\Sigma }$,
because, by definition, a smooth function on $N_{\Sigma }$ has to
be constant in a neighborhood of $\infty$.
This gives an example of a non-surjective
\[
\trop ^{\ast}\colon \AS^{0,0}(U) \longrightarrow \AS^{0,0}(\trop^{-1}(U))^{\SS,F}.
\]
\end{rem}

\begin{lem} \label{comparison of integration}
  Let $U$ be an open set of $N_{\Sigma }$ and $V=\trop^{-1}(U)$. Let
  $\eta$ be a Lagerberg $(n,n)$-form with compact support contained in
  $U$. Then
  \begin{displaymath}
    \int_{U}\eta =\int_{V}\trop^{\ast}\eta.
  \end{displaymath}
\end{lem}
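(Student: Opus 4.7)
The plan is to reduce to a pointwise computation in coordinates on the dense torus, since $\eta$ of bidegree $(n,n)$ must be supported in the top-dimensional stratum.

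First I would observe that, because $\eta \in A^{n,n}_{c}(U)$ has bidegree $(n,n)$, the support property mentioned right after \eqref{eq:3} in the introduction forces $\supp(\eta)$ to be disjoint from every stratum $N(\sigma)$ of dimension $<n$. The only stratum of dimension $n$ is the dense stratum $N_\R$, so $\supp(\eta)$ is a compact subset of $U\cap N_\R$. By properness of $\trop$, $\supp(\trop^{\ast}\eta)\subset \trop^{-1}(\supp(\eta))$ is a compact subset of $V\cap \T^{\an}$. Consequently both integrals depend only on the restrictions of $\eta$ and $\trop^{\ast}\eta$ to the dense torus, so we may replace $U$ by $U\cap N_\R$ and $V$ by $V\cap \T^{\an}$ and work in the setting of Proposition~\ref{prop:4}. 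After applying a partition of unity on $U\cap N_\R$, it suffices to consider $\eta$ supported in a coordinate chart, where we write
\[
\eta = f\, d'u_1\wedge d''u_1\wedge\cdots\wedge d'u_n\wedge d''u_n
\]
with $f\in C^\infty_c(U\cap N_\R)$, so that by definition $\int_U\eta=\int_{N_\R} f\,d\lambda$.

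Next I would compute $\trop^{\ast}\eta$ in coordinates using \eqref{eq:25}. Since the $2$-forms $\trop^{\ast}(d'u_j\wedge d''u_j) = \frac{i\,dz_j\wedge d\bar z_j}{4\pi z_j\bar z_j}$ commute, one gets
\[
\trop^{\ast}\eta = f(-\log|z_1|,\ldots,-\log|z_n|)\cdot \bigwedge_{j=1}^n \frac{i\,dz_j\wedge d\bar z_j}{4\pi|z_j|^2}.
\]
Introducing polar coordinates $z_j=r_je^{i\theta_j}$, the identity $dz_j\wedge d\bar z_j=-2ir_j\, dr_j\wedge d\theta_j$ turns each factor into $\frac{dr_j\wedge d\theta_j}{2\pi r_j}$, giving
\[
\trop^{\ast}\eta = \frac{f(-\log r_1,\ldots,-\log r_n)}{(2\pi)^n\prod_j r_j}\, dr_1\wedge d\theta_1\wedge\cdots\wedge dr_n\wedge d\theta_n.
\]

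Finally I would integrate using Fubini: the integration over each $\theta_j\in[0,2\pi)$ contributes a factor $2\pi$, cancelling the $(2\pi)^n$ in the denominator, and leaves
\[
\int_V \trop^{\ast}\eta = \int_{(0,\infty)^n} \frac{f(-\log r_1,\ldots,-\log r_n)}{\prod_j r_j}\, dr_1\cdots dr_n.
\]
The change of variable $u_j=-\log r_j$ sends $dr_j/r_j$ to $-du_j$, and the sign is absorbed by the reversal of orientation; the result is $\int_{\R^n} f(u)\,du_1\cdots du_n = \int_U\eta$.

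There is no serious obstacle here: the main point is the support reduction to the dense torus (where the Lagerberg form fits Proposition~\ref{prop:4}), and the rest is a bookkeeping exercise that is, in fact, exactly what fixes the normalization constants $\pi^{-1/2}$ appearing in \eqref{eq:4}, as already indicated in Remark~\ref{about the choice of the corresponding differential operators3}.
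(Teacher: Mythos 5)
Your argument is correct and follows the same route as the paper: reduce to the dense torus because a compactly supported $(n,n)$-form cannot meet the lower-dimensional strata, compute $\trop^{\ast}\eta$ in coordinates via \eqref{eq:25}, pass to polar coordinates, and integrate. The partition-of-unity step is superfluous (once linear coordinates on $N_\R\cong\R^n$ are fixed there is already a single global chart), but this does not affect the correctness.
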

\begin{proof}
Since $\eta$ is a top degree Lagerberg form with compact support in $U$, 
it has compact support contained in $U\cap N_{\R}$. 
Choosing integral linear coordinates in $N_\R$, we have
\begin{displaymath}
  \eta = f(u_{1},\dots,u_{n}) d'u_{1}\land d''u_{1}\land \dots \land
  d'u_{n}\land d''u_{n} 
\end{displaymath}
for a smooth function $f$ on $\R^n$. 
{By Proposition \ref{prop:4}, we have}
\begin{displaymath}
  \trop^{\ast}\eta=
  f(-\log|z_{1}|,\dots,-\log|z_{n}|)
  \frac{i^{n}dz_{1}\land  d \bar z_{1}\land \dots \land dz_{n}
    \land d \bar z_{n}}{(4\pi)^nz_{1}\dots z_{n} \bar z_{1}\dots \bar z_{n}}.
\end{displaymath}
Writing $\trop^{\ast}\eta$ in polar coordinates $z_j=r_je^{i\theta_j}$ 
and using that
\begin{displaymath}
  \frac{i dz_j\land d\bar z_j}{2 z_j\overline z_j}=\frac{dr_j\land d\theta_j }{r_j},
\end{displaymath}
we deduce
\begin{eqnarray*}
  \int_{V}\trop^{\ast}\eta
  &=&\int _{V}
  f(-\log r_{1},\dots,-\log r_{n})\frac{dr_{1}\dots dr_{n}d\theta
    _{1}\dots d\theta _{n}}{(2\pi)^{n}r_{1}\dots
    r_{n}}\\
  &=&\int _{U} f(x_{1},\dots,x_{n})dx_{1}\dots dx_{n}
  =\int _{U}\eta
\end{eqnarray*}
which proves the claim.
\end{proof}

\begin{art} \label{positivity notions for toric varieties} 
We next discuss the notions of positivity for toric varieties. In the
case of the complex smooth toric variety $\Xsigmaan$ the different
notions of positivity are the usual ones: A complex differential form is \emph{strongly positive}
(resp. \emph{positive}, \emph{weakly positive}) if it is so pointwise.

In the case of $N_{\Sigma }$ a little bit more has to be said because
the different fibers of the sheaves of forms $\AS$ over a point $p\in N_{\Sigma }\setminus
N_{\R}$ have a different nature.

Recall from Subsection \ref{subsection partial compactification} 
that given a point $v\in N_{\Sigma }$, there is an 
unique orbit $N(\sigma )$ with $v\in N(\sigma )$ 
corresponding to a cone $\sigma $ of the fan. 
Then there is an identification of fibers 
\begin{displaymath}
\AS_{N_{\Sigma }}(v)= \AS_{N(\sigma )}(v).
\end{displaymath}
Therefore, the different notions of positivity 
for Lagerberg forms make sense for this
fiber by using Definition 
\ref{Lagerberg involution and orientation} for $V=N(\sigma)$. 
Then for $U\subset N_{\Sigma }$, a Lagerberg form 
is {\it strongly positive}
(resp. {\it positive}, {\it weakly positive}) if it is 
so fiber by fiber.
\end{art}

\begin{lem}\label{lemm:16}
Let $U\subset N_{\Sigma }$ (respectively $V\subset \Xsigmaan$)
be an open subset and $\omega \in \AS(U)$ (respectively 
$\omega \in\AS(V)$) then $\omega $ is strongly positive, 
positive or weakly positive if and only if the same if true for 
$\omega |_{U\cap N_{\R}}$ (respectively 
$\omega |_{V\cap \torusan}$).
\end{lem}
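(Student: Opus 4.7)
The plan is to separate the immediate direction (positivity on $U$ or $V$ implies positivity on the dense open subset) from the nontrivial converse. The immediate direction follows at once from the pointwise definition in \ref{positivity notions for toric varieties}. For the converse, I would handle the complex case on $\Xsigmaan$ by continuity and density, and the Lagerberg case on $N_\Sigma$ via the compatibility condition \eqref{eq:3}, using a linear section of the boundary projection.

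For the complex case, $\torusan$ is Zariski-open and hence analytically dense in $\Xsigmaan$, so $V\cap\torusan$ is dense in $V$. Fixing a point of $V$ and local holomorphic coordinates in a neighbourhood, the form $\omega$ is given by continuous coefficient functions, and the three positivity conditions translate into conditions that the coefficient tuple lies in the respective cones of $\Lambda^{p,p}(\C^n)^\ast$. By Corollary \ref{cor:1} these cones are closed and convex, so the three positivity conditions at each point are closed conditions on the coefficients. Since they hold on a dense set, they extend by continuity.

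For the Lagerberg case, fix $p\in U_\sigma$ for some non-trivial cone $\sigma\in \Sigma$. Apply the compatibility condition of Definition \ref{def-lagerberg-partial-compactification} with $\{0\}\prec \sigma$: on a small neighborhood $W$ of $p$ we obtain
\[
\omega_{N_\R}\bigl|_{W_{N_\R}\cap \pi_{\sigma,\{0\}}^{-1}(W_\sigma)} = \pi_{\sigma,\{0\}}^{\ast}\bigl(\omega_\sigma|_{W_\sigma}\bigr)
\]
where $\pi_{\sigma,\{0\}}\colon N_\R\to N(\sigma)$. Choose a linear section $s\colon N(\sigma)\to N_\R$ of $\pi_{\sigma,\{0\}}$; then for $y\in W_\sigma$ sufficiently close to $p$ the point $s(y)$ lies in $W_{N_\R}$ and
\[
\omega_\sigma(y) = s^{\ast}\pi_{\sigma,\{0\}}^{\ast}\omega_\sigma(y) = s^{\ast}\bigl(\omega_{N_\R}(s(y))\bigr).
\]
Pullback by any linear map preserves the wedge product, commutes with the Lagerberg involution $J$, and sends $(1,0)$-forms to $(1,0)$-forms, so by the generating descriptions in Definition \ref{Lagerberg involution and orientation} it preserves both strong positivity and positivity. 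Thus these properties transfer from $\omega_{N_\R}(s(y))$ to $\omega_\sigma(y)$.

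For weak positivity, pullback does not automatically preserve the defining duality condition, so an auxiliary device is needed. Given a strongly positive $\mu\in \Lambda^{m-p,m-p}N(\sigma)'$ with $m=\dim N(\sigma)$, fix a strongly positive top Lagerberg form $\kappa$ on $\ker(\pi_{\sigma,\{0\}})$ normalized so that $\kappa\wedge \pi_{\sigma,\{0\}}^{\ast}\tau_{m}^{N(\sigma)}=\tau_{n}^{N_\R}$. Then $\mu'\coloneqq \kappa\wedge \pi_{\sigma,\{0\}}^{\ast}\mu$ is strongly positive on $N_\R$ of complementary degree, and the identity
\[
\pi_{\sigma,\{0\}}^{\ast}\omega_\sigma(y)\wedge \mu' = \kappa\wedge \pi_{\sigma,\{0\}}^{\ast}\bigl(\omega_\sigma(y)\wedge \mu\bigr)
\]
translates weak positivity of $\pi_{\sigma,\{0\}}^{\ast}\omega_\sigma(y)=\omega_{N_\R}(s(y))$ into weak positivity of $\omega_\sigma(y)$ (symmetry of $\omega_\sigma(y)$ follows from symmetry of its pullback together with injectivity of $\pi_{\sigma,\{0\}}^{\ast}$). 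The main obstacle is precisely this weak positivity step, where the duality nature of the definition forces the volume-form construction on $\ker(\pi_{\sigma,\{0\}})$ to bridge strata of different dimensions.
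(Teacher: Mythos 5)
The paper's proof of this lemma is a single sentence: ``This follows from a continuity argument using that the cones of strongly positive, positive and weakly positive Lagerberg forms in each fiber are closed by Corollary \ref{cor:2}.''  Your proof spells this out in far more detail, and in the Lagerberg case takes a genuinely different route.  For $V\subset\Xsigmaan$ all pointwise fibres are copies of $\Lambda^{p,p}(\C^n)^\ast$, and your density-plus-closedness reasoning is exactly the continuity argument the paper intends.  For $U\subset N_\Sigma$, however, the fibre at a boundary point $p\in N(\sigma)$ is $\Lambda^{p,p}N(\sigma)'$, which is a \emph{different} vector space from $\Lambda^{p,p}N_\R'$, so a naive continuity argument cannot even be formulated; you correctly resolve this by invoking the compatibility condition \eqref{eq:3} to reduce the question to whether positivity of $\pi_{\sigma,\{0\}}^\ast\eta$ forces positivity of $\eta$.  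The treatment of strong positivity and positivity via $s^\ast\pi_{\sigma,\{0\}}^\ast=\id$ is correct, and your $\kappa$-device for weak positivity --- wedging with a strongly positive top form on $\ker(\pi_{\sigma,\{0\}})$ so that strongly positive forms on $N(\sigma)$ lift to strongly positive forms on $N_\R$ of complementary degree and the duality defining weak positivity can be compared --- is a genuine additional step that the paper's one-liner elides.

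There is one concrete inaccuracy: the assertion that for a fixed \emph{linear} section $s\colon N(\sigma)\to N_\R$, the point $s(y)$ lies in $W_{N_\R}$ for $y\in W_\sigma$ close to $p$.  This is false in general.  A basic neighbourhood of a boundary point consists of points of $N_\R$ that are far out in the direction of $\sigma$, whereas $s(p)$ is a fixed finite point; for instance if $p$ corresponds to $(\infty,0)\in\Rsup^2$ and $s(t)=(0,t)$, then $s(p)=(0,0)$ never lies in a set of the form $(R,\infty)\times(-\ve,\ve)$.  The fix is to decouple the two roles you have assigned to $s$: by the description of the topology in Remark~\ref{def-topology-partial-compactification}, for $y\in W_\sigma$ the fibre $\pi_{\sigma,\{0\}}^{-1}(y)\cap W_{N_\R}$ is nonempty, so pick any $z$ there; the compatibility condition gives $\omega_{N_\R}(z)=\pi_{\sigma,\{0\}}^\ast\bigl(\omega_\sigma(y)\bigr)$, which is positive since $z\in N_\R$.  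Now apply $s^\ast$ for an arbitrary linear section purely as a linear-algebraic left inverse of $\pi_{\sigma,\{0\}}^\ast$ on multilinear forms, exactly as you do afterwards.  With this adjustment the rest of your argument, including the $\kappa$-device, goes through.
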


\begin{proof}
This follows from a continuity argument using that the cones of strongly positive, positive and weakly
positive Lagerberg forms in each fiber are closed by Corollary \ref{cor:2}.
\end{proof}

\begin{lem}\label{lemm:7}
Let $U$ be an open subset of $N_{\Sigma }$ and
$V=\trop^{-1}(U)\subset \Xsigmaan$. Let $\eta\in
\AS^{p,p}(U)$.
\begin{enumerate}
\item \label{item:12} If $\eta$ is strongly positive, 
then $\trop^{\ast}(\eta)$ is strongly 
positive.
\item \label{item:13} $\eta$ is positive if and only if 
$\trop^{\ast}(\eta)$ is  
positive.
\item \label{item:14} If $\trop^{\ast}(\eta)$ is weakly  
positive, then $\eta$ is weakly positive.
\end{enumerate}
\end{lem}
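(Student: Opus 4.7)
The plan is to reduce all three statements to the torus case established in Lemma \ref{lemm:4} by restriction to the open dense stratum. The bridge is Lemma \ref{lemm:16}, which asserts that each of the three positivity notions is preserved and reflected under restriction of Lagerberg forms on $U \subset N_\Sigma$ (respectively complex forms on $V \subset \Xsigmaan$) to $U \cap N_\R$ (respectively $V \cap \torusan$).

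The key compatibility I would verify first is that $\trop^*$ commutes with restriction to the open stratum: since the orbit decomposition of $X_\Sigma$ corresponds to the stratification of $N_\Sigma$, we have $\trop^{-1}(N_\R) = \torusan$ and hence $\trop^{-1}(U \cap N_\R) = V \cap \torusan$. Because $\trop^*$ is a morphism of sheaves by Proposition \ref{prop:5}, this gives
\[
\trop^*(\eta)|_{V \cap \torusan} = \trop^*\bigl(\eta|_{U \cap N_\R}\bigr),
\]
where the right-hand $\trop^*$ is the torus map of Proposition \ref{prop:4}.

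With this identification in hand, each of \ref{item:12}--\ref{item:14} follows by a three-step chain: apply Lemma \ref{lemm:16} on the $N_\Sigma$-side to pass to the restriction, then the corresponding statement of Lemma \ref{lemm:4}, then Lemma \ref{lemm:16} again on the $\Xsigmaan$-side to lift back. For \ref{item:12}, strong positivity of $\eta$ yields strong positivity of $\eta|_{U \cap N_\R}$, hence of $\trop^*(\eta|_{U \cap N_\R}) = \trop^*(\eta)|_{V \cap \torusan}$, hence of $\trop^*(\eta)$. Statement \ref{item:13} follows identically but uses the equivalence in the positive case of Lemma \ref{lemm:4}. For \ref{item:14}, the implication runs in the reverse direction: weak positivity of $\trop^*(\eta)$ descends to weak positivity of its restriction, which forces $\eta|_{U \cap N_\R}$ to be weakly positive by the weak part of Lemma \ref{lemm:4}, and a final application of Lemma \ref{lemm:16} lifts this back to $U$.

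There is no real obstacle here; the lemma is essentially a bookkeeping consequence of the torus case together with the sheaf property of $\trop^*$. The fact that \ref{item:12} and \ref{item:14} are only one-sided implications precisely mirrors the one-sided statements in Lemma \ref{lemm:4}, which in turn reflect the phenomenon isolated in Example \ref{too few strongly positive} and Remark \ref{strongly positive not compatible}: the cone of strongly positive Lagerberg forms is strictly smaller than the cone of $F$-invariant strongly positive complex forms, and dually for weakly positive forms.
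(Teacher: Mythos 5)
Your proposal is correct and is exactly the paper's proof, which just cites Lemma \ref{lemm:16} and Lemma \ref{lemm:4} without elaboration; you have simply spelled out the restrict--apply--lift chain and the sheaf compatibility of $\trop^*$ that the paper leaves implicit. No gaps and no divergence in method.
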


\begin{proof}
The result follows from  Lemma \ref{lemm:16} and Lemma \ref{lemm:4}.
\end{proof}

One can deduce from Example \ref{strongly positive not compatible explicit}
that the converses of \ref{item:12} and \ref{item:14} in the previous
lemma are not always true.

\begin{lem}\label{lemm:5} 	
Let $U$ be an open subset of $N_{\Sigma }$ and $V=\trop^{-1}(U)\subset \Xsigmaan$. 
Every strongly positive complex differential form on $V$ (resp. strongly positive Lagerberg form on $U$) is positive and
every positive complex differential form on $V$ (resp. positive Lagerberg form on $U$) is weakly 
positive. 
Moreover, if $p=0,1,n-1,n$, then all three positivity notions agree on $A^{p,p}(U)$ (resp. on $A^{p,p}(V)$).
\end{lem}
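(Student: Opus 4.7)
The plan is to reduce every assertion to a pointwise statement on the fibers of the sheaf of (complex or Lagerberg) forms, and then invoke the linear-algebra results already proved in Section \ref{sec:posit-real-compl}. By Lemma \ref{lemm:16}, in both the complex and the Lagerberg setting a form is strongly positive, positive or weakly positive precisely when the corresponding condition holds at every point. Thus once the fiberwise statements are in hand, no further global argument is needed.

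For the first two assertions, observe that in the complex case the fiber of $\AS^{p,p}_{\Xsigmaan}$ at a point $y \in V$ is of the form $\Lambda^{p,p}W^{\ast}$ for $W$ the real tangent space at $y$, while in the Lagerberg case, by Remark \ref{rem-support-forms} (i) together with \ref{positivity notions for toric varieties}, the fiber of $\AS^{p,p}_{N_\Sigma}$ at $v \in N(\sigma) \subset N_\Sigma$ is $\Lambda^{p,p} N(\sigma)'$. The needed chain of cone inclusions
\[
\Lambda^{p,p}_{+,s} \subset \Lambda^{p,p}_{+} \subset \Lambda^{p,p}_{+,w}
\]
is exactly the content of Corollary \ref{cor:1} in the complex case and of Corollary \ref{cor:2} in the Lagerberg case. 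Combining these with Lemma \ref{lemm:16} yields the first two claims at once.

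For the final claim, I would apply the same reduction: fiberwise, the three positivity notions coincide for $p=0,1,n-1,n$ by Remark \ref{rem:6} in the complex case and by Remark \ref{rem:7} in the Lagerberg case. Another application of Lemma \ref{lemm:16} upgrades the pointwise equality of the three cones to equality of the three notions on $A^{p,p}(U)$ and on $A^{p,p}(V)$.

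I do not anticipate any genuine obstacle: the lemma is a purely formal consequence of pointwise linear algebra together with the local nature of all three positivity conditions. The only mild point to keep track of is the identification of the stalk of $\AS^{p,p}_{N_\Sigma}$ at a boundary point $v \in N(\sigma)$ with the stalk of the Lagerberg sheaf on the real vector space $N(\sigma)$, so that Corollary \ref{cor:2} and Remark \ref{rem:7} apply verbatim.
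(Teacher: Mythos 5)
Your argument is correct and follows essentially the same route as the paper: positivity of (complex or Lagerberg) forms is a pointwise condition on the fibers, so the cone inclusions $\Lambda^{p,p}_{+,s}\subset\Lambda^{p,p}_{+}\subset\Lambda^{p,p}_{+,w}$ from Corollaries \ref{cor:1} and \ref{cor:2}, together with the equality of the three cones for $p=0,1,n-1,n$ from Remarks \ref{rem:6} and \ref{rem:7}, immediately give the lemma. One small imprecision: the fiberwise nature of positivity is not the content of Lemma \ref{lemm:16} (which says positivity may be tested on the restriction to the dense stratum); it is simply the definition, given in Definition \ref{def: positivity of Lagerberg superforms} and \S\ref{positivity notions for toric varieties}, so the citation should be to those rather than to Lemma \ref{lemm:16}, which is not needed here.
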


\begin{proof}
Since all notions of positivity of forms are checked fiber by fiber,
the result follows from Corollary \ref{cor:1} and Remark \ref{rem:6}
in the complex case and Corollary \ref{cor:2} and Remark \ref{rem:7}
in the Lagerberg case. 
\end{proof}

\section{Positivity for complex invariant currents and Lagerberg currents} 
\label{Complex invariant currents}

Throughout this section, $\Sigma$ will be a {smooth} fan in $N_\R$,
$X_{\Sigma}$ will denote the associated toric variety, $N_\Sigma$ will
denote the partial compactification of $N_\R$ and
$\Xsigmaan = X_{\Sigma}(\C)$ will denote the complex {manifold} 
associated to $X_\Sigma$, with tropicalization map
$\trop \colon \Xsigmaan \to N_\Sigma$.

\subsection{Invariant currents} \label{subsection invariant currents}

Let $U$ be an open subset of $N_{\Sigma }$ and write
$V=\trop^{-1}(U)$. Let $\AS_{c}^{p.q}(U)$ be the space of 
Lagerberg forms on $U$ with compact support and similarly $\AS_{c}^{p,q}(V)$ is the
space of complex forms on $V$ with compact support. 
Since  $\trop \colon V \to U$ is proper, the map $\trop^*$ 
in \eqref{trop-invariant-lagerberg-forms} induces a map
\begin{equation}\label{proper-pullback}
\trop^{\ast}\colon \AS_{c}^{p,q}(U) \longrightarrow \AS_{c}^{p,q}(V).
\end{equation} 
We now compare the topologies on the space of Lagerberg forms and the
space of complex forms through the map $\trop^*$.
The definition of the $C^{\infty}$-topology on both spaces is slightly
different.
The topology of $\AS_{c}^{p,q}(U)$ has
been described in Definition \ref{new-definition-topology}.  The
topology of $\AS_{c}^{p,q}(V)$ is defined similarly. One first defines
a topology on $\AS_{K}^{p,q}(V)$ for each compact $K$ using norms
similar to those in \eqref{new-distribution-norm} taking into account
all complex derivatives and then define the topology of
$\AS_{c}^{p,q}(V)$ as the direct limit in the category of locally
convex topological vector spaces. So the main difference is the use of
finite coverings in Definition \ref{new-definition-topology}.
From the definition of the topologies, it is easy to check that the map 
$\trop^{\ast} \colon \AS_{c}^{p,q}(U) \to\AS_{c}^{p,q}(V)$ is
injective and  continuous.
Nevertheless, as the following examples show, 
$\trop^*$ is in general not a homeomorphism onto its image endowed with the subspace topology. 
Moreover, the image of $\trop ^{\ast}$ is not closed in $\AS_{c}^{p,q}(V)$.

\begin{ex}\label{exm:2} 
As in Remark \ref{rem:4}, we consider the case $ X_\Sigma= \mathbb A_\C^1$, 
thus $N_\Sigma = \Rsup$.
Consider   $U=\{u \in \Rsup \mid u>0\}\subset N_\Sigma$ and 
$V=\trop^{-1}(U)=\{z \in \C \mid z\bar z < 1\}$.
Let $\rho \colon \R \to \R$ be a smooth function with $0\le
\rho \le 1$ and $\rho (x)=0$ for $|x|>2$ and $\rho (x)=1$ for
$|x|<1$. Define the sequence of functions
\begin{displaymath}
 f_{n}(u)=\frac{1}{e^{n^{2}}}\rho (u-n).
\end{displaymath}
The sequence $(f_{n})_{{n \geq 3}}$ does not converge to zero in $A^{0,0}_c(U)$ 
because for any compact subset $K$ of $U$ there is no finite covering
$\{V_i\}_{i}$ of $K$ with $f_{n}\in A^{0,0}_K(U,\{V_{i}\})$ for all
$n$ (see Proposition \ref{convergent sequences}). 
Indeed, assume that such a compact $K$ and covering $\{V_{i}\}$
exists. Since the point $n\in \Rsup$ is in the support of $f_{n}$, and
these points converge to $\infty$, then $\infty\in K$. Therefore one
$V_i$ would contain $\infty$ and hence 
by definition of $A^{0,0}_K(U,\{V_{i}\})$
all the $f_n|_{V_i}$ 
would have to be constant, which is not the case.

Write $\varphi_{n}=\trop^{\ast}f_{n}$. Then
\begin{displaymath}
\varphi_{n}(z)=\frac{1}{e^{n^{2}}}\rho (-\log |z|-n).
\end{displaymath}
The support of $\varphi_{n}$ is contained in the closed annulus 
$\{z \in \C \mid e^{n-2}\le{|z|^{-1}}\le e^{n+2}\}$. 
Since $\rho $ is smooth with compact support, all the
derivatives of $\rho $ are bounded. From this and the
condition above on the support of $\varphi_{n}$, we deduce that 
for every pair of 
integers $a$, $b$, there is a constant $C_{a,b}$  such that
      \begin{displaymath}
      \left| \left(\frac{\partial}{\partial z}\right)^a \left(\frac{\partial}{\partial\overline z}\right)^b \varphi_n\right|
      \le \frac{1}{e^{n^{2}}}C_{a,b}\sup_{z \in
      	\supp(\varphi_{n})}\frac{1}{|z|^{a+b}}\le
      \frac{1}{e^{n^{2}}}C_{a,b}e^{(a+b)(n+2)}.
      \end{displaymath}
It follows that the sequence $(\varphi_{n})_{n\geq 3}$ converges to 
      the function 
      $0$ in $\AS_c^{0,0}(V)$.
We conclude that the topology of
$\AS_c^{0,0}(U)$ is not induced by the topology of
$\AS_c^{0,0}(V)$.
  \end{ex}

\begin{ex}\label{exm:2'} We keep the setting from Example \ref{exm:2}. 
Choose now $\rho \in A^{0,0}(\R)$ with $0\le \rho \le 1$ and 
$\rho (x)=1$ for $x\le 0$ and $\rho(x)=0$ for $x>1$. 
Further pick $\psi \in A_{c}^{0,0}(U)$ with $\psi
  (x)=1$ for $x \geq 1$.
Consider the sequence of differential forms
\begin{displaymath}
\eta_{n}= \psi(- \log \vert z \vert) e^{-1/|z|}\rho (-\log|z|-n) 
\frac{idz\land d\bar z}{{4}\pi z\bar z}\in A_c^{1,1}(V)\,\,\,\,\,(n\in \N)
\end{displaymath}
and the differential form $\eta= \psi (- \log \vert z \vert) e^{-1/|z|}{i}dz\land 
d\bar z/(4{\pi}z\bar z)$. 
Similarly as in Example \ref{exm:2} the forms $\eta_{n}$
converge to $\eta$ in $\AS^{1,1}_c(V)$. 
The forms $\eta_{n}$ are in the image of $\trop ^{\ast}$ as 
\begin{displaymath}
\eta_{n}=\trop^{\ast} \left( \psi(u) \exp(-e^{u})\rho (u-n)d'u\land d''u\right),
\end{displaymath}
but the form $\eta$ is not in the image of $\trop^{\ast}$
because the function $\psi (u)e^{-e^{u}}$ is non-constant close
to $\infty$. 
Hence the image of $\trop^{\ast} \colon A^{1,1}_c(U) \to A^{1,1}_c(V)$
is not closed.
\end{ex}

Now we shift our attention to currents.

\begin{rem} \label{F on currents}
Let $V\subset \Xsigmaan$ be an $\SS$-invariant open subset.
Similarly as in Remark \ref{rem:2}, the average with respect to the
probability Haar measure on $\SS$  
leads to a canonical projection $A_c(V) \to A_c(V)^{\SS}$ that we denote
$\omega \mapsto \omega ^{\av}$. A current $S$ is called
\emph{$\SS$-invariant} if 
\begin{displaymath}
  a_{\ast}S = S\quad \forall a \in \SS.
\end{displaymath}
The space of $\SS$-invariant currents is denoted $D(V)^{\SS}=\bigoplus
D^{\cdot,\cdot}(V)^{\SS}$. There is a canonical projection $D(V) \to D(V)^{\SS}$,  
where $S$ is mapped to the $\SS$-invariant current $S^{\av}$ given by
$S^{\av}(\omega)=S(\omega^{\av})$.  
The antilinear involution $F$ of $\AS(V)^{\SS}$ leaves $\AS_{c}(V)^{\SS}$
invariant. 
It defines an antilinear involution on the space of $\SS$-invariant
complex currents  
$D(V)^{\SS}$, which we also denote by $F$, given by
\begin{equation}\label{eq:37}
(FS)(\eta)=\overline{S(F\eta)} \quad\quad (\eta \in A_c(V)^\SS).
\end{equation}
The space of $\SS$- and $F$-invariant currents is denoted by $D(V)^{\SS,F}$. 
Observe that $F$ respects the grading. 
Hence the spaces $D^{p,q}(V)^{\SS,F}$ are well-defined. 
Since $F$ is an involution, there is a canonical projection
$D(V)^{\SS}\to D(V)^{\SS,F}$. 
\end{rem}

\begin{defi}
The \emph{direct image of currents} is the map
\begin{displaymath}
  \trop_{\ast}\colon D^{p,q}(V)\longrightarrow
  D^{p,q}(U){\otimes_\R }\C
\end{displaymath}
given by 
$ \trop_{\ast}(S)(\eta)=S(\trop^{\ast}\eta)$
for each $\eta \in A_c^{n-p,n-q}(U)$. 
{By linearity, we extend the direct image to a $\C$-linear map 
$\trop_*\colon D(V) \to D(U){\otimes_\R }\C$, where $D(U) \coloneqq\oplus_{p,q}D^{p,q}(U)$ is the total 
space of real Lagerberg currents.}
\end{defi}

\begin{lem} \label{trop of complex current}
For  an open subset $U$ of $N_\Sigma$, $V \coloneqq \trop^{-1}(U)$ and $S \in D(V)$, we have the following properties. 
\begin{enumerate}
	\item \label{average}
	 $\trop_*(S)=\trop_*(S^\av)$
	\item \label{real push-forward}
	If $S\in D(V)^{{\SS},F}$, then $\trop_{\ast}(S)\in D(U)\subset D(U){\otimes_\R }\C$.
	\item \label{converse of real push-forward}
	If $U \subset N_\R$ and $S \in D(V)^\SS$,  then  $S$ is
	$F$-invariant if and only if  
	$ \trop _{\ast}(S) \in D(U)$.
\end{enumerate}
\end{lem}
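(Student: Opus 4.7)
The plan is to exploit the fact that by Proposition \ref{prop:5} the image of $\trop^*$ lies in $A_c(V)^{\SS,F}$, and then to transfer reality questions between forms and currents by decomposing $\SS$-invariant forms into their $F$-eigenparts.

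For \ref{average}, I would simply observe that for any test form $\eta \in A^{n-p,n-q}_c(U)$ the pullback $\trop^*\eta$ lies in $A_c(V)^{\SS,F}$, so in particular $(\trop^*\eta)^{\av}=\trop^*\eta$. Then
\begin{displaymath}
\trop_*(S^{\av})(\eta)=S^{\av}(\trop^*\eta)=S\bigl((\trop^*\eta)^{\av}\bigr)=S(\trop^*\eta)=\trop_*(S)(\eta).
\end{displaymath}
For \ref{real push-forward}, I plan to unwind the definition of $F$ on currents from \eqref{eq:37}. The relation $FS=S$ on $\SS$-invariant currents reads $\overline{S(F\omega)}=S(\omega)$ for every $\omega\in A_c(V)^{\SS}$. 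Applying this to the $F$-invariant form $\omega=\trop^*\eta$ gives $\overline{S(\trop^*\eta)}=S(\trop^*\eta)$, so $\trop_*(S)(\eta)\in \R$ for every real $\eta$ and consequently $\trop_*(S)\in D(U)$.

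For \ref{converse of real push-forward}, one direction is a specialization of \ref{real push-forward}, so the content is the converse. The key input is Corollary \ref{top-coro}: when $U\subset N_\R$ the pullback $\trop^*$ is an isomorphism $A_c(U)\simeq A_c(V)^{\SS,F}$. Hence the hypothesis $\trop_*(S)\in D(U)$ is equivalent to $S(\omega)\in \R$ for every $\omega\in A_c(V)^{\SS,F}$. To upgrade this to the identity $S(F\omega)=\overline{S(\omega)}$ on the larger space $A_c(V)^{\SS}$, I would use that every $\omega\in A_c(V)^{\SS}$ has a unique decomposition $\omega=a+ib$ with $a,b\in A_c(V)^{\SS,F}$, obtained by setting $a=\tfrac{1}{2}(\omega+F\omega)$ and $b=\tfrac{1}{2i}(\omega-F\omega)$; here the antilinearity of $F$ is crucial to verify that both $a$ and $b$ are $F$-invariant. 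Since $F\omega=a-ib$ and $S(a),S(b)\in\R$ by assumption, a direct computation gives $S(F\omega)=S(a)-iS(b)=\overline{S(\omega)}$, i.e.\ $FS=S$.

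The computations are all short and direct; the only subtlety to keep in mind is the antilinearity of $F$, which is what forces $A_c(V)^{\SS,F}$ to be a real (rather than complex) subspace of $A_c(V)^{\SS}$ and which makes the decomposition into $\pm 1$ eigenparts of $F$ the correct bridge between reality of $\trop_*(S)$ and $F$-invariance of $S$.
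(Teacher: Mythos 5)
Your proposal is correct and follows essentially the same approach as the paper. For part \emph{(iii)}, where the paper notes that the $\C$-span of forms $\trop^{\ast}\omega$ (with $\omega\in A_c(U)$) is all of $A_c(V)^{\SS}$ and extends the relation by (conjugate-)linearity, you make the same point explicit by decomposing $\omega=\tfrac12(\omega+F\omega)+i\cdot\tfrac{1}{2i}(\omega-F\omega)$ into $F$-eigenparts; these are two phrasings of the identical argument, both resting on Corollary \ref{top-coro}.
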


\begin{proof}
Proposition \ref{prop:5} yields \ref{average}. 
To prove the remaining claims, let $S \in D(V)^\SS$. 
Using that $F$ is an involution, we see that \eqref{eq:37} is equivalent to $\overline{S(\eta)}=(FS)(F\eta)$ for every $\eta\in
{A_{c}(V)^\SS}$. 
To prove \ref{real push-forward}, we assume that $S$ is $F$-invariant. 
We have to show $S(\trop^*\omega)\in \R$ for all $\omega \in A_{c}(U)$. 
Using that $S$ and $\trop^{\ast}\omega $ are
$F$-invariant, property \ref{real push-forward} follows from
\begin{displaymath}
\overline {\trop_{\ast}S(\omega )}
=\overline {S(\trop^{\ast}\omega)}
=(FS)(F\trop^{\ast}\omega )
=S(\trop^{\ast}\omega )=\trop_{\ast}S(\omega ).
\end{displaymath}
To show \ref{converse of real push-forward}, let $U \subset N_\R$ and assume $ \trop _{\ast}(S) \in D(U)$. Then the same computation in reversed order shows $\overline{S}(\eta)=(FS)(F\eta)$ for every $\eta$ of the form $\trop^*\omega$ with $\omega \in A_{c}(U)$. 
Using $U\subset N_\R$, we  deduce from \eqref{added eqnr} and \eqref{eq:22} that the $\C$-span of such forms is $A_{c}(V)^\SS$ and hence $\overline{S}(\eta)=(FS)(F\eta)$ holds for all $\eta \in A_c(V)^\SS$ proving $F$-invariance of $S$ and \ref{converse of real push-forward}.
\end{proof}

\begin{rem} \label{rem:3}  
The map $\trop_{\ast}$ fits in the following  commutative diagram:
\begin{displaymath}
    \xymatrix{
D(V) \ar@(ur,ul)[rr]^{\trop_{\ast}}\ar[r] &D(V)^{\SS} \ar[r]_-{\trop_{\ast}}
&D(U)\underset{\R}\otimes \C\\
&D(V)^{\SS,F} \ar[r]^-{\trop_{\ast}}
\ar[u]&D(U).\ar[u]}
\end{displaymath}
If $U$ is an open subset of the dense orbit $N_\R$, 
then {Corollary \ref{top-coro}} yields that the map 
$D(V)^{\SS,F} \xrightarrow{\trop_{\ast}}	D(U)$ is an isomorphism. 
 If $U$ intersects the boundary, then $\trop^{\ast}$ is not a closed immersion and 
hence $\trop_{\ast}$ is not surjective (see  Examples
  \ref{exm:1} and \ref{exm:1'} below).
\end{rem}

\begin{lem}\label{lemm:8}
Given $\omega\in A^{p,q}(U)$ and $S\in D^{r,s}(V)$, we have
the projection formula
\begin{equation}\label{trop-projection-formula}
\omega\wedge \trop_*S=\trop_*\bigl(\trop^*(\omega)\wedge S\bigr).
\end{equation}  
\end{lem}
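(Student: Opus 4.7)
The plan is to prove this by unraveling definitions; no deep ingredients beyond Proposition \ref{prop:5} are needed. Fix a test form $\eta\in A_c^{n-p-r,n-q-s}(U)$; then $\omega\wedge\eta\in A_c^{n-r,n-s}(U)$ and $\trop^*\omega\wedge\trop^*\eta\in A_c^{n-r,n-s}(V)$, so both sides of \eqref{trop-projection-formula} can legitimately be paired against $\eta$.

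First I would evaluate the left-hand side. By the definition of the wedge product between a form and a current recalled in Remark \ref{product-partial-comp}\ref{product-partial-comp2}, followed by the definition of $\trop_*$,
\begin{equation*}
(\omega\wedge\trop_*S)(\eta)=(-1)^{(p+q)(r+s)}(\trop_*S)(\omega\wedge\eta)=(-1)^{(p+q)(r+s)}S\bigl(\trop^*(\omega\wedge\eta)\bigr).
\end{equation*}

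Next I would evaluate the right-hand side similarly, using the same sign convention and the fact that $\trop^*\omega\in A^{p,q}(V)$ by Proposition \ref{prop:5}:
\begin{equation*}
\trop_*\bigl(\trop^*(\omega)\wedge S\bigr)(\eta)=\bigl(\trop^*\omega\wedge S\bigr)(\trop^*\eta)=(-1)^{(p+q)(r+s)}S\bigl(\trop^*\omega\wedge\trop^*\eta\bigr).
\end{equation*}

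The two expressions now agree because $\trop^*\colon A^{\cdot,\cdot}(U)\to A^{\cdot,\cdot}(V)$ is a morphism of bigraded $\R$-algebras by Proposition \ref{prop:5}, so $\trop^*(\omega\wedge\eta)=\trop^*\omega\wedge\trop^*\eta$; equality of both sides on every test form $\eta$ then gives \eqref{trop-projection-formula}. There is no real obstacle here beyond keeping track of the Koszul sign $(-1)^{(p+q)(r+s)}$, which cancels on both sides; the only conceptual input is the multiplicativity of $\trop^*$, already established.
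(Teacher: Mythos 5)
Your proof is correct and follows essentially the same route as the paper's: unwind $\trop_*$ and the sign convention from Remark \ref{product-partial-comp}\ref{product-partial-comp2} on both sides of the pairing with a test form $\eta$, then invoke multiplicativity of $\trop^*$ from Proposition \ref{prop:5}. The paper's proof is just a more compressed version of this same computation.
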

\begin{proof}
Given $\eta \in A_c^{n-p-r,n-q-s}(U)$, by
Remark \ref{product-partial-comp}\ref{product-partial-comp2} we have
\begin{align*}
S\bigl(\trop^*(\omega \wedge\eta )\bigr)=
(-1)^{(p+q)(r+s)}\bigl(\trop^*(\omega)\wedge S\bigr)(\trop^*\eta)
\end{align*}
as $\trop^*$ respects products and \eqref{trop-projection-formula} is an immediate consequence. 
\end{proof}

Recall that for $\omega \in A^{p,q}(U)$, 
we have an associated Lagerberg current $[\omega]= \int_U\omega \wedge .$ in $D^{p,q}(U)$.
Similarly, we have a complex current $[\eta] \in D^{p,q}(V)$
associated to a complex form
 $\eta \in A^{p,q}(V)$. 
From Lemma \ref{comparison of integration} and \ref{lemm:8}, 
we immediately deduce the following result.

\begin{cor}  \label{pushforward pullback}
  For every Lagerberg form $\omega \in A^{p,q}(U)$, we have
    \begin{displaymath}
      \trop_* [\trop^*(\omega)] = [\omega]\in D^{p,q}(U).
    \end{displaymath}
\end{cor}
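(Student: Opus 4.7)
The plan is to verify the identity on an arbitrary test form $\eta\in A_c^{n-p,n-q}(U)$, since both $\trop_*[\trop^*\omega]$ and $[\omega]$ are Lagerberg currents of type $(p,q)$ on $U$ and are hence determined by their pairings with such $\eta$.

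The computation simply unfolds the definitions and applies the two cited lemmas in sequence. Starting from $\trop_*[\trop^*\omega](\eta)$, the definition of $\trop_*$ replaces it by $[\trop^*\omega](\trop^*\eta)$, and the definition of the current $[\cdot]$ rewrites this as the integral $\int_V \trop^*(\omega)\wedge \trop^*(\eta)$. Proposition \ref{prop:5} tells us that $\trop^*$ is a morphism of bigraded $\R$-algebras, so the integrand collapses to $\trop^*(\omega\wedge \eta)$, with $\omega\wedge \eta\in A_c^{n,n}(U)$ a compactly supported top-degree Lagerberg form on $U$. Lemma \ref{comparison of integration} now converts the integral over $V$ into $\int_U \omega\wedge \eta$, which is $[\omega](\eta)$ by definition.

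An equivalent but slightly slicker packaging uses the projection formula of Lemma \ref{lemm:8}: applied to the "integration current" $T\in D^{n,n}(V)$ given by $T(\alpha)=\int_V \alpha$, it yields $[\trop^*\omega]=\trop^*\omega\wedge T$ and $[\omega]=\omega\wedge \trop_*T$ once one knows that $\trop_*T$ is the analogous integration current on $U$, which is precisely the content of Lemma \ref{comparison of integration}. Either route makes the statement entirely formal, so there is no real obstacle to overcome; the only genuine input is the compatibility of $\trop^*$ with integration of top-degree forms, and the only book-keeping point is that $\omega\wedge \eta$ has compact support in $U\cap N_\R$, which is automatic from the compact support of $\eta$ together with the fact that a top-degree Lagerberg form is automatically supported in the dense stratum.
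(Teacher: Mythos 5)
Your proof is correct and matches the paper's intended argument, which cites exactly Lemma \ref{comparison of integration} and the projection formula Lemma \ref{lemm:8} — your "slicker packaging" is precisely what the paper has in mind, and your first route merely unfolds the projection formula by hand. The supporting observations (that $\trop^*$ is a bigraded algebra morphism, and that a compactly supported top-degree Lagerberg form is automatically supported in the dense stratum) are both correct and already built into the cited lemmas.
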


\begin{defi}\label{definition positive currents on partial compactification}
   Let $V\subset \Xsigmaan$ be an open subset
and let $S\in D^{p,p}(V)$ be a current.
  The current $S$ is called 
\emph{strongly positive (resp.~positive, resp.~weakly positive)} 
if $S(\eta)\ge 0$ for every weakly positive (resp.~positive, resp.~strongly positive) form
$\eta\in \AS^{n-p,n-p}_{c}(V)$.
The space of  weakly positive $(p,p)$-currents will be denoted
$D^{p,p}_{w+}(V)$, the space of positive ones by 
$D^{p,p}_{+}(V)$ and  the space of strongly positive ones by 
$D^{p,p}_{s+}(V)$. 
{The current $S$ is called \emph{real} if
we have $S(\bar\eta)=\overline{S(\eta)}$ 
for each $\eta\in \AS^{n-p,n-p}_{c}(V)$}.

Let $U\subset N_{\Sigma }$ be an open subset and let 
$T\in D^{p,p}(U)$ be a Lagerberg current.
The Lagerberg current $T$ is called 
\emph{strongly positive (resp.~positive, resp.~weakly positive)} if it
is symmetric and for every weakly
positive (resp.~positive, resp.~strongly positive) Lagerberg form
$\eta\in  \AS^{n-p,n-p}_{c}(U)$
the condition $T(\eta)\ge 0$ holds.
The space of  weakly positive Lagerberg currents 
of type $(p,p)$ will be denoted
$D^{p,p}_{w+}(U)$, the space of positive ones by 
$D^{p,p}_{+}(U)$, and  the space of strongly positive ones by 
$D^{p,p}_{s+}(U)$.  
\end{defi}

\begin{lem}\label{lemm:6} 
Let $U$ be an open subset of $N_{\Sigma }$ and $V=\trop^{-1}(U)\subset \Xsigmaan$. 
Every strongly positive current in $D^{p,p}(V)$ (resp.~in $D^{p,p}(U)$) is 
positive, 
every positive current in $D^{p,p}(V)$ (resp.~in $D^{p,p}(U)$) is weakly positive {and every weakly positive current in $D^{p,p}(V)$ (resp.~in $D^{p,p}(U)$) is real (resp.~symmetric).} 
Moreover, if  $p=0,1,n-1,n$, then all three positivity notions agree in $D^{p,p}(V)$ (resp.~in $D^{p,p}(U)$).
\end{lem}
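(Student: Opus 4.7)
The plan is to establish the two chains of implications by duality between cones of positive forms and cones of positive currents, and then to treat the coincidence at the extreme degrees separately. First I would observe that in both settings, if $S$ is strongly positive, then by definition $S(\eta)\geq 0$ for every weakly positive test form $\eta$; since every positive (respectively strongly positive) test form is in particular weakly positive, by the pointwise Definition \ref{def: positivity of Lagerberg superforms} together with the chain of inclusions $\Lambda^{n-p,n-p}_{+,s}\subset \Lambda^{n-p,n-p}_{+}\subset \Lambda^{n-p,n-p}_{+,w}$ from Corollaries \ref{cor:1} and \ref{cor:2}, it follows that $S(\eta)\geq 0$ also on these smaller cones. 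Thus $S$ is positive and weakly positive, and similarly any positive current is weakly positive. The same argument, using Corollary \ref{cor:2}, handles the Lagerberg case. In the Lagerberg setting, symmetry of weakly positive currents is already built into Definition \ref{definition positive currents on partial compactification}, so there is nothing further to check.

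The hard part will be showing that every weakly positive complex current $S\in D^{p,p}_{w+}(V)$ is real, since reality is not part of the definition. I plan to reduce this to the positivity of certain scalar distributions. On a coordinate chart $W\subset V$, Lemma \ref{lemm:13} supplies a $\C$-basis of $\Lambda^{n-p,n-p}V^{\ast}$ consisting of constant strongly positive forms; since such forms are real, this is at the same time an $\R$-basis of the real subspace. Fix such a constant strongly positive form $\eta_0$ on $W$. Because $\phi\,\eta_0$ is strongly positive whenever $\phi\in A^{0,0}_c(W)$ is non-negative, the linear functional $\phi\mapsto S(\phi\,\eta_0)$ is a distribution on $A^{0,0}_c(W)$ that is non-negative on non-negative test functions. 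By the Riesz--Markov theorem (compare Appendix \ref{facts from measure theory}), every such positive distribution is a positive Radon measure, hence takes real values on real test functions.

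To globalize, I would choose, via Proposition \ref{extended-partition-unity}, a locally finite partition of unity subordinate to a cover of $V$ by coordinate charts, and on each chart use the above $\C$-basis of constant strongly positive forms to decompose an arbitrary test form as
\[
\eta = \sum_{\lambda,k} g_{\lambda,k}\,\eta_{\lambda,k}
\]
with $g_{\lambda,k}\in A^{0,0}_c(V)\otimes_\R\C$ and $\eta_{\lambda,k}$ a constant strongly positive form on the chart containing $\supp(g_{\lambda,k})$. Since $\bar\eta_{\lambda,k}=\eta_{\lambda,k}$, applying the reality of each functional $\phi\mapsto S(\phi\,\eta_{\lambda,k})$ term by term yields $S(\bar\eta)=\overline{S(\eta)}$, proving that $S$ is real.

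Finally, for the coincidence of all three positivity notions at $p=0,1,n-1,n$, I would invoke Lemma \ref{lemm:5}: the complementary test forms have bidegrees $(n,n), (n-1,n-1), (1,1), (0,0)$, and at each of these degrees the three cones of forms agree. Therefore the three dual cones in $D^{p,p}(V)$, respectively $D^{p,p}(U)$, also coincide, so any weakly positive current is strongly positive at these degrees in both settings.
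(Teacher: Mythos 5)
Your proof is correct and follows essentially the same route as the paper: the chain of inclusions follows from Lemma \ref{lemm:5}, symmetry of weakly positive Lagerberg currents is definitional, and reality of weakly positive complex currents comes from Lemma \ref{lemm:13} together with the observation that $\phi\mapsto S(\phi\,\eta_0)$ is a positive distribution hence a real-valued Radon measure. The paper's proof compresses the last step into ``easily yields the claim''; you have simply spelled out the positive-distribution argument and the partition-of-unity globalization explicitly.
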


\begin{proof}
The positivity claims follow from  Lemma \ref{lemm:5}. By definition, a weakly positive current on $U$ is symmetric. 
{To see that a weakly positive current on $V$ is real, we use Lemma \ref{lemm:13}. The latter implies  that any real form on $V$ is a real linear combination of strongly positive ones which easily yields the claim.} 
\end{proof}

We will see in Theorem \ref{thm:5} that $\trop_*$ 
induces a bijection between the space of closed positive complex
currents on {$V= \trop^{-1}(U)$} that are invariant under $F$ and $\SS$ and the space of
closed positive Lagerberg currents on {$U$}. In particular, every closed
positive Lagerberg current is in the image of $\trop_{\ast}$.
The following two counterexamples show that one can drop neither the
closedness assumption 
nor the positivity assumption.
The first example is a positive Lagerberg current and the second
example is a closed
Lagerberg current, neither of which are in the image of $\trop_*$.

\begin{ex}\label{exm:1} 
Let $X_\Sigma= \mathbb A_\C^1$, $N_\Sigma =\Rsup$, 
$U=\{u \in \Rsup \mid u>0\}$ and $V=\trop^{-1}(U)=\{z \in \C \mid z\bar z <1\}$ as in 
Example \ref{exm:2}. 
Let $T\colon A^{1,1}_{c}(U)\to \R$ be the linear map 
\begin{align} \label{equation exm:1}
T(fd'u\land d''u)=\int_{{0}}^{\infty}e^{x^{2}}f(x)dx.
\end{align}
If $fd'u\land d''u\in A^{1,1}_{c}(U)$, then $f$ has compact support
contained in $U\setminus \{ \infty \}$.
Therefore the integral in (\ref{equation exm:1}) is finite. 
To show that $T$ is a continuous functional, we use Proposition 
  \ref{test-proposition}. Let $\omega _{n}=g_{n}d'u\land d''u$, $n\ge
1$ be a sequence of forms in $A^{1,1}_{c}(U)$ that converge to
zero. Then there is a neighborhood $V_{0}$ of $\infty$ such that
\begin{displaymath}
  \omega _{0,n}|_{V_{0}}=\pi _{\sigma ,0}^{\ast}(\omega _{\sigma ,n})=0
\end{displaymath}
for all $n$,
where $\sigma =\R_{\ge 0}$ is the cone corresponding to the point
$\infty$. Hence there is a compact $K\subset U\setminus \{\infty\}$
such that $\supp(g_{n})\subset K$ for all $K$. Moreover
$\|g_{n}\|_{K}$ converges to zero. Therefore $T(\omega _{n})$ also
converges to zero.
We conclude that $T$ is a Lagerberg current. 
Clearly it is positive.

Assume that $S$ is a current on $V$ such that $\trop_{\ast}S=T$. 
Let $f_{n}$ and $\varphi_{n}$ be as in Example
\ref{exm:2}.
Since the functions $\varphi_{n}$ converge to  $0$ in $A^{0,0}_{c}(V)$, we deduce that
\begin{align*}
\lim_{n\to \infty} S \left(\varphi_{n} \frac{idz \land d\overline z}
{4{\pi}z \overline z} \right) = 0.
\end{align*}
On the other hand, the assumption $\trop_{\ast}{S}=T$ yields the desired contradiction as follows:
\begin{displaymath}
{S} \left( \varphi_{n} \frac{idz \land d\overline z}
{4{\pi} z \overline z} \right) =
T \left( f_{n} d'u \land d''u \right)\\ 
=\int_{{0}}^{\infty} e^{x^{2}-n^{2}} \rho (x-n)dx >
\int_{n}^{n+1}e^{x^{2}-n^{2}}dx > 1.
\end{displaymath}
\end{ex}

\begin{ex}\label{exm:1'} 
We keep the setting from Example \ref{exm:1}. Then 
\begin{equation}\label{eq:6}
  T(f) \coloneqq \int _{{0}}^{\infty}e^{2e^{x}}f'(x)dx.
\end{equation}
defines a linear functional $T\colon A^{0,0}_{c}(U)\to \R$. Since any $f\in A^{0,0}_{c}(U)$ is {constant in a neighborhood of $\infty$}, 
the support of $f'$ is a compact subset of  $U\setminus \{\infty\}$.
As in Example \ref{exm:1}, we see 
that $T$ defines a current in $D^{1,1}(U)$.
This current is closed because it has top degree. 
We claim that $T$ is not of the form $\trop_{\ast}({S})$ for any 
${S}\in D^{1,1}(V)$.
We argue by contradiction and 
suppose that $T=\trop_{\ast}({S})$. Let $\rho $ and $\psi$ be as in
Example \ref{exm:2'} and write
$$
  \zeta_n=\psi(- \log \vert z \vert) e^{-1/|z|}\rho (-\log|z|-n), \quad 
  \zeta=\psi(- \log \vert z \vert) e^{-1/|z|}.
$$
Then $\zeta_n \to \zeta$ in $A^{0,0}_c(V)$. Moreover
$\zeta_{n}=\trop^{\ast}(f_{n})$ for
\begin{displaymath}
  f_{n}(x)=\psi(x) e^{-e^{x}}\rho (x-n)\in A^{0,0}_{c}(U).
\end{displaymath}
Therefore 
$
 \lim_{n\to \infty}{S}(\zeta_{n})={S}(\zeta) \in \C$. 
 This contradicts 
\begin{align*}
{S}(\zeta_{n}) = T(f_{n})=
\int_{{0}}^\infty e^{2e^x}\left(\psi(x)e^{-e^{x}}\rho (x-n)) \right)'dx
\end{align*}
converging to $-\infty$ for $n \to \infty$, as a direct computation
using integration by parts shows. 
\end{ex}

\begin{prop} \label{positivity and trop}
{Let $S\in D^{p,p}(V)^{\SS,F}$. 
If $S$ is real (resp.~positive, resp.~weakly positive), 
then $T\coloneqq\trop_*(S)$ is symmetric (resp.~positive, resp.~weakly positive).}
\end{prop}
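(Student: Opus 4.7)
The plan is to verify each of the three implications separately; each reduces to a short calculation combining the properties of $\trop^*$ from Proposition \ref{prop:5} and Lemma \ref{lemm:7} with the $\SS$- and $F$-invariance hypothesis on $S$.

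First I would establish the symmetry claim. By Lemma \ref{trop of complex current} \ref{real push-forward}, the $\SS$- and $F$-invariance of $S$ already guarantees that $T = \trop_*(S)$ lies in $D^{p,p}(U)$, so $T(\eta) \in \R$ for all $\eta$. To show $J(T) = (-1)^p T$, I would unwind the definition $J(T)(\eta) = (-1)^n T(J\eta)$ from Remark \ref{product-partial-comp} \ref{product-partial-comp3}, then apply formula \eqref{eq:18'} to $\eta \in A^{n-p,n-p}_c(U)$, which gives $\trop^*(J\eta) = i^{2(n-p)} \overline{\trop^*\eta} = (-1)^{n-p} \overline{\trop^*\eta}$. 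Using that $S$ is real to pull the conjugation outside, one obtains
\[
J(T)(\eta) = (-1)^n (-1)^{n-p} S\bigl(\overline{\trop^*\eta}\bigr) = (-1)^p \,\overline{S(\trop^*\eta)} = (-1)^p \overline{T(\eta)} = (-1)^p T(\eta),
\]
where the last equality uses that $T(\eta)\in\R$. This yields $J(T) = (-1)^p T$, i.e., $T$ is symmetric.

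For the positive case, I would first note that a positive current is weakly positive (Lemma \ref{lemm:6}) and hence real, so $T$ is symmetric by the first step. The positivity inequality is then immediate: for any positive Lagerberg form $\eta \in A^{n-p,n-p}_c(U)$, Lemma \ref{lemm:7} \ref{item:13} gives that $\trop^*\eta$ is positive as a complex form, so $T(\eta) = S(\trop^*\eta) \geq 0$ by positivity of $S$. The weakly positive case proceeds analogously: if $S$ is weakly positive then $S$ is real by Lemma \ref{lemm:6}, so $T$ is symmetric; and for any strongly positive Lagerberg form $\eta$, Lemma \ref{lemm:7} \ref{item:12} shows $\trop^*\eta$ is strongly positive as a complex form, against which the weakly positive current $S$ pairs non-negatively.

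There is no real obstacle here — the proof is essentially bookkeeping, once the earlier technology is in place. The only point requiring attention is the sign computation in the symmetry step, which works out cleanly because the factor $i^{p+q}$ in \eqref{eq:18'} (applied with $p+q = 2(n-p)$) combines with the $(-1)^n$ from the definition of $J$ on currents to produce exactly the required $(-1)^p$. The converses of Lemma \ref{lemm:7} \ref{item:12} and \ref{item:14} are not needed, which is essential since they are known to fail.
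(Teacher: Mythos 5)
Your proof is correct and follows essentially the same route as the paper's: the symmetry claim is derived from \eqref{eq:18'} combined with $S$ being real and $T$ being real-valued, and the positivity claims are reduced to the pointwise compatibility of $\trop^*$ with positivity (Lemma \ref{lemm:7}). One small thing you do that the paper leaves implicit: you explicitly verify the symmetry requirement built into the definition of a positive (resp.\ weakly positive) Lagerberg current by invoking Lemma \ref{lemm:6} to show a positive (resp.\ weakly positive) complex current is real, hence $T$ is symmetric — the paper's single sentence ``The positivity statements follow from Lemma \ref{lemm:7}'' skips this point.
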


\begin{proof}
{Recall from Lemma \ref{trop of complex current} that $T\in D^{p,p}(U)$.
If $S$ is real, then the Lagerberg current $T$ is symmetric.
Indeed for $q \coloneqq n-p$ and $\omega \in A_c^{q,q}(U)$, 
we deduce from \eqref{eq:18'} that 
\[
T(J(\omega))
=S\bigl(\trop^*(J(\omega))\bigr)
=(-1)^pS( \overline{\trop^*(\omega)})
=(-1)^p\overline{S( \trop^*(\omega))}
=(-1)^p{T(\omega)}.
\]
The positivity statements follow from Lemma \ref{lemm:7}.}	
\end{proof}

On the dense orbit, we also have the converse implication.

\begin{lem} \label{generic positivity and trop}
Let $U$ be an open subset of $N_\R$, 
put $V\coloneqq \trop^{-1}(U)$, and let ${S}\in D^{p,p}(V)^{\SS,F}$
be an invariant current such that $\trop_*({S})$ is a positive current on $U$.
Then ${S}$ is a positive current on $V$. 
\end{lem}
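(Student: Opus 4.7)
The plan is to leverage the complexification of the isomorphism from Corollary \ref{top-coro}: the $\C$-linear extension of $\trop^*$ gives an isomorphism $A_c(U)\otimes_\R \C \xrightarrow{\sim} A_c(V)^\SS$. This identifies every $\SS$-invariant complex form on $V$ with a unique pair of real Lagerberg forms on $U$. Positivity of $T=\trop_*(S)$ will handle the $F$-invariant piece of $\eta^\av$, while symmetry of $T$ (automatic for positive Lagerberg currents) will kill the $F$-anti-invariant piece.

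Fix a positive form $\eta\in A_c^{n-p,n-p}(V)$. Its $\SS$-average $\eta^\av$ is again positive (positivity is preserved under pullback by $\SS$-translations and under convex combinations), and $S(\eta)=S(\eta^\av)$ by $\SS$-invariance of $S$. Write $\eta^\av=\trop^*\xi_1+i\,\trop^*\xi_2$ for unique $\xi_1,\xi_2\in A_c^{n-p,n-p}(U)$. Since $F\eta^\av=\trop^*\xi_1-i\,\trop^*\xi_2$ is positive by Lemma \ref{lemm:15}, the average $\trop^*\xi_1=\tfrac{1}{2}(\eta^\av+F\eta^\av)$ is positive; Lemma \ref{lemm:7} then gives that $\xi_1$ is a positive Lagerberg form on $U$, so $T(\xi_1)\geq 0$.

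The crux is to show $T(\xi_2)=0$. Positive complex forms are real (Proposition \ref{prop:11}), so $\overline{\eta^\av}=\eta^\av$. Applying \eqref{eq:18'} in bidegree $(n-p,n-p)$ yields $\overline{\trop^*\xi_j}=(-1)^{n-p}\trop^*(J\xi_j)$, and the injectivity of the complexified $\trop^*$ converts $\overline{\eta^\av}=\eta^\av$ into $J\xi_1=(-1)^{n-p}\xi_1$ (confirming the symmetry of $\xi_1$ expected of a positive Lagerberg form) and the anti-symmetry relation $J\xi_2=-(-1)^{n-p}\xi_2$. Combining this with the identity $T(J\omega)=(-1)^{n-p}T(\omega)$ from Remark \ref{product-partial-comp}\ref{product-partial-comp3} (valid since $T$ is symmetric, being positive) yields $(-1)^{n-p}T(\xi_2)=-(-1)^{n-p}T(\xi_2)$, hence $T(\xi_2)=0$.

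Altogether, $S(\eta)=S(\eta^\av)=T(\xi_1)+iT(\xi_2)=T(\xi_1)\geq 0$, establishing positivity of $S$. The main obstacle is the previous step: without the anti-symmetry of $\xi_2$, the argument would only yield $\operatorname{Re} S(\eta)\geq 0$. The required cancellation comes from the twisted conjugation formula \eqref{eq:18'} of Proposition \ref{prop:4}, and without it one could not rule out a nonzero imaginary contribution to $S(\eta)$.
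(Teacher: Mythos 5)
Your proof is correct. It follows the same averaging strategy as the paper's proof: average $\eta$ over $\SS$, pass to the $F$-invariant part to land in $\trop^*\bigl(A_c^{n-p,n-p}(U)\bigr)$, and transfer positivity through $\trop^*$ via Lemmas \ref{lemm:15} and \ref{lemm:7}/\ref{lemm:4}. Where you differ is in supplying an explicit argument for a step the paper passes over quickly: the paper writes $S(\alpha)=S(\alpha^{\rm inv})$, which — after using $\SS$-invariance of $S$ (giving $S(\alpha)=S(\alpha^{\rm av})$) and $F$-invariance of $S$ (giving $S(\alpha^{\rm inv})=\operatorname{Re}\,S(\alpha^{\rm av})$) — amounts to the non-obvious claim that $S(\alpha^{\rm av})$ is real. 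You prove this by decomposing $\eta^{\av}=\trop^*\xi_1+i\,\trop^*\xi_2$ with $\xi_1,\xi_2\in A_c^{n-p,n-p}(U)$, using \eqref{eq:18'} together with the realness of the positive form $\eta^{\av}$ to show that $\xi_2$ is anti-symmetric, and then invoking the symmetry of $T=\trop_*(S)$ (which is built into Definition \ref{definition positive currents on partial compactification} of a positive Lagerberg current) to get $T(\xi_2)=0$. Your concluding remark that without the twisted conjugation formula \eqref{eq:18'} one could only conclude $\operatorname{Re}\,S(\eta)\geq 0$ correctly pinpoints why this step is genuinely needed rather than cosmetic.
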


\begin{proof}
Given a positive form $\alpha\in A_c^{n-p,n-p}(V)$,
the averaged form $\alpha^{\rm av}\in A_c^{p,p}(V)^\SS$ 
from Remark \ref{rem:2} is again positive. 
From Lemma \ref{lemm:15}, we get that the invariant form
\begin{align*}
\alpha^{\rm inv}=(\alpha^{\rm av}+F(\alpha^{\rm av}))/2\in 
A_c^{n-p,n-p}(V)^{\SS,F}
\end{align*} 
is positive as well.
Since $U\subset N_\R$, we conclude by Proposition \ref{prop:4} that 
$\alpha^{\rm inv}=\trop^*(\beta)$ for a uniquely
determined Lagerberg form $\beta$ in $A_c^{p,p}(U)$.
Finally $\beta$ is positive by Lemma {\ref{lemm:4}}  and 
${S}(\alpha)={S}(\alpha^{\rm inv})=\trop_*({S})(\beta)\geq 0$ shows our claim.
\end{proof}

We will see in the next subsection that positive currents have measure coefficients. The key result to prove
this is captured in the following proposition.
{We refer the reader to Appendix \ref{facts from measure theory} for
the convention we use about Radon measures.}

\begin{prop}\label{positive_distribution_is_a_measure} 
Let $U$ be either an open subset of $\Xsigmaan$ or of $N_{\Sigma }$. 
Let $T \colon A^{0,0}_c(U) \to \R$ be a linear functional 
such that $T(f)\ge 0$ for every non-negative $f \in  A^{0,0}_c(U)$.
Then there is a unique positive Radon measure $\mu$ on $U$ with 
$T(f)=\int_U f \, d\mu$ for every $f \in A^{0,0}_c(U)$. 
\end{prop}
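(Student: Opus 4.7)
The plan is to extend $T$ by continuity to all of $C_c^0(U)$ and then invoke the Riesz representation theorem for locally compact Hausdorff spaces to obtain the Radon measure $\mu$.

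First, I would establish a local bound of the form $|T(f)| \le C_K \|f\|_\infty$ for every $f \in A^{0,0}_c(U)$ with $\supp(f) \subset K$, where $K \subset U$ is compact. For this, I would use Proposition \ref{extended-partition-unity} to construct a smooth cut-off $\chi_K \in A^{0,0}_c(U)$ with $0 \le \chi_K \le 1$ and $\chi_K \equiv 1$ on $K$. The functions $\|f\|_\infty \chi_K \pm f$ are then smooth, compactly supported, and non-negative, so positivity of $T$ gives $T(\|f\|_\infty \chi_K \pm f) \ge 0$, i.e.
\[
|T(f)| \le T(\chi_K) \|f\|_\infty \quad \forall f\in A^{0,0}_c(U) \text{ with } \supp(f) \subset K.
\]
Setting $C_K \coloneqq T(\chi_K)$ yields the desired sup-norm bound.

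Next, I would extend $T$ to a positive linear functional on $C_c^0(U)$. Given $f \in C_c^0(U)$, choose a compact set $K \subset U$ containing a neighborhood of $\supp(f)$. By Corollary \ref{apply-stone-weierstrass}, there exists a sequence $(f_k) \subset A^{0,0}_c(U)$ converging to $f$ in $C_c^0(U)$; inspecting the proof of that corollary, one sees that we may arrange $\supp(f_k) \subset K$ for all $k$. The sup-norm estimate then shows $(T(f_k))$ is Cauchy, so $\tilde T(f) \coloneqq \lim_k T(f_k)$ exists and is independent of the approximating sequence. The extended functional $\tilde T \colon C_c^0(U) \to \R$ is linear, positive (approximating non-negative continuous functions by non-negative smooth ones, again via the construction in Corollary \ref{apply-stone-weierstrass}), and continuous with respect to the inductive limit topology on $C_c^0(U)$.

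Finally, since $U$ is a locally compact Hausdorff space with a countable basis (see Remark \ref{def-topology-partial-compactification} for the case $U \subset N_\Sigma$; the complex case is classical), the Riesz--Markov--Kakutani representation theorem provides a unique positive Radon measure $\mu$ on $U$ with $\tilde T(f) = \int_U f\, d\mu$ for every $f \in C_c^0(U)$. Restricting to $A^{0,0}_c(U) \subset C_c^0(U)$ yields the integral representation for $T$. Uniqueness of $\mu$ follows from the fact that a Radon measure is determined by its values on $C_c^0(U)$, combined with density of $A^{0,0}_c(U)$ in that space. The main subtlety is ensuring that the approximation in Corollary \ref{apply-stone-weierstrass} can be made with supports contained in a fixed compact set so that the sup-norm bound applies uniformly; this is already built into the cut-off argument used in the proof of that corollary.
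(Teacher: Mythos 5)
Your proposal follows essentially the same route as the paper: establish the sup-norm bound $|T(f)|\le T(\chi_K)\|f\|_{\sup}$ via a smooth cut-off from Proposition \ref{extended-partition-unity}, extend $T$ to $C_c^0(U,\R)$ by density using Corollary \ref{apply-stone-weierstrass}, and conclude with the Riesz representation theorem. The one place where the paper is more careful than your sketch is positivity of the extension: the smooth approximants produced by Corollary \ref{apply-stone-weierstrass} need not be non-negative even when $f\ge 0$, so the paper replaces them by $\chi_K\cdot(f_k+\|f_k-f\|_{\sup})$, which are non-negative, supported in a fixed compact set, and still converge to $f$; your parenthetical claim that the construction directly yields non-negative approximants should be replaced by this shift-by-error trick.
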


\begin{proof}
The following argument works for the complex and the Lagerberg case.

Let $K$ be a compact subset of $U$ and let $C^0_K(U,\R)$ 
be the set of continuous real functions with support in $K$. 
As usual, this real vector space is endowed with the supremum norm 
$\metr_{\rm sup}$. We also consider the subspace $A_K^{0,0}(U,\R)$ of 
smooth real functions with support in $K$
and its subspace $A_K^{0,0}(U,\R_{\geq 0})$ of non-negative functions. 

We claim that the restriction of $T$ to $A_K^{0,0}(U,\R)$ 
is continuous with respect to the sup-norm. 
Since we have smooth partitions of unity by Proposition 
\ref{extended-partition-unity}, there is a  non-negative 
 $\chi_K\in  A^{0,0}_c(U) $ with 
$\chi_K(x)=1$ for all $x \in K$. 
Now the positivity of $T$ yields 
\begin{align*}
-T(\chi_K)\|f\|_{\rm sup} \leq T(f) \leq T(\chi_K) \|f\|_{\rm sup}
\end{align*}
for all smooth functions $f$ with compact support in $K$.
This proves the claim. 

Let $f \in C^0_c(U,\R)$. 
To define $T(f)$, we use Corollary \ref{apply-stone-weierstrass} 
or its classical analogue to
get a sequence $(f_n)_{n \in \N}$ in 
$A_c^{0,0}(U,\R)$ which converges to $f$.
It follows from the proof of that corollary
as well as from
Proposition \ref{convergent sequences} that we can find a compact subset $K$ of
$U$ such that $f_n\in A_K^{0,0}(U,\R)$ for all $n\in \N$.
The sequence $(f_n)_{n \in \N}$ converges to $f$ with respect to the 
sup-norm on $K$.
By the continuity of $T$ shown above, it is clear that $T(f_n)$ converges 
to a real number. 
The limit $T(f)$ neither depends on the choice of the 
sequence nor on the choice of $K$.  
Note also that if $f \geq 0$, then 
$\chi_K\cdot(f_n+ \|f_n-f\|_{\rm sup})$ is a 
sequence in $C^0_{\supp(\chi_K)}(U,\R_{\geq 0})$ 
converging to $f$ with respect to the sup-norm.  
We conclude easily that the above defines a positive linear functional 
$T$ on $C^0_c(U,\R)$ which extends the given functional. This is equivalent 
to have a positive Radon measure $\mu$  on $U$ with $T(f)=\int_U f \, d\mu$ 
for every $f \in C^0_c(U,\R)$.

Note that uniqueness is obvious as a positive linear functional 
on $C^0_c(U,\R)$ is continuous on $C^0_K(U,\R)$ 
(by the same argument as above) 
and so our definition of $T(f)$ is forced.
\end{proof}

We can now prove that positive $(n,n)$-currents on
open subsets of $\Xsigmaan$ or $N_{\Sigma }$ are the same as positive
Radon measures.

\begin{prop} \label{positive currents in top degree}
Let $U$ be either an open subset of $\Xsigmaan$ or of $N_{\Sigma }$. 
For every positive current  $T \in D^{n,n}(U)$, there is a unique positive 
Radon measure $\mu$ on $U$ such that 
\begin{equation} \label{integral equation}
T(f)=\int_U f \, d\mu \quad \forall f \in A^{0,0}_c(U).
\end{equation}
Conversely,  every positive Radon measure 	
$\mu$ on $U$ induces a current $T \in D_+^{n,n}(U)$ by \eqref{integral equation}.
\end{prop}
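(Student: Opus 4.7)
The plan is to reduce the statement to Proposition \ref{positive_distribution_is_a_measure} together with Example \ref{measure gives current in top degree}. The crucial observation is that an $(n,n)$-current $T$ is tested against $(0,0)$-forms, which are just smooth functions with compact support. Moreover, by Remarks \ref{rem:6} and \ref{rem:7} (or directly from the definitions, for $p=0$), a complex or Lagerberg $(0,0)$-form is strongly positive / positive / weakly positive precisely when it is a non-negative real function. Hence the positivity assumption on $T$ boils down to $T(f) \geq 0$ for every non-negative $f \in A_c^{0,0}(U,\R)$.

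For the forward direction (existence and uniqueness of $\mu$), I would first treat the Lagerberg case directly: the restriction of $T$ to $A_c^{0,0}(U,\R)$ is a positive $\R$-linear functional, and Proposition \ref{positive_distribution_is_a_measure} produces a unique positive Radon measure $\mu$ with $T(f) = \int_U f\, d\mu$ for every $f \in A_c^{0,0}(U,\R)$, which by definition is all of $A_c^{0,0}(U)$ in this setting. In the complex case one first notes that positivity forces $T$ to take real values on real smooth functions with compact support (since $T(f)\geq 0$ for $f\geq 0$ and by $\R$-linearity $T(f) = T(f^+) - T(f^-) \in \R$ for real $f$, writing $f = f^+ - f^-$ with $f^\pm \geq 0$ smooth, which can be arranged by a cutoff; alternatively use that $T$ is real on the dense cone of non-negatives by Stone--Weierstrass). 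Then Proposition \ref{positive_distribution_is_a_measure} applies to the real restriction and yields $\mu$, and the identity $T(f)=\int_U f\, d\mu$ extends to complex $f \in A_c^{0,0}(U)$ by $\C$-linearity of both sides. Uniqueness of $\mu$ is immediate from the uniqueness clause of Proposition \ref{positive_distribution_is_a_measure}.

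For the converse direction, the Lagerberg case is already contained in Example \ref{measure gives current in top degree}, which shows that \eqref{integral equation} defines a current $T \in D^{n,n}(U)$; the complex case is identical in spirit (continuity on each $A_{K}^{0,0}(U,(V_i))$ follows from $|T(f)| \leq \|f\|_0\, \mu(K)$ using that $\mu(K) < \infty$ for Radon $\mu$ and compact $K$). Positivity is clear: if $\mu \geq 0$ and $f \geq 0$ then $T(f) = \int_U f\, d\mu \geq 0$. The symmetry condition in the Lagerberg definition of positivity is automatic for top-type currents since $J$ acts trivially on $A^{0,0}(U)$, so $J(T) = (-1)^n T$ is trivially satisfied.

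The only mildly delicate point is the reality of $T$ on real-valued functions in the complex case, which is needed before one can invoke Proposition \ref{positive_distribution_is_a_measure}; everything else is a direct application of results already proved in the excerpt.
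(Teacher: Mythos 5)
Your proposal follows essentially the same route as the paper: positivity of $T$ on non-negative $(0,0)$-forms reduces the forward direction to Proposition \ref{positive_distribution_is_a_measure}, and the converse is exactly Example \ref{measure gives current in top degree}, with positivity of the induced current being immediate. The extra discussion you add for the complex case (establishing that $T$ is real-valued on real test functions, for which the cutoff trick $g=\|f\|_{\infty}\chi \pm f\geq 0$ works) fills in a point the paper's proof leaves implicit (it can also be extracted from Lemma \ref{lemm:6}); otherwise the argument is the same.
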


\begin{proof}
If $T$ is a positive current in $D^{n,n}(U)$, then it is a positive 
linear functional on $A_c^{0,0}(U)$. 
Hence the first claim follows from Proposition \ref{positive_distribution_is_a_measure}. 
Conversely, we have seen in Example \ref{measure gives current in top degree}
that every real Radon measure $\mu$ induces an element $T \in D^{n,n}(U)$ with \eqref{integral equation}. 
Clearly, if $\mu$ is positive, then $T$ is positive as well.
\end{proof}

The following result is {a prototype} for 
our main correspondence result in Theorem \ref{thm:5}.

\begin{cor} \label{correspondence for top degree currents}
Let $U$ be an open subset of $N_\Sigma$ and let $V \coloneqq \trop^{-1}(U)$. 
Then the map $\trop_{\ast}$ induces a linear isomorphism 
$D^{n,n}(V)_+^\SS \to D^{n,n}(U)_+$ of real cones.	
\end{cor}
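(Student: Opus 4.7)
The plan is to reduce this to a purely measure-theoretic statement about the proper quotient map $\trop\colon V \to U = V/\SS$. By Proposition \ref{positive currents in top degree}, evaluating at $f \in A^{0,0}_c$ gives bijections between $D^{n,n}_+(V)$ and positive Radon measures on $V$, and between $D^{n,n}_+(U)$ and positive Radon measures on $U$; under the first bijection, $\SS$-invariance of the current corresponds to $\SS$-invariance of the measure. So the claim becomes: push-forward along $\trop$ gives a bijection between $\SS$-invariant positive Radon measures on $V$ and positive Radon measures on $U$.

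First, I would verify that $\trop_{\ast}$ on currents agrees with the classical push-forward on measures. If $T \in D^{n,n}_+(V)^\SS$ corresponds to $\mu$, then for $\omega \in A^{0,0}_c(U)$ we have $\trop^{\ast}\omega = \omega \circ \trop$ by definition, which is a real-valued smooth function on $V$ that is non-negative whenever $\omega \geq 0$. Hence $\trop_{\ast}T(\omega) = \int_V (\omega \circ \trop)\, d\mu \geq 0$; in particular $\trop_{\ast}T$ is real-valued and so lies in $D^{n,n}_+(U)$ (not merely in its complexification). By Corollary \ref{apply-stone-weierstrass} and the continuity on compact sets used in the proof of Proposition \ref{positive_distribution_is_a_measure}, the identity $\trop_{\ast}T(\omega) = \int_V (\omega \circ \trop)\, d\mu$ extends from $A^{0,0}_c(U)$ to $C_c^0(U,\R)$, so $\trop_{\ast}T$ is represented by the classical push-forward measure $\trop_{\ast}\mu$.

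Next I would construct the inverse. Given a positive Radon measure $\nu$ on $U$, using the continuous proper section $\rho_{\Sigma}\colon N_{\Sigma} \to X_{\Sigma}^{\an}$ from Remark \ref{ass-toric-variety} and the Haar probability measure $da$ on $\SS$, I define
\[
\mu(f) \coloneqq \int_U \Bigl(\int_{\SS} f\bigl(a\cdot\rho_{\Sigma}(u)\bigr)\, da\Bigr)\, d\nu(u) \qquad (f \in C^0_c(V)).
\]
Joint continuity of the $\SS$-action, compactness of $\SS$, and properness of $\rho_{\Sigma}$ ensure that the inner integral defines a compactly supported continuous function on $U$, so $\mu$ is a well-defined positive Radon measure on $V$. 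It is $\SS$-invariant by construction, and $\trop(a\cdot\rho_{\Sigma}(u)) = u$ (as $\rho_{\Sigma}$ is a section and $\trop$ is $\SS$-invariant) yields $\trop_{\ast}\mu = \nu$ at once.

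Finally, for injectivity I would use that an $\SS$-invariant positive Radon measure $\mu$ on $V$ satisfies $\mu(f) = \mu(f^{\av})$ for every $f \in C^0_c(V)$, where $f^{\av}$ denotes the average over $\SS$. Since $\trop$ identifies $U$ topologically with $V/\SS$ and is proper, every $\SS$-invariant $f \in C^0_c(V)$ is of the form $g \circ \trop$ for some $g \in C^0_c(U)$, so $\mu(f) = \mu(g\circ\trop) = (\trop_{\ast}\mu)(g)$; thus $\mu$ is determined by $\trop_{\ast}\mu$. The only real obstacle is keeping the measure-theoretic bookkeeping clean at the boundary strata of $N_{\Sigma}$, but this is handled uniformly by the facts that $\trop$ is proper and $\rho_{\Sigma}$ is a continuous proper section, so no stratum-by-stratum analysis is needed.
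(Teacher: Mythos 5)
Your proof is correct and takes essentially the same approach as the paper: both reduce via Proposition \ref{positive currents in top degree} to a statement about Radon measures and then exploit the identification $N_\Sigma = \Xsigmaan/\SS$ from Remark \ref{ass-toric-variety}. Where the paper obtains the bijection tersely by noting that $\trop^*$ identifies $C^0_c(U,\R)$ with the $\SS$-invariant subspace of $C^0_c(V,\R)$ and dualizing, you make the inverse explicit via the section $\rho_\Sigma$ together with the $\SS$-averaging argument for injectivity---a more hands-on version of the same idea.
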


\begin{proof}
Since $N_\Sigma=\Xsigmaan/\SS$ (see Remark \ref{ass-toric-variety}), 
it is clear that $\trop^*$ induces an isomorphism between $C_c^0(U,\R)$ and the 
subspace of $C_c^0(V,\R)$ given by the $\SS$-invariant functions. 
Hence $\trop_*$ maps the cone of $\SS$-invariant positive Radon 
measures on $V$ isomorphically onto the cone of positive Radon measures on $U$. 
By Proposition \ref{positive currents in top degree}, we get the claim.
\end{proof}

\subsection{Co-coefficients of currents}  
\label{Co-coefficients subsection}

To extend Proposition \ref{positive_distribution_is_a_measure} to 
$(p,p)$-currents, we place ourselves in the local setting. 
We assume in this subsection that 
$N=\Z^{n}$ and that $\Sigma $ is the fan given by
the maximal cone $\R_{\geq 0}^n$ and its faces in
$N_\R=\R^n$. 
Then $N_\Sigma = \Rsup^n$ and $X_{\Sigma}$  
is the smooth toric variety $\A_\C^{n}$. 
Recall that any smooth toric variety can be covered by toric affine
varieties isomorphic to open subvarieties of this one.
The splitting $N=\Z^{n}$ induces coordinates $(z_{1},\dots,z_{n})$
in $ \Xsigmaan$
and $(u_{1},\dots,u_{n})$ in $N_{\R}$.

\begin{defi} \label{definition co-coefficitents C}
Let $V\subset \Xsigmaan = \C^n$ be an open subset and let 
${S}\in D^{p,p}(V)$, write $q\coloneqq n-p$ and let 
$I,J\subset \{1,\dots,n\}$ with $|I|=|J|=q$. 
Then the {\it co-coefficient} ${S}^{IJ}$ of ${S}$ is the 
distribution on $V$ defined by
\begin{displaymath}
  {S}^{IJ}(f)={S}(i^{q^2}f dz_{I}\land d\bar z_{J}).
\end{displaymath}
\end{defi}

For $I\subset \{1,\dots,n\}$, we consider the following union of strata
\begin{displaymath}
E^{I} \coloneqq \bigcup_{i\in I}  \{u_{i}=\infty\} \subset N_\Sigma = \Rsup^n.
\end{displaymath}

\begin{defi}\label{definition co-coefficitents L}
Let $U\subset N_{\Sigma } = \Rsup^n$ be  open  and
$T\in D^{p,p}(U)$. Let $q,I,J$ be as above.
Then the {\it co-coefficient} 
$T^{IJ}$ is the Lagerberg distribution  on 
$U\setminus E^{I\cup J}$ defined by
\begin{displaymath}
T^{IJ}(f)=T\bigl((-1)^{\frac{q(q-1)}{2}}f d'u_{I}\land d'' u_{J}\bigr).
\end{displaymath}
\end{defi}

\begin{rem}Note that we define $T^{IJ}\in D^{n,n}(U\setminus E^{I\cup J})$ 
because, by definition of Lagerberg forms, we can only plug
in functions that vanish in a neighborhood of 
$E^{I\cup J}$.
\end{rem}

\begin{rem} \label{co-coefficients determine currents}
Conversely, given  ${S}^{IJ} \in D^{n,n}(V)$ for all $I,J\subset \{1,\dots,n\}$ with
$|I|=|J|=q=n-p$, there is a unique complex current ${S} \in D^{p,p}(V)$ with co-coefficients ${S}^{IJ}$. 
  	
Similarly, one can show that given  $T^{IJ}\in D^{n,n}(U\setminus E^{I \cup J})$ 
for all $I,J\subset \{1,\dots,n\}$ with $|I|=|J|=q=n-p$,
there is a unique $T \in D^{p,p}(U)$ with co-coefficients $T^{IJ}$. 
\end{rem}

Positive currents have measure co-coefficients, instead of just distribution co-coefficients.
The complex case is given by \cite[Proposition III.1.14]{demailly_agbook_2012}:

\begin{prop}\label{prop:7}
Let $V\subset \Xsigmaan = \C^n$ and ${S}\in D^{p,p}(V)$ be a weakly positive current. 
Then the co-coefficients ${S}^{IJ}$ are complex Radon measures on $V$ 
that satisfy $\overline{{S}^{IJ}}={S}^{JI}$ for all multi-indices $I,J$ with $|I|=|J|=q$. 
Moreover, all ${S}^{II}$ are positive Radon measures and the total variation measures $|{S}^{IJ}|$ satisfy the estimates
\begin{equation}\label{eq:10}
\lambda _{I}\lambda _{J}|{S}^{IJ}| \le 2^{q}\sum_{M}\lambda ^{2}_{M}{S}^{MM}, 
\quad I\cap J \subset M  \subset I\cup J,
\end{equation}
for any collection of real numbers  $\lambda _{k}\ge 0$,
$k=1,\dots,n,$ and $\lambda _{I}=\prod_{k\in I}\lambda _{k}.$
\end{prop}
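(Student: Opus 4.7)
The plan is to apply Proposition \ref{positive_distribution_is_a_measure} to explicit combinations of co-coefficients of $S$, using weak positivity of $S$ together with suitable strongly positive test forms of the shape $i^{q^2}\alpha\wedge\bar\alpha$ with $\alpha$ a decomposable $(q,0)$-form.

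First I would treat the diagonal: for $f\in A_c^{0,0}(V)$ with $f\ge 0$, the form $i^{q^2}f\,dz_I\wedge d\bar z_I$ can be reordered (the sign being compensated by $i^{q^2}$) into $f\cdot dz_{i_1}\wedge id\bar z_{i_1}\wedge\cdots\wedge dz_{i_q}\wedge id\bar z_{i_q}$, which is strongly positive. Hence $S^{II}(f)\ge 0$, and Proposition \ref{positive_distribution_is_a_measure} upgrades $S^{II}$ to a positive Radon measure. The conjugation relation $\overline{S^{IJ}}=S^{JI}$ then follows from the fact that weakly positive currents are real (Lemma \ref{lemm:6}): a short sign computation using $(-1)^{q^2}(-i)^{q^2}=i^{q^2}$ gives $\overline{i^{q^2}f\,dz_I\wedge d\bar z_J}=i^{q^2}\bar f\,dz_J\wedge d\bar z_I$, so $\overline{S^{IJ}(f)}=S^{JI}(\bar f)$.

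The main obstacle is the bound \eqref{eq:10}. Writing $K=I\cap J$, $I\setminus K=\{a_1,\ldots,a_s\}$ and $J\setminus K=\{b_1,\ldots,b_s\}$, I would introduce for each $\epsilon=(\epsilon_1,\ldots,\epsilon_s)\in\{\pm 1,\pm i\}^s$ the decomposable $(q,0)$-form
\[
\alpha_\epsilon=\bigwedge_{k\in K}\lambda_k\,dz_k\wedge\bigwedge_{i=1}^s(\lambda_{a_i}dz_{a_i}+\epsilon_i\lambda_{b_i}dz_{b_i}).
\]
Expanding yields $\alpha_\epsilon=\sum_M c_M\lambda_M\,\epsilon^{\sigma^M}\,dz_M$, where $M$ runs over subsets with $K\subset M\subset I\cup J$ and $|M|=q$, $c_M\in\{\pm 1\}$ is a combinatorial sign independent of $\epsilon$, and $\sigma^M\in\{0,1\}^s$ encodes which $b_i$ belong to $M$. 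Since $\alpha_\epsilon$ is decomposable, $i^{q^2}\alpha_\epsilon\wedge\bar\alpha_\epsilon$ is strongly positive, so for $f\ge 0$ weak positivity of $S$ gives
\[
0\le S(f\cdot i^{q^2}\alpha_\epsilon\wedge\bar\alpha_\epsilon)=\sum_{M,M'} c_Mc_{M'}\lambda_M\lambda_{M'}\,\epsilon^{\sigma^M}\bar\epsilon^{\sigma^{M'}}\,S^{MM'}(f).
\]
The orthogonality relations $\sum_{\epsilon\in\{\pm 1,\pm i\}}\epsilon^a\bar\epsilon^b=4\delta_{ab}$ for $a,b\in\{0,1\}$ imply that summing over $\epsilon\in\{\pm 1,\pm i\}^s$ unweighted collapses the right-hand side to $4^s\sum_M\lambda_M^2S^{MM}(f)$, whereas weighting by $\prod_i\bar\epsilon_i$ isolates exactly the $(J,I)$-cross term and gives $4^s c_Ic_J\lambda_I\lambda_J\,S^{JI}(f)$. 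Because each unweighted summand is non-negative, the triangle inequality yields $\lambda_I\lambda_J|S^{IJ}(f)|\le\sum_M\lambda_M^2S^{MM}(f)$ for every non-negative $f\in A_c^{0,0}(V)$.

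To finish, this distributional estimate shows that $\mathrm{Re}(S^{IJ})$ and $\mathrm{Im}(S^{IJ})$ are distributions whose sum and difference with $\frac{1}{\lambda_I\lambda_J}\sum_M\lambda_M^2S^{MM}$ are positive on non-negative test functions; applying Proposition \ref{positive_distribution_is_a_measure} to these combinations presents $\mathrm{Re}(S^{IJ})$ and $\mathrm{Im}(S^{IJ})$ as differences of positive Radon measures, so $S^{IJ}$ itself is a complex Radon measure. The minimal Hahn decomposition of each part is dominated in total variation by $\frac{1}{\lambda_I\lambda_J}\sum_M\lambda_M^2S^{MM}$, and combining real and imaginary parts yields $\lambda_I\lambda_J|S^{IJ}|\le 2\sum_M\lambda_M^2S^{MM}$, which is sharper than the claimed constant $2^q$.
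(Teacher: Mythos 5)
The paper does not prove Proposition~\ref{prop:7}: it is quoted directly from Demailly's book, where it appears as Proposition~III.1.14 with the same constant $2^q$. So you are supplying a proof that the paper deliberately leaves to the reference, and comparison should be against Demailly's argument rather than anything in the paper.

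Your argument is correct and is in essence the same polarization technique Demailly uses: test against decomposable $(q,0)$-forms of the shape $\alpha_\epsilon$, with $\epsilon$ ranging over the fourth roots of unity on the symmetric difference $I\triangle J$, and average. Your bookkeeping verifies cleanly. The reordering identity $i^{q^2}dz_I\wedge d\bar z_I = dz_{i_1}\wedge id\bar z_{i_1}\wedge\cdots\wedge dz_{i_q}\wedge id\bar z_{i_q}$ and the sign computation $\overline{i^{q^2}f\,dz_I\wedge d\bar z_J}=i^{q^2}\bar f\,dz_J\wedge d\bar z_I$ are right, the orthogonality relations over $\{\pm1,\pm i\}$ hold, the unweighted sum collapses as stated because $\sigma^M=\sigma^{M'}$ forces $M=M'$, and the weighted sum picks out precisely $(M,M')=(J,I)$. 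Because each $S(f\cdot i^{q^2}\alpha_\epsilon\wedge\bar\alpha_\epsilon)\ge 0$ and the weights $\prod_i\bar\epsilon_i$ have modulus one, the triangle inequality gives $\lambda_I\lambda_J|S^{IJ}(f)|\le\sum_M\lambda_M^2 S^{MM}(f)$ for nonnegative smooth $f$. Feeding $\lambda\equiv 1$ into this, the combinations $\sum_M S^{MM}\pm\mathrm{Re}(S^{IJ})$ and $\sum_M S^{MM}\pm\mathrm{Im}(S^{IJ})$ are nonnegative linear functionals, hence positive Radon measures by Proposition~\ref{positive_distribution_is_a_measure}, so $S^{IJ}$ is a complex Radon measure. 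The Hahn decomposition argument then yields $|\mathrm{Re}(S^{IJ})|\le A$ and $|\mathrm{Im}(S^{IJ})|\le A$ as measures (using that the positive and negative parts are mutually singular), giving $|S^{IJ}|\le 2A$. This is indeed at least as strong as the stated bound with $2^q$, and sharper for $q\ge 2$; in fact a direct regularity argument gives $|S^{IJ}|\le A$ with constant~$1$.

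Two small steps are compressed and deserve a sentence each if you write this out in full: passing from smooth nonnegative test functions to continuous ones before invoking any total-variation statement (this is exactly what Corollary~\ref{apply-stone-weierstrass} supplies), and the regularity/mutual-singularity argument that turns the pointwise bound $|S^{IJ}(f)|\le A(f)$ on nonnegative $f$ into the inequality of total variation measures. Also, the case where some $\lambda_k=0$ should be handled by a trivial limit, since then the forms $\alpha_\epsilon$ degenerate; but the inequality is then vacuous on the left. None of these is a gap in substance.
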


We have seen in Example \ref{too few strongly positive} 
that in the tropical case it is reasonable to restrict our attention
to positive Lagerberg forms and
hence to positive Lagerberg currents, as opposed to weakly positive currents.
    
\begin{prop}\label{prop:6}
Let $U\subset N_{\Sigma } = \Rsup^n$ be an open subset and $T\in D^{p,p}(U)$ a positive current. 
Then the co-coefficients $T^{IJ}$ are real Radon measures on $U\setminus E^{I\cup J}$ 
that satisfy $T^{IJ}=T^{JI}$ for all multi-indices $I,J$ with $|I|=|J|=q$. 
Moreover, all $T^{II}$ are positive Radon measures and the
total variation measures $|T^{IJ}|$ satisfy the estimates
\begin{equation}\label{eq:30}
\lambda _{I}\lambda _{J}|T^{IJ}| \le \frac{1}{2} (\lambda ^{2}_{I}T^{II}+ \lambda ^{2}_{J}T^{JJ}), 
\end{equation}
for any pair of real numbers  $\lambda _{I}, \lambda _{J}\ge 0$.
\end{prop}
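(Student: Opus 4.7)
The plan is to evaluate the positive current $T$ on a carefully chosen family of positive Lagerberg test forms built from the basic $(q,0)$-forms $d'u_I$, exploit the positivity and symmetry of $T$, and then extract the three asserted properties via Proposition \ref{positive_distribution_is_a_measure}. The main obstacle will be upgrading the pointwise inequality produced by positivity of $T$ to a genuine total-variation estimate on the signed functional $T^{IJ}$.

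For the symmetry $T^{IJ}=T^{JI}$, I would use that $J$ is an algebra involution with $J(d'u_i)=d''u_i$ to compute $J(d'u_I\wedge d''u_J)=d''u_I\wedge d'u_J=(-1)^q d'u_J\wedge d''u_I$. Combining with the identity $T(J\omega)=(-1)^q T(\omega)$ (which follows from the symmetry $J(T)=(-1)^pT$ via Remark \ref{product-partial-comp}\ref{product-partial-comp3}) and $J(f)=f$ for $f\in A^{0,0}$, one obtains $T^{JI}(f)=T^{IJ}(f)$.

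For the positivity of $T^{II}$, I would observe that after reordering wedge factors, $(-1)^{q(q-1)/2}d'u_I\wedge d''u_I$ equals the product $d'u_{i_1}\wedge d''u_{i_1}\wedge\cdots\wedge d'u_{i_q}\wedge d''u_{i_q}$, which is a product of positive $(1,1)$-forms and hence positive by Remark \ref{rem:5}. Therefore $T^{II}(f)\geq 0$ for every non-negative $f\in A_c^{0,0}(U\setminus E^I)$, and Proposition \ref{positive_distribution_is_a_measure} turns $T^{II}$ into a positive Radon measure on $U\setminus E^I$.

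For the central estimate, I would fix $\lambda_I,\lambda_J\geq 0$ and $\epsilon\in\{-1,+1\}$ and apply $T$ to the positive Lagerberg form $(-1)^{q(q-1)/2}\alpha\wedge J(\alpha)$ with $\alpha=\lambda_I d'u_I+\epsilon\lambda_J d'u_J$, whose positivity comes from Definition \ref{Lagerberg involution and orientation}. Expanding and using $T^{IJ}=T^{JI}$, this yields, for every non-negative $f\in A_c^{0,0}(U\setminus E^{I\cup J})$,
\[
0\leq \lambda_I^2\,T^{II}(f)+2\epsilon\lambda_I\lambda_J\,T^{IJ}(f)+\lambda_J^2\,T^{JJ}(f),
\]
so varying $\epsilon$ gives $2\lambda_I\lambda_J|T^{IJ}(f)|\leq \lambda_I^2\,T^{II}(f)+\lambda_J^2\,T^{JJ}(f)$ on non-negative smooth test functions. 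Taking $\lambda_I=\lambda_J=1$, the functionals $\tfrac{1}{2}(T^{II}+T^{JJ})\pm T^{IJ}$ are positive on such functions and extend to positive Radon measures by Proposition \ref{positive_distribution_is_a_measure}, exhibiting $T^{IJ}$ as a signed Radon measure. A standard Hahn--Jordan decomposition argument then converts the two one-sided bounds into $|T^{IJ}|\leq \tfrac{1}{2}(T^{II}+T^{JJ})$, and rerunning the derivation with general $\lambda_I,\lambda_J\geq 0$ delivers the scaled inequality \eqref{eq:30}.
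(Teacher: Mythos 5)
Your proposal is correct and follows essentially the same route as the paper's own proof: symmetry of $T$ gives $T^{IJ}=T^{JI}$, positivity of $T$ on the positive form $(-1)^{q(q-1)/2}(\lambda_I d'u_I \pm \lambda_J d'u_J)\wedge(\lambda_I d''u_I \pm \lambda_J d''u_J)$ gives the pointwise quadratic inequality, and Proposition \ref{positive_distribution_is_a_measure} together with a Hahn--Jordan argument turns $\tfrac{1}{2}(T^{II}+T^{JJ})\pm T^{IJ}$ into positive Radon measures and yields \eqref{eq:30}. The only cosmetic difference is that the paper first exhibits $(-1)^{q(q-1)/2}(d'u_I\wedge d''u_J + d'u_J\wedge d''u_I)$ as a difference of two positive forms to conclude directly that $T^{IJ}+T^{JI}$ is a Radon measure, whereas you package the same information as positivity of $\tfrac{1}{2}(T^{II}+T^{JJ})\pm T^{IJ}$; these two formulations are algebraically identical.
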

\begin{proof}
By definition, Lagerberg currents are real valued. 
The condition
\begin{align} \label{eq:28}
T^{IJ}=T^{JI}
\end{align}
follows from the symmetry of $T$.
For every $f \in A_c^{0,0}(U\setminus E^{I\cup J})$, 
we have
\begin{align}\label{eq:26}
(T^{IJ}+T^{JI})(f) = 
T \left( f\cdot(-1)^{\frac{q(q-1)}{2}}  (d'u_{I}\wedge d''u_{J}+d'u_{J}\wedge d''u_{I}) \right).
\end{align}
Moreover 
$(-1)^{\frac{q(q-1)}{2}}(d'u_{I}\wedge d''u_{J}+d'u_{J}\wedge d''u_{I})$ 
can be written as the  difference of two positive Lagerberg forms
 \begin{align}\label{eq:27}
\nonumber (-1)^{\frac{q(q-1)}{2}} (d'u_{I}\wedge d''u_{J}+d'u_{J}\wedge d''u_{I}) &= 
(-1)^{\frac{q(q-1)}{2}} (d'u_{I}+d'u_{J}) \wedge (d''u_{I}+d''u_{J}) \\ 
&- (-1)^{\frac{q(q-1)}{2}}(d'u_{I}\wedge d''u_{I} + d'u_{J}\wedge d''u_{J}).
\end{align}
Since the product of a positive Lagerberg current by a positive Lagerberg form is a 
positive Lagerberg current by Remark \ref{rem:5}, 
we deduce from equations \eqref{eq:26}, \eqref{eq:27} 
and from Proposition \ref{positive_distribution_is_a_measure} 
that $T^{IJ}+T^{J,I}$ is a real Radon measure on $U\setminus E^{I\cup J}$. 
From equation \eqref{eq:28}, 
it follows that $T^{IJ}$ is also a real Radon measure on $U\setminus E^{I\cup J}$. 
In the special case $I=J$, there is no need for \eqref{eq:27} 
and we can directly deduce from the definition of the 
co-coefficient $T^{II}$ or from \eqref{eq:26} that 
$T^{II}$ is a positive Radon measure on $U\setminus E^{I}$.

Note that for every positive continuous function with compact support 
contained in $U\setminus E^{I\cup J}$ and 
for every pair of non-negative real numbers $\lambda _{I}, \lambda _{J}$, we have the inequalities
\begin{displaymath}
T\left(f\cdot (-1)^{\frac{q(q-1)}{2}}(\lambda _{I}d'u_{I}\pm \lambda _{J}d'u_{J})\wedge
(\lambda _{I}d''u_{I}\pm\lambda _{J}d''u_{J})\right)\ge 0.
\end{displaymath}
Expanding and using equation \eqref{eq:28}, we deduce
\begin{displaymath}
\pm 2 \lambda _{I}\lambda_{J} T^{IJ}(f) \le  
\lambda^{2}_{I}T^{I,I}(f)+ \lambda ^{2}_{J}T^{J,J}(f),  
\end{displaymath}
from which equation \eqref{eq:30} follows. 
\end{proof}

Observe that  \eqref{eq:30} is stronger 
than  \eqref{eq:10}. 
This is because in
Proposition \ref{prop:6} we are dealing with positive currents
while Proposition \ref{prop:7} deals with weakly positive currents.

\begin{ex}\label{measures and weakly positive Lagerberg currents1}
We show that there can be no analogue of Proposition \ref{prop:6} 
for weakly positive Lagerberg currents.

Example \ref{too few strongly positive} gives a non-zero Lagerberg form $\omega$  
of type $(2,2)$ on $\R^4$ such that $\pm \omega$ are both weakly positive. 
In fact $\omega$ was constructed such that $\omega\wedge \eta=0$ for each
strongly positive Lagerberg form $\eta$ of type $(2,2)$.
By definition, the associated Lagerberg currents $\pm [\omega]$ are
weakly positive.
We compute the co-coefficients of $T=[\omega]$. 
For $I=\{3,4\}$, $J=\{1,2\}$ and $f \in C_c^\infty(N_\R)$, we have 
\begin{align*}
T^{JI}(f)=T^{IJ}(f)
=-T(fd'u_3 \wedge d'u_4 \wedge d''u_1 \wedge d'' u_2)
=\int_{\R^4} f(u)du_1du_2du_3du_4
\end{align*}
by using the  formula for $\omega$ in 
Example  \ref{too few strongly positive}.
We conclude that $T^{JI}=T^{IJ}$ is   
the Lebesgue measure on $\R^4$ and
the same holds for $I=\{2,3\}$ and $J=\{1,4\}$. 
On the other hand, for $I=\{2,4\}$, $J=\{1,3\}$, we have that
$-T^{JI}=-T^{IJ}$ agrees 
with the Lebesgue measure on $\R^4$. 
All other co-coefficients are $0$. In particular, for all $M\subset
\{1,2,3,4\}$ with two elements, we have $T^{MM}=0$.     
We conclude that no estimate of the form \eqref{eq:30} or
\eqref{eq:10} holds.
\end{ex}

In Example \ref{measures and weakly positive Lagerberg currents1}, all 
co-coefficients were positive or negative Radon measures on $\R^4$.

\begin{ex}\label{measures and weakly positive Lagerberg currents2}
We construct from $\omega$ in Example \ref{too few strongly positive}
a weakly positive Lagerberg current ${T}$ whose 
co-coefficients are not all Radon measures.

Since $\omega \wedge \alpha = 0$ for all strongly positive 
forms $\alpha$, 
we have that ${\omega \wedge {T'}}$ is weakly positive for 
every {symmetric} current ${T'}$ of type $(0,0)$. 
This already shows that it is very unlikely that every weakly 
positive current has measure coefficients.
Indeed, one may check that for the Lagerberg currents
\begin{align*}
{T'}\colon A^{4,4}_c(\R^4)\longrightarrow \R,\,\,
{T'}(fd'u_1\wedge d''u_1\wedge\ldots\wedge d'u_4\wedge d''u_4)
=\frac{\partial f}{\partial u_1}(0,0,0,0)
\end{align*}
and $T=\omega \wedge {T'}$, and the sets 
$I = \{3, 4\}$ and $J = \{1,2\}$, we have
\begin{displaymath}
  T^{IJ}(f) = \frac{\partial f}{\partial u_1}(0,0,0,0).
\end{displaymath}
It is well-known that uniform convergence does not imply convergence
of derivatives.
Therefore the co-coefficient $T^{IJ}$ is 
\emph{not} a Radon measure. 
  \end{ex}

\section{Tropicalization of positive currents} 
\label{decomposition-invariant-boundary}

We consider a smooth fan $\Sigma$ in $N_\R$ with associated complex toric variety $X_\Sigma$ and
corresponding tropicalization $N_\Sigma$. 
In this section, we will describe precisely which  positive Lagerberg currents are in the image 
of positive complex currents with respect to $\trop_*$. 
To this end, in the first subsection, we will introduce a
canonical decomposition of positive currents in
the complex and in the Lagerberg case and use
it to give the desired characterization in the second subsection.

\subsection{Decomposition of positive currents along the
  strata} \label{subsection positive currents and the boundary}

In this subsection, we give a canonical decomposition 
of a positive current along the boundary strata. 
As usual, we handle the complex and the tropical case simultaneously. 
For simplicity, we will often give the arguments only in the case of
Lagerberg currents
as the complex case is completely similar and 
even easier as the exceptional sets $E^I$ for co-coefficients do not occur 
(see Subsection \ref{Co-coefficients subsection}). 
At the end of the subsection, we will study functoriality of the
canonical decomposition with respect to $\trop_*$.

\begin{rem} \label{stratifications of sets}
Recall that the torus action of $\torus$ yields stratifications of the
toric variety
\begin{align*}
X_\Sigma = \coprod_{\sigma \in \Sigma} O(\sigma) \; \text{ and } \;
X_\Sigma^\an = \coprod_{\sigma \in \Sigma} O(\sigma)^\an
\end{align*} 
into locally closed subsets given by the orbits $O(\sigma)$. 
Similarly, we have the stratification 
\begin{align*}
N_\Sigma = \coprod_{\sigma \in \Sigma} N(\sigma)
\end{align*}
given in Definition \ref{definition partial compactification}. 	
Note that the stratification of $N_\Sigma$ induces the one of $X_\Sigma^\an$ by 
$O(\sigma)^{\rm an}=\trop^{-1}(N(\sigma))$ for any $\sigma \in \Sigma$. 
For an open subset $V$ of $\Xsigmaan$ and an open subset $U$ of $N_\Sigma$, we get induced stratifications 
\begin{align*}
V= \coprod_{\sigma \in \Sigma} V \cap O(\sigma)^{\rm an}  \; \text{ and } \;  
U=\coprod_{\sigma \in \Sigma} U \cap N(\sigma).
\end{align*}
For any $\rho \in \Sigma$, the open subset 
$\coprod_{\nu \prec \rho} O(\nu)$  of $X_\Sigma$ is the affine 
toric variety associated with $\rho$ and 
its tropicalization $\coprod_{\nu \prec \rho} N(\nu)$ is  
open in $N_\Sigma$.
This gives rise to the open subsets 
\begin{align*}
V_\rho \coloneqq   \coprod_{\nu \prec \rho} V \cap O(\nu)^{\rm an} \; \text{ and } \; 
U_\rho \coloneqq \coprod_{\nu \prec \rho} U \cap N(\nu)
\end{align*}
of $V$ and $U$, respectively. 
For varying $\rho \in \Sigma$, 
these open  subsets  cover $V$ (resp.~$U$). 
\end{rem}

\begin{defi}\label{good-coordinates}
Since $\Sigma$ is smooth, a given cone $\rho$ in $\Sigma$ is 
generated by part of a $\Z$-basis $b_1, \dots, b_n$ of $N$. 
Using the corresponding coordinates, we may view  
$V_\rho$ (resp.~$U_\rho$) as an open subset of 
$\C^n$ (resp.~$\Rsup^n$). 
We call such an identification of 
$V_\rho$ (resp.~$U_\rho$)  with an open subset of $\C^n$ (resp.~$\R_\infty^n$) a
\emph{choice of toric coordinates on $V_\rho$ (resp.~$U_\rho$)}. We
usually denote the corresponding
complex coordinates by $z_1, \dots, z_n$ and the corresponding
tropical coordinates by $u_1, \dots, u_n$.
\end{defi}

Note that for a current $T$ of bidegree $(p,q)$ on $V$ (resp.~$U$), 
the choice of toric coordinates on $V_\rho$ (resp.~$U_\rho$) 
leads to well-defined co-coefficients $(T|_{V_{\rho} })^{IJ}$
(resp. $(T|_{U_{\rho} })^{IJ}$) for all  $I,J \subset
\{1, \dots, n\}$ with $|I|=|J|=q=n-p$. Recall that, 
{if $T$ is positive, then} in the complex
case the
co-coefficient $(T|_{V_{\rho} })^{IJ}$ is a Radon measure on the open
set $V_{\rho }$ while in the
Lagerberg case $(T|_{U_{\rho} })^{IJ}$ is a Radon measure on the open
set $U_\rho\setminus
E^{I\cup J}$ (see \S \ref{Co-coefficients subsection}).
This allows us to define the notion of a null set of a positive
current.

 \begin{defi}\label{def:nullsetcurrent}
{Let $U$ be an open subset of $X_{\Sigma }^{\an}$ (resp. $N_{\Sigma }$). 
	We say that $T\in D^{p,p}(U)$ has \emph{measure co-coefficients} if 
	for every $\rho \in \Sigma$ and some choice of toric coordinates on $U_\rho$, 
	the co-coefficients $T^{IJ}$ are complex (resp.~real) Radon measures on $U_\rho$ (resp.~on $U_\rho \setminus E^{I \cup J}$).} 
	
	{Let $T \in D^{p,p}(U)$ have measure co-coefficients and let $A$ be  a Borel subset of $U$. We
	say that $A$ is a \emph{null set} for $T$ if, for all $\rho \in \Sigma$ and some choice of toric coordinates on $U_\rho$, the set $A\cap U_{\rho }$
	(resp. $A\cap U_{\rho }\setminus E^{I\cup J}$) is a null set of
	the total variation measure 
	$\left|(T|_{V_\rho })^{IJ}\right|$
	(resp.~$\left|(T|_{U_\rho })^{IJ}\right|$) for all $I,J \subset \{1, \dots, n\}$ with $|I|=n-p$.}     
\end{defi}

Using change of variables, it is easy to see that the conditions in Definition \ref{def:nullsetcurrent} do not depend on the choice of toric coordinates.

\begin{lem} \label{uniqueness sequence}	
{Let $U$ be open in  $X_{\Sigma }^{\an}$ (resp.~in $N_{\Sigma }$)
	 and let  $A \subset U$  be closed in $U$. Then there is an increasing sequence 
	 of  smooth functions $\psi_k \geq 0$ in $A_c^{0,0}(U \setminus A)$ which converges pointwise 
	 to the characteristic function of $U \setminus A$.} 
	 
	 {For  any $\alpha \in A_c^{p,q}(U)$, the forms $\alpha_k \coloneqq \psi_k \alpha \in A_c^{p,q}(U)$ have compact support in $U \setminus A$. If $\alpha$ is a positive form, 
	 then   $\alpha_k$ and $\alpha- \alpha_k$ are both positive.}
	 
	{For every $T \in D^{n-p,n-q}(U)$ with measure co-coefficients, we have:
	\begin{enumerate}[label={(\rm \emph{\alph*})}]
		\item\label{item:10}
		if $A$ is a null set for $T$, then  
		$\lim_{k \to \infty} T(\alpha_k)=T(\alpha)$;
	    \item\label{item:11}
		if $U \setminus A $  is a null set for  
		$T$, 
		then $ \lim_{k \to \infty} T(\alpha_k)=0$.
	\end{enumerate}}
\end{lem}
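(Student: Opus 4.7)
The plan is to first construct the sequence $(\psi_k)$ by an exhaustion-and-smoothing argument, verify the positivity claims algebraically, and then handle (a) and (b) via the dominated convergence theorem applied to the co-coefficient measures after a local toric reduction.

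Since $U$ is Hausdorff, locally compact and second countable (see Remark \ref{def-topology-partial-compactification} for $N_\Sigma$, and this is classical for $\Xsigmaan$), the open subset $U\setminus A$ admits a compact exhaustion $K_1 \subset K_2 \subset \cdots$ with $K_k$ contained in the interior of $K_{k+1}$. Using Proposition \ref{extended-partition-unity} (respectively its classical complex analogue), I would pick smooth cutoff functions $\varphi_k \in A_c^{0,0}(U\setminus A)$ with $0 \leq \varphi_k \leq 1$ and $\varphi_k \equiv 1$ on $K_k$. To force monotonicity, set $\psi_k \coloneqq 1 - \prod_{j=1}^k (1-\varphi_j)$: this is a finite product of smooth $[0,1]$-valued functions, hence smooth and $[0,1]$-valued; it is increasing in $k$ because the product is decreasing, has compact support contained in $\supp(\varphi_1)\cup\cdots\cup\supp(\varphi_k) \subset U\setminus A$, and equals $1$ on $K_k$, so it converges pointwise to the characteristic function of $U\setminus A$. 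The positivity of $\alpha_k = \psi_k\alpha$ and of $\alpha - \alpha_k = (1-\psi_k)\alpha$ is then immediate from Remark \ref{rem:5} (respectively its complex analogue), since a non-negative smooth function is a positive $(0,0)$-form.

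For (a) and (b), I would invoke Proposition \ref{extended-partition-unity} once more to choose a partition of unity on $U$ subordinate to the open cover $(U_\rho)_{\rho \in \Sigma}$ from Remark \ref{stratifications of sets}. Since $\alpha$ has compact support, only finitely many partition functions meet $\supp(\alpha)$, so by linearity I may assume $\supp(\alpha)\subset U_\rho$ for a fixed $\rho\in\Sigma$ and work in toric coordinates on $U_\rho$ as in Definition \ref{good-coordinates}. Expanding $\alpha$ in the standard basis yields coefficients $f_{IJ}$ which are smooth with compact support in $U_\rho$; in the Lagerberg case the compatibility condition \eqref{eq:3} further forces each $f_{IJ}$ to vanish on a neighbourhood of $E^{I\cup J}$, so that $\psi_k f_{IJ}$ can be legitimately integrated against the Radon measure $T^{IJ}$ on $U_\rho\setminus E^{I\cup J}$. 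Up to signs and powers of $i$ absorbed in the definition of the co-coefficients, one then has $T(\alpha)=\sum_{I,J} T^{IJ}(f_{IJ})$ and $T(\alpha_k)=\sum_{I,J} T^{IJ}(\psi_k f_{IJ})$. For (a), $|f_{IJ}|$ is an $|T^{IJ}|$-integrable dominating function for $(\psi_k f_{IJ})$, and $\psi_k f_{IJ}\to f_{IJ}$ pointwise off $A$; since $A$ is a null set for $T$, the dominated convergence theorem yields $T^{IJ}(\psi_k f_{IJ})\to T^{IJ}(f_{IJ})$, and summing gives $T(\alpha_k)\to T(\alpha)$. For (b), $\supp(\psi_k)\subset U\setminus A$, so $\psi_k f_{IJ}$ is supported in $(U\setminus A)\cap U_\rho$ (respectively intersected with the complement of $E^{I\cup J}$), which is $|T^{IJ}|$-null by hypothesis; hence $T^{IJ}(\psi_k f_{IJ})=0$ for every $k$, and $T(\alpha_k)=0$.

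The main obstacle specific to this setting is compatibility with the Lagerberg structure: one must verify that the coefficients $f_{IJ}$ genuinely vanish near $E^{I\cup J}$ (so that the co-coefficient measures $T^{IJ}$ are applied only where they are defined), and that the notion of null set in Definition \ref{def:nullsetcurrent} is independent of the chosen toric coordinates, so that the partition-of-unity reduction is coherent across the different charts $U_\rho$. Once these points are settled, the statement reduces to a routine application of classical measure theory on each co-coefficient.
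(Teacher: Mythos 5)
Your argument is correct and takes the paper's overall route (partition-of-unity cutoffs; reduce to the Radon co-coefficient measures $T^{IJ}$ on toric charts; pass to the limit by a classical convergence theorem), but two details differ in ways worth noting. To build $\psi_k$ you exhaust $U\setminus A$ by compacta $K_k$, pick $\varphi_k\in A_c^{0,0}(U\setminus A)$ equal to $1$ on $K_k$, and set $\psi_k = 1-\prod_{j\le k}(1-\varphi_j)$; this genuinely gives $\psi_k\in A_c^{0,0}(U\setminus A)$ increasing pointwise to $1_{U\setminus A}$, independently of $\alpha$. The paper instead fixes $\alpha$ with support $K$, chooses relatively compact opens $W_k\supset\overline{W_{k+1}}$ shrinking to $A\cap K$, takes $\varphi_k\in A_c^{0,0}(W_k)$ equal to $1$ on $W_{k+1}$ (a decreasing sequence converging to $1_{A\cap K}$), and sets $\psi_k=1-\varphi_k$; this $\psi_k$ is $1$ outside $W_k$, so it is neither compactly supported in $U\setminus A$ nor converging to $1_{U\setminus A}$. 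That is harmless for the paper's purposes because only $\alpha_k=\psi_k\alpha$ enters (a) and (b), but strictly speaking your construction is a more faithful proof of the first sentence of the lemma as stated. Beyond that, you finish with dominated convergence (domination by $|f_{IJ}|$ against $|T^{IJ}|$) where the paper invokes monotone convergence after splitting into positive and negative parts; these are interchangeable. The two Lagerberg-specific cautions you flag are both handled in the paper: coordinate-independence of the null-set notion is recorded immediately after Definition \ref{def:nullsetcurrent}, and the vanishing of the coefficients $f_{IJ}$ near $E^{I\cup J}$ is forced by the compatibility condition \eqref{eq:3} and is already built into the domain on which $T^{IJ}$ is defined.
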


\begin{proof}
  Let $\alpha \in A_c^{p,q}(U)$ with support contained in the compact
  subset $K$ of $U$.  Since $U$ is a metric space and since $A \cap K$
  is compact, there is a strictly decreasing sequence
  $(W_k)_{k \in \N}$ of relatively compact open subsets of $U$ with
  \begin{align*}
    \bigcap_{k \in \N} W_k = A \cap K \quad \text{ and } \quad 
    W_0 \supset \overline{W_1} \supset W_1 \supset \overline{W_2} \supset \cdots.
  \end{align*}
  Now a partition of unity {(see Proposition \ref{extended-partition-unity} for the Lagerberg case)} 
  gives the existence of a function
  $\varphi_k \in A_c^{0,0}(W_k)$  with $0 \leq \varphi_k \leq 1$ and
  $\varphi_k \equiv 1$ on $W_{k+1}$ for every $k \in \N$.  We note
  that the $\varphi_k$ form a decreasing sequence of smooth functions
  with compact support which converges pointwise to the characteristic
  function of $A \cap K$. {For the sequence $\varphi_k \coloneqq 1 - \psi_k$, we get the first claim.} 
  Evaluating $T(\alpha_k)$ in terms of the co-coefficients $T^{IJ}$
  and using that the latter are  Radon measures, we deduce \ref{item:10} and
  \ref{item:11} from the monotone convergence theorem.  If $\alpha$ is positive,
  then every {$\alpha_k=\psi_k \alpha$} 
  and every
  $\alpha-\alpha_k=\varphi_k\alpha$ is positive.
\end{proof}

\begin{rem} \label{invariance in complex case of approximation lemma}
In the complex case of Lemma \ref{uniqueness sequence}, if $A$ is
$\SS$-invariant, then we can choose the smooth functions $\psi_k$ to
be $\SS$-invariant.   Indeed, this follows easily from Lemma
\ref{uniqueness sequence} by averaging the $\psi_k$ with respect to
the Haar probability measure on $\SS$. 
\end{rem}

  The \emph{decomposition theorem} is the following result.

  \begin{thm}  \label{stratification of currents}
Let $U$ be an open subset of $\Xsigmaan$ (resp.~of $N_\Sigma$) 
and let $T$ be a positive current in $D^{p,p}(U)$. 
Then there is a unique  decomposition 
\begin{equation}\label{global-stratification-of-currents}
T = \sum_{\sigma \in \Sigma}T_\sigma
\end{equation}
such that, for every $\sigma \in \Sigma $, the following two 
conditions are satisfied
\begin{enumerate}
\item \label{item:8} $T_{\sigma }\in D^{p,p}(U)$ is positive;
\item \label{item:9} the set $U\setminus O(\sigma )$
  (resp. $U\setminus N(\sigma )$)
  is a null set for $T_{\sigma }$.
\end{enumerate}
Moreover, the support of the current $T_\sigma$ is contained in 
$V \cap \overline{O(\sigma)^{\an}}$ (resp.~$U \cap
\overline{N(\sigma)}$).
\end{thm}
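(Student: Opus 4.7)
The plan is to reduce to the local model of Subsection 4.2 via Remark \ref{stratifications of sets}. Covering $U$ by the open subsets $V_\rho$ (resp.~$U_\rho$) for $\rho \in \Sigma$ and choosing toric coordinates as in Definition \ref{good-coordinates}, we may assume locally that $X_\Sigma^{\an} = \C^n$ and $N_\Sigma = \Rsup^n$, with strata $O(\sigma_L)^{\an}$ (resp.~$N(\sigma_L)$) indexed by $L \subset \{1,\ldots,n\}$. By Proposition \ref{prop:6} (resp.~\ref{prop:7}), the co-coefficients $T^{IJ}$ of the positive current $T$ are real (resp.~complex) Radon measures satisfying the coupled positivity bounds.

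In this local setting, I would define $T_{\sigma_L}$ through its co-coefficients by measure-theoretic restriction: set $T_{\sigma_L}^{IJ}$ to be the restriction of the Radon measure $T^{IJ}$ to the Borel subset $N(\sigma_L) \setminus E^{I\cup J}$ (respectively $O(\sigma_L)^{\an}$), with $T_{\sigma_L}^{IJ} = 0$ whenever $L\cap (I\cup J)\neq \emptyset$. Restriction to a Borel subset preserves the Radon property and the pointwise coupled positivity bounds of Propositions \ref{prop:6} and \ref{prop:7}, so by Remark \ref{co-coefficients determine currents} the family $\{T_{\sigma_L}^{IJ}\}$ comes from a positive current $T_{\sigma_L}\in D^{p,p}$ on the chart. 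The identity $T = \sum_\sigma T_\sigma$ restricted to the chart is then the $\sigma$-additivity $T^{IJ} = \sum_\sigma T^{IJ}|_{N(\sigma)}$ along the disjoint Borel partition of the chart into strata (and only finitely many terms are nonzero on any given compactly supported test form). The support statement in the chart is immediate: if $W$ is an open subset of the chart disjoint from $\overline{N(\sigma)}$, then every coefficient $f_{IJ}$ of a test form $\alpha \in A_c^{n-p,n-p}(W)$ has support in $W \setminus E^{I\cup J}$, which is disjoint from the support of the measure $T_\sigma^{IJ}$, so $T_\sigma(\alpha) = 0$.

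For uniqueness, any decomposition $T = \sum_\sigma T_\sigma'$ as in the statement forces, in every local chart and for every $(I,J)$, the measure $(T_\sigma')^{IJ}$ to be concentrated on $N(\sigma)$ modulo null sets (Definition \ref{def:nullsetcurrent}); disjointness of the strata then forces $(T_\sigma')^{IJ} = T^{IJ}|_{N(\sigma)}$, and hence $T_\sigma' = T_\sigma$ by Remark \ref{co-coefficients determine currents}. This uniqueness also ensures that the locally defined $T_\sigma$ are independent of the choice of toric coordinates and glue across overlapping charts to a global $T_\sigma \in D^{p,p}(U)$. If one prefers a more hands-on argument, the positivity of $T_\sigma$ and the vanishing in the support argument can be deduced by approximating the characteristic function of $U \setminus \overline{N(\sigma)}$ by smooth cut-offs $\psi_k$ via Lemma \ref{uniqueness sequence}.

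The main technical point to watch is that the measure-theoretic restriction genuinely yields an element of $D^{p,p}$, i.e.~that the formal assignment $\alpha \mapsto \sum_{I,J} \pm \int f_{IJ}\, dT_\sigma^{IJ}$ is continuous on $A_c^{n-p,n-p}$. This is where the compatibility condition \eqref{eq:3} for Lagerberg forms is essential: the coefficients $f_{IJ}$ of a compactly supported test form vanish in a neighborhood of $E^{I\cup J}$, so pairing against a measure on $U \setminus E^{I\cup J}$ is well-defined; together with the domination $T_\sigma^{II} \le T^{II}$ and the coupled bound of Proposition \ref{prop:6}, this transfers the local uniform estimates ensuring continuity of $T$ to $T_\sigma$.
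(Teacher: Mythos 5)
Your proposal for existence follows the same route as the paper: pass to the local model $\Rsup^n$ (resp.~$\C^n$), restrict the co-coefficient measures $T^{IJ}$ stratum by stratum, and reassemble via Remark \ref{co-coefficients determine currents}. The support statement and the gluing across charts are handled correctly, and your uniqueness argument is actually cleaner and more direct than the paper's: the paper proves uniqueness by an induction over the face poset using the cut-off sequences $\psi_k$ from Lemma \ref{uniqueness sequence}, whereas you observe directly that once $T^{IJ}=\sum_\sigma (T_\sigma')^{IJ}$ with each summand concentrated on the Borel set $N(\sigma)\setminus E^{I\cup J}$ and these sets disjoint, the summands are forced to be the measure restrictions $T^{IJ}|_{N(\sigma)}$. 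This is a genuine simplification of the uniqueness step.

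However, there is a real gap in your primary argument for positivity of $T_\sigma$. You assert that because restriction of measures preserves the coupled estimates of Propositions \ref{prop:6} and \ref{prop:7}, the current $T_\sigma$ built from the restricted co-coefficients is positive. This inference is not valid: those coupled estimates are \emph{necessary} consequences of positivity, not a characterization of it. Already for $n=3$, $p=1$ one can write down a Hermitian $3\times 3$ matrix with non-negative diagonal and all $2\times 2$ off-diagonal bounds satisfied whose determinant is negative, so a current whose co-coefficients merely obey \eqref{eq:30} (or \eqref{eq:10}) need not pair non-negatively against all positive test forms. What does work — and what the paper actually does — is to realize $T_\sigma(\alpha)$ for a positive test form $\alpha$ as a limit $\lim_k T(\text{positive forms})$ using the monotone cut-offs $\psi_k$ of Lemma \ref{uniqueness sequence}: one first shows $T_\sigma'\coloneqq T-\sum_{\tau\prec\sigma,\,\tau\neq\sigma}T_\tau$ is positive because $T_\sigma'(\alpha)=\lim_k T(\alpha-\alpha_k')$ with $T$ and $\alpha-\alpha_k'$ positive, and then in a second approximation step extracts positivity of $T_\sigma$ from that of $T_\sigma'$. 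You mention this $\psi_k$ route as an "if one prefers a more hands-on argument" alternative, but in fact it (or an equivalent argument that restriction by indicator functions is a weak-$*$ limit of multiplications by non-negative smooth functions) is needed; the coupled-bounds argument alone does not close the positivity step.

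Two smaller points worth making explicit: (a) the restriction of the signed Radon measure $T^{IJ}$ should be carried out on the Hahn–Jordan pieces $\mu_\pm^{IJ}$ separately, and one must observe $\mu_{\pm,\sigma}^{IJ}\le\mu_\pm^{IJ}$ to know that the restricted measures are again locally finite (Radon); the paper spells this out while your phrase "preserves the Radon property" glosses over it. (b) Your continuity remark at the end is essentially Example \ref{measure gives current in top degree} combined with Remark \ref{co-coefficients determine currents}; having measure co-coefficients already gives a bona fide current, so the continuity transfer from $T$ to $T_\sigma$ is automatic and does not need the extra discussion.
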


We call \eqref{global-stratification-of-currents} the \emph{canonical
  decomposition} of the positive current $T$.

\begin{proof}
We write the proof only in the
  Lagerberg case as the complex case is analogous. The statement
  about the support is a direct consequence of statement \ref{item:9}
  so we only need to prove the existence and uniqueness of the
  decomposition.

Using a partition of unity provided by Proposition
\ref{extended-partition-unity}, the existence of such decomposition can
be checked locally. Moreover, since currents form a sheaf, the unicity
can also be checked locally. Therefore we choose a cone $\rho \in
\Sigma $ and replace $U$ by $U_{\rho }$. We also make a choice of
toric coordinates, so we can assume from now on that $U\subset
\R_\infty^n$.

For each pair of subsets $I,J$ with $|I|=|J|=p$ we write
\begin{displaymath}
  T^{IJ}=T^{IJ}_{+}-T^{IJ}_{-}
\end{displaymath}
with $T^{IJ}_{\pm}$ the positive and the negative part of the Radon
measure $T^{IJ}$. We denote by $\mu 
_{\pm}^{IJ}$ the corresponding Borel measures.  
For any  $\sigma \in \Sigma$, let us consider the immersion  
\begin{align*}
i_{\sigma}^{IJ}\colon U \cap N(\sigma) \setminus 
E^{I\cup J} \longrightarrow U \setminus E^{I\cup J}.
\end{align*}
We  define a new Borel measure 
$\mu_{\pm,\sigma}^{IJ}$ on $U \setminus E^{I \cup J}$ by first
restricting $\mu_\pm^{IJ}$ to 
the locally closed subset $U \cap N(\sigma) \setminus E^{I\cup J}$ and 
then using the image measure with respect to $i_\sigma^{IJ}$. 
Since the sets $N(\sigma)$ form a stratification of $N_\Sigma$, we get
the decomposition
\begin{equation} \label{decomposition of Borel measures}
\mu_\pm^{IJ}= \sum_{\sigma\in\Sigma} \mu_{\pm,\sigma}^{IJ}
\end{equation}
of Borel measures. 
It follows that $\mu_{\pm,\sigma}^{IJ} \leq \mu_\pm^{IJ}$. 
Note that in our setting, the Radon measures correspond to locally
finite Borel measures 
(see Appendix \ref{facts from measure theory}). 
We conclude that  $\mu_{\pm,\sigma}^{IJ}$ is a locally finite Borel measure and 
hence we get a real Radon measure $T_\sigma^{IJ}$ on $U \setminus E^{I\cup J}$ corresponding to 
$\mu_{+,\sigma}^{IJ}- \mu_{-,\sigma}^{IJ}$. By Remark \ref{co-coefficients determine
	currents}, we obtain a unique current $T_{\sigma } \in D^{p,p}(U)$ with co-coefficients $T_\sigma^{IJ}$.
{Note that $T_\sigma$ has measure co-coefficients.} 
By construction, the set $U \setminus (E^{I\cup J} \cup N(\sigma))$ 
is a null set 
with respect to the Borel measure  $\mu_{+,\sigma}^{IJ}+\mu_{-,\sigma}^{IJ}$ 
associated to the total variation measure $|T_\sigma|$.  
The decomposition $T = \sum_\sigma T_\sigma$ follows from
\eqref{decomposition of Borel measures}. It remains to show uniqueness
and property \ref{item:8}.

We prove uniqueness of $T_\sigma$ in the decomposition \eqref{global-stratification-of-currents} 
by induction with respect to the partial ordering $\prec$ on $\Sigma$. 
We take $\sigma \in \Sigma$ and we suppose that uniqueness of $T_\tau $
is known for all $\tau \not = \sigma $ with $\tau \prec \sigma$.
Recall from Remark \ref{stratifications of sets} 
that $U_\sigma$ is an open subset of $U$ and 
that $A_\sigma \coloneqq U \setminus U_\sigma$ is the closed subset of $U$ 
given by the disjoint union of all $N(\tau) \cap U$ with $\tau \in
\Sigma$ not a face of $\sigma$.
For $\alpha \in A_c^{n-p,n-p}(U)$,  
Lemma \ref{uniqueness sequence} gives a sequence $(\alpha_k)_{k \in
  \N}$ in $A_c^{n-p,n-p}(U)$ with 
\begin{align*}
\lim_{k \to \infty} T_\tau(\alpha_k)= T_\tau(\alpha) 
\end{align*}
for $\tau \prec \sigma$ and
\begin{align*}
\lim_{k \to \infty} T_\tau(\alpha_k)= 0
\end{align*}
for all $\tau \in \Sigma$ which are not faces of $\sigma$. 
Now the decomposition \eqref{stratification-of-currents} gives 
\begin{align*}
\lim_{k \to \infty} T(\alpha_k) = \sum_{\tau \prec \sigma} T_\tau(\alpha)
\end{align*}
and uniqueness of $T_\sigma$ follows from our induction hypotheses. 

For the positivity \ref{item:8}, we prove first that the current 
\begin{align*}
T_\sigma' \coloneqq T- \sum_{\tau \prec \sigma, \, \tau \neq \sigma}
  T_\tau 
\end{align*}
is positive. For this let now $A_\sigma'=A_{\sigma }\cup (N(\sigma )
\cap U)$ be the 
union of all $N(\tau) \cap U$ with $\tau$ ranging over all cones of
$\Sigma$ which are not proper faces of $\sigma$. 
Clearly, $A_\sigma'$ is also a closed subset of $U$. 
Let $\alpha' \in A_c^{n-p,n-p}(U)$ be positive. 
Then Lemma \ref{uniqueness sequence} gives a sequence 
$(\alpha_k')_{k \in \N}$ in $A_c^{n-p,n-p}(U)$ with both $\alpha_k'$
and $\alpha'-\alpha_k'$ positive.
Using the properties \ref{item:10} and \ref{item:11} of the sequence from Lemma
\ref{uniqueness sequence}, 
the same argument as before shows that
\begin{align*}
\lim_{k \to \infty} T(\alpha_k') = \sum_{\tau \prec \sigma, \, \tau
  \neq \sigma} T_\tau(\alpha')
\end{align*}
and hence 
\begin{align*} 
T_\sigma'(\alpha')= \lim_{k \to \infty} T(\alpha'-\alpha_k').
\end{align*}
Since $T$ and all $\alpha'-\alpha_k'$ are positive, we deduce that
$T_\sigma'$ is positive.

To prove that $T_\sigma$ is positive, we consider again the closed subset 
$A_\sigma $. 
Let $\alpha \in A_c^{n-p,n-p}(U)$ be positive. 
By Lemma \ref{uniqueness sequence}, there is a positive sequence  
$(\alpha_k)_{k \in \N}$ in  $A_c^{n-p,n-p}(U)$ such that 
\begin{align*}
\lim_{k \to \infty} T_\sigma(\alpha_k)= T_\sigma(\alpha)
\quad \quad 
\text{and}
\quad \quad
\lim_{k \to \infty} T_\tau(\alpha_k)= 0
\end{align*}
for all $\tau \in \Sigma$ which are not faces of $\sigma$. 
We conclude that 
\begin{align*}
\lim_{k \to \infty} T_\sigma'(\alpha_k)= T_\sigma(\alpha).
\end{align*} 
Using that $T_\sigma'(\alpha_k)\geq 0$ by the positivity of
$T_\sigma'$ and $\alpha_k$, 
we deduce that $T_\sigma$ is positive.
\end{proof}

In the complex case, we will show next that the canonical current 
$T_\sigma$ from the canonical 
decomposition {\eqref{global-stratification-of-currents}}
is not always the push-forward of a current on  $V \cap \overline{O(\sigma)}^{\rm an}$.

\begin{ex} \label{counterexample for push-forward}
Let us consider $V=\Xsigmaan = \C^2$ 
where $\Sigma$ is the  fan whose maximal cone is the
positive quadrant in $\R^2$
and let $T \in D^{1,1}(\C^2)$ be given by 
\[
T\biggl(\sum_{k,l} \alpha_{kl} idz_k \wedge d \overline z_l\biggr) 
= \alpha_{1,1}(0,1).
\]
Using that the coefficients $\alpha_{kk}$ of a positive $(1,1)$-form are 
non-negative, we see that $T$ is a positive current. 
We conclude that $T = T_\sigma$ for the cone $\sigma$ generated 
by $(1,0)$. 
Note that the push-forward $S$ of a current on the stratum closure 
$\overline{O(\sigma)}^{\rm an}=\{z_1=0\}$ can only have a non-zero 
co-coefficient {$S^{2,2}$}.
Since all the co-coefficients of $T$ are zero except {$T^{1,1}$} which 
is given by the Dirac measure $\delta_{(1,0)}$ in the point $(1,0)$, 
we conclude that $T_\sigma$ is not the push-forward of a current 
on  $\overline{O(\sigma)}^{\rm an}$.
\end{ex}

Note that the analogue of Example \ref{counterexample for push-forward} in the Lagerberg 
case gives no 
counterexample as $T$ is zero in  this case since 
the point $(\infty,0)$ belongs to $E^{\{1\}}$. 
This is explained by the following result.

\begin{prop} \label{pushforward decomposition}
Let $U$ be an open subset of  $\Xsigmaan$ (resp.~$N_\Sigma$). 
Let $T$ be a positive current of bidegree $(p,p)$ on $U$ with canonical 
decomposition  $T=\sum_{\sigma \in \Sigma} T_\sigma$. 
Let us fix $\sigma \in \Sigma$.
\begin{enumerate}[label={(\rm \emph{\alph*})}]
\item\label{item:1} If $U \subset \Xsigmaan$, then the complex current
  $T_\sigma$ is the push-forward of a positive current on
  $U \cap \overline{O(\sigma)}^{\rm an}$ if and only if for all
  $\rho \in \Sigma$ and all choices of toric coordinates on $U_\rho$
  the co-coefficients $T_\sigma^{IJ}$ are zero whenever
  $O(\sigma) \subset E^{I \cup J}$.
\item\label{item:2} If $U \subset N_\Sigma$, then there is a positive
  Lagerberg current
on $U \cap \overline{N(\sigma)}$ whose image on $U$ is 
$T_\sigma$.
\end{enumerate}
\end{prop}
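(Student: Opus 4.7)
For part \ref{item:1} the easy direction is the ``only if''. Suppose $T_\sigma = \iota_\ast S$ with $\iota\colon U \cap \overline{O(\sigma)}^{\rm an} \hookrightarrow U$ the inclusion. Working in toric coordinates on some $U_\rho$, we have $\overline{O(\sigma)}^{\rm an} \cap U_\rho = \{z_i = 0 : i \in L\}$ where $L \subset \{1,\dots,n\}$ corresponds to the rays of $\sigma$, and $O(\sigma) \subset E^{I\cup J}$ amounts to $L \cap (I \cup J) \neq \emptyset$. Since $\iota^\ast dz_i = 0 = \iota^\ast d\bar z_i$ for $i \in L$, the pullback $\iota^\ast(f\,dz_I \wedge d\bar z_J)$ vanishes. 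Hence $T_\sigma^{IJ}(f) = S\bigl(i^{q^2}\iota^\ast(f\,dz_I \wedge d\bar z_J)\bigr) = 0$.

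For the converse in \ref{item:1}, and for the existence assertion in \ref{item:2}, the strategy is uniform: construct $S$ by the prescription
\[
S(\beta) = T_\sigma(\tilde\beta),
\]
where $\tilde\beta \in A^{n-p,n-p}_c(U)$ is any extension of a test form $\beta$ on $U \cap \overline{O(\sigma)}^{\rm an}$ (resp.\ $U \cap \overline{N(\sigma)}$), built locally and patched via the partitions of unity from Proposition \ref{extended-partition-unity}. The heart of the argument is well-definedness, i.e.~that $T_\sigma(\alpha) = 0$ whenever $\iota^\ast\alpha = 0$. Writing $\alpha = \sum \alpha_{IJ}\,dz_I \wedge d\bar z_J$ in local toric coordinates, one splits into two cases: if $(I \cup J) \cap L \neq \emptyset$, the vanishing hypothesis of \ref{item:1} kills the term via $T_\sigma^{IJ} = 0$; if $(I \cup J) \cap L = \emptyset$, then $\iota^\ast\alpha = 0$ forces $\alpha_{IJ}$ to vanish on $\overline{O(\sigma)}^{\rm an}$, and since $T_\sigma^{IJ}$ is a complex Radon measure whose total variation is concentrated on $\overline{O(\sigma)}^{\rm an} \cap U_\rho$ by Theorem \ref{stratification of currents}, the integral $\int \alpha_{IJ}\, dT_\sigma^{IJ}$ vanishes as well. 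Positivity of $S$ is obtained by choosing positive extensions $\tilde\beta$ of positive test forms $\beta$; such positive extensions exist locally (extend ``constantly in the normal direction'') and globally via the partition of unity. A direct check on co-coefficients then confirms $\iota_\ast S = T_\sigma$, and uniqueness of $S$ follows from the fact that $\iota^\ast$ has dense image on compactly supported test forms.

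For part \ref{item:2}, we verify that the vanishing hypothesis from part \ref{item:1} is automatic in the Lagerberg setting. Indeed, Theorem \ref{stratification of currents} implies that each co-coefficient $T_\sigma^{IJ}$ is a Radon measure on $U_\rho \setminus E^{I \cup J}$ whose support is contained in $N(\sigma) \cap (U_\rho \setminus E^{I \cup J})$. In local coordinates with $\sigma = \sigma_L$, the condition $N(\sigma) \subset E^{I \cup J}$ is equivalent to $L \cap (I \cup J) \neq \emptyset$, which forces $N(\sigma) \cap (U_\rho \setminus E^{I \cup J}) = \emptyset$ and hence $T_\sigma^{IJ} = 0$. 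Thus the construction of part \ref{item:1} applies unconditionally to yield the desired positive Lagerberg current on $U \cap \overline{N(\sigma)}$.

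The main technical obstacle will be the well-definedness step of the extension construction, which requires carefully combining the vanishing of the irrelevant co-coefficients with the fact that positive currents have measure coefficients (so are of order zero), ensuring that only the restriction of $\tilde\beta$ to $\overline{O(\sigma)}^{\rm an}$ (resp.~$\overline{N(\sigma)}$) matters.
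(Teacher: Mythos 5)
Your proposal is correct and follows essentially the same approach as the paper. Both proofs establish the ``only if'' direction of \ref{item:1} via $\iota^*dz_k=0$ for $k$ indexing the rays of $\sigma$, both construct $S$ by extension with well-definedness deduced from the concentration of $T_\sigma$ on $O(\sigma)^{\rm an}$ (resp.~$N(\sigma)$) coming from Theorem \ref{stratification of currents} (the paper does this by prescribing the co-coefficients $S^{IJ}(f)\coloneqq T_\sigma^{IJ}(g)$ and invoking Remark \ref{co-coefficients determine currents}, which is the dual formulation of your $S(\beta)=T_\sigma(\tilde\beta)$), and both deduce \ref{item:2} from the observation that the vanishing hypothesis of \ref{item:1} is automatic in the Lagerberg case because $N(\sigma)\subset E^{I\cup J}$ forces $N(\sigma)\cap(U_\rho\setminus E^{I\cup J})=\emptyset$.
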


We will see in the proof below that in the Lagerberg case, 
the necessary and sufficient condition in \ref{item:1} is always
satisfied which is the reason for \ref{item:2} to be true.

\begin{proof}
 We first deal with the complex case \ref{item:1}.  
Suppose that there is a current $S$ on $U$ such that 
$\iota_*(S)=T_{\sigma }$ for the closed immersion $\iota$ of 
$U \cap \overline{O(\sigma)}^{\rm an}$ 
into $U$. 
Let $\rho \in \Sigma$ and let us fix toric coordinates on $U_\rho$
such that we may view $U_\rho$ 
as an open subset of $\C^n$. 
Suppose that $N(\sigma) \subset E^{I \cup J}$. 
This means that there is $k \in I \cup J$ such that the coordinate 
$z_k$ vanishes on $O(\sigma)$. 
Hence the restrictions of $dz_k,d\overline z_k$ to $O(\sigma)$ 
are zero. 
This implies that $T_{\sigma }^{IJ}=0$ using the definition of co-coefficients 
and $\iota_*(S)=T_{\sigma }$.

Conversely, assume that for any $\rho \in \Sigma$ and any choice of
toric coordinates on $U_\rho$, 
we have  $T_\sigma^{IJ}=0$ on $U_\rho$ whenever 
$N(\sigma) \subset E^{I \cup J}$. 
The  coordinates $z_k$ of $O(\sigma)$ are 
precisely those such that $z_k \neq 0$ on $O(\sigma)$. 
We conclude that the co-coefficients of a current $S$  on 
$U_\rho \cap \overline{O(\sigma)}^{\rm an}$ 
can be labeled as $S^{IJ}$ 
with $I,J \subset \{1,\dots,n\}$ such that $|I|=|J|=p$ and such that 
$N(\sigma)$ is not contained in $E^{I \cup J}$. 
To construct such a current $S$ with $\iota_*(S)=T_{\sigma }$, we define its 
co-coefficients on $U_\rho \cap \overline{O(\sigma)}^{\rm an}$ 
by
\begin{align*}
S^{IJ}(f) \coloneqq T^{IJ}_{\sigma }(g)
\end{align*}
where $f$ is any smooth function with compact support in 
$U_\rho \cap \overline{O(\sigma)}^{\rm an}$ 
and where $g$ is any extension to a smooth function with compact support 
in $U_\rho$. 
It follows from Theorem \ref{stratification of currents} that the 
co-coefficients of $T_{\sigma }$ depend only on the restriction to 
$U_\rho \cap  \overline{O(\sigma)}^{\rm an}$.
Hence the definition of $S^{IJ}$ depends only on $f$ and not on the choice 
of the extension $g$. 
By Remark \ref{co-coefficients determine currents}, there is a unique 
current $S_\rho$ on $U_\rho \cap \overline{O(\sigma)}^{\rm an}$ with co-coefficients $S^{IJ}$. 
By definition, $T_{\sigma }^{IJ}$ is the push-forward of $S^{IJ}$. 
This shows that $T_{\sigma }|_{U_\rho}$ is the push-forward of $S_\rho$ with respect to the 
closed immersion of  $U_\rho \cap  \overline{O(\sigma)}^{\rm an}$ into $U_\rho$. 
It follows that the currents $S_\rho$ on the open covering $U_\rho$ of $U$ 
glue to a current $S$ on $U$  with 
$\iota_*(S)=T_{\sigma }$. 
Since $T_{\sigma }$ is positive and since we can extend compactly
supported positive
forms on $U \cap \overline{N(\sigma)}$ to compactly supported positive forms 
$U$, it is clear that also $S$ is positive. This proves \ref{item:1}.

In the case of a positive Lagerberg current on $U$, we note that 
Theorem \ref{stratification of currents} says that the 
support of $T_\sigma$ is contained in $U \cap \overline{N(\sigma)}$. 
For any $\rho \in \Sigma$ and any choice of toric coordinates on
$U_\rho$, the co-coefficient $T_{\sigma }^{IJ}$ on
$U_\rho \setminus E^{I\cup J}$  has support contained in $\supp(T_{\sigma })$.  
If $I,J$ satisfy  $N(\sigma) \subset E^{I \cup J}$, 
then the support of the restriction of $T_{\sigma }$ to $U_\rho$ is contained 
in $U_\rho \cap \overline{N(\sigma)} \subset E^{I\cup J}$ and 
hence $T_{\sigma }^{IJ}=0$ on $U_\rho \setminus E^{I\cup J}$. 
We conclude that the crucial condition in \ref{item:1} is always satisfied in
the Lagerberg case and an easy adaption of the argument of \ref{item:1} 
to the Lagerberg case gives a positive Lagerberg current $S$ on $U
\cap N(\sigma)$ 
whose image in $U$ is $T_\sigma$ proving \ref{item:2}.
\end{proof}

\begin{lem} \label{preparation-lemma}
{Let $U$ be an open subset of $\R_\infty^n$ and 
$V \coloneqq \trop^{-1}(U)\subset \C^n$. We consider 
$S \in D^{p,p}(V)^{\SS,F}$ and 
$T \coloneqq \trop_*(S)$. 
Let $I,J \subset \{1,\dots,n\}$ with $|I|=|J|=q\coloneqq n-p$.}
\begin{enumerate}
\item\label{preparation-lemma1}
{For any $f \in A_c^{0,0}(U \setminus E^{I\cup J})$, we have}
\begin{equation} \label{co-coefficients and trop}
{T^{IJ}(f)=\pi^{-q}2^{-2q} S^{IJ}
\bigl(z^{-I}\bar{z}^{-J}\trop^*(f)\bigr).}
\end{equation}
\item \label{preparation-lemma2}
{Assume that the current $S$ is positive.
Then $T$ is positive as well and the total variation measures $|S^{IJ}|$ and $|T^{IJ}|$
of the Radon measures $S^{IJ}$ and $T^{IJ}$ satisfy}
\begin{equation} \label{variance of co-coefficients and trop}
{|S^{IJ}|\bigl(\trop^*(g)\bigr)=
\pi^{-q} 2^{2q} |T^{IJ}| \Biggl(\prod_{i \in I} e^{-u_i} \prod_{j \in
  J} e^{-u_j} g \Biggr)}
\end{equation}
{for each $g \in A_c^{0,0}(U \setminus E^{I\cup J})$.} 	
\end{enumerate}
\end{lem}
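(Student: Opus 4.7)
The plan is to establish part \ref{preparation-lemma1} by a direct coordinate computation, and then deduce part \ref{preparation-lemma2} from it through a polar decomposition argument that exploits the $\SS$-equivariance of $S^{IJ}$.

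For part \ref{preparation-lemma1}, I would unwind the definitions. From Definition \ref{definition co-coefficitents L} and $T = \trop_*(S)$, we have
\[
T^{IJ}(f) = S\bigl(\trop^*\bigl((-1)^{q(q-1)/2} f\, d'u_I \wedge d''u_J\bigr)\bigr).
\]
Using the pullback formulas \eqref{eq:25} together with the multiplicativity of $\trop^*$ from Proposition \ref{prop:5}, one computes
\[
\trop^*(d'u_I \wedge d''u_J) = \frac{i^q}{(4\pi)^q}\, z^{-I}\bar z^{-J}\, dz_I \wedge d\bar z_J,
\]
and the elementary identity $i^{q^2} = i^q (-1)^{q(q-1)/2}$ then absorbs the sign into the prefactor $i^{q^2}$ appearing in Definition \ref{definition co-coefficitents C} of $S^{IJ}$, which yields the claim.

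For part \ref{preparation-lemma2}, positivity of $T = \trop_*(S)$ is immediate from Proposition \ref{positivity and trop} using the $\SS$- and $F$-invariance of the positive current $S$. The main content is the total variation identity. The first step is to observe that $S^{IJ}$ transforms equivariantly under $\SS$: from $\SS$-invariance of $S$ and the transformation law $a^*(dz_I \wedge d\bar z_J) = a_I \bar a_J\, dz_I \wedge d\bar z_J$ for $a \in \SS$, one deduces $a_* S^{IJ} = \bar a_I a_J\, S^{IJ}$. Since this character has modulus one, the positive Radon measure $|S^{IJ}|$ provided by Proposition \ref{prop:7} is $\SS$-invariant, and hence so is $|z_I|^{-1}|z_J|^{-1}|S^{IJ}|$.

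The heart of the argument is the polar decomposition $S^{IJ} = h\, |S^{IJ}|$ with $|h|=1$ almost everywhere. The equivariance above forces $h(a \cdot z) = a_I \bar a_J\, h(z)$ almost everywhere, so in polar coordinates $z_k = r_k e^{i\theta_k}$ the function $\tilde h \coloneqq h \cdot e^{i(\theta_J - \theta_I)}$ is $\SS$-invariant of modulus one, and one has $z^{-I}\bar z^{-J} h = |z_I|^{-1}|z_J|^{-1}\tilde h$. Substituting this into the formula from part \ref{preparation-lemma1} expresses $T^{IJ}$ as the $\trop$-pushforward of $\tilde h \cdot |z_I|^{-1}|z_J|^{-1}|S^{IJ}|$, multiplied by the constant $\pi^{-q}2^{-2q}$. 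Since $T^{IJ}$ is a real Radon measure by Proposition \ref{prop:6} and the $\SS$-invariant function $\tilde h$ descends via $\trop$ to a Borel function $h_U$ on $U \setminus E^{I\cup J}$, realness forces $h_U \in \{-1,+1\}$ almost everywhere; passing to total variations eliminates $h_U$, and rewriting $|z_k|^{-1} = e^{u_k}$ then produces \eqref{variance of co-coefficients and trop}.

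The main obstacle will be rigorously justifying that the phase correction $\tilde h$ is genuinely $\SS$-invariant as a measurable function modulo $|S^{IJ}|$-null sets, and that its descent $h_U$ is forced to be real-valued almost everywhere. Both rely on carefully separating the equivariant part of the polar decomposition from its $\SS$-invariant modulus and on combining this with the realness of $T^{IJ}$ guaranteed by positivity.
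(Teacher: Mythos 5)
Your proof of part \ref{preparation-lemma1} follows the same direct computation as the paper: unwind Definition \ref{definition co-coefficitents L}, push through $\trop_*$, apply the pullback formulas \eqref{eq:25} and the multiplicativity of $\trop^*$, and reconcile the sign constants. The arithmetic checks out: $\trop^*(d'u_I\wedge d''u_J)=i^{q}(4\pi)^{-q}z^{-I}\bar z^{-J}dz_I\wedge d\bar z_J$ and $i^{q^2}=i^q(-1)^{q(q-1)/2}$ combine exactly as you state.

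For part \ref{preparation-lemma2}, the paper's proof is terse: after invoking Proposition \ref{positivity and trop} for positivity of $T$, it says only that \eqref{variance of co-coefficients and trop} ``follows from \eqref{co-coefficients and trop} by using $|z^I\bar z^J|=\prod e^{-u_i}\prod e^{-u_j}$.'' This is genuinely a gap, since in general $|\trop_*\nu|\le\trop_*|\nu|$ with strict inequality possible: total variation does not commute with pushforward unless the measure has constant phase along fibers. Your polar-decomposition argument supplies exactly the missing ingredient. The key observation --- that $a_*S^{IJ}=\bar a_Ia_JS^{IJ}$ for $a\in\SS$, so that the complex measure $\nu\coloneqq z^{-I}\bar z^{-J}S^{IJ}$ is genuinely $\SS$-invariant, its polar phase $\tilde h=\nu/|\nu|$ descends to a unimodular function $h_U$ on the quotient, and hence $|\trop_*\nu|=\trop_*|\nu|$ --- is both correct and necessary. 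Two minor remarks: (a) the conclusion that $h_U\in\{-1,+1\}$ a.e., while true and a nice consequence of the realness of $T^{IJ}$, is not actually needed for the total variation identity (only $|\tilde h|=1$ and its $\SS$-invariance enter); (b) the phase factor $e^{i(\theta_J-\theta_I)}$ is a priori branch-dependent, so it is cleaner to write it as $z_J\bar z_I/(|z_I||z_J|)$, which is manifestly well-defined away from $\trop^{-1}(E^{I\cup J})$. Neither affects the validity of your argument, and what you have written is a more complete justification than the paper offers.
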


\begin{proof} 
{Recall from Lemma \ref{trop of complex current} that $T \in D^{p,p}(U)$. To prove 
\ref{preparation-lemma1},  we use} \eqref{eq:25} to get 
\begin{multline*}
T^{IJ}(f)=T((-1)^{\frac{q(q-1)}{2}}f d'u_I \wedge d''u_J)
= S(i^{q^2-q}\trop^*(f d'u_I \wedge d''u_J)) \\
= S \left(i^{q^2}\pi^{-q}\trop^*(f) \frac{dz_{I}}{2^{q}z^{I}}\wedge
  \frac{i^{q}d\bar 
	z_{J}}{2^{q}\bar z^{J}} \right)
= \pi^{-q}2^{-2q} S^{IJ}(z^{-I}\bar{z}^{-J}\trop^*(f)).
\end{multline*}
To prove \ref{preparation-lemma2}, 
we recall from Proposition \ref{positivity and trop} that 
$T$ is positive if $S$ is positive.
Now equation \eqref{variance of co-coefficients and trop} follows 
from \eqref{co-coefficients and trop} by using
$
|z^{I}\bar z^{J}|=\prod_{i\in I}e^{-u_{i}}\prod_{j\in J}e^{-u_{j}}
$.
\end{proof}

The following result shows compatibility of the tropicalization map 
with the canonical decomposition of  positive currents from Theorem \ref{stratification of currents}.

\begin{prop} \label{extension by 0 and trop}
Let $U$ be an open subset of $N_\Sigma$ and  $V \coloneqq \trop^{-1}(U)$. If $S= \sum_{\sigma \in \Sigma} S_\sigma$ is the canonical
decomposition of the positive current $S \in D^{p,p}(V)^{{\SS,F}}$, {then $\trop_*(S)$ is a positive Lagerberg current on $U$ with canonical decomposition}
\begin{equation} \label{decomposition and tropicalization}
\trop_*(S)= \sum_{\sigma \in \Sigma} \trop_*(S_\sigma).
\end{equation} 
\end{prop}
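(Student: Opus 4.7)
The plan is to use the uniqueness assertion in Theorem \ref{stratification of currents} twice: once to transfer the $\SS$- and $F$-invariance of $S$ to each $S_\sigma$, and once to identify the resulting Lagerberg decomposition as the canonical one. For the invariance, I would first observe that any $a\in\SS$ preserves each orbit $O(\sigma)^{\an}$ and that $a_*$ preserves positivity of currents; consequently $a_*S_\sigma$ satisfies the characterizing conditions \ref{item:8} and \ref{item:9} of Theorem \ref{stratification of currents} for the cone $\sigma$, so uniqueness applied to $a_*S=\sum_\sigma a_*S_\sigma=S$ yields $a_*S_\sigma=S_\sigma$. The same strategy works for $F$, using three inputs: $F$ acts on $\SS$-invariant currents (Lemma \ref{lemm:11}), it preserves positivity (Lemma \ref{lemm:15} dualised), and, by a direct computation on the torus starting from $F(dz_j/z_j)=dz_j/z_j$ and $F(id\bar z_j/\bar z_j)=id\bar z_j/\bar z_j$, it transforms the co-coefficient $S_\sigma^{IJ}$ by complex conjugation together with multiplication by a function of modulus one, neither of which alters the support of the associated total variation measure; uniqueness then forces $FS_\sigma=S_\sigma$.

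Once each $S_\sigma$ is known to lie in $D^{p,p}(V)^{\SS,F}$, Lemma \ref{trop of complex current}\ref{real push-forward} and Proposition \ref{positivity and trop} imply that $\trop_*(S_\sigma)$ is a positive Lagerberg current in $D^{p,p}(U)$. The identity \eqref{decomposition and tropicalization} then follows from the linearity of $\trop_*$ together with local finiteness of the sum $S=\sum_\sigma S_\sigma$: a fan is locally finite around each point, so any compact subset of $V$ meets only finitely many orbits, and the sum may be evaluated term by term on compactly supported test forms.

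It remains to check the null set condition that characterises the canonical decomposition of $\trop_*(S)$. Fix a cone $\rho\in\Sigma$ and toric coordinates on $U_\rho$ as in Definition \ref{good-coordinates}, so that $U_\rho\subset\Rsup^n$. By Lemma \ref{preparation-lemma}\ref{preparation-lemma2}, the measure $|(\trop_*S_\sigma)^{IJ}|$ on $U_\rho\setminus E^{I\cup J}$ equals, up to multiplication by the strictly positive smooth density $\prod_{i\in I}e^{u_i}\prod_{j\in J}e^{u_j}$, the pushforward along $\trop$ of $|S_\sigma^{IJ}|$. Since $|S_\sigma^{IJ}|$ is supported in $O(\sigma)^{\an}\cap V_\rho$ and $\trop(O(\sigma)^{\an})=N(\sigma)$, the measure $|(\trop_*S_\sigma)^{IJ}|$ is supported in $N(\sigma)\cap(U_\rho\setminus E^{I\cup J})$; ranging over all charts $\rho$ and all pairs $I,J$ shows that $U\setminus N(\sigma)$ is a null set for $\trop_*(S_\sigma)$, and uniqueness in Theorem \ref{stratification of currents} identifies \eqref{decomposition and tropicalization} as the canonical decomposition of $\trop_*(S)$. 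The principal obstacle is the first step, specifically the $F$-case: one needs the concrete calculation confirming that at the level of complex co-coefficients $F$ acts merely by conjugation and multiplication by a unimodular function, which is what makes the null set property stable under $F$.
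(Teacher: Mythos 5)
Your proof is correct and follows essentially the same route as the paper: deduce $\SS$- and $F$-invariance of each $S_\sigma$ from the uniqueness in Theorem \ref{stratification of currents}, then use the total-variation formula of Lemma \ref{preparation-lemma}\ref{preparation-lemma2} (equation \eqref{variance of co-coefficients and trop}) to verify the null-set criterion for $\trop_*(S_\sigma)$, and conclude again by uniqueness. The only difference is that you flesh out the first step (the unimodular-twist computation for $F$ on co-coefficients), which the paper dispatches in one sentence.
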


\begin{proof}
{We first observe that with $S$ each summand $S_\sigma$ is  
again $\SS$- and $F$-invariant by the uniqueness property 
of the canonical decomposition.} 
To prove  \eqref{decomposition and tropicalization}, we may argue locally on the base and so we may assume 
that $U=U_\rho$ and $V=V_\rho$ for some $\rho \in \Sigma$. 
We choose toric coordinates to view $U$ as an open subset of {$\Rsup^n$} 
and $V$ as an open subset of $\C^n$. 
{We write $T\coloneqq \trop_*(S)$ and denote by
$T=\sum_{\sigma}T_\sigma$ its canonical decomposition.}
We pick $\sigma \in \Sigma$ and we may assume $S=S_\sigma$. 
Then we have  to show that $T=T_\sigma$. 
By our characterization of the canonical decomposition in Theorem
\ref{stratification of currents},  
it is equivalent to show that $U \setminus (N(\sigma) \cup E^{I \cup J})$ are null sets 
with respect to the Radon measures $|T^{IJ}|$. 
We note that  $\trop^{-1}(N(\sigma))= O(\sigma)$ and 
that $V \setminus O(\sigma)$ is a null set with respect to the Radon measure $S^{IJ}$ 
using $S=S_\sigma$ and Theorem \ref{stratification of currents}. 
Then \eqref{variance of co-coefficients and trop} yields 
that  $U \setminus (N(\sigma) \cup E^{I \cup J})$ is a null set  
with respect to  $|T^{IJ}|$ proving  \eqref{decomposition and tropicalization}.
\end{proof}

\subsection{Positive Lagerberg currents and local mass} \label{subsection local mass}
Equation 
\eqref{variance of co-coefficients and trop} 
gives a necessary condition 
for a positive Lagerberg current 
to be the image of {an $\SS$- and $F$-invariant} positive complex current. 
In fact, it naturally leads to the following definition 
which turns out to be  also a sufficient condition.

\begin{defi} \label{def complex local finite mass}
Let $U$ be an open subset of $N_{\Sigma }$ and 
let $T\in D^{p,p}(U)$ be a  
positive Lagerberg current. 
We say that $T$ has \emph{$\C$-finite local mass} if for 
all $\rho \in \Sigma$ 
and some choice of toric coordinates 
$u_1, \dots, u_n$ on $U_\rho$ as in Remark \ref{good-coordinates}, 
the corresponding co-coefficients $T^{IJ}$, which 
may be seen as real Radon measures on 
$U_\rho \setminus E^{I \cup J}$ with total variation measures
$|T^{IJ}|$, satisfy the condition that 
the Borel measures given as the image measures
\begin{displaymath}
j_\rho^{IJ}\Biggl( |T^{I,J}|\prod_{i\in I}e^{-u_{i}}\prod_{j\in J}e^{-u_{j}}\Biggr)
\end{displaymath}
with respect to the open  immersion 
$j_\rho^{IJ}\colon U_\rho \setminus E^{I \cup J} \to U_\rho$ 
are locally finite on $U_\rho$.
\end{defi}

Clearly, the above definition of $\C$-finite local mass 
does not depend on the choice of toric coordinates on $U_\rho$. 
Note that a Borel measure is locally finite on $U_\rho$ if and only if 
it is a Radon measure on $U_\rho$ (see Appendix \ref{facts from measure theory}).

\begin{prop}\label{prop:9}
Let $U\subset N_{\Sigma }$ be an open subset and 
$V=\trop^{-1}(U)$. Let $T\in D^{p,p}(U)$ be a positive Lagerberg 
current, then there exists a positive complex current 
$S\in D^{p,p}(V)^{\SS,F}$ such that $\trop_{\ast}(S)=T$ 
if and only if $T$
has $\C$-finite local mass.
\end{prop}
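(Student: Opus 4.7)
The proof will proceed via co-coefficients and Lemma~\ref{preparation-lemma}, handling the two implications separately. For necessity, suppose such $S$ exists. Since $S$ is positive, each $|S^{IJ}|$ is a positive Radon measure on $V_\rho$ by Proposition~\ref{prop:7}, and properness of $\trop$ makes $\trop_*|S^{IJ}|$ a Radon measure on $U_\rho$. Equation~\eqref{variance of co-coefficients and trop} identifies $\trop_*|S^{IJ}|$ with a positive multiple of the push-forward by $j_\rho^{IJ}$ of $|T^{IJ}|\prod_{i\in I}e^{-u_i}\prod_{j\in J}e^{-u_j}$; the latter is therefore a Radon measure on $U_\rho$, so $T$ has $\C$-finite local mass.

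For sufficiency, I will construct $S$ locally on each $V_\rho$ using toric coordinates identifying $V_\rho$ with an open subset of $\C^n$ and $U_\rho$ with an open subset of $\Rsup^n$. For each pair $I,J$ with $|I|=|J|=q$, the $\C$-finite local mass hypothesis provides a real Radon measure $\mu^{IJ}$ on $U_\rho$ extending $T^{IJ}\prod_{i\in I}e^{-u_i}\prod_{j\in J}e^{-u_j}$ by zero across $E^{I\cup J}$. Since $\trop$ is proper and identifies $U_\rho$ with $V_\rho/\SS$, there is a unique $\SS$-invariant real Radon measure $\tilde\mu^{IJ}$ on $V_\rho$ with $\trop_*\tilde\mu^{IJ}=\mu^{IJ}$. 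I then define
\[
S^{IJ}\coloneqq \pi^q 2^{2q}\,(z/|z|)^I(\bar z/|\bar z|)^J\,\tilde\mu^{IJ},
\]
a complex Radon measure that is $\SS$-semi-invariant of the correct weight and supported away from $\trop^{-1}(E^{I\cup J})$. By Remark~\ref{co-coefficients determine currents} these co-coefficients assemble into a current $S_\rho\in D^{p,p}(V_\rho)\otimes_\R\C$. Using the identity $z^{-I}\bar z^{-J}(z/|z|)^I(\bar z/|\bar z|)^J=\trop^*\!\bigl(\prod_{i\in I}e^{u_i}\prod_{j\in J}e^{u_j}\bigr)$ together with~\eqref{co-coefficients and trop}, a direct substitution verifies $\trop_*(S_\rho)=T|_{U_\rho}$. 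The $\SS$-invariance of $S_\rho$ is built into the matching weights, while $F$-invariance follows from $T^{IJ}=T^{JI}$ (hence $\tilde\mu^{IJ}=\tilde\mu^{JI}$) combined with the explicit action of $F$ on the monomials $i^{q^2}dz_I\wedge d\bar z_J$.

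The main step will be to verify positivity of $S_\rho$. Expanding $S_\rho(i^{q^2}\alpha\wedge\bar\alpha)$ for $\alpha=\sum_I a_I dz_I$ and setting $b_I\coloneqq a_I(z/|z|)^I$, the problem reduces to the Hermitian positive semi-definiteness $\int\sum_{IJ}b_I\bar b_J\,d\tilde\mu^{IJ}\geq 0$ of the matrix of Radon measures $[\tilde\mu^{IJ}]$. I will inherit this from the positivity of $T$: testing $T$ against the positive Lagerberg forms $(-1)^{q(q-1)/2}h(\sum_I c_I d'u_I)\wedge J(\sum_I c_I d'u_I)$ with $h\geq 0$ and substituting $c_I\coloneqq f_I\prod_{i\in I}e^{-u_i}$ for arbitrary smooth bounded $f_I$ converts the real positive semi-definiteness of $[T^{IJ}]$ into that of $[\mu^{IJ}]$ on $U_\rho$, which pulls back to $[\tilde\mu^{IJ}]$ on $V_\rho$. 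The symmetry $T^{IJ}=T^{JI}$ makes the imaginary cross-terms in the Hermitian quadratic form cancel, yielding the complex version as in Proposition~\ref{prop:7}.

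Finally, the local constructions glue: a change of toric coordinates on $V_\rho$ is a unimodular monomial substitution under which the co-coefficients transform covariantly, so the defining formula for $S^{IJ}$ is coordinate-independent and the currents $S_\rho$ agree on overlaps, producing the desired global $S\in D^{p,p}(V)^{\SS,F}$ with $\trop_*S=T$. The conceptually hard part is the positivity argument: the real semi-definiteness of $[\mu^{IJ}]$ is immediate from $T\geq 0$, but transferring it to the Hermitian setting over $V$ relies essentially on the symmetry $T^{IJ}=T^{JI}$ and on the fact that the twist $(z/|z|)^I(\bar z/|\bar z|)^J$ has absolute value~$1$ on the support of $\tilde\mu^{IJ}$.
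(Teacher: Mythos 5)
Your construction differs from the paper's: instead of first handling $T=T_{\{0\}}$ via the uniquely determined current $R$ on $V\cap\T^\an$ and then reducing the general case to strata via the canonical decomposition (Theorem~\ref{stratification of currents}) and Proposition~\ref{pushforward decomposition}, you build $S$ directly by specifying its co-coefficients. The necessity direction, the formula for $S^{IJ}$, the $\trop_*$ verification and the $\SS$- and $F$-invariance are all workable (the $F$-invariance really comes from $\tilde\mu^{IJ}$ being a real measure rather than from $T^{IJ}=T^{JI}$, but the conclusion is right).

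The gap is in your positivity argument. The forms
\[
(-1)^{\frac{q(q-1)}{2}}h\Bigl(\textstyle\sum_I c_I d'u_I\Bigr)\wedge J\Bigl(\textstyle\sum_I c_I d'u_I\Bigr),\qquad c_I=f_I\prod_{i\in I}e^{-u_i},
\]
are \emph{not} valid Lagerberg forms on $U_\rho$. Near a boundary point $p\in N(\sigma_M)$ with $M\cap I\neq\emptyset$, the coefficient $c_I$ decays exponentially but does not vanish in any neighbourhood of $p$, so the term $c_I\,d'u_I$ cannot match any pullback $\pi_{\sigma_M,\{0\}}^*(\cdot)$; the compatibility condition of Definition~\ref{def-lagerberg-partial-compactification} fails. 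Hence you cannot test $T$ against these forms. If instead you take $h$ compactly supported in $U_\rho\cap N_\R$ so that the form becomes legitimate, you obtain positive semi-definiteness of $[\mu^{IJ}]$ only on the dense stratum. But the extended measures $\mu^{IJ}$ do carry mass on boundary strata $N(\sigma_M)$ with $M\cap(I\cup J)=\emptyset$, so the desired PSD inequality on all of $U_\rho$ does not follow by a naive approximation (the boundary is \emph{not} a $\mu^{IJ}$-null set). Repairing this requires decomposing $T=\sum_\sigma T_\sigma$, using that each $T_\sigma$ is positive and concentrated on $N(\sigma)$, and testing $T_\sigma$ against forms built only from the coordinates active on $\overline{N(\sigma)}$ with compact support in the open stratum $N(\sigma)$. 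This is exactly the structure the paper's proof exploits via Proposition~\ref{pushforward decomposition}, so the decomposition theorem is not really avoidable here; without it, your positivity step does not go through as stated.
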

\begin{proof}
Suppose that $S\in D^{p,p}(V)^{\SS,F}$ is a positive current and let
$T=\trop_{\ast}(S)$. We will prove that $T$ has $\C$-finite local
mass. Indeed, it follows from \eqref{variance of co-coefficients and trop} that we have the identity 
\begin{equation*} 
\trop(|S^{IJ}|)= \pi^{-q}2^{2q} \prod_{i \in I} e^{-u_i} \prod_{j \in J} e^{-u_j} |T^{IJ}|
\end{equation*}
of positive measures on $U_\rho \setminus E^{I\cup J}$ for any
$\rho \in \Sigma$ and any choice of toric coordinates $u_1,\dots,u_n$ on $U_\rho$.  
Passing to image measures with respect to the
open immersion $j_\rho^{IJ}\colon  U_\rho \setminus E^{I\cup J} \to U_\rho$ and 
using that $\trop(|S^{IJ}|)$ is a Borel measure on $U_\rho$, we deduce that 
\begin{equation*} \label{variance and trop 2} 
\trop(|S^{IJ}|) \geq \pi^{-q} 2^{2q} j_\rho^{IJ} 
\Biggl(\prod_{i \in I} e^{-u_i} \prod_{j \in J} e^{-u_j} |T^{IJ}|\Biggr) \geq 0.
\end{equation*}
By Proposition \ref{prop:7},  the Borel measure $|S^{IJ}|$ is locally
finite on $V_\rho$ and
hence the Borel measure $\trop(|S^{IJ}|)$ is locally finite on $U_\rho$. 
We deduce  that $T$ has $\C$-finite local mass.

Conversely, let $T$ be a positive Lagerberg current in $D^{p,p}(U)$ with
$\C$-finite local mass. 
We consider the canonical decomposition $T= \sum_{\sigma \in \Sigma} T_\sigma$ 
from Theorem  \ref{stratification of currents}. 
First, we assume that $T=T_{\{0\}}$ for the minimal cone $\{0\} \in \Sigma$. 
By assumption, the co-coefficients are 
Radon measures $T^{IJ}$ on 
$U_\rho \setminus E^{I\cup J}$ such that the positive 
Borel measures 
\begin{displaymath}
j_\rho^{IJ}\biggl( |T^{IJ}|\prod_{i\in I}e^{-u_{i}}\prod_{j\in J}e^{-u_{j}}\biggr)
\end{displaymath}
are locally finite on $U_\rho$ for every $\rho \in \Sigma$. 
By Remark \ref{rem:3}, there is a unique current 
$R\in D^{p,p}(V \cap \torusan)^{\SS,F}$ 
with $\trop_*(R)=T|_{N_\R}$. 
Since $T$ is positive, it follows from 
Lemma \ref{generic positivity and trop}
that $R$ is a positive current. 
It follows from Proposition \ref{prop:7} that the co-coefficients $R^{IJ}$ 
of $R$ are complex Radon measures on $V \cap \torusan$. 
Using \eqref{variance of co-coefficients and trop}, we 
get the following identity of Borel measures
\begin{equation*} 
\trop(|R^{IJ}|)=\pi^{-q}2^{2q} \prod_{i \in I} e^{-u_i} \prod_{j \in J} e^{-u_j} |T^{IJ}|
\end{equation*}
on $U\cap N_\R$.
Using that $T$ has $\C$-finite local mass, we know that the 
image measure of the right hand side with respect to the 
open immersion $j_\rho\colon  U \cap N_\R \to U_\rho$ is a 
locally finite Borel measure on $U_\rho$. 
We conclude that the image measure of $|R^{IJ}|$ with 
respect to the open immersion 
$j_\rho'\colon V \cap \torusan \to V_\rho$ is a locally finite 
Borel measure on $V_\rho$ by using 
$j_\sigma \circ \trop = \trop \circ j_\sigma'$ on $V \cap \torusan$. 
Hence the Radon measure $R^{IJ}$ admits an image {Radon} measure  
 $j_\rho'(R^{IJ})$ which 
is a complex Radon measure  on $V_\rho$. 
By Remark \ref{co-coefficients determine currents} 
and Example \ref{measure gives current in top degree}, 
there is a unique current in $S \in D^{p,p}(V_\rho)$ with 
co-coefficients $S^{IJ}$ induced by $j_\rho'(R^{IJ})$. 
Since the restriction of the co-coefficients $S^{IJ}$ to the 
dense stratum $V \cap \torusan$ is $R^{IJ}$, 
it follows also from Remark  
\ref{co-coefficients determine currents} 
that $S|_{V \cap \torusan}= R$. 
By construction, the boundary $V_\rho \setminus N_\R$ is a null set
with respect to each Radon measure $j_\rho'(R^{IJ})$. 
By Lemma \ref{uniqueness sequence} {and Remark \ref{invariance in complex case of approximation lemma}}, there is an increasing sequence  
 $\psi_k \geq 0$  in $A_c^{0,0}(V \setminus N_\R)^\SS$ which converges pointwise 
to the characteristic function of $V \setminus N_\R$ such that for any $\alpha \in A_c^{q,q}(V_\rho)$ we have
\begin{equation} \label{limit from the dense stratum}
\lim_{k \to \infty} S(\alpha)= \lim_{k \to \infty} S(\psi_k\alpha) = \lim_{k \to \infty} R(\psi_k\alpha).
\end{equation}

It follows that the currents $S$, defined a priori only on $V_\rho$ for any 
$\rho \in \Sigma$, 
do not depend on the choice of toric coordinates on $V_\rho$ and hence 
agree on overlappings of the covering 
$(V_\rho)_{\rho \in \Sigma}$.  
They define a current on $V$ which we also denote by $S$. 
If $\alpha$ is positive, then 
$\psi_k\alpha$ is also positive. By positivity of $R$ and by \eqref{limit from the dense stratum}, the current $S$ is also positive. 
	Since both $\psi_k$ and $R$ are invariant with respect to $\SS$ and $F$, we deduce from \eqref{limit from the dense stratum} that $S \in D^{p,p}(V)^{\SS,F}$.

We have to check $\trop_*(S)=T$. 
Using $j_\sigma \circ \trop = \trop \circ j_\sigma'$ on $V \cap \torusan$, 
we have the following identities 
\begin{align*}
\trop(\pi^{-q} 2^{-2q} z^{-I}\bar{z}^{-J} S^{IJ})= 
\trop(j_\sigma'( \pi^{-q} 2^{-2q} z^{-I}\bar{z}^{-J} R^{IJ}))= 
j_\sigma(\trop(\pi^{-q} 2^{-2q} z^{-I}\bar{z}^{-J} R^{IJ}))
\end{align*}
of Radon measures on $U_\rho$. 
By \eqref{co-coefficients and trop}, 
we deduce the identity of Radon measures 
\begin{align*}
(\trop_*(S))^{IJ}= \trop(\pi^{-q}2^{-2q} z^{-I}\bar{z}^{-J} S^{IJ})= 
j_\sigma(\trop(\pi^{-q}2^{-2q} z^{-I}\bar{z}^{-J} R^{IJ}))
\end{align*}
on $U_\rho \setminus E^{I\cup J}$. Since $\trop_*(R)=T$ on $U \cap N_\R$, 
we deduce again from \eqref{co-coefficients and trop} 
\begin{align*}
(\trop_*(S))^{IJ}= j_\sigma(\trop(2^{-2q} z^{-I}\bar{z}^{-J} R^{IJ})) = T^{IJ}
\end{align*}
on $U_\rho \setminus E^{I\cup J}$, 
where in the last step we have used our assumption $T=T_{\{0\}}$. 
Then Remark \ref{co-coefficients determine currents} proves that  $\trop_*(S)=T$.

Now we skip the assumption $T=T_{\{0\}}$ from above and 
consider any positive current $T$ with canonical decomposition 
$T= \sum_{\sigma \in \Sigma} T_\sigma$ from Theorem \ref{stratification of currents}. 
By Proposition \ref{pushforward decomposition}, for every $\sigma \in \Sigma$, 
there is a positive Lagerberg current $P_\sigma$ on $U \cap \overline{N(\sigma)}$ 
such that $T_\sigma = (\iota_\sigma)_*(P_\sigma)$ for the closed immersion 
$\iota_\sigma\colon U \cap \overline{N(\sigma)} \to U$. 
Now we apply the above case to the current $P_\sigma$ and 
to the open subset $V \cap \overline{O(\sigma)}^{\rm an}$ of the toric variety $\overline{O(\sigma)}$. 
We conclude that there is a positive current $Q_\sigma$ on $V \cap \overline{O(\sigma)}^{\rm an}$ 
such that $\trop_*(Q_\sigma)=P_\sigma$. 
For the closed immersion $\iota_\sigma'\colon V \cap \overline{O(\sigma)}^{\rm an} \to V$, 
we have the obvious relation $\trop \circ \iota_\sigma' = \iota_\sigma \circ \trop$. 
Now we set $S_\sigma \coloneqq (\iota_\sigma')_*(Q_\sigma)$ which is a positive current on $V$.  
Then we get
\begin{align*}
\trop_*(S_\sigma)= \trop_*((\iota_\sigma')_*(Q_\sigma))
=(\iota_\sigma)_*(\trop_*(Q_\sigma))=(\iota_\sigma)_*(P_\sigma)= T_\sigma.
\end{align*}
Then $S \coloneqq \sum_{\sigma \in \Sigma} S_\sigma$ is a positive current on $V$ satisfying $\trop_*(S)=T$.
\end{proof}

Next we give an example of a positive Lagerberg current $T$ such that 
$T = \trop_{\ast}(S)$ for some complex current $S$, but for which no such $S$ is positive.

\begin{ex}\label{exm:3}
We consider the one-dimensional situation with $N=\Z$, 
$N_\Sigma = \Rsup$ and $X_\Sigma= \mathbb A_\C^1$. Write $U\subset
N_{\Sigma }$ given by   
$U=\{u \in \Rsup \mid u>0\}$ and $V=\trop^{-1}(U)=\{z \in \C \mid z\bar z <
1\}$.
Let $T\in D^{0,0}(U)$ be the current
\begin{displaymath}
T(gd'u\land d''u)=\int_{0}^{\infty} g(x) e^{2x}dx.
\end{displaymath}
Since the function $1=e^{2x}e^{-2x}$ does not have locally finite
mass around $\infty$, this current is not of the form
$\trop_{\ast}(S)$ for a positive current $S$ in
$V$. Nevertheless $T$ is in the image of $\trop_{\ast}$. Indeed,
let $S\in D^{0,0}(U)$ be the current given by
\begin{displaymath}
S(f dz \land d\bar z)= \int _{V}\frac{f(z)-f(0)}{z\bar
z}dz\land d\bar z.
\end{displaymath}
Clearly this current is not weakly positive.
Let now $gd'u\land d''u$ be a Lagerberg form on $U$. Then $g(u)=0$ for
$u\gg 0$ and hence
\begin{multline*}
\trop_{\ast} S (gd'u\land d''u) = S(\trop^{\ast}(gd'u\land d''u))
\,=\, S \left( g ( -\log|z| )\frac {idz\land d\bar z} {4 \pi z \bar z} \right)\\
=\frac{1}{\pi }\int _{V}\frac{g(-\log|z|)}{4 (z\bar z)^2}idz\land d\bar z
\,=\, \int_{0}^{1}g(-\log r)\frac{r dr}{r^{4}}
=
\int_{1}^{\infty}g(x)e^{2x}dx.
\end{multline*}
This shows $T=\trop_{\ast}S$. 
\end{ex}

\section{The correspondence theorem for closed positive currents} 
\label{correspondance-invariant-boundary}

In this section, we consider a smooth fan $\Sigma$ with associated toric 
variety $X_\Sigma$ and partial compactification $N_\Sigma$. 
After considering positivity of currents in the previous sections, 
we add here the additional condition that the currents are closed. 
Recall that a complex $(p,p)$-current $T$ (resp.~a Lagerberg $(p,p)$-current
$S$) is called
\emph{closed} if $\partial T = \bar{\partial}T = 0$ (resp.~$d'S=d''S=0$).
In the first subsection, we show our main theorem.
It states that the 
tropicalization map induces a bijective correspondence between $\SS$- and 
$F$-invariant closed positive currents on $X_\Sigma$ and closed 
positive Lagerberg currents on $N_\Sigma$. 
In the second subsection, we
derive from our main theorem a tropical version 
of the Skoda--El Mir theorem.

\subsection{Closed positive invariant currents} \label{closed positive
	invariant currents}

For $M \subset M' \subset \{1,\dots,n\}$, we consider the inclusions $\Rsup^M
\subset \Rsup^{M'} \subset \Rsup^n$ obtained 
by adding zero at the missing components.  
For $v \in \R_{\geq 0}^M$, we consider the parallelepiped  
\begin{align*}
P(v) \coloneqq \{u \in {\R^M} \mid 0 \leq u_i \leq v_i \, \forall i \in M\}.
\end{align*}

\begin{lem} \label{disturbation lemma}
	Let $T$ be a closed current in $D_+^{p,p}(U)$ 
	for an open subset $U$ of $\Rsup^n$.
	Fix $M\subset \{1,\ldots,n\}$ with $|M|=n-p$ and 
	with complement $M^c$. Fix also 
	a compact subset $K$ of  $U$, and a function $\chi \in
	C_c^\infty(\Rsup^{M^c})$.  
	Then there is a constant $C \in \R_{\geq 0}$ such that for all 
	$f \in C_c^\infty({\R}^M)$, and for all 
	$v \in \R_{\geq 0}^M$ such that $(\supp(f)+P(v))\times \supp(\chi) \subset K$, 
	we have
	\[
	\Bigl|T\bigl(f(u_M)\chi(u_{M^c})d'u_M \wedge d''u_M\bigr)
	-T\bigl(f(u_M-v)\chi(u_{M^c})d'u_M \wedge d''u_M\bigr)\Bigr| 
	\leq C \cdot \|v\| \cdot \|f \|_{\rm sup}.
	\]
\end{lem}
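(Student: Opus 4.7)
The plan is a positivity-controlled integration by parts. By
\[
f(u_M)-f(u_M-v)=\sum_{i\in M}v_i\int_0^1(\partial_if)(u_M-tv)\,dt,
\]
it suffices to find, for each $i\in M$, a constant $C_i$ independent of $(f,v,t)$ with
\[
\bigl|T\bigl((\partial_if)(u_M-tv)\chi(u_{M^c})\,d'u_M\wedge d''u_M\bigr)\bigr|\le C_i\|f\|_{\mathrm{sup}}\qquad(t\in[0,1]).
\]

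Fix $i\in M$, put $M'\coloneqq M\setminus\{i\}$, and choose $\epsilon\in\{\pm 1\}$ so that $\epsilon\,d'u_i\wedge d'u_{M'}=d'u_M$. The Lagerberg form
\[
\omega\coloneqq\epsilon\,f(u_M-tv)\chi(u_{M^c})\,d'u_{M'}\wedge d''u_M\in A^{n-p-1,n-p}_c(U)
\]
is admissible because $f\in C_c^\infty(\R^M)$ forces its coefficient to vanish near $\{u_j=\infty\}$ for every $j\in M$. Closedness of $T$ together with Remark \ref{product-partial-comp} gives $T(d'\omega)=0$. Expanding $d'\omega$ via Leibniz and using that $f$ depends only on $u_M$ while $\chi$ depends only on $u_{M^c}$, the only surviving contributions come from the term $j=i$ in $d'f$ (which combines with $d'u_{M'}$ to produce $d'u_M$) and from the terms $j\in M^c$ in $d'\chi$, yielding
\[
T\bigl((\partial_if)(u_M-tv)\chi\,d'u_M\wedge d''u_M\bigr)=-\epsilon\sum_{k\in M^c}T\bigl(f(u_M-tv)\partial_k\chi\,d'u_k\wedge d'u_{M'}\wedge d''u_M\bigr).
\]
Each summand equals, up to sign, the co-coefficient $T^{IJ}(f(u_M-tv)\partial_k\chi)$ with $I=M'\cup\{k\}$ and $J=M$.

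Now Proposition \ref{prop:6} with $\lambda_I=\lambda_J=1$ yields $|T^{IJ}|\le\tfrac12(T^{II}+T^{JJ})$ as positive Radon measures on $U\setminus E^{I\cup J}$. The integrand $f(u_M-tv)\partial_k\chi(u_{M^c})$ is supported in $K$, bounded in absolute value by $\|f\|_{\mathrm{sup}}\|\partial_k\chi\|_{\mathrm{sup}}$, and vanishes on a neighborhood of $E^{I\cup J}$ (because $f$ vanishes near $\{u_j=\infty\}$ for $j\in M$ and $\partial_k\chi$ vanishes near $\{u_k=\infty\}$); its support is therefore contained in a compact set $L_k\subset U\setminus E^{I\cup J}$ that can be chosen uniformly in $(f,v,t)$ from the fixed data $K$ and $\chi$. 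Hence
\[
\bigl|T^{IJ}\bigl(f(u_M-tv)\partial_k\chi\bigr)\bigr|\le\tfrac12\|f\|_{\mathrm{sup}}\|\partial_k\chi\|_{\mathrm{sup}}\bigl(T^{II}(L_k)+T^{JJ}(L_k)\bigr),
\]
and the right-hand side is finite because $T^{II}$ and $T^{JJ}$ are Radon measures. Summing over $i\in M$ and $k\in M^c$ and using $\sum_{i\in M}v_i\le|M|\,\|v\|$ produces the required bound.

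The main obstacle is identifying the correct primitive $\omega$: deleting exactly one factor $d'u_i$ from $d'u_M\wedge d''u_M$ is what allows closedness of $T$ to convert the unbounded factor $\partial_if$ into the bounded factor $\partial_k\chi$ via integration by parts, after which Proposition \ref{prop:6} reduces everything to the positive diagonal Radon measures $T^{II}$ and $T^{JJ}$.
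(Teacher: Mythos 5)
Your proof takes essentially the same route as the paper's: closedness of $T$ supplies an integration by parts $T(d'\omega)=0$ for a primitive $\omega$ of bidegree $(n-p-1,n-p)$ obtained by deleting one $d'u_i$-factor, thereby trading the problematic $u_i$-derivative for bounded derivatives of the fixed cutoff $\chi$, and Proposition \ref{prop:6} then controls the resulting co-coefficient evaluations. The only cosmetic difference is the choice of primitive. The paper telescopes to single-coordinate translations $v=te_i$ and takes $\omega=g_t\chi\,d'u_{M'}\wedge d''u_M$, where $g_t$ is an antiderivative of the difference $f(u_M)-f(u_M-te_i)$, the $\|v\|$-factor coming from $\|g_t\|_{\sup}\le t\|f\|_{\sup}$. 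You instead use the fundamental theorem of calculus $f(u_M)-f(u_M-v)=\sum_{i\in M}v_i\int_0^1(\partial_if)(u_M-tv)\,dt$, take $\omega=\pm f(u_M-tv)\chi\,d'u_{M'}\wedge d''u_M$, and extract the $\|v\|$-factor from the $v_i$-weights. After the integration by parts both routes bound the same pairings $T^{IJ}(h\,\partial_k\chi)$ with $\|h\|_{\sup}$ under control, so the two proofs are interchangeable.

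Where you should be more careful is the assertion that $\supp\bigl(f(u_M-tv)\partial_k\chi\bigr)$ lies inside a compact $L_k\subset U\setminus E^{I\cup J}$ that can be chosen \emph{uniformly} in $(f,v,t)$. The constraint $(\supp f+P(v))\times\supp\chi\subset K$ confines $\supp f+P(v)$ only to the slice of $K$ over $\supp\chi$; when $K$ meets $E^M$ --- which is exactly what happens when this lemma is applied in Proposition \ref{prop:8}, where $K$ is a compact neighbourhood of a boundary point --- that slice is an \emph{unbounded} subset of $\R^M$, so the supports of $f(u_M-tv)\partial_k\chi$ can escape any fixed compact subset of $U\setminus E^{I\cup J}$ as $f$ and $v$ vary. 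What your argument genuinely needs is finiteness of the co-coefficient measures $T^{II},T^{JJ}$ on the (non-compact) region $\bigl(\mathrm{pr}_M(K)\cap\R^M\bigr)\times\supp(\partial_k\chi)$; this is the nontrivial content of the lemma. Be aware that the paper's own proof is equally terse on this point --- its constant $\sum_{j}\bigl|T^{M'\cup\{j\},M}\bigr|\bigl(|\partial_j\chi|\bigr)$ is asserted finite without further comment and hides the same uniformity claim --- so you have not worsened the argument, but the phrase ``a compact $L_k$ chosen uniformly'' is false as stated and should be replaced by the corresponding finiteness assertion for the measures.
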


\begin{proof}
	Let $(e_1,\ldots,e_n)$ denote the standard basis of $\R^n$
	and write $v=\sum_{i\in M}t_ie_i$. 
	Using a telescope argument, one easily reduces to the case where
	$v=t e_i$ for some fixed $i\in M$.
	For a given {$t \in [0,v_i]$}, we consider the auxiliary function
	\begin{displaymath}
	g_t\colon \R^M\longrightarrow \R,\,\,
	g_t(u_M)=\int_{-\infty}^{0}f(u_M+se_i)
	-f(u_M+(s-t)e_i)ds
	=\int_{-t}^{0}f(u_M+se_i)ds.
	\end{displaymath}
	From $(\supp(f)+P(te_i))\times \supp(\chi) \subset K$, we get 
	\begin{equation}\label{reference in proof}
	\supp(g_t)\times\supp(\chi)\subset K,\,\,\,\,
	\|g_t\|_{\sup}\le t \|f\|_{\sup},\,\,\,\,
	\frac{\partial g_t}{\partial u_i}(u)
	=f(u_M)-f(u_M-t e_i).
	\end{equation}
	We put $M'=M\setminus \{i\}$ and obtain
	\begin{align*}
	d'\bigl(g_t(u_M)\chi(u_{M^c})d'u_{M'}\land d''u_{M})
	&=\sum_{j\in M^c}g_t(u_M)\frac{\partial\chi}{\partial u_{j}}(u_{M^c})
	d'u_{j}\land  d'u_{M'}\land d'' u_{M}\\
	&+\bigl(f(u_M)-f(u_M-t e_i)\bigr)\chi(u_{M^c})
	d'u_{i}\land d'u_{M'}\land d'' u_{M}.
	\end{align*}
	Since {$g_{t}$ has compact support in $\R^{M}$} and, by
	\eqref{reference in proof} $g_t(u_{M})\chi(u_{M^c})$ has compact
	support in $U$, we deduce that
	\begin{displaymath}
	g_t (u_{M})\chi(u_{M^c})d'u_{M'}\land d''u_{M}
	\in A^{n-p-1,n-p}_{c}(U).
	\end{displaymath}
	Hence, since $T$ is closed, we have
	\begin{displaymath}
	T\Bigl(d'\bigl(g_t (u_{M})\chi(u_{M^c})d'u_{M'}\land d''u_{M}\bigr)\Bigr)=0.
	\end{displaymath}
	Therefore, using that $T$ has measure co-coefficients (see Proposition
	\ref{prop:6}), we get
	\begin{align*}
	\bigl|T\bigl((f(u_M)-f(u_M-t e_i))&\chi(u_{M^c})d'u_{M}\land d''u_{M}\bigr)\bigr| 
	\le \sum_{j\in M^c}\bigl| T^{M'\cup\{j\},M}\bigr|
	\left(\left|\frac{\partial\chi}{\partial u_{j}}(u_{M^c})g_t(u_M)\right|\right)\\
	&\le  \sum_{j\in M^c}\left | T^{M'\cup\{j\},M}\right|
	\left(\left|\frac{\partial\chi}{\partial u_{j}}(u_{M^c})\right|\right) \|g_t\|_{\sup}
	\le C\cdot t \cdot \|f\|_{\sup}
	\end{align*}
	for some constant $C$.
\end{proof}

\begin{prop}\label{prop:8}
	Let $U\subset N_{\Sigma }$ be an open subset and $T \in D^{p,p}(U)$. 
	If $T$ is positive and closed, then it has $\C$-finite local mass. 
\end{prop}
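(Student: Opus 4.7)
The plan is to reduce the $\C$-finite local mass condition to a statement about the diagonal co-coefficients $T^{MM}$, and then use Lemma \ref{disturbation lemma} --- which encodes the closedness of $T$ --- to control the mass of $T^{MM}$ on cubes shifted towards the boundary linearly, so that the geometric decay of the exponential weight $\prod_{i \in M} e^{-u_i}$ dominates.

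First I would work locally: fix $\rho \in \Sigma$, a choice of toric coordinates so that $U_\rho \subset \Rsup^n$, and a point $p \in U_\rho$ with a compact neighborhood $K$ of product type. Estimate \eqref{eq:30} of Proposition \ref{prop:6} (with $\lambda_I = \lambda_J = 1$) gives $|T^{IJ}| \leq \tfrac12 (T^{II} + T^{JJ})$, so
\begin{displaymath}
|T^{IJ}| \prod_{i \in I} e^{-u_i} \prod_{j \in J} e^{-u_j} \leq \tfrac12 \sum_{M \in \{I,J\}} \Biggl(T^{MM} \prod_{i \in M} e^{-u_i}\Biggr) \cdot \prod_{i \in (I \cup J) \setminus M} e^{-u_i} \cdot \prod_{i \in I \cap J} e^{-u_i}.
\end{displaymath}
The trailing factors are bounded by $1$ on all of $U_\rho$ (with the convention $e^{-\infty} = 0$), so it suffices to show, for every $M \subset \{1,\ldots,n\}$ with $|M| = q \coloneqq n-p$, that $j_\rho^M\bigl(T^{MM} \prod_{i \in M} e^{-u_i}\bigr)$ is a locally finite positive Borel measure on $U_\rho$.

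Set $L \coloneqq \{i \in M : u_i(p) = \infty\}$. If $L = \emptyset$ then $p \notin E^M$ and $T^{MM}$ is already a Radon measure near $p$, so the claim is immediate. Otherwise I would take $K$ to be a product neighborhood in which $u_l \in [A_l, \infty]$ for $l \in L$ and the remaining coordinates lie in compact subsets of $\R$ or $\Rsup$. On $K \cap (U_\rho \setminus E^M)$, the factors $e^{-u_i}$ for $i \in M \setminus L$ are bounded, and it reduces to showing
\begin{displaymath}
\int_{K \setminus E^M} \prod_{l \in L} e^{-u_l} \, dT^{MM} < \infty.
\end{displaymath}
Decompose the relevant region of $K \cap \R^n$ into integer translates, in the $L$-directions, of a unit cube, indexed by $k \in \N^L$ with translation vector $v_k \coloneqq \sum_{l \in L} k_l e_l$. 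Choose non-negative $f_0 \in C_c^\infty(\R^M)$ dominating the characteristic function of the initial unit cube, and $\chi \in C_c^\infty(\Rsup^{M^c})$ with $\chi \equiv 1$ on the $M^c$-projection of $K$, chosen so that $(\supp(f_0) + P(v_k)) \times \supp(\chi) \subset K$ for every $k$ --- this is exactly where the product structure of $K$, unbounded precisely in the $L$-directions, is essential.

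Now I would apply Lemma \ref{disturbation lemma}: it yields a constant $C$, independent of $v_k$, such that
\begin{displaymath}
T^{MM}\bigl(f_0(\,\cdot\, - v_k)\,\chi\bigr) \leq T^{MM}(f_0 \chi) + C \|v_k\| \, \|f_0\|_{\rm sup} \eqqcolon M_0 + C' \|k\|.
\end{displaymath}
By positivity of $T^{MM}$, this dominates the mass on the $k$-th cube, and summing against the weight $\prod_{l \in L} e^{-u_l} \leq e^{-\sum_l (A_l + k_l)}$ gives
\begin{displaymath}
\int_{K \setminus E^M} \prod_{l \in L} e^{-u_l} \, dT^{MM} \leq \mathrm{const} \cdot \sum_{k \in \N^L} e^{-\sum_l k_l} \bigl(M_0 + C' \|k\|\bigr) < \infty,
\end{displaymath}
as geometric decay dominates linear growth. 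The main obstacle --- and the only step using closedness --- is verifying the support conditions of Lemma \ref{disturbation lemma} uniformly over arbitrarily large $v_k$, which relies on the boundary structure of $U_\rho$ at $p$ being precisely aligned with the set $L$ of directions in which we wish to integrate the exponential weight.
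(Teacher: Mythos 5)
Your proof is correct and takes essentially the same approach as the paper: the core of both arguments is Lemma \ref{disturbation lemma}, which exploits closedness to show that the mass of $T^{MM}$ on cubes translated towards infinity grows at most linearly, followed by summation against a rapidly decaying weight. You streamline slightly by reducing off-diagonal to diagonal co-coefficients at the outset using \eqref{eq:30} with $\lambda_I=\lambda_J=1$ and summing directly against the exponential weight, whereas the paper first establishes finite mass against the polynomial weights $\prod_i u_i^{-2-2\varepsilon}$ and then passes to the exponential via $e^{-u}\ll u^{-1-\varepsilon}$.
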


\begin{proof}
	{The question is local and so we may assume that $U$ is an open subset of $\Rsup^n$.}
	
	If $p=n$, then 
	the only co-coefficient of $T$ is $T^{\emptyset,\emptyset}$ 
	which is a positive {Radon} measure on all of $U$ {by Proposition \ref{prop:6}}. 
	Hence $T$ has $\C$-finite local mass.
	
	Now assume that $p<n$. 
	We want to prove {that $T$ has $\C$-finite local} 
	mass around $u \in U$. 
	{We will check the relevant condition in Definition \eqref{def complex local finite mass} 
		first for the co-coefficients $T^{MM}$.}
	For notational simplicity, we assume $M=\{1,\dots,n-p\}$ and 
	$u=(\infty, \dots, \infty)$ which is no real restriction of generality as 
	finite components are easier to handle.
	We choose a compact neighborhood $K$ of $u$.   
	There is $R \in \N_{\geq 1}$ such that $[R-1,\infty]^n\subset K$. 
	There is a $C^\infty$-function $f$ on $\R^M$ with the properties:
	\begin{enumerate}
		\item 
		$0 \leq f \leq 1$;
		\item 
		$f \equiv 1$ on the unit cube $[R,R+1]^M$;
		\item 
		$f$ has compact support in the open cube $(R-\frac{1}{2},R+ \frac{3}{2})^M$.
	\end{enumerate}
	We choose $\chi \in C^\infty(\Rsup^{M^c})$ with compact support 
	in $(R-\frac{1}{2},\infty]^{M^c}$ such that 
	$0\leq \chi\leq 1$ and $\chi\equiv 1$ on
	$[R,\infty]^{M^c}$. 
	For $\underline n \in \N^M$, we consider the positive Lagerberg form 
	\begin{displaymath}
	\alpha  _{M,\underline n}
	={(-1)^{\frac{(n-p)(n-p-1)}{2}}}\chi(u_{M^c})f(u_M-\underline n)
	{d'u_{M}\wedge d''u_{M}.}
	\end{displaymath}
	Lemma \ref{disturbation lemma} implies that
	\begin{displaymath}
	T(\alpha  _{M,\underline n})= T(\alpha  _{M,\underline 0}) + O(\|\underline n\|).
	\end{displaymath}
	For any $\varepsilon >0$, we prove now that the positive measure
	\begin{displaymath}
	T^{MM}\prod_{i\in M}\frac{1}{u_{i}^{2+ 2\varepsilon}}
	\end{displaymath}
	has finite mass in the region $S\coloneqq [R,\infty)^M \times [R,\infty]^{M^c}$.
	We have 
	\begin{eqnarray*}
		\left(T^{MM}\prod_{i\in M}\frac{1}{u_{i}^{2+2\varepsilon}}\right)(1_{S})&\le&
		\sum _{\underline n\in (\N_{\geq 1})^{M}}
		\left(\prod_{i\in M}\frac{1}{n_{i}^{2+ 2\varepsilon}}\right)
		T(\alpha  _{M,\underline n})\\ 
		&=& \sum_{\underline n\in {(\N_{\geq 1})^{M}}}
		{\left(\prod_{i\in M}\frac{1}{n_{i}^{2+ 2\varepsilon}}\right)O(\|\underline n\|)}\\
		& \leq& O(1)  \sum_{\underline n\in {(\N_{\geq 1})^{M}}}\prod_{i\in M}
		\frac{1}{n_{i}^{1+2\varepsilon}} < \infty.
	\end{eqnarray*}
	For 
	$I,J\subset \{1,\dots,n\}$ with $|I|=|J|=n-p$, equation  \eqref{eq:30} yields 
	\begin{displaymath}
	|T^{IJ}|\prod_{i\in I}\frac{1}{u_{i}^{1+\varepsilon}}\prod_{j\in
		J}\frac{1}{u_{j}^{1+\varepsilon}}\le \frac{1}{2}\left(T^{II}\prod_{i\in
		I}\frac{1}{u_{i}^{2+2\varepsilon}}
	+T^{JJ}\prod_{j\in J}\frac{1}{u_{j}^{2+2\varepsilon}}\right)
	\end{displaymath}
	in the region $[R,\infty)^{I \cup J} \times [R,\infty]^{(I \cup J)^c}$. 
	Therefore the left hand side has finite
	mass in this region. Finally, using that $e^{-u}$ decreases faster
	than $u^{-1-\varepsilon}$, 
	we deduce that $T$ has $\C$-finite local mass.
\end{proof}

\begin{prop}\label{prop:10}
	Let $U\subset N_{\Sigma}$ be an open subset and $V=\trop^{-1}(U)$.
	Let $T',T''\in {D^{p,p}(V)^{\SS,F}}$ be positive and closed currents.
	If $\trop_{\ast}(T')=\trop_{\ast}(T'')$, then $T'=T''$.
\end{prop}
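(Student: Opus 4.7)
Setting $T \coloneqq T' - T''$, we have $T \in D^{p,p}(V)^{\SS,F}$ which is closed, has measure co-coefficients (as a difference of positive currents, each having measure co-coefficients by Proposition \ref{prop:7}), and satisfies $\trop_*(T) = 0$. The goal is to show $T = 0$ by a stratum-by-stratum reduction based on the canonical decompositions of $T'$ and $T''$.

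Apply Theorem \ref{stratification of currents} to obtain $T' = \sum_{\sigma \in \Sigma} T'_\sigma$ and $T'' = \sum_{\sigma \in \Sigma} T''_\sigma$. By Proposition \ref{extension by 0 and trop}, both sums $\sum_\sigma \trop_*(T'_\sigma)$ and $\sum_\sigma \trop_*(T''_\sigma)$ are canonical decompositions of the common positive Lagerberg current $\trop_*(T') = \trop_*(T'')$; the uniqueness part of Theorem \ref{stratification of currents} then yields $\trop_*(T'_\sigma) = \trop_*(T''_\sigma)$ for every $\sigma \in \Sigma$. It therefore suffices to prove $T'_\sigma = T''_\sigma$ for each $\sigma$.

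For the dense stratum $\sigma = \{0\}$, both $T'_{\{0\}}$ and $T''_{\{0\}}$ have $V \setminus \torusan$ as a null set by construction. Their restrictions to $V \cap \torusan = \trop^{-1}(U \cap N_\R)$ are $\SS,F$-invariant currents with the same tropicalization and hence coincide, by the isomorphism $D(V \cap \torusan)^{\SS,F} \cong D(U \cap N_\R)$ induced by $\trop_*$ which is recorded in Remark \ref{rem:3}. Applying Lemma \ref{uniqueness sequence} with $\SS$-invariant cutoff functions (Remark \ref{invariance in complex case of approximation lemma}) and using the null-set property then propagates this equality to all of $V$, yielding $T'_{\{0\}} = T''_{\{0\}}$.

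For a non-dense stratum $\sigma \ne \{0\}$, we argue by induction on $\dim N$. The currents $T'_\sigma, T''_\sigma$ are supported on $V \cap \overline{O(\sigma)^\an}$, which is open in the smooth toric variety $\overline{O(\sigma)^\an}$ of dimension $n - \dim\sigma$; this smaller toric variety has its own partial compactification $\overline{N(\sigma)}$ and corresponding tropicalization map. Using Proposition \ref{pushforward decomposition}\ref{item:1} one writes $T'_\sigma = \iota_*(S'_\sigma)$ and $T''_\sigma = \iota_*(S''_\sigma)$ for positive currents $S'_\sigma, S''_\sigma$ on $\overline{O(\sigma)^\an}$; the equality of tropicalizations descends to $\overline{N(\sigma)}$ by functoriality, and the inductive hypothesis gives $S'_\sigma = S''_\sigma$, hence $T'_\sigma = T''_\sigma$. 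The main obstacle lies in this last step: one must verify the hypothesis of Proposition \ref{pushforward decomposition}\ref{item:1} (vanishing of the appropriate co-coefficients) and ensure that the descended currents $S'_\sigma, S''_\sigma$ are closed. Both points rest on combining the closedness of $T'$ and $T''$ with an $\SS$-weight analysis of co-coefficients: since $T^{IJ}$ and $z^{-I}\bar{z}^{-J}$ share the same $\SS$-weight, the measure $z^{-I}\bar{z}^{-J}T^{IJ}$ is $\SS$-invariant, and the formula of Lemma \ref{preparation-lemma} combined with $\trop_*(T) = 0$ forces $\supp T^{IJ} \subset \bigcup_{k \in I \cup J}\{z_k = 0\}$; this support constraint, together with closedness of the full currents $T'$ and $T''$, underlies the reduction to the smaller toric variety.
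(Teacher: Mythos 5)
Your overall skeleton (use the canonical decomposition, match the pieces stratum by stratum, handle the dense stratum via the torus isomorphism of Remark \ref{rem:3}, descend the boundary pieces to the smaller toric variety) is a reasonable plan, and the treatment of the dense stratum is essentially correct. However, the step you yourself flag as ``the main obstacle'' -- the non-dense strata -- is genuinely not closed, and this is exactly where all the work lies.

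There are two concrete gaps. First, to apply Proposition \ref{pushforward decomposition}\ref{item:1} to $T'_\sigma$ and $T''_\sigma$ individually, you need the co-coefficients $T'_\sigma{}^{IJ}$ (not $T^{IJ}$, not even $T_\sigma^{IJ}$) to vanish whenever $O(\sigma)\subset E^{I\cup J}$. Example \ref{counterexample for push-forward} shows this can \emph{fail} for a positive $T'_\sigma$, so some additional input is essential. The support constraint you derive from Lemma \ref{preparation-lemma} and $\trop_*(T)=0$ concerns only the difference $T=T'-T''$; it says nothing about $T'_\sigma$ alone, and so it cannot supply the hypothesis of Proposition \ref{pushforward decomposition}\ref{item:1}. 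What \emph{would} suffice is a structure theorem for closed positive currents supported in analytic subsets -- but that brings in the second gap: you never prove that $T'_\sigma$ (respectively $T''_\sigma$) is closed. This is a non-trivial fact; the paper derives it from the complex Skoda--El Mir theorem, applied inductively along the strata. You assert that both points ``rest on combining the closedness of $T'$ and $T''$ with an $\SS$-weight analysis,'' but you do not actually carry out this analysis, and the $\SS$-weight observation by itself does not produce either the closedness of $T'_\sigma$ or the required co-coefficient vanishing.

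For contrast, the paper's actual proof of Proposition \ref{prop:10} avoids decomposing $T'$ and $T''$ separately into pushforwards. It first shows (via Skoda--El Mir) that every piece $T_\sigma$ of the canonical decomposition of the \emph{difference} $T=T'-T''$ is closed, and then proves $T_\sigma^{IJ}=0$ by a double induction: the outer variable is $|M\cap(I\cup J)|$ where $\sigma=\sigma_M$, and the inductive step is an integration-by-parts argument exploiting $\partial T_\sigma=\bar\partial T_\sigma=0$. This is precisely the mechanism that propagates the base-case vanishing (which you also obtain) across the co-coefficients touching the boundary, and it is the piece your proposal is missing.
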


\begin{proof}
	This can be proved locally on $N_\Sigma$.
	Hence we may assume that $\Sigma $ is a fan containing a
        single maximal dimensional cone with all its faces, so that   
	$N_\Sigma = \Rsup^n$ 
	as in Definition \ref{good-coordinates}. 
	Then the cones of $\Sigma$ are given by the faces 
		$$\sigma_M \coloneqq \{u \in \R_{\geq 0}^n \mid u_m  = 0 \ \forall m \notin M\}$$
		of $\R_+^n$, where $M$ is ranging over all subsets of $\{1,\dots,n\}$.
		We have the corresponding strata
	\begin{displaymath}
	S_M\coloneqq  O(\sigma_M)^{\rm an} \cap V =\{(z_{1},\dots,z_{n})\in V\,\mid\, z_{i}=0
	\Leftrightarrow i\in M\}
	\end{displaymath}
	of $V$.
	We consider now the canonical decompositions of $T'$ and $T''$ from Theorem \ref{stratification of currents}: 
	\begin{displaymath}
	T'= \sum_{\sigma \in \Sigma} T'_{\sigma}, \quad
	T''=\sum_{\sigma \in \Sigma} T''_{\sigma}.
	\end{displaymath}
	For $M\subset \{1,\dots,n\}$ and  $\sigma \coloneqq \sigma_M \in \Sigma$, we recall that $T_\sigma'$ and $T_\sigma''$ are $\SS$-invariant,
	positive currents with support contained in the closure 
	$\overline{S_M}$  
	of the stratum $S_M$. 
	The co-coefficients of the current 
	$T_\sigma\coloneqq T_\sigma'-T_\sigma''$ are  obtained by first 
	restricting the co-coefficient measures of $T\coloneqq T'-T''$ to 
	$S_M$ and then by extending these
	measures by zero to $V$.
	
	We claim that all currents $T_\sigma, T_\sigma'$ and $T_\sigma''$ are closed. 
	It is clear that $T'$ agrees with $T_{\{0\}}'$ on the dense
        stratum
		$V \cap \torus^\an$ and hence $T_{\{0\}}'|_{V \cap \torus^\an}$ is a closed 
	positive current on $V \cap \torus^\an$. 
Since $T_{\{0\}}'$ is a positive current on $V$, it is clear that the restriction of $T_{\{0\}}'$ to $V \cap \Tan$ has locally finite mass near the boundary $\partial V \coloneqq V \setminus (V \cap \torus^\an)$.
By the Skoda--El Mir Theorem \cite[Theorem III.2.3]{demailly_agbook_2012} and using that $\partial V$ is a null set for $T_{\{0\}}'$, it follows that $T_{\{0\}}'$ is closed on $V$. 
Using induction on $\dim(\sigma)$, we can prove similarly that $T'_\sigma$ is closed for any $\sigma \in \Sigma$.  
The same arguments show that $T_\sigma''$ is closed. 
Then $T_\sigma$ is closed as well.
	
	From Proposition \ref{extension by 0 and trop}, we get the
	equality of canonical decompositions
	\[
	\sum_{\sigma \in \Sigma} \trop_*(T'_{\sigma})=\trop_*(T')
	=\trop_*(T'')=\sum_{\sigma \in \Sigma} \trop_*(T''_{\sigma})
	\]
	which implies $\trop_*(T_\sigma)=0$ for all $\sigma \in \Sigma$.
	
	To prove the proposition, it is enough to show for each
        $\sigma = \sigma_M\in \Sigma$ that $T_{\sigma}$ is zero , or
        equivalently that all  
	co-coefficients $T_\sigma^{IJ}$ are zero for all subsets
        $I,J\subset \{1,\dots,n\}$
	with $|I|=|J|=n-p$  (see Remark \ref{co-coefficients determine currents}). 
	We prove this by induction on the cardinality of $M\cap (I\cup J)$.
	For the initial step, we consider subsets $M,I,J\subset \{1,\dots,n\}$ 
	such that $M\cap (I\cup J)=\emptyset$.  
	Let $f$ be a real valued 
	smooth function on $U$ with compact support in 
	$U\setminus E^{I\cup J}
	=\{u \in U \mid u_i \neq \infty \; \forall i \in I \cup J\}.$ 
	Using $\trop_*(T_\sigma)=0$ and \eqref{co-coefficients and trop},  
	we obtain
	\begin{equation}\label{cocoff-vanish}
	T_\sigma^{IJ}\bigl(z^{-I}\bar{z}^{\, -J}\trop^*(f)\bigr)=0.
	\end{equation}	
	Using that $T_\sigma^{IJ}$ is a Radon measure, we conclude 
	by a standard approximation argument that \eqref{cocoff-vanish} holds 
	for all 
	$f \in C_c^0(U \setminus E^{I\cup J})$. 
	
	For any $g \in C_c^0(V \setminus \trop^{-1}(E^{I\cup J}))$, 
	let $g^{\rm av}$ be the natural projection onto the $\SS$-invariant 
	functions given by averaging over the fibers of $\trop$ with respect 
	to the probability Haar measures. 
	By construction, there is a unique $f \in C_c^0(U \setminus E^{I\cup J})$ 
	with $\trop^*(f) = g^{\rm av}$. 
	The $\SS$-invariance of $T_\sigma$ implies that
	\begin{displaymath}
	T_\sigma^{IJ}(z^{-I}\bar z^{\,-J}g)= T_\sigma^{IJ}(z^{-I}\bar z^{\,-J}g^{\rm av})
	= T_\sigma^{IJ}\bigl(z^{-I}\bar z^{\,-J}\trop^{\ast}(f)\bigr)=0.
	\end{displaymath}
		This means that the restriction of $T_\sigma^{IJ}$ to the open 
	subset $V \setminus \trop^{-1}(E^{I\cup J})$ is identically zero. 
	Since $V \setminus S_M$ is a null set with respect to the Radon measure 
	$T_{{\sigma }}^{IJ}$ and since our
          assumption $M\cap (I\cup J)=\emptyset$ yields 
	$S_M \subset V \setminus \trop^{-1}(E^{I\cup J})$, 
	we deduce that $T_{\sigma}^{IJ}=0$.
	
	For the inductive step, the induction hypothesis is that $T_{\rho}^{IJ}=0$
		whenever $\rho=\sigma_L \in \Sigma
		$ with $|L\cap(I\cup J)|< k$ for some $k\geq 1$. 
	Let $M,I,J \subset \{1,\dots, n\}$ with $|M\cap(I\cup J)|= k$ and
	$m\in M\cap (I\cup J)$. 
	We have to show that $T_\sigma^{IJ}(f)=0$ for $\sigma=\sigma_M$ and any $f \in C_c^\infty(V)$. 
	Using that $V \setminus S_M$
	is a null set with respect to the Radon measure $T_\sigma^{IJ}$,
	we may assume that, in a neighborhood of 
	$\overline{S_M}$, the
	function $f$ depends only the
	variables $z_{j}$, $j\not \in M$. 
	By symmetry, we may also assume without loss of generality that $m\in I$.
	Write $I'=I\setminus \{m\}$. Let $g\in C_c^\infty(V)$ be the function
	given by $g=z_{m}f$. By the assumptions on $f$, there is a  
	neighborhood of $\overline{S_M}$ where 
	\begin{displaymath}
	\frac{\partial g}{\partial z_{m}}=f. 
	\end{displaymath}
	and in this neighborhood, $g$ depends  only on the variables $z_{j}$,
	$j\not \in M$, and $z_m$.
	Consider the smooth form $\eta  = g dz_{I'}\wedge d\bar z_{J}$
	on $V$ which has compact support.  
	Since $T_{\sigma}$ is closed and has support on $\overline{S_M}$, 
	the assumptions on $g$ lead to 
	\begin{equation}\label{eq:290}
	0 = T_{\sigma}(\partial \eta)
	= T_{\sigma}(\pm f dz_{I}\wedge d\bar z_{J}) 
	+ \sum _{j\not \in M\cup I}
	T_{\sigma}\Bigl(\frac{\partial g}{\partial z_{j}} dz_{j}\wedge dz_{I'}\wedge d\bar z_{J} \Bigr),
	\end{equation}
	where the sign depends on the position of $m$ in $I$. 
	Let us first consider the case where $m\not\in J$.
	For each $j\not \in M\cup I$, we get
	$|M\cap \bigl((I'\cup \{j\})\cup J\bigr )|<k$.
	Hence the induction hypothesis gives
	\[
	T_{\sigma}^{(I'\cup\{j\})J}\Bigl(\frac{\partial g}{\partial z_{j}}\Bigr)=0
	\]
	and \eqref{eq:290} implies $T_{\sigma}(f dz_{I}\wedge d\bar z_{J})=0$ and
	\[
	T_\sigma^{IJ}(f) = i^{q^2}T_\sigma(f dz_{I}\land d\bar z_{J})=0.
	\]
	Now consider the case where $m\in J$. 
	We put $J'\coloneqq J\setminus \{m\}$.
	For each $j\not \in M\cup I$, we write
	\begin{displaymath}
	h_{j}=\bar z_{m}\frac{\partial g}{\partial z_{j}}\in C_c^\infty(V).
	\end{displaymath}
	This function depends only on the variables $(z_{j'})_{j' \not \in M}$
	and $z_m$ and, in a neighborhood of $\overline{S_M}$, satisfies  
	\begin{displaymath}
	\frac{\partial h_{j}}{\partial \bar z_{m}}
	= \frac{\partial g}{\partial z_{j}}.  
	\end{displaymath}
	For the compactly supported smooth form 
	$\alpha _{j}=h_{j}dz_{j}\wedge dz_{I'}\wedge d\bar z_{J'}$ on $V$, 
	we deduce as above that 
	\begin{displaymath}
	0=T_{\sigma}(\bar \partial \alpha _{j})=
	T_{\sigma} \left( \pm\frac{\partial g}{\partial z_{j}} d z_j\wedge dz_{I'}\wedge d\bar z_{J} \right)
	+\sum_{j'\not \in M\cup J} T_{\sigma} \left( \frac{\partial h_j}{\partial \bar z_{j'}}
	d \bar z_{j'}\wedge dz_{j} \wedge dz_{I'} \wedge d\bar z_{J'} \right).
	\end{displaymath}
	For $j\not\in M\cup I$ and $j'\not \in M\cup J$, we get
	$|M\cap \bigl((I'\cup \{j\})\cup (J\cup \{j'\}\bigr )|<k$.
	Hence our induction hypothesis gives
	\begin{align*}
	T_{\sigma}\left( \frac{\partial h_j}{\partial \bar z_{j'}}
	d \bar z_{j'}\wedge   dz_{j} \wedge dz_{I'}\wedge d\bar z_{J'} \right) =0.
	\end{align*}
	This implies
	\begin{align*}
	T_{\sigma}\left( \frac{\partial g}{\partial z_{j}}{ d\bar z_{j}} 
	\wedge dz_{I'}\wedge d\bar z_{J} \right) = 0.
	\end{align*}
	for all $j\not\in M\cup I$
	which implies $T_\sigma^{IJ}=0$ by \eqref{eq:290} as before.
	This completes the induction and proves the result.
\end{proof}

In general the map $\trop_{\ast}\colon D^{p,p}_{+}(V)\cap
D^{p,p}(V)^{\SS,F}\to D^{p,p}_{+}(U)$ is not injective as the
following example shows. 
\begin{ex} \label{tropicalization of currents is not injective}
	Consider $\P^{1}_\C$ as a toric variety. Then the $(0,0)$
	current that sends the form $f(z)dz\land i d\bar z$ to the value
	$f(0)$ is a non-zero positive invariant {current}, but its image by
	$\trop_{\ast}$ is zero. Thus, in Proposition \ref{prop:10}  the closedness
	condition is necessary.
\end{ex}

\begin{thm} \label{thm:5}
	Let $U\subset N_{\Sigma }$ be an open subset,
	$V=\trop^{-1}(U)$. Denote by $D^{p,p}(U)_{\cl,+}$ the cone of
	positive closed currents on $U$ and by
	$D^{p,p}(V)^{\SS,F}_{\cl,+}$ the cone of
	$\SS$- and $F$-invariant positive closed currents on $V$. Then
	the map $\trop_{\ast}$ induces {a linear isomorphism} 
	\begin{equation}   \label{main iso}
	\trop_{\ast}\colon D^{p,p}(V)^{\SS,F}_{\cl,+}
	\stackrel{\sim}{\longrightarrow} D^{p,p}(U)_{\cl,+}
	\end{equation} 
	of real cones. 
\end{thm}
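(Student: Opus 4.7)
The plan is to establish three things: well-definedness of the map \eqref{main iso}, its injectivity, and its surjectivity. Well-definedness has two parts. Positivity of $\trop_{*}S$ for $S\in D^{p,p}(V)^{\SS,F}_{+}$ is already Proposition \ref{positivity and trop}. For closedness, let $S$ be closed in $D^{p,p}(V)^{\SS,F}$ and $\omega\in A_c^{n-p-1,n-p}(U)$. Using \eqref{trop-invariant-lagerberg-forms2} and duality,
\[
d'(\trop_{*}S)(\omega)=\pm \trop_{*}S(d'\omega)=\pm S(\trop^{*}d'\omega)=\pm\pi^{-1/2}S(\partial\trop^{*}\omega)=\mp\pi^{-1/2}(\partial S)(\trop^{*}\omega)=0,
\]
and similarly $d''\trop_{*}S=0$. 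Injectivity is precisely Proposition \ref{prop:10}.

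For surjectivity, let $T\in D^{p,p}(U)_{\cl,+}$. By Proposition \ref{prop:8}, $T$ has $\C$-finite local mass, and by Proposition \ref{prop:9} there exists a positive $S\in D^{p,p}(V)^{\SS,F}$ with $\trop_{*}S=T$. The delicate point is to arrange $S$ to be closed. We induct on $\dim N$; the case $\dim N=0$ is trivial. Consider the canonical decomposition $T=\sum_{\sigma\in\Sigma}T_{\sigma}$ from Theorem \ref{stratification of currents}. Handle the dense-stratum piece first: Remark \ref{rem:3} yields a unique $R\in D^{p,p}(V\cap\T^{\an})^{\SS,F}$ with $\trop_{*}R=T|_{U\cap N_{\R}}$; since $T|_{U\cap N_{\R}}$ is closed and positive, so is $R$ (positivity via Lemma \ref{generic positivity and trop}, closedness via the commutation of $\trop^{*}$ with differentials on the dense stratum). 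The $\C$-finite local mass together with \eqref{variance of co-coefficients and trop} implies that $R$ has locally finite mass near $V\setminus\T^{\an}$, so the complex Skoda--El Mir theorem \cite[Theorem III.2.3]{demailly_agbook_2012} yields a closed positive invariant extension $S_{\{0\}}\in D^{p,p}(V)^{\SS,F}_{\cl,+}$.

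Next we identify $\trop_{*}S_{\{0\}}$ with $T_{\{0\}}$: the current $\trop_{*}S_{\{0\}}$ is closed, positive, invariant, has $U\setminus N_{\R}$ as a null set (since $V\setminus\T^{\an}$ is null for $S_{\{0\}}$), and agrees with $T|_{U\cap N_{\R}}$ on the dense stratum. Uniqueness of the canonical decomposition in Theorem \ref{stratification of currents} forces $\trop_{*}S_{\{0\}}=T_{\{0\}}$. In particular $T_{\{0\}}$ is closed, hence $T-T_{\{0\}}=\sum_{\sigma\neq\{0\}}T_{\sigma}$ is closed and positive. For each $\sigma\neq\{0\}$, Proposition \ref{pushforward decomposition} writes $T_{\sigma}=(\iota_{\sigma})_{*}P_{\sigma}$ with $P_{\sigma}$ a positive Lagerberg current on $\overline{N(\sigma)}\cap U$. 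Iterating the construction above inside the smaller partial compactification $\overline{N(\sigma)}$ shows successively (by increasing $\dim\sigma$) that every $T_{\sigma}$, hence every $P_{\sigma}$, is closed; then the inductive hypothesis applied to the toric variety $\overline{O(\sigma)}$ (of strictly smaller dimension, with tropicalization $\overline{N(\sigma)}$) produces closed positive invariant $Q_{\sigma}$ on $\overline{O(\sigma)}^{\an}\cap V$ with $\trop_{*}Q_{\sigma}=P_{\sigma}$. Setting $S_{\sigma}:=(\iota_{\sigma}')_{*}Q_{\sigma}$ and $S:=\sum_{\sigma}S_{\sigma}$ produces the required closed positive invariant preimage.

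The main obstacle is arranging the preimage $S$ to be closed, since Proposition \ref{prop:9} on its own only gives a positive preimage. The resolution combines three ingredients: the commutation of $\trop^{*}$ with the differential operators (preserving closedness on the dense torus), the classical complex Skoda--El Mir theorem (extending closed positive currents across the invariant boundary divisor), and the uniqueness in the canonical decomposition (identifying $\trop_{*}S_{\{0\}}$ with $T_{\{0\}}$ and thereby propagating closedness to each stratum piece through an induction on dimension).
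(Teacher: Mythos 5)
Your proof is correct, and for surjectivity it is noticeably more detailed than the paper's own two-line argument. The paper deduces well-definedness from Lemma \ref{trop of complex current}, Proposition \ref{prop:5} and Proposition \ref{positivity and trop}, injectivity from Proposition \ref{prop:10}, and surjectivity simply by citing Propositions \ref{prop:8} and \ref{prop:9}. Your well-definedness computation and your treatment of injectivity match this exactly.

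Where you go beyond the paper is surjectivity, and you are right to: Proposition \ref{prop:9} as stated produces a \emph{positive} $S\in D^{p,p}(V)^{\SS,F}$ with $\trop_*S=T$, but says nothing about $S$ being closed, and one cannot conclude $\partial S=0$ merely from $d'T=0$ because $\trop_*$ is only known to be injective on \emph{closed positive} currents (Example \ref{tropicalization of currents is not injective} even shows injectivity fails without closedness). Your bootstrap — lift $T|_{U\cap N_\R}$ to the closed positive invariant $R$ on the dense torus using Remark \ref{rem:3} and the commutation \eqref{trop-invariant-lagerberg-forms2}, extend by zero via the complex Skoda--El Mir theorem using the local finiteness coming from Proposition \ref{prop:8} and \eqref{variance of co-coefficients and trop}, identify $\trop_*S_{\{0\}}$ with $T_{\{0\}}$ via the uniqueness clause in Theorem \ref{stratification of currents}, conclude $T_{\{0\}}$ is closed, and then propagate through the strata by induction on $\dim\sigma$ together with Proposition \ref{pushforward decomposition} — is precisely the argument needed to see that the construction in Proposition \ref{prop:9} produces a \emph{closed} preimage when the input is closed. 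As a by-product you obtain the conclusion of Corollary \ref{decomposition theorem and closed} directly, without the detour through Theorem \ref{tropical El Mir}; in the paper both of these results are derived only \emph{after} Theorem \ref{thm:5}, so your ordering avoids a potential circularity. In short, your argument is a genuine completion of the paper's terse proof rather than a different route, and the additional bootstrap step is exactly what is required.
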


\begin{proof}
	By Lemma \ref{trop of complex current}, Proposition \ref{prop:5} and
	Proposition \ref{positivity and trop}, the linear map in \eqref{main
		iso} is well-defined. Surjectivity follows from  Propositions
	\ref{prop:9} and \ref{prop:8}, while  injectivity was proven in
	Proposition \ref{prop:10}.
\end{proof}

The isomorphism in Theorem \ref{thm:5} respects the support of currents.

\begin{prop}\label{prop:16}
	For $T\in D^{p,p}(V)^{\SS,F}_{\cl,+}$,   we have 
	$\trop^{-1}(\supp(\trop_{\ast} T )) = \supp(T)$. 
\end{prop}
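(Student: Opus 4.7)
The plan is to prove the two set-theoretic inclusions separately, with one using only the $\SS$-invariance of $T$ and properness of $\trop$, and the other using the full strength of the correspondence isomorphism of Theorem \ref{thm:5}.

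For the inclusion $\trop^{-1}(\supp(\trop_*T))\subset \supp(T)$, I would proceed by contraposition. Starting from $x\in V$ with $x\notin \supp(T)$, I would pick an open neighborhood $W$ of $x$ with $T|_W=0$. The crucial step is to pass from $W$ to an $\SS$-invariant open set: using that $T$ is $\SS$-invariant and that the $\SS$-translate of a compactly supported form has the same pairing with $T$, one checks that $T$ vanishes on the $\SS$-invariant saturation $W'':=\SS\cdot W=\bigcup_{s\in \SS}s\cdot W$ (reducing to finitely many translates by compactness of the support of any test form). Since $\trop\colon V\to N_\Sigma$ is the quotient by $\SS$, the set $U':=\trop(W'')$ is open in $N_\Sigma$ with $\trop^{-1}(U')=W''$. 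For any $\alpha\in A_c^{n-p,n-q}(U')$, properness of $\trop$ gives $\trop^*\alpha\in A_c^{n-p,n-q}(W'')$, hence $(\trop_*T)(\alpha)=T(\trop^*\alpha)=0$, so $\trop(x)\notin\supp(\trop_*T)$.

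For the reverse inclusion $\supp(T)\subset \trop^{-1}(\supp(\trop_*T))$, I would again argue by contraposition. If $\trop(x)\notin \supp(\trop_*T)$, choose an open neighborhood $U'\subset N_\Sigma$ of $\trop(x)$ on which $\trop_*T$ vanishes, and set $W:=\trop^{-1}(U')$, which is an $\SS$-invariant open neighborhood of $x$. The restriction $T|_W$ is still closed, positive, and invariant under $\SS$ and $F$, and by definition of the pushforward $\trop_*(T|_W)=(\trop_*T)|_{U'}=0$. Applying the injectivity of $\trop_*$ from Theorem \ref{thm:5} (with the open set $U'$ in place of $U$), we conclude $T|_W=0$ and therefore $x\notin \supp(T)$.

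The main obstacle is really just bookkeeping: the forward direction leans entirely on the previously established injectivity in the correspondence theorem, so the work is to verify that restriction to the invariant open set $W=\trop^{-1}(U')$ preserves all the hypotheses (closedness, positivity, $\SS$- and $F$-invariance) needed to apply Theorem \ref{thm:5}, all of which are immediate from the sheaf-theoretic definitions. The reverse direction is purely topological and does not use closedness or positivity. No new estimates or decomposition arguments are needed.
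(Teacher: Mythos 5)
Your proof is correct and follows essentially the same strategy as the paper: the inclusion $\supp(T)\subset\trop^{-1}(\supp(\trop_* T))$ is obtained from the injectivity part of the Correspondence Theorem applied on the $\SS$-invariant open set $\trop^{-1}(U')$, and the reverse inclusion is the purely topological observation that $\supp(T)$ is $\SS$-invariant (your saturation of the neighborhood $W$ is a spelled-out version of the paper's one-line remark that $\SS$-invariance of $T$ makes $\supp(T)$ $\SS$-invariant, so that its complement is $\trop^{-1}$ of an open set).
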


\begin{proof}
	Recall that the support of a current $T$ is defined as the
	complement of the maximal open set $W$ such that $T|_{W}=0$.
	Let $C \coloneqq \supp(\trop_{\ast}(T))$. Write $U'=U\setminus C$ and
	$V'=\trop^{-1}(U')$. Then
	$\trop_{\ast}(T)|_{U'}=0$ and hence Proposition
	\ref{prop:10} yields $T|_{V'}=0$ which means 
	\begin{displaymath}
	\supp(T)\subset \trop^{-1}(\supp(\trop_{\ast}(T))).
	\end{displaymath}
	Since $T$ is $\SS$-invariant, then $C'\coloneqq \supp(T)$ is
	also $\SS$-invariant. Write $V''=V\setminus C'$. By the
	invariance of $C'$, we have $V''=\trop^{-1}(U'')$ for an open
	subset $U''\subset U$ (see Remark \ref{ass-toric-variety}). Since $T|_{V''}=0$, it is clear that 
	$\trop_{\ast}(T)|_{U''}=0$. Therefore
	\begin{displaymath}
	\trop^{-1}(\supp(\trop_{\ast}(T)))\subset  \supp(T),          
	\end{displaymath}
	concluding the proof of the proposition. 
\end{proof}

\begin{ex}\label{exm:5}
In the case $U= N_\R$ and $V=\T^\an$ our 
Correspondence Theorem \ref{thm:5} gives a new interpretation of the 
\emph{tropical currents} introduced by Babaee and Huh  \cite[Chapter 3]{babaee2014},
\cite[Section 2]{babaee-huh2017}.

{Let $\Delta$ be a polyhedron in $N_\R$
of dimension $p$ which is \emph{integral $\R$-affine}, i.e.~a polyhedron 
given by finitely many inequalities $\varphi\geq c$ with 
$\varphi\in \mathrm{Hom}_\Z(N,\Z)$ and $c\in \R$.} 
The argument map from polar coordinates induces an $\SS$-equivariant 
fibration of the $\SS$-invariant subset $\trop^{-1}(\Delta)\subset \torusan$ 
over a real torus of dimension $n-p$ with fibers of complex dimension $p$.  
Integration of a complex $(p,p)$-form over the fibers and 
then integrating the resulting function on the real torus with respect to the probability Haar 
measure defines a complex current $T_\Delta\in {D^{n-p,n-p}}(\T^\an)$.
For details of the construction of $T_\Delta$,
we refer to \cite[Definition 2.3]{babaee-huh2017}.

Let $\delta_\Delta$ denote the Lagerberg current of integration
over $\Delta$ defined in \cite[3.6]{gubler-forms}.
The complex current $T_\Delta$ is by construction $\SS$-invariant. 
We get furthermore $\trop_*(T_\Delta)=\delta_\Delta$ by a similar argument 
as in the proof of Lemma \ref{comparison of integration}.
Then Lemma \ref{trop of complex current}\ref{converse of real push-forward} yields that $T_{\Delta }$ is F-invariant.

If $C=(\mathscr C,m)$ is a weighted integral 
$\R$-affine polyhedral complex of pure {dimension $p$} with weights $m$ 
in the sense of \cite[3.1, 3.3]{gubler-forms}, then one defines
\[
T_C\coloneqq \sum_{\genfrac{}{}{0pt}{}{\Delta\in \mathscr C}{\dim\Delta=p}}
m_\Delta T_\Delta\in {D^{n-p,n-p}(\T^\an)},
\,\,\,
\delta_C\coloneqq \sum_{\genfrac{}{}{0pt}{}{\Delta\in \mathscr C}{\dim \Delta=p}}
m_\Delta\delta_\Delta\in {D^{n-p,n-p}(N_\R)}.
\]
Since $\trop_*(T_\Delta)=\delta_\Delta$, we clearly have $\trop_*(T_C)=\delta_C$.

If $C$ is an effective tropical cycle, then 
$T_C$ is closed and positive by \cite[Theorem 2.9]{babaee-huh2017}. 
The same is true for $\delta_C$ by  \cite[3.7]{gubler-forms}.
We conclude that $T_C$ is the unique closed positive current in {$D^{n-p,n-p}(\torusan)^{\SS,F}$} 
such that $\trop_*(T_C) = \delta_C$.
\end{ex}

\subsection{The {analogue of the} Skoda--El Mir Theorem for tropical toric varieties} 
\label{section:tropical Skoda El Mir}

We will prove a tropical analogue of the Skoda--El Mir Theorem.
In this subsection, $U$ is an open subset of $N_\Sigma$ and 
$E$ is the intersection of $U$ with a union of strata closures.

Let $T \in D^{p,p}(U \setminus E)$ be a positive Lagerberg current. We
pick $\rho \in \Sigma$ and choose toric coordinates on $U_\rho$ as in
Definition \ref{good-coordinates}. By Proposition \ref{prop:6}, the
co-coefficients $T^{IJ}$  
are real Radon measures on $U_\rho \setminus (E^{I \cup J} \cup E)$
with {total variation measure} $|T^{IJ}|$. 
We will consider the open immersion 
\[{i_E^{IJ}}\colon U_\rho \setminus (E^{I \cup J} \cup E) 
  \longrightarrow U_\rho \setminus E^{I\cup J}.\]

\begin{defi} \label{extension by 0} Let
  $T \in D^{p,p}(U \setminus E)$ be a positive Lagerberg current.  We
  say that \emph{$T$ is extendable by zero to $U$} if for any
  $\rho\in \Sigma$,   any toric coordinates on $U_\rho$ and all subsets $I,J$ of
  $\{1,\dots,n\}$ with $|I|= |J|=n-p$, the Radon measure $T^{IJ}$
  admits an image {Radon} measure with respect to $i_E^{IJ}$ (see
  Appendix \ref{facts from measure theory}). In other words, there is
  a (unique) Radon measure $i_E^{IJ}(T^{IJ})$ on
  $U_\rho \setminus E^{I\cup J}$ which agrees with $T^{IJ}$ on
  $U_\rho \setminus (E^{I \cup J} \cup E)$ such that
  $U_\rho \cap E \setminus E^{I \cup J}$ is a null set with respect to
  $i_E^{IJ}(T^{IJ})$.
\end{defi}

\begin{lem} \label{current extends by zero} If $T$ is extendable by
  zero to $U$, then there is a unique Lagerberg current
  $\widetilde T \in D^{p,p}(U)$ such that for any $\rho \in \Sigma$
  and all toric coordinates on $U_\rho$, the co-coefficient
  $(\widetilde T)^{IJ}$ is induced by the Radon measure
  ${i_E^{IJ}}(T^{IJ})$ for all $I,J$. Moreover, we have
  $\widetilde T|_{U \setminus E} = T$ and the Lagerberg current
  $\widetilde T$ is positive.
\end{lem}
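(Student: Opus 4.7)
The plan is to construct $\widetilde T$ locally on each $U_\rho$ and then glue. Uniqueness is immediate from Remark \ref{co-coefficients determine currents}: a Lagerberg current on $U$ is determined by its co-coefficients in some choice of toric coordinates on each $U_\rho$ for $\rho\in \Sigma$. For existence, fix $\rho\in \Sigma$ and toric coordinates on $U_\rho$. Each Radon measure $i_E^{IJ}(T^{IJ})$ on $U_\rho\setminus E^{I\cup J}$ defines a current in $D^{n,n}(U_\rho\setminus E^{I\cup J})$ via Example \ref{measure gives current in top degree}, and Remark \ref{co-coefficients determine currents} produces a unique $\widetilde T_\rho\in D^{p,p}(U_\rho)$ whose co-coefficients in the chosen coordinates are these measures.

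The main obstacle is compatibility. I would characterize $\widetilde T_\rho$ intrinsically as the unique current on $U_\rho$ with measure co-coefficients that restricts to $T|_{U_\rho\setminus E}$ on the complement of $E$ and for which $E$ is a null set in the sense of Definition \ref{def:nullsetcurrent}. The first property is clear, since away from $E$ the image measures $i_E^{IJ}(T^{IJ})$ agree with $T^{IJ}$. The second follows from the very defining property of the image measures in Definition \ref{extension by 0}, which guarantees that $E\cap(U_\rho\setminus E^{I\cup J})$ has measure zero for each $(\widetilde T_\rho)^{IJ}$. Since the null-set property is coordinate independent, and the extension of a Radon measure that assigns zero mass to a specified set is unique, two candidates obtained from different toric coordinates on $U_\rho$, or from an overlap $U_\rho\cap U_{\rho'}$, necessarily agree. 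Gluing the local currents yields a global $\widetilde T\in D^{p,p}(U)$, and by construction $\widetilde T|_{U\setminus E}=T$; moreover, for any choice of toric coordinates on any $U_\rho$, the co-coefficient property holds, since it does so in the coordinate system in which $\widetilde T_\rho$ was built and the image-measure construction is intrinsic.

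For positivity, I first note symmetry: by Proposition \ref{prop:6}, $T^{JI}=T^{IJ}$, and taking image measures preserves this identity, so $(\widetilde T)^{JI}=(\widetilde T)^{IJ}$, which translates via Remark \ref{co-coefficients determine currents} into $J(\widetilde T)=(-1)^p \widetilde T$. For the non-negativity condition, let $\alpha\in A^{n-p,n-p}_c(U)$ be positive. Apply Lemma \ref{uniqueness sequence} with $A=E$ to obtain an increasing sequence $\psi_k\geq 0$ in $A^{0,0}_c(U\setminus E)$ converging pointwise to the characteristic function of $U\setminus E$; then the forms $\alpha_k\coloneqq \psi_k\alpha$ are positive with compact support in $U\setminus E$. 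Since $E$ is a null set for $\widetilde T$, Lemma \ref{uniqueness sequence}\ref{item:10} gives $\widetilde T(\alpha)=\lim_{k\to\infty}\widetilde T(\alpha_k)$. But $\widetilde T(\alpha_k)=T(\alpha_k)\geq 0$ by positivity of $T$, and we conclude $\widetilde T(\alpha)\geq 0$. The truly delicate step is the coordinate independence of the local construction, which I handle via the intrinsic null-set characterization rather than by an explicit change-of-variables calculation.
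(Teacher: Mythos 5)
Your proof is correct and follows essentially the same route as the paper's: construct $\widetilde T$ locally from the image measures via Remark \ref{co-coefficients determine currents}, observe that the result is intrinsic (coordinate-independent and compatible on overlaps), glue, and deduce positivity from Lemma \ref{uniqueness sequence} applied with $A=E$. You simply spell out two steps the paper leaves implicit — the intrinsic characterization of $\widetilde T_\rho$ as the unique current with measure co-coefficients extending $T$ and having $E$ as a null set, which cleanly justifies coordinate independence; and the separate verification of symmetry from $T^{IJ}=T^{JI}$ — so this is a slightly more detailed write-up of the paper's argument rather than a different one.
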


In the above situation, we say that $\widetilde T$ is the extension of $T$ by zero to $U$.

\begin{proof}
  For $\rho \in \Sigma$ and a choice of toric coordinates, Remark
  \ref{co-coefficients determine currents} shows that there is a
  unique current $\widetilde T \in D^{p,p}(U_\rho)$ with
  co-coefficients induced by the Radon measures ${i_E^{IJ}}(T^{IJ})$.
  Clearly, we have
  $\widetilde T|_{U_\rho \setminus E} = T|_{U_\rho \setminus E}$ and
  $\widetilde T$ does not depend on the choice of toric
  coordinates. Moreover, the extensions $\widetilde T$ constructed on
  the open covering $(U_\rho)_{\rho \in \Sigma}$ agree on overlapping.
  By glueing, we get a Lagerberg current on $U$ also denoted by
  $\widetilde T$.  Using Lemma \ref{uniqueness sequence} for
  $A \coloneqq E \cap U_\rho$, we deduce easily that $\widetilde T$ is
  again a positive Lagerberg current.
\end{proof}

\begin{defi} \label{def complex local finite mass generalized}
	Let $T\in D^{p,p}(U \setminus E)$ be a  positive Lagerberg current. 
	Generalizing Definition \ref{def complex local finite mass}, 
	we say that  $T$ has \emph{$\C$-finite local mass on $U$} if for all $\rho \in \Sigma$ the 
	co-coefficients $T^{IJ}$, 
	which may be seen as real Radon measures on $U_\rho \setminus (E \cup  E^{I \cup J})$, 
	satisfy the condition that {the  Borel measures on $U_\rho$, given as} 
	the image measures
	\begin{displaymath}
	j_E^{IJ}\biggl(|T^{I,J}|\prod_{i\in I}e^{-u_{i}}\prod_{j\in J}e^{-u_{j}}\biggr)
	\end{displaymath}
	with respect to the open immersion $j_E^{IJ}\colon U_\rho \setminus (E \cup E^{I \cup J}) \to U_\rho$, are locally finite on $U_\rho$.
\end{defi}

\begin{thm} \label{tropical El Mir} Let $T \in D^{p,p}(U \setminus E)$
  be a closed positive Lagerberg current which has $\C$-finite local
mass on $U$.  Then $T$ is extendable by zero to $U$ and the
  extension $\widetilde T$ of $T$ by zero to $U$ from Lemma
  \ref{current extends by zero} is a closed positive Lagerberg current
  on $U$.
\end{thm}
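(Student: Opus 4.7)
The plan is to reduce the statement to the classical Skoda--El Mir theorem by transporting everything to the complex side via the Correspondence Theorem \ref{thm:5}. Set $\widetilde E \coloneqq \trop^{-1}(E) \subset V \coloneqq \trop^{-1}(U)$. Since $E$ is a union of stratum closures in $U$, the set $\widetilde E$ is a union of orbit closures $\overline{O(\sigma)^\an}$, hence a closed complex analytic subset of $V$ which is in particular complete pluripolar. Applying Theorem \ref{thm:5} to the open set $U \setminus E$, the closed positive Lagerberg current $T$ corresponds to a unique closed positive $\SS$- and $F$-invariant current $S \in D^{p,p}(V \setminus \widetilde E)^{\SS,F}$ with $\trop_*(S) = T$.

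Next, I would translate the $\C$-finite local mass condition on $T$ into the statement that $S$ has locally finite mass in $V$ near $\widetilde E$. Working locally in toric coordinates on $V_\rho$, the formula \eqref{variance of co-coefficients and trop} of Lemma \ref{preparation-lemma} gives
\begin{equation*}
\trop(|S^{IJ}|) = \pi^{-q}2^{2q}\prod_{i\in I}e^{-u_i}\prod_{j \in J}e^{-u_j}|T^{IJ}|;
\end{equation*}
since $\trop$ is proper, the right-hand side having locally finite image measure on $U_\rho$ is equivalent to $|S^{IJ}|$ being locally finite on $V_\rho$. With this in hand, the classical Skoda--El Mir theorem \cite[Theorem III.2.3]{demailly_agbook_2012} extends $S$ by zero across $\widetilde E$ to a closed positive current $\widetilde S$ on $V$ for which $\widetilde E$ is a null set of every co-coefficient measure. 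By the uniqueness of the trivial extension, $\widetilde S$ inherits the $\SS$- and $F$-invariance of $S$.

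Define $\widetilde T \coloneqq \trop_*(\widetilde S) \in D^{p,p}(U)$, which by Theorem \ref{thm:5} is a closed positive Lagerberg current on $U$. It remains to identify $\widetilde T$ with the extension of $T$ by zero in the sense of Definition \ref{extension by 0}: since $\widetilde S|_{V \setminus \widetilde E} = S$ we have $\widetilde T|_{U \setminus E} = T$, and applying \eqref{variance of co-coefficients and trop} to the pair $(\widetilde T, \widetilde S)$ shows that $E \setminus E^{I \cup J}$ is a null set for each $|\widetilde T^{IJ}|$ precisely because $\widetilde E$ is null for each $|\widetilde S^{IJ}|$. This simultaneously proves the existence of the image Radon measures $i_E^{IJ}(T^{IJ})$ on $U_\rho \setminus E^{I \cup J}$ (so $T$ is extendable by zero) and identifies the resulting extension with $\widetilde T$, which is closed and positive by construction.

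The main obstacle is step 2, the precise translation between the tropical $\C$-finite local mass condition and the classical local finite mass of $S$ near the pluripolar set $\widetilde E$; the factors $\prod e^{-u_i}\prod e^{-u_j}$ in \eqref{variance of co-coefficients and trop} are essential and the properness of $\trop$ has to be used carefully to transfer local finiteness between $U_\rho$ and $V_\rho$. A subtler point is ruling out that $\widetilde T$ picks up additional mass on $E$ beyond the trivial extension; this is forced by the fact that $\widetilde S$ itself is the trivial extension on the complex side, combined with the injectivity of $\trop_*$ on closed positive invariant currents from Theorem \ref{thm:5}.
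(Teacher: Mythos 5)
Your proof is correct and follows the same strategy as the paper: pass to the complex side of $U\setminus E$ via the Correspondence Theorem \ref{thm:5}, apply the classical Skoda--El Mir theorem to the complete pluripolar set $\trop^{-1}(E)$, and push the trivial extension forward. The only organizational difference is that the paper first establishes extendability and positivity of $\widetilde T$ directly from the $\C$-finite local mass hypothesis (the exponential weights in \eqref{variance of co-coefficients and trop} are bounded away from zero locally on $U_\rho \setminus E^{I\cup J}$, so $i_E^{IJ}(|T^{IJ}|)$ is automatically locally finite), and then uses the complex side only for closedness of $\widetilde T$, whereas you derive extendability by first constructing $\widetilde S$ and then identifying $\trop_*(\widetilde S)$ with the trivial extension; both orderings are valid.
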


\begin{proof}
	{We may assume that $U=U_\rho$ for some $\rho \in \Sigma$ and we choose toric coordinates as in Definition \ref{good-coordinates}.} 
	Since the function  $\prod_{i\in I}e^{-u_{i}}\prod_{j\in J}e^{-u_{j}}$ 
	is locally finite on $U \setminus E^{I \cup J}$  
	{and since $T$ has $\C$-finite local mass on $U$, the Borel measure $i_E^{IJ}(|T^{IJ}|)$} 	is locally finite on $U \setminus E^{I\cup J}$ and 
		hence $T$ is extendable by zero to $U$. 
		By Lemma \ref{current extends by zero},    
		the extension $\widetilde T$ of $T$ by zero to $U$ is a  positive Lagerberg current on $U$. 
	
	Let $V \coloneqq \trop^{-1}(U)$ and let $D \coloneqq \trop^{-1}(E)$. 
	Note that $D$ is the intersection of the open subset $V$ of $\Xsigmaan$ with a union of strata closures and 
	hence is a closed analytic subset of the complex toric manifold $\Xsigmaan$. 
	By our correspondence theorem (Theorem \ref{thm:5}), 
	there is a unique closed positive current 
	$S \in D^{p,p}(V \setminus D)^{\SS,F}_{\cl,+}$ with $\trop_*(S)=T$. 
	Using that $T$ has $\C$-finite local mass on $U$, 
	it follows from \eqref{variance of co-coefficients and trop} that $S$ has finite local mass on $V$. 
	The complex Skoda--El Mir Theorem (see \cite[Theorem III.2.3]{demailly_agbook_2012}) 
	shows that the extension $\widetilde{S}$ of $S$ by zero to $V$ is a closed positive current on $V$. 
	By construction, we have $\trop_*(\widetilde S)= \widetilde T$ and 
	hence $\widetilde T$ is a closed Lagerberg current.
\end{proof}

\begin{cor} \label{decomposition theorem and closed}
  Let $T \in D^{p,p}(U)$ be  positive and closed with canonical
  decomposition $T=\sum_{\sigma \in \Sigma} T_\sigma$ from Theorem
  \ref{stratification of currents}. Then every Lagerberg current
  $T_\sigma$ is positive and closed.
\end{cor}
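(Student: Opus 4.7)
The positivity of each $T_\sigma$ is already part of Theorem \ref{stratification of currents}, so the substance of the corollary is closedness of each $T_\sigma$. My plan is to proceed by induction on $\dim \sigma$, first establishing closedness on a well-chosen dense open piece of the support of $T_\sigma$ and then extending to $U$ via the Tropical Skoda--El Mir Theorem \ref{tropical El Mir}.

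For each cone $\sigma \in \Sigma$, introduce the open subset
\[
\tilde W_\sigma \coloneqq U \setminus \bigcup_{\mu \in \Sigma,\, \mu \not\prec \sigma} \overline{N(\mu)},
\]
where the union runs over cones which are not faces of $\sigma$. Using the disjointness of strata together with the equivalence $N(\tau) \subset \overline{N(\mu)} \Leftrightarrow \mu \prec \tau$, one verifies $\tilde W_\sigma \cap \overline{N(\sigma)} = U \cap N(\sigma)$, since any stratum $N(\tau) \subset \overline{N(\sigma)}$ with $\tau \neq \sigma$ is eliminated by taking $\mu = \tau$ in the union. Moreover, whenever $\mu \not\prec \sigma$ the current $T_\mu$ has support in $\overline{N(\mu)}$, which is disjoint from $\tilde W_\sigma$, so on $\tilde W_\sigma$ the canonical decomposition yields
\[
T_\sigma|_{\tilde W_\sigma} = T|_{\tilde W_\sigma} - \sum_{\mu \prec \sigma,\, \mu \neq \sigma} T_\mu|_{\tilde W_\sigma}.
\]
Every $\mu$ in the sum satisfies $\dim \mu < \dim \sigma$ and is therefore closed by the induction hypothesis; $T$ is closed by assumption. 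Consequently $T_\sigma|_{\tilde W_\sigma}$ is closed. The base case $\sigma = \{0\}$ is covered by the same discussion: the sum is empty, $\tilde W_{\{0\}} = U \cap N_\R$, and $T_{\{0\}}|_{U \cap N_\R} = T|_{U \cap N_\R}$.

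To propagate closedness to all of $U$, I would write $\tilde W_\sigma = U \setminus E_\sigma$ with $E_\sigma \coloneqq U \cap \bigcup_{\mu \not\prec \sigma} \overline{N(\mu)}$ a union of strata closures, and apply Theorem \ref{tropical El Mir} to $T_\sigma|_{\tilde W_\sigma}$. The $\C$-finite local mass hypothesis on $U$ follows from Proposition \ref{prop:8} applied to the closed positive current $T$, combined with the pointwise domination $|T_\sigma^{IJ}| \le |T^{IJ}|$ for co-coefficient total variations, which is immediate from the Jordan-decomposition construction of $T_\sigma$ in the proof of Theorem \ref{stratification of currents}. Theorem \ref{tropical El Mir} will then produce a closed positive extension by zero of $T_\sigma|_{\tilde W_\sigma}$ to all of $U$.

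The main technical point, where care is needed, is to identify this extension with $T_\sigma$ itself. By construction the two currents agree on $\tilde W_\sigma$. Since $E_\sigma \cap \overline{N(\sigma)} \subset U \setminus N(\sigma)$ is a null set for $T_\sigma$ by Theorem \ref{stratification of currents}, the co-coefficients of $T_\sigma$ are forced to be zero on $E_\sigma$ in the sense of Definition \ref{extension by 0}; the uniqueness part of Lemma \ref{current extends by zero} identifies $T_\sigma$ with the Skoda--El Mir extension and thus yields its closedness, completing the induction.
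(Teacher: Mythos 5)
Your argument is correct and follows essentially the same route as the paper: induct on $\dim\sigma$, express $T_\sigma$ on the open set $U_\sigma$ (your $\tilde W_\sigma$) as $T$ minus lower-dimensional pieces which are closed by the induction hypothesis, and then invoke the Tropical Skoda--El Mir Theorem \ref{tropical El Mir} to propagate closedness to $U$, using Proposition \ref{prop:8} for the $\C$-finite local mass. The only cosmetic divergence is that the paper subtracts $\sum_{\dim\tau<\dim\sigma}T_\tau$ globally while you subtract only the proper faces of $\sigma$ after restricting to $\tilde W_\sigma$, but these agree on that open set; you also spell out the domination $|T_\sigma^{IJ}|\le|T^{IJ}|$ and the identification of the extension by zero with $T_\sigma$, which the paper leaves implicit.
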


\begin{proof}
  By Theorem \ref{stratification of currents}, every $T_\sigma$ is a
  positive Lagerberg current. We know from Proposition \ref{prop:8}
  that the closed positive Lagerberg current $T$ has $\C$-finite local
  mass. These two facts show that every $T_\sigma$ has $\C$-finite
  local mass.

    We prove that $T_\sigma$ is closed
    by induction on the dimension of $\sigma $. By construction
    $T_{\{0\}}$ is the extension by zero of $T|_{N_\R\cap U}$. 
    Hence $T_{\{0\}}$ is closed
    by Theorem \ref{tropical El Mir}. We assume now that $T_{\tau }$
    is closed for all $\tau $ with $\dim(\tau )<\dim (\sigma )$. Hence
    \begin{displaymath}
      T_{\sigma }'\coloneqq T-\sum_{\dim(\tau )<\dim(\sigma )}T_{\tau }
    \end{displaymath}
    is closed. Since
    $
      T_{\sigma }|_{U_{\sigma }}= T'_{\sigma }|_{U_{\sigma }}
    $
    we deduce that $T_{\sigma }|_{U_{\sigma }}$ is closed. As
    $T_{\sigma }$ is the extension by zero of this last current, Theorem
    \ref{tropical El Mir} implies that $T_{\sigma }$ is closed.
\end{proof}

\begin{appendix}

\section{Reminder about Radon and regular Borel measures}
\label{facts from measure theory}

For the convenience of the reader, we gather
the used conventions about Radon measures and some basic facts.   
In this paper, we deal only with measures on locally compact Hausdorff 
spaces which have a countable basis, so let us consider  such a space $Y$.

For $\K\in \{\R,\C\}$ and a compact subset $Z$ of $Y$, we write 
$C_Z^0(Y,\K)$ for the space of $\K$-valued continuous functions 
on $Y$ with support
in $Z$ equipped with the topology induced by the supremum norm.
The space $C^0_c(Y,\K)$ of $\K$-valued continuous functions on $Y$ with compact support
is the direct limit of the spaces $C_Z^0(Y,\K)$.
We equip $C^0_c(Y,\K)$ with the direct limit topology in the category of
locally convex topological vector spaces 
(see \cite[III \S1 n\textsuperscript{o}1]{bourbaki-integration-1-4}).

We define the space of \emph{real Radon measures} on  $Y$ as the topological dual of $C^{0}_{c}(Y,\R)$.
Observe that our Radon measures are precisely the measures
considered by Bourbaki 
\cite[III \S1 n\textsuperscript{o}3+5]{bourbaki-integration-1-4}.
If $\mu\colon C^{0}_{c}(Y,\R) \to \R$ is linear with 
$\mu(f) \geq 0$ for all $0 \leq f \in C^{0}_{c}(Y,\R)$, then we call 
$\mu$ a {\it positive linear functional}. 
Note that $\mu$ is then continuous and gives rise to a 
\emph{positive Radon measure}. 
The Riesz representation theorem shows that there is a unique regular 
Borel measure $\mu^B$ on $Y$ such that
\begin{align*}
\mu(f) = \int_Y f \, d\mu^B
\end{align*}
for all  $f \in C^{0}_{c}(Y,\R)$. 
	Recall that a Borel measure $\lambda$ on $Y$ 
is regular if and only if it is \emph{locally finite}, 
i.e.~any point has an open neighborhood $U$ with $\lambda(U)< \infty$
\cite[Prop.~7.2.3 and 7.2.5]{cohn1993}.
In contrast to Radon measures, we always assume that a Borel measure
is positive.

A real Radon measure $\mu\colon C_c^0(Y,\R)\to \R$
can be written as the difference $\mu=\mu_+-\mu_-$
of positive Radon measures $\mu_\pm$
which are induced by locally finite (positive) Borel 
measures $\mu^B_\pm$.
The Hahn--Jordan decomposition theorem tells us that the Borel measures 
$\mu^B_\pm$ are unique if one requires them to be minimal.
The positive Radon measure $|\mu| \coloneqq \mu_+ + \mu_-$ is 
called  the \emph{{total variation measure of $\mu$}}. 
We call $|\mu|^B(Y)=\mu_+^B(Y)+\mu_-^B(Y)$ the \emph{total variation
of $\mu$}. 
Observe that $\mu^B_+-\mu^B_-$ 
\emph{is not necessarily a signed Borel measure} as it has only 
well defined finite values on $\mu^B_\pm$-finite Borel sets.	
The standard example is the real Radon measure on $\R$ 
with density function $\sin(x)$ with respect to the 
Lebesgue measure which is not a signed Borel measure as the 
total space $\R$ has no well-defined mass.

A \emph{complex Radon measure} on $Y$ is defined as
a continuous linear functional 
$\mu\colon C^{0}_{c}(Y,\C) \to \C$. 
It is clear that the real and imaginary part of $\mu$ are 
real Radon measures. 
From this, we conclude as above 
that $\mu$ corresponds locally on an open subset $U$ of $Y$ to
a complex Borel measure $\mu_U^B$.
 The \emph{{total variation measure
of $\mu_U^B$}} is the unique Borel measure $|\mu_U^B|$ on $U$ 
such that there is a Borel measurable function $\theta\colon U \to \R$ 
with $d\mu_U^B = e^{i\theta}d|\mu_U^B|$ \cite[6.1, 6.12]{rudin1966}.
The Borel measures $|\mu_U^B|$ glue to a 
Borel measure on $Y$ which is associated to a Radon measure $|\mu|$ on
$Y$  called 
the \emph{{total variation measure }of the Radon measure $\mu$}.
We call $|\mu|^B(Y)$ the \emph{total variation of $\mu$}.

Let $g\colon X\to Y$ be a continuous map of locally compact {Hausdorff} 
spaces which admit a countable basis. 
We say that a real or complex Radon measure $\mu$ on $X$ \emph{admits
an image {Radon} measure under $g$}  if the image measure 
{$g(|\mu|^B)$ is a locally finite Borel measure.}

Assume that the real or complex Radon measure $\mu$ 
on $X$ admits an image {Radon} measure under $g$.
If $\mu$ is a real Radon measure on $X$, then 
we write $\mu=\mu_+-\mu_-$ as above.
By our assumption the image measures $f(\mu^B_+)$ and
$f(\mu^B_-)$ are locally finite and define Radon measures 
$f(\mu)_+$ and $f(\mu)_-$ on $Y$.
We call the real Radon measure $f(\mu)\coloneqq f(\mu)_+-f(\mu)_-$ 
the \emph{image measure of $\mu$ under $f$}.
If $\mu$ is a complex Radon measure $\mu$ on $X$, we define the 
\emph{image measure $f(\mu)$} by treating the real and 
the imaginary part of $\mu$ separately.

It is straightforward to check that real
and complex Radon measures always admit image {Radon} measures under
proper maps.

\end{appendix}

\bibliographystyle{alpha}

\end{document}